\def\@secnumfont{\bfseries\scshape}
\def\section{\@startsection{section}{1}%
  \z@{.7\linespacing\@plus\linespacing}{.5\linespacing}%
  {\normalfont\large\bfseries\scshape\centering}}
\def\subsection{\@startsection{subsection}{2}%
  \z@{.5\linespacing\@plus.7\linespacing}{-.5em}%
  {\normalfont\bfseries\scshape}}
\def\subsubsection{\@startsection{subsubsection}{3}%
  \z@{.5\linespacing\@plus.7\linespacing}{-.5em}%
  {\normalfont\scshape}}
\def\specialsection{\@startsection{section}{1}%
  \z@{\linespacing\@plus\linespacing}{.5\linespacing}%
  {\normalfont\centering\large\bfseries\scshape}}
\renewenvironment{proof}[1][\proofname]{\par
\pushQED{\qed}%
\normalfont \topsep4\p@\@plus4\p@\relax
\trivlist
\item[\hskip\labelsep
\bfseries
#1\@addpunct{.}]\ignorespaces
}{%
\popQED\endtrivlist\@endpefalse
}
\newcommand \Dotfill {\leavevmode \leaders \hb@xt@ 6pt{\hss .\hss }\hfill \kern \z@}
\def\@tocline#1#2#3#4#5#6#7{\relax
  \ifnum #1>\c@tocdepth 
  \else
    \par \addpenalty\@secpenalty\addvspace{#2}%
    \begingroup \hyphenpenalty\@M
    \@ifempty{#4}{%
      \@tempdima\csname r@tocindent\number#1\endcsname\relax
    }{%
      \@tempdima#4\relax
    }%
    \parindent\z@ \leftskip#3\relax \advance\leftskip\@tempdima\relax
    \rightskip\@pnumwidth plus4em \parfillskip-\@pnumwidth
    #5\leavevmode\hskip-\@tempdima
      \ifcase #1
       \or\or \hskip 1.65em \or \hskip 3.3em \else \hskip 4.95em \fi%
      #6\nobreak\relax
    \Dotfill
    \hbox to\@pnumwidth{\@tocpagenum{#7}}\par
    \nobreak
    \endgroup
  \fi}
\def\l@section{\@tocline{1}{0pt}{1pc}{}{\scshape}}
\renewcommand{\tocsection}[3]{%
\indentlabel{\@ifnotempty{#2}{\ignorespaces#1 #2.\hskip 0.7em}}#3}
\def\l@subsection{\@tocline{2}{0pt}{1pc}{5pc}{}}
\def\l@subsubsection{\@tocline{3}{0pt}{1pc}{7pc}{}}
\numberwithin{equation}{section}
\newtheoremstyle{mytheorem}{.7\linespacing\@plus.3\linespacing}{.7\linespacing\@plus.3\linespacing}%
     {\itshape}
     {}
     {\bfseries}
     {. }
     {0.3ex}
     {\thmname{{\bfseries #1}}\thmnumber{ {\bfseries #2}}\thmnote{ (#3)}}  
\theoremstyle{mytheorem}
\newtheorem{theorem}{Theorem}[section]
\newtheorem{proposition}[theorem]{Proposition}
\newtheorem{remark}[theorem]{Remark}
\newtheorem{definition}[theorem]{Definition}
\def\bs{\boldsymbol}
\newcommand\bW{\boldsymbol W}
\newcommand\bK{\boldsymbol K}
\newcommand\be{\boldsymbol e}
\newcommand\bp{\boldsymbol p}
\newcommand\bq{\boldsymbol q}
\newcommand\bx{\boldsymbol x}
\newcommand\by{\boldsymbol y}
\newcommand\ba{\boldsymbol a}
\newcommand\bb{\boldsymbol b}
\newcommand\bw{\boldsymbol w}
\newcommand\bPi{\boldsymbol \Pi}
\newcommand\red{\textcolor{red}}
\newcommand{\bbE}{{\ensuremath{\mathbb E}} }
\newcommand{\bbN}{{\ensuremath{\mathbb N}} }
\newcommand{\bbP}{{\ensuremath{\mathbb P}} }
\newcommand{\bbQ}{{\ensuremath{\mathbb Q}} }
\newcommand{\bbR}{{\ensuremath{\mathbb R}} }
\newcommand{\bbZ}{{\ensuremath{\mathbb Z}} }
\newcommand{\cA}{{\ensuremath{\mathcal A}} }
\newcommand{\cE}{{\ensuremath{\mathcal E}} }
\newcommand{\cL}{{\ensuremath{\mathcal L}} }
\newcommand{\cS}{{\ensuremath{\mathcal S}} }
\newcommand{\cX}{{\ensuremath{\mathcal X}} }
\newcommand{\cY}{{\ensuremath{\mathcal Y}} }
\newcommand{\cZ}{{\ensuremath{\mathcal Z}} }
\newcommand{\ga}{\alpha}
\newcommand{\gb}{\beta}
\newcommand{\gd}{\delta}
\newcommand{\gz}{\zeta}
\newcommand{\gl}{\lambda}
\newcommand{\go}{\omega}
\newcommand\sfB{\mathsf B}
\newcommand\sfG{\mathsf G}
\newcommand\sfK{\mathsf K}
\newcommand\sfL{\mathsf L}
\newcommand\sfQ{\mathsf Q}
\newcommand\sfR{\mathsf R}
\newcommand\sfS{\mathsf S}
\newcommand\sfT{\mathsf T}
\newcommand\sfW{\mathsf W}
\newcommand\sfX{\mathsf X}
\newcommand\sfZ{\mathsf Z}
\newcommand\sff{\mathsf f}
\newcommand\sfg{\mathsf g}
\newcommand\sfh{\mathsf h}
\newcommand\sft{\mathsf t}
\newcommand\sfw{\mathsf w}
\renewcommand{\tilde}{\widetilde}          
\DeclareMathSymbol{\leqslant}{\mathalpha}{AMSa}{"36} 
\DeclareMathSymbol{\geqslant}{\mathalpha}{AMSa}{"3E} 
\DeclareMathSymbol{\emptysetet}{\mathalpha}{AMSb}{"3F}     
\newcommand{\dd}{\text{\rm d}}             
\newcommand{\sumtwo}[2]{\sum_{\substack{#1 \\ #2}}} 
\newcommand{\crossrsk}[4]{\small{{\begin{array}{ccc} & #1& \\ #2 & \cross & #3 \\ & #4&\end{array}}}}
\newcommand{\R}{\mathbb{R}}
\newcommand{\C}{\mathbb{C}}
\newcommand{\Z}{\mathbb{Z}}
\newcommand{\N}{\mathbb{N}}
\newcommand{\PEfont}{\mathrm}
\newcommand{\p}{\ensuremath{\PEfont P}}
\newcommand{\e}{\ensuremath{\PEfont E}}
\newcommand{\E}{\e}
\renewcommand{\P}{\p}
\newcommand\bP{\ensuremath{\bs{\mathrm{P}}}}
\newcommand{\ind}{\mathds{1}}
\renewcommand{\ind}{\mathbb{1}}
\renewcommand{\epsilon}{\varepsilon}
\renewcommand{\theta}{\vartheta}
\renewcommand{\rho}{\varrho}
\newenvironment{myenumerate}{%
\renewcommand{\theenumi}{\arabic{enumi}}%
\renewcommand{\labelenumi}{{\rm(\theenumi)}}%
\begin{list}{\labelenumi}
	{%
	\setlength{\itemsep}{0.4em}%
	\setlength{\topsep}{0.5em}%
	\setlength\leftmargin{2.45em}%
	\setlength\labelwidth{2.05em}%
	\setlength{\labelsep}{0.4em}%
	\usecounter{enumi}%
	}%
	}%
{\end{list}
}
\renewenvironment{enumerate}{
\begin{myenumerate}}%
{\end{myenumerate}}
\newenvironment{myitemize}{%
\begin{list}{$\bullet$}%
 	{%
	\setlength{\itemsep}{0.4em}%
	\setlength{\topsep}{0.5em}%
	\setlength\leftmargin{2.45em}%
	\setlength\labelwidth{2.05em}%
	\setlength{\labelsep}{0.4em}%
	}%
	}%
{\end{list}}
\renewenvironment{itemize}{
\begin{myitemize}}%
{\end{myitemize}}
\def\dd{\mathrm{d}}
\newcommand\rs{\sfR\sfS}
\newcommand\rsk{\sfR\sfS\sfK}
\newcommand\grsk{\sfg\sfR\sfS\sfK}
\newcommand\GT{\sfG\sfT}
\newcommand\gGT{\sfg\sfG\sfT}
\newcommand{\Ai}{\mathcal{A}i}
\newcommand{\cross}
{\begin{picture}(12,10)(-2,0)
\put(-8,0){\Large$\longrightarrow$}
\put(0,0){$\big\downarrow$}
\end{picture}}
\begin{document}

\title
[algebraic structures in KPZ]
{Some algebraic structures \\ in KPZ universality}

\begin{abstract} We review some algebraic and 
combinatorial structures that underlie models in the KPZ universality class.
Emphasis is placed on the Robinson-Schensted-Knuth correspondence and its geometric lifting due to A.N.Kirillov.
We present how these combinatorial constructions are used to analyse the structure of solvable models in the KPZ 
class and lead to computation of their statistics via connecting to representation theoretic objects such as
 Schur, Macdonald and Whittaker functions, Young tableaux and Gelfand-Tsetlin patterns. 
 We also present how fundamental representation theoretic concepts, such
 as the Cauchy identity, the Pieri rule and the branching rule, can be used,
  alongside $\rsk$ correspondences, and can be combined with probabilistic ideas, in order 
  to construct integrable stochastic dynamics on two dimensional arrays of Gelfand-Tsetlin type, 
  in ways that couple different one dimensional stochastic processes. 
  For example, interacting particle systems, on the one hand, and
  processes related to eigenvalues of random matrices, on the other, thus illuminating the emergence of random matrix distributions
  in interacting stochastic processes.
 The goal of the notes is to expose some of the overarching principles, which have driven a significant number
 of developments in the field.

\end{abstract}

\author[N.Zygouras]{Nikos Zygouras}
\address{Department of Mathematics\\
University of Warwick\\
Coventry CV4 7AL, UK}
\email{N.Zygouras@warwick.ac.uk}

\date{\today}

\keywords{KPZ universality, Robinson-Schensted-Knuth correspondence, Young tableaux, Gelfand-Tsetlin patterns, tropical combinatorics,
 growth models, directed polymer models, Whittaker functions, Schur functions, Pieri rule, Branching rule}
\subjclass{Primary: 60Cxx, 05Exx, 82B23}

\maketitle

\tableofcontents
\section{Introduction}\label{sec:intro}

The Kardar-Parisi-Zhang equation is the nonlinear (Hamilton-Jacobi type) stochastic partial differential equation 
\begin{align}\label{KPZ}
\frac{\partial h}{\partial t}=\frac{1}{2}\Delta h +\frac{1}{2}|\nabla h|^2 +\dot{W} (t,x),\qquad x\in \bbR^d, t>0,
\end{align}
proposed in \cite{KPZ86} as a universal object governing the 
fluctuations of randomly growing interfaces. $\dot{W}$ is the space-time white noise, which is a distribution valued
Gaussian process, delta correlated in space and time as
\begin{align*}
\bbE\big[ \dot{W}(t,x) \dot{W}(s,y)\,\big] = \delta(t-s)\,\delta(x-y).
\end{align*}

Through a dynamical renormalization analysis, based also on earlier studies on the stochastic
 Burgers equation by \cite{FNS77}, it was predicted that, in dimension one, the fluctuations of models within this class are governed by a $t^{1/3}$ scaling, while one expects to observe spatial correlations at scales $t^{2/3}$. Roughly speaking,
 this means that asymptotically for large time $t$, the solution to \eqref{KPZ} behaves as
 \begin{align}\label{GUEasy}
 h(t,x) \approx \mu t + t^{1/3} \mathfrak{h}\big(\frac{x}{t^{2/3}}\big),
 \end{align}
 where the term $\mu t$ represents a macroscopic, deterministic behaviour and is model dependent
 and $\mathfrak{h}(\cdot)$ is a random function representing the fluctuations of the system.
 
 Even though the exponents $1/3$ and $2/3$ were predicted in \cite{KPZ86}, the arguments there do not allow for a prediction on the nature of the process $ \mathfrak{h}(\cdot)$. The indication of what this should be came through the analysis of discrete models,
  which showed the remarkable fact that the statistics of $\mathfrak{h}(\cdot)$ are related to statistics in random matrix theory. The first work that set the grounds of this link is
 the work of Baik-Deift-Johansson \cite{BDJ99} on the statistics of the {\it longest increasing subsequence 
 in a random permutation} of length $n$ (also known as {\it ``Ulam's problem''}), which
  exhibited rigorously not only the $n^{1/3}$ scaling but also
  the link to the {\it Tracy-Widom GUE law}\footnote{GUE stands for {\it Gaussian Unitary Ensemble}}, 
  which describes the asymptotic statistics of the largest eigenvalue of
  an ensemble of random, hermitian
   matrices with gaussian entries. At the same time the work of Pr\"ahofer and Spohn 
  \cite{PS00, PS02} introduced the {\it Airy process} as the law of the process
 $\mathfrak{h}(\cdot)$ after subtraction of a certain parabola. 
 Crucial in these breakthroughs was a combinatorial algorithm known as the {\it Robinson-Schensted-Knuth corespondence} ($\rsk$). This is a correspondence between (generalised) permutations and a pair of {\it Young tableaux} (with the latter being an object of central importance in representation theory). The link to probability and KPZ emerges from the 
 fact that the length of a longest increasing subsequence in a permutation is encoded via a certain observable in
 the Young tableaux (the common length of their  {\it first rows}) when 
 viewed as the image of a permutation under the Robinson-Schensted-Knuth correspondence.
 The significance of this correspondence, though, goes into much greater depths as it encodes a profound
 structure shared by models in the KPZ class, which otherwise would remain hidden.
 
 Around the same time as the works of Baik-Deift-Johansson and Pr\"ahofer-Spohn,
  the works of Okounkov \cite{O01}, Okounkov-Reshetikhin \cite{OR03} 
  on {\it ``Schur Measures''} and {\it ``Schur-Processes''}, respectively, consolidated the link between probabilistic 
  models in the KPZ class to integrable and representation theoretic objects and in particular to the theory of 
  symmetric functions (centred around Schur functions). Moreover, the works of Johansson \cite{J01b}, Okounkov \cite{O00} and 
  Borodin-Okounkov-Olshanksi \cite{BOO00} extended the link between permutations and Young tableaux beyond just the equality of the length of the longest increasing 
  subsequence and the length of the first row. This amounted to clarifying the link between the rest of the lines of the Young tableaux and observables in random permutations
  and also performing asymptotics that consolidated the link to statistics of eigenvalues of random matrices.
  Reference \cite{BO01} provides an interesting account of relations between the Robinson-Schensted-Knuth correspondence, measures on partitions, random matrix distributions and the representation theory of the infinite symmetric group.
  
  These breakthroughs followed a decade of intense activity where KPZ / Tracy-Widom fluctuations and Airy processes were discovered within a large class of models, ranging from interacting particle systems, domino tilings, Aztec diamonds and growth models. The analysis of such models was 
  facilitated by combinatorial
 algorithms, such as $\rsk$, and constructions which connected to ensembles of non-intersecting paths
 and led to the expression of probabilities in terms of {\it determinantal measures}. These developments created
 what is now known as the framework of {\it determinantal processes} \cite{B11}.
 The asymptotic analysis of such
 measures and processes
  was then possible by imitating or using techniques developed earlier in the context of random 
 matrix  theory. One can look at the book \cite{BDS16} or the reviews \cite{J01,J05, J17} for reference to those 
 developments. 
 
However important the outcomes and the methods developed during this period may have been, they
were restricted to models at {\it ``zero temperature''}. Here,
temperature is to be understood as a parameter of the system which, according to its value, imposes a certain rigidity or relaxation.
For example, if one considers a system of particles moving on a line, then zero temperature
is to be interpreted as the particles moving only towards one direction (e.g. right) while {\it positive temperature}
should be understood as particles being able to move towards 
both right and left directions (but with a drift towards one of the directions, e.g. right). 
Or, if one considers {\it polymer models}, then zero temperature corresponds to considering
 a single (polymer) path that achieves the maximum in a random variational problem (known as {\it last passage
 percolation}), while at positive temperature we consider a {\it thermal average} over all admissible
 paths. Even though the breakthroughs during the first decade of the millennium confirmed the KPZ
 prediction for several models within this class, the restriction of those methods to ``zero temperature''
 presented a limitation, which, among others, did not allow to prove that the solution to the KPZ equation itself (interpreted in a certain rigorous sense \cite{BG95})  obeys these predictions.
 
 At this point, a second wave of breakthroughs took place, which allowed not only to handle models at positive
 temperature, but also to expand significantly the class of models, which exhibit KPZ type fluctuations as in \eqref{GUEasy}, with new stochastic models as well as
 new methods and links between disparate scientific areas. Some of the highlights of this progress can be summarised to be: (i) Tracy and Widom's solution of the asymmetric exclusion process (ASEP) \cite{TW08a, TW08b, TW09} via the Bethe Ansatz, which subsequently led to the confirmation of the Tracy-Widom fluctuations
 for the solution to the KPZ equation at a single point \cite{SS10, ACQ11} and to simultaneous
  works with similar results in the physics literature \cite{D10, CLeDR10};
 (ii) the analysis of the combinatorial structure, via Kirillov's {\it geometric lifting} of $\rsk$ \cite{K01},
  of two random polymer models, the {\it Brownian} or {\it O'Connell-Yor polymer} \cite{O12}
 and the {\it log-gamma polymer} \cite{COSZ14} (the latter was
 originally introduced by Sepp\"al\"ainen \cite{S12}, inspired by
 earlier works on the ASEP \cite{BS10}) and (iii) the introduction of the {\it Macdonald processes} by Borodin-Corwin \cite{BC14} as a deformation of Okounkov-Reshetikhin's ``Schur processes''.
 \vskip 2mm
 Exposing all the developments, methods and results obtained in the understanding of KPZ universality,
  especially after this second wave of developments, would be impossible in a single review. 
 The purpose of these notes is to expose only some of the key algebraic and 
 combinatorial structures underlying discrete models in the KPZ universality class,
 which provide the necessary tools to study their statistical properties. The emphasis will be on and around the
 Robinson-Schensted-Knuth correspondence and its geometric lifting due to Kirillov
 ({\bf Sections \ref{sec:RSK}, \ref{sec:grsk}})
 and how these lead to connections with
 representation theoretic objects and related special, symmetric functions such as Schur, Whittaker and Macdonald
 functions, which we will introduce in {\bf Section} \ref{basics}. Our hope with these notes is to unravel some of the
  overarching principles that run as a backbone in the
 theme and can be used as a motivation and inspiration for further developments. 
 In particular, we would like to draw some parallel between the first set of 
 breakthroughs at the end of nineties and the second set from around 2010 onwards. In this effort we have chosen to
 present $\rsk$ and geometric $\rsk$ correspondences in an intertwined fashion highlighting conceptual similarities between the two 
 and showing how one can lift or reduce properties of one to the other. The main probabilistic examples that
we will use, in order to demonstrate the power of the Robinson-Schensted-Knuth correspondence (and its geometric
lifting), will be the last passage percolation ({\bf Section \ref{geomLPP}}) and the {\it log-gamma} 
random polymer model ({\bf Section \ref{sec:loggamma}}), whose analysis we will present in a parallel fashion, indicating similarities and differences. 
We will also link to dynamics of particle systems ({\bf Section \ref{sec:dynamics}}), 
inspired partly by viewing the $\rsk$ algorithm as a dynamical procedure, rather than a bijective map,
and partly by certain representation theoretic structures, such as the {\it Cauchy identity},
the {\it Pieri rule} and the {\it branching rule}. In particular, we will try to show how these three principles can be used
as a guide to produce interesting stochastic dynamics that couple different particle systems.
Finally, we will also give an idea of the general principles that drive the asymptotic
analysis, towards Tracy-Widom and Airy process laws
 ({\bf Section \ref{sec:Fred}}),  where we will also expose the framework of {\it determinantal point processes}.
In {\bf Section \ref{sec:Fred}} we will also discuss multi-point correlations and present some recent constructions of the {\it universal}
models of {\it Airy line ensemble, KPZ fixed point, Airy sheet and Directed Landscape}.
 The whole discussion will start in
  {\bf Section \ref{sec:examples}} with some of the main examples of models in the KPZ class. The list
  here is far from exhaustive, especially taking into account the large number of models that have
  been constructed in the recent years. 
However, the models presented in Section \ref{sec:examples} are chosen as being suitable (and distinguished)
 discretizations of the ``solution'' to the KPZ equation, which after a number of simplifications lead to the fundamental problem of the length of the longest increasing subsequence in a random permutation - a problem that
has undoubtedly been the cornerstone of all the developments around the integrability of KPZ.  
 \vskip 2mm
 Before closing this introduction, let us mention that 
 there is a number of other reviews that expose different aspects of this multifaceted
 field, to which these notes have a complementary purpose and focus, and which can provide source of 
 further reading and comparison. 
 Without being exhaustive let us highlight the following ones.
  Krug and Spohn \cite{KS92} provide a physical background of stochastic growth. Johansson 
 \cite{J01,J05, J17}, Kriecherbauer-Krug \cite{KK10} as well as the book of Baik-Deift-Suidan \cite{BDS16}
 provide a comprehensive account of mostly the early breakthroughs that led to the link between KPZ models, 
 determinantal structures and random matrices. The {\it ``Lectures on Integrable Probability} ''
  by Borodin and Gorin \cite{BG16} and Borodin and Petrov \cite{BP14} expose the recent developments in relation to Macdonald Processes \cite{BC14}, while the lecture notes by Borodin and Petrov \cite{BP16a} expose links to other distinguished integrable, statistical mechanics models such as the stochastic six-vertex model, which
  we will not discuss here. Reviews \cite{S10a, S17} expose more probabilistic techniques 
  (large deviations, Busemann functions, Burke's property) that have been developed 
  to capture the $t^{1/3}$ fluctuations of the solvable corner growth (with the solvability there encoded via the knowledge
  of the ``stationary process'') as well as the asymmetric exclusion process. 
  For reviews more focused  on the KPZ equation itself and related Airy processes one could refer to
   \cite{C12, QR14, QS15}.
  Finally, for martingale approaches to general (non solvable) random polymer models one can refer to 
  \cite{C17, CSY04} as well as \cite{denH09}; in the latter,
  questions on polymers outside the KPZ scope are also explored.
 
 \section{Examples of discrete models in the KPZ class}\label{sec:examples}
 The mere existence of a solution to \eqref{KPZ} and the sense that this could be given are far from obvious. The reason is that due to the presence of the white noise, any possible solution should look locally like Brownian motion in space, meaning that its spatial derivative exists only as a distribution, i.e. a Dirac delta-like function.
  The difficulty then arises when one tries to consider the nonlinear term $|\partial_x h|^2$, 
  since one cannot give a meaning to the square of a delta function.
The problem of well posedness of the KPZ equation in dimension one
\footnote{In dimensions higher than one the situation is more complicated. Progress has recently been made \cite{CSZ20, CD20, Gu20, LZ20, CNN20, CSZ21}}
 has now been settled with methods emanating both from stochastic analysis
\cite{H13, GP17, K16} and particle systems \cite{BG95, GJ14}. Even so, these approaches do not capture
 the asymptotics \eqref{GUEasy}.
 In order to study the statistical properties of the solution to the \eqref{KPZ} and in particular \eqref{GUEasy} one resorts to suitable discrete models and combinatorial or integrable methods which yield the corresponding 
 asymptotics for these. From thereon one can use approximations of the KPZ by discrete systems, eg via \cite{BG95},
 in order to transfer asymptotics of discrete models to KPZ. 
 
 The characterising features of the KPZ class are the following:
 \begin{itemize}
 \item {\bf Local smoothing}: this is manifested by the $\Delta h$ term and captures the tendency of the
 interface towards a flat profile. 
 \item {\bf Gradient growth:} this is manifested by the term $|\nabla h|^2$ and captures an opposite to the previous mechanism,
 which is that peaks of the interface will tend to grow faster.
 \item {\bf Local randomness:} this is manifested by the white noise $\dot W(t,x)$, which is uncorrelated in space and time.
 \end{itemize}
 
Our focus in this section is to present some of the discrete models, which exhibit these features and 
 are amenable to analysis. We will start with the formal representation of the solution to the KPZ equation via 
 the Feynman-Kac formula and then go on to a number of simplifications, each one of which produces a model that can be solved 
 with the algebraic and combinatorial methods that we will present. 
 The simplest model in this sequence of reductions will be the problem of {\it longest increasing subsequence},
 which was the first one solved \cite{BDJ99}.

 \subsection{Directed Polymer in Random Medium.}\label{sec:DPRM}
 Let us assume for the moment that the white noise in \eqref{KPZ}
  is replaced by a {\it smooth}, random function $V(t,x)$ and let us perform the change of variables
 \begin{align*}
 h(t,x)=\log u(t,x).
 \end{align*}
 This transformation is known as the Hopf-Cole transformation and transforms \eqref{KPZ} 
 (with the white noise replaced by the smooth potential $V$) to
 \begin{align}\label{SHE}
 \frac{\partial u}{\partial t} =\frac{1}{2}\frac{\partial^2 u}{\partial x^2}+ V u ,\qquad x\in \bbR, t>0,
 \end{align} 
 which is a linear, parabolic equation, known as the {\it Stochastic Heat Equation}.
 Assuming an initial condition $f(\cdot)$, then \eqref{SHE} can be solved via the Feynman-Kac formula
(see \cite{D18}, Chapter 4) as
 \begin{align}\label{FK}
 u(t,x) = \E_x\Big[ f(B(t)) \,e^{\int_0^t V(t-s,B(s)) \,\dd s}  \Big],
 \end{align} 
 where $B(\cdot)$ is one-dimensional Brownian motion and $\E_x$ denotes expectations with respect to its law,
 when starting from location $x\in\R$.
 We may assume that the initial condition to \eqref{SHE} is a delta function, in order to obtain
  its fundamental solution. More precisely, we assume that
  $f(x)=\delta_0(x)$ and then making the time change $s\mapsto t-s$ in \eqref{FK} and using the time reversal 
  invariance of the Brownian Bridge, \eqref{FK} has the same distribution as
   \begin{align}\label{FKd}
 \E_0\Big[ \delta_x(B(t)) \,e^{\int_0^t V(s,B(s)) \,\dd s}  \Big].
 \end{align} 
 Attempting to repeat the same argument back in the case of white noise $\dot W$, instead of a smooth potential $V$, runs into the problem of giving a meaning to the quantity $\exp\big(\int_0^t \dot W(t-s,B(s)) \dd s\big)$, as it amounts to integrating the white noise field  which 
  takes values $\pm \infty$. It turns out that this can be done in dimension one by first mollifying spatially the noise and applying a variation of the Feynman-Kac formula suitable for white-in-time potential and then removing the mollification. This was done by Bertini-Cancrini in
 \cite{BC95}. However, motivated by
 \eqref{FK}, one may also be led to study a discrete, lattice analogue where 
 \begin{itemize}
\item[ (i)] the space-time white noise $\big(\dot W(t,x)\big)_{t>0,x\in \bbR}$ is replaced by
 a family of independent, identically distributed (i.i.d.) variables $(\go(n,x))_{n\in\bbN, x\in \bbZ}$, 
\item[ (ii)] Brownian motion $(B(t))_{t>0}$ is replaced by its discrete analogue, which is a simple, symmetric random walk $(S_n)_{n\geq 1}$ ,
\item[ (iii) ] integration is replaced by summation.
\end{itemize}
 Making these replacements,
 \eqref{FKd} is replaced by
 \begin{align}\label{DPRMp2p}
 Z_{N,\beta}^\go(x):=\E_0\Big[\ind_{\{S_N=x\}}\,e^{\beta\sum_{n=1}^N \go(n,S_n)}\Big].
 \end{align}
 This is the {\it point-to-point partition function} of the model of {\it Directed Polymer in Random Medium}, where we have also included an {\it inverse temperature}
 parameter $\beta$, which tunes the strength of the disorder ($\beta=\infty$ will amount to what we have called ``zero temperature''; we will come back to this in a subsequent paragraph). Notice that,
 contrary to \eqref{FKd}, \eqref{DPRMp2p} is perfectly defined and one expects that its large time asymptotics
 should agree with \eqref{GUEasy}. This means that the asymptotic behaviour
 \begin{align}\label{GUEpoly}
 \log Z_{N,\beta}^\go(x)\approx \sff(\beta) N+ \sigma(\beta)N^{1/3} \,
 \mathfrak{h}\big(\rho(\beta)N^{-2/3}x\big), \qquad \text{as}\,\, N\to\infty,
 \end{align}
should hold,
 exhibiting the same universal exponents $1/3$ and $2/3$ and fluctuation process $\mathfrak{h}(\cdot)$
 as for the KPZ equation \eqref{GUEasy} ($\sff(\beta), \sigma(\beta),\rho(\beta)$ will be model specific constants). 
  In particular, one expects these asymptotics to be universal, irrespective of the distribution of the random 
  variables $(\go_{n,x})_{n\in \bbN,x\in \bbZ}$, as long as they possess enough (five) moments
  (the conjectures on the number of required moments is more recent, see
   \cite{D07, BBP07, GDBR15, DZ16}). The non universal parameter
  $\sff(\beta)$ is known as the {\it free energy}. 
 \subsection{Corner growth.}\label{sec:corner}
 KPZ was proposed in an attempt to describe fluctuations in randomly growing processes. Let us describe a prototypical such process, which is known as the 
 {\it corner growth process}. At time $t=0$ consider a corner like the one depicted in Figure \ref{corner}. After a random time, a unit square
 fills the corner with bottom vertex at $(0,0)$. Now, two more corners with bottom vertices $(1,1)$ and $(-1,1)$ are formed and each one is filled with a unit square after random times, which are independent of each other as well 
 as the previous filling time. At each step the corners of the interface are filled with a unit square and the
 time that it takes for each corner to be
 filled is independent of all other times. Let us now map the features of the process to the terms of \eqref{KPZ}
 \begin{itemize}
 \item The fact that unit squares fill corners is consistent with the smoothing effect of the Laplacian;
 \item Parts of the interface which are very stretched (that is they have very few corners, e.g. the corner marked by the string ``$1000$'' )
 in Figure \ref{corner}), grow slower than other parts with many corners (e.g. the sequence of corners marked by the string ``$10100$'' in Figure \ref{corner}).
  This is consistent with the growth of the interface being proportional to $|\partial_x h|^2$ 
  (strictly speaking this slower effect should correspond to a negative sign in front of $|\partial_x h|^2$ 
  but in terms of statistical properties this is immaterial upon considering $-h$, instead);
 \item The randomness and independence of the waiting times until corners are filled is consistent with the presence of the space-time white noise $\dot W$.
 \end{itemize}
 
  \begin{figure}[t]
 \begin{center}
\begin{tikzpicture}[scale=.6]
\draw (0,0) -- (10.5,10.5) ;
\draw (0,0) -- (-10.5,10.5);
\draw[ultra thick, red] (-10.5, 10.5)--(-6,6) -- (-5,7) -- (-3,5) -- (-2,6) -- (-1,5)--(2,8)--(3,7)--(4,8)--(5,7)--(7,9)--(8,8)--(10.5,10.5); 
\draw[dashed] (-1,1) -- (5,7);
\draw[dashed] (-2,2) -- (3,7);
\draw[dashed] (-3,3) -- (-1,5);
\draw[dashed] (-4,4) -- (-3,5);
\draw[dashed] (-5,5) -- (-4,6);
\draw[dashed] (1,1) -- (-3,5);
\draw[dashed] (2,2) -- (-1,5);
\draw[dashed] (3,3) -- (0,6);
\draw[dashed] (4,4) -- (1,7);
\draw[dashed] (5,5) -- (3,7);
\draw[dashed] (6,6) -- (5,7);
\draw[dashed] (7,7) -- (6,8);
\node at (-9.7, 9.3) {$1$}; \node at (-8.7, 8.3) {$1$}; \node at (-7.7, 7.3) {$1$}; \node at (-6.7, 6.3) {$1$};
\node at (-5.3, 6.2) {$0$}; \node at (-4.7, 6.2) {$1$}; \node at (-3.8, 5.3) {$1$}; \node at (-2.3, 5.2) {$0$};
\node at (-1.7, 5.2) {$1$}; \node at (-0.2, 5.3) {$0$}; \node at (0.8, 6.3) {$0$}; \node at (1.8, 7.3) {$0$};
\node at (2.3, 7.3) {$1$}; \node at (3.7, 7.3) {$0$};  \node at (4.2, 7.3) {$1$};  \node at (5.7, 7.3) {$0$};
\node at (6.7, 8.3) {$0$}; \node at (7.3, 8.3) {$1$};  \node at (8.7, 8.3) {$0$}; \node at (9.4, 9.0) {$0$};
\draw[dotted, thick] (9.7,9.3) -- (10.2, 9.8);
\draw[dotted, thick] (-10.1,9.7) -- (-10.5, 10.1);

\draw (-12,0)--(12,0);
\draw [fill] (-9.7, 0) circle [radius=0.2];  \draw  [fill] (-8.7, 0)  circle [radius=0.2]; \draw  [fill] (-7.7, 0)  circle [radius=0.2]; \draw  [fill] (-6.7, 0)  circle [radius=0.2]; 
\draw (-5.7, 0)  circle [radius=0.2]; \draw  [fill] (-4.7, 0)  circle [radius=0.2]; \draw  [fill]  (-3.7, 0)  circle [radius=0.2]; \draw (-2.7, 0)  circle [radius=0.2];
\draw  [fill ](-1.7, 0)  circle [radius=0.2];; \draw (-0.7, 0)  circle [radius=0.2];; \draw (0.8, 0)  circle [radius=0.2];; \draw (1.8, 0)  circle [radius=0.2];;
\draw  (1.8, 0)  circle [radius=0.2]; \draw [fill] (2.8, 0)  circle [radius=0.2]; \draw (3.8, 0)  circle [radius=0.2]; \draw [fill] (4.8, 0)  circle [radius=0.2];
\draw  (5.8, 0)  circle [radius=0.2]; \draw (6.8, 0)  circle [radius=0.2]; \draw  [fill]  (7.8, 0)  circle [radius=0.2]; \draw (8.8, 0)  circle [radius=0.2];
\draw (9.8, 0)  circle [radius=0.2];

\draw[dotted, thick] (10.5,0.2) -- (12,0.2);
\draw[dotted, thick] (-10.5,0.2) -- (-12,0.2);
\end{tikzpicture}
 \end{center}  
\caption{ \small  A corner growth process and its mapping to TASEP }
\label{corner}
\end{figure}

   \begin{figure}[t]
 \begin{center}
\begin{tikzpicture}[scale=.6]
\draw (0,0) -- (10.5,10.5) ;
\draw (0,0) -- (-10.5,10.5);
\draw[dashed] (-1,1) -- (6,8);
\draw[dashed] (-2,2) -- (5,9);
\draw[dashed] (-3,3) -- (4,10);
\draw[dashed] (-4,4) -- (3,11);
\draw[dashed] (-5,5) -- (2,12);
\draw[dashed] (-6,6) -- (1,13);
\draw[dashed] (-7,7) -- (0,14);

\draw[dashed] (1,1) -- (-6,8);
\draw[dashed] (2,2) -- (-5,9);
\draw[dashed] (3,3) -- (-4,10);
\draw[dashed] (4,4) -- (-3,11);
\draw[dashed] (5,5) -- (-2,12);
\draw[dashed] (6,6) -- (-1,13);
\draw[dashed] (7,7) -- (0,14);

\foreach \x in {0,...,6} {
 \draw[fill] (\x,\x+1) circle [radius=0.1]; 
 }
 \foreach \x in {1,...,7} {
  \draw[fill] (\x-2,\x+1) circle [radius=0.1]; 
 }
  \foreach \x in {2,...,8} {
  \draw[fill] (\x-4,\x+1) circle [radius=0.1]; 
 }
   \foreach \x in {3,...,9} {
  \draw[fill] (\x-6,\x+1) circle [radius=0.1]; 
 }
  \foreach \x in {4,...,10} {
  \draw[fill] (\x-8,\x+1) circle [radius=0.1]; 
 }
  \foreach \x in {5,...,11} {
  \draw[fill] (\x-10,\x+1) circle [radius=0.1]; 
 }
  \foreach \x in {6,...,12} {
  \draw[fill] (\x-12,\x+1) circle [radius=0.1]; 
 }

\draw [ultra thick, red] (0,1)--(2,3) -- (1,4) --(4,7) -- (0,11) --(1,12)--(0,13);
\end{tikzpicture}
 \end{center}  
\caption{ \small  Last passage percolation path }
\label{LPPsquare}
\end{figure}

 \subsection{Interacting particle systems - exclusion process.}\label{secTASEP}
 We can map the interface and the dynamics of the corner growth process, described in the previous paragraph,
  to an interacting particle system as follows: To each downward edge of the corner interface we assign the number $1$ and to each upward the number $0$. This configuration can also be projected onto the one-dimensional lattice $\frac{1}{2}\bbZ$,
 with $1$'s corresponding to particles and $0$'s corresponding to empty sites. All corners are encoded by a sequence $(\cdots 10 \cdots)$
 and when a corner (encoded via a string ``$10$'')
  is filled then the configuration of zeros and ones changes to ``$01$''.
  Projected down to the line $\frac{1}{2}\bbZ$ this change corresponds to a particle (encoded via a ``$1$'' in the string) jumping to the empty site (encoded via a ``$0$'' in the string) on its immediate right. Recall that the corner growth can only grow at corners, which means that a particle can only jump if its right neighbouring site is empty. In the case that the waiting times are exponentially distributed, then this 
  particle process is Markovian and known as the Totally Asymmetric Simple Exclusion Process (TASEP).  
 
 For each $x\in \frac{1}{2}\bbZ$ let us denote by 
 \begin{align*}
 \eta_t(x):=\ind_{\{\text{a particle occupies site $x$ at time $t$}\}}.
 \end{align*}
 Then we see that $\eta_t(x) = (h(t,x)-h(t,x-1)+1)/2$. This is in fact the discrete version of another
  standard PDE transformation, which transforms a Hamilton-Jacobi equation to a conservation law. 
  More specifically, setting
  $\rho:=\partial_x h$ in \eqref{KPZ} leads via differentiating the equation in space to
 \begin{align}\label{Burgers}
 \frac{\partial \rho}{\partial t} = \frac{1}{2}\frac{\partial^2 \rho}{\partial x^2}+ \rho \frac{\partial \rho}{\partial x} +\frac{\partial \dot W}{\partial x}(t,x).
 \end{align}
 This is the Stochastic Burgers equation and describes the density fluctuations of the TASEP.
  
 \subsection{Last Passage Percolation.}\label{sec:LPP}
 We can describe the time $\tau_{x,y}$ that it takes for the corner growth interface to cover a corner with bottom site
  $(x,y)$ ($x$ being the horizontal cartesian coordinate and $y$ the vertical one) in terms of a variational problem:
  Notice that in order for the site $(x,y)$ to be covered it is necessary that both its neighbouring sites $(x-1,y-1)$ and $(x+1,y-1)$
  are already covered. Once both of these are covered, they form a corner which will then 
  be covered after an additional time $w_{x,y}$. We have,
  therefore, the recursive equation
  \begin{align}\label{LPPrec}
  \tau_{x,y} = \max(\tau_{x-1,y-1}, \tau_{x+1,y-1})+ w_{x,y}.
  \end{align}
  Iterating this and denoting by ${\bf\Pi}_{x,y}$ the set of directed, up-left or up-right paths from $(1,1)$ to $(x,y)$, we derive the variational formula
  \begin{align*}
  \tau_{x,y}=\max_{\pi\in {\bf\Pi}_{x,y}} \sum_{v\in \pi} w_{v}.
  \end{align*}
  This is depicted in Figure \ref{LPPsquare}.
  This quantity is known as the last passage percolation time and its statistics are linked
   to the statistics of the height via
  \begin{align*}
  \bbP\big(h(t,x) \geq y \big) = \bbP\big(\tau_{x,y} \leq t\big).
  \end{align*}
  
   \begin{figure}[t]
 \begin{center}
\begin{tikzpicture}[scale=.6]
\draw (0,0) -- (0,10)--(10,10)--(10,0)--(0,0) ;
\draw  [fill ](1,1)  circle [radius=0.1]; \draw (1,1)--(1,10.5);  \draw (1,1)--(10.5, 1);
\draw  [fill ](1.4, 8.1)  circle [radius=0.1]; \draw (1.4,8.1)--(1.4,10.5); \draw (1.4,8.1)--(2.1,8.1);
\draw  [fill ](2.1, 1.8)  circle [radius=0.1]; \draw(2.1,1.8)--(2.1,8.1); \draw(2.1,1.8)--(10.5, 1.8);
\draw  [fill ](3.4, 4.1)  circle [radius=0.1]; \draw(3.4, 4.1) --(3.4,10.5); \draw(3.4,4.1)--(7.9, 4.1);
\draw  [fill ](6.2, 5.6)  circle [radius=0.1]; \draw  (6.2, 5.6)--(6.2,10.5); \draw (6.2, 5.6)--(10.5, 5.6);
\draw  [fill ](5.2, 4.6)  circle [radius=0.1]; \draw(5.2, 4.6)--(10.5, 4.6); \draw(5.2, 4.6)--(5.2, 6.8);
\draw  [fill ](4.3, 6.8)  circle [radius=0.1]; \draw (4.3, 6.8)--(5.2, 6.8);   \draw (4.3, 6.8)--(4.3, 10.5);
\draw  [fill ](7.9, 2.8)  circle [radius=0.1]; \draw(7.9, 2.8)--(7.9,4.1); \draw(7.9, 2.8)--(10.5,2.8); 
\draw  [fill ](7.2, 8.1)  circle [radius=0.1]; \draw (7.2, 8.1)--(7.2,10.5);  \draw (7.2, 8.1)--(8.4,8.1);
\draw  [fill ](8.4, 7.4)  circle [radius=0.1]; \draw (8.4, 7.4)--(8.4, 8.1); \draw(8.4, 7.4)--(10.5, 7.4); 
\draw[ultra thick, red] (0,0)--(1,1)--(2.1,1.8)--(3.4, 4.1)--(5.2, 4.6)--(6.2, 5.6)--(7.2, 8.1) --(10,10) ;
\end{tikzpicture}
 \end{center}  
\caption{ \small  An optimal path in the Hammersley process. }
\label{fig:hammer}
\end{figure}

  \subsection{The Hammersley Process and Longest Increasing Subsequences.}\label{sec:Hammer}
  One may consider the following, degenerate last passage percolation problem: In the square with 
  side length $N$ and lower-left corner $(0,0$) 
  we have a Poisson Point Process with intensity $1$ and we ask what is the maximum number of Poisson points that can be collected by
  going from $(0,0)$ to $(N,N)$ via an up-right path, see Figure \ref{fig:hammer}. This is known as Hammersley problem.
  
  One can read the length of such maximal path (with which we mean the maximal number of points collected by an up-right path) 
  as follows: From each point draw a horizontal and vertical line going rightwards and upwards.
  If two such rays meet, they cancel each other. As is suggested from the picture, the length of the longest path 
  (i.e. the maximal number of points collected by an up-right path) equals the number of rays that reach either the top or right side of the square. 
  \begin{remark}{\rm
  From this construction one may realise the difficulty associated to this problem: if one only looks at the number of rays that reach the sides of the square, one cannot see
   the cancellations that take place inside it. So, in order to handle this problem, we would need to develop a method that would allow us to track down the 
   cancellations. The combinatorial methods that we will expose in these notes do exactly this.}
   \end{remark}
   One can also map the Hammersley problem to the problem of longest increasing subsequence in a random permuation. This is done as follows:
   Order the horizontal and vertical coordinates of the Poisson points in the square as $1,2,3...$ according to the order of their projections.
   We then write the coordinates $(x,y)$ of each point in the form of a {\bf biletter} ${x \choose y}$. 
   For example, in the case of Figure \ref{fig:hammer}, 
   we represent all the points in the form of a double array as
   \begin{align*}
   \left(\begin{array}{cccccccccc}
   1&2&3&4&5&6&7&8&9&10\\
   1&10&2&4&7&5&6&9&3&8
   \end{array}\right),
   \end{align*}
   and we see that the Poisson points are mapped to a permutation. Moreover,
   the length of the longest upright path through these points (which in this example is $6$)
    equals the length of the/a longest increasing subsequence in the permutation (which in this case is $1,2,4,5,6,9$).
   \vskip 2mm
   The problem of longest increasing subsequence is also related to last passage percolation as follows: A permutation $\sigma\in S_N$ of $\{1,2...,N\}$ is encoded through the 
   {\bf permutation matrix} $(a_{ij})_{1\leq i,j\leq N}$ where $a_{ij}=\delta_{i,\sigma(i)}$, with $\delta_{ij}$ the Kronecker delta. The length of the 
   longest increasing subsequence in the permutation $\sigma$ can be easily 
   seen to equal to $\max_{\pi:(1,1)\to (N,N)} \sum_{(i,j)\in \pi} a_{ij}$, where $\pi:(1,1)\to(N,N)$
   is a down-right path from entry $(1,1)$ to entry $(N,N)$.
   
   \section{Some central special functions }\label{basics}
   The combinatorial and algebraic techniques that we aim at exposing in the notes 
   allow to express the laws of observables of models with {\it integrable randomness} (hence the 
   recently coined name Integrable Probability)
   in terms of special functions. These are typically symmetric functions of combinatorial or representation theoretic origins
   but also of analytic nature, being eigenfunctions of difference or differential operators. The main such functions that will appear in
   these notes are 
   \begin{itemize}
   \item[(i)] {\bf Schur functions}, which will emerge in Section \ref{geomLPP} describing the law of a solvable last passage percolation model,
   \item[(ii)] {\bf Macdonald polynomials} and their {\bf q-Whittaker} degeneration, which will emerge in Section \ref{sec:dynamics} describing the laws 
   of particles in solvable interacting particle systems
   \item[(iii)] {\bf Whittaker functions}, which will emerge in Section \ref{sec:loggamma}, where they will be used to express the Laplace transform of 
   the partition function (recall \eqref{DPRMp2p}) of a solvable directed polymer model. 
  \end{itemize}
  \subsection{Basic symmetric functions}
  A {\bf partition} of a number $n$ is a sequence of non-increasing numbers $\lambda_1\geq \lambda_2\geq\cdots$ such that 
  $\lambda_1+\lambda_2+\cdots=n$.  More generally, we call a {\it partition} a sequence $\lambda:=(\lambda_1,\lambda_2,...)$, 
  which is decreasing, i.e. $\lambda_1\geq \lambda_2\geq\cdots$, and has only a finite number of non-zero terms. The non-zero 
  terms of $\lambda$ are called {\bf parts} and its number is denoted by $\ell(\lambda)$.
  A partition can be depicted by {\bf Young diagrams}. 
  These are arrays of left justified unit boxes, the first row of which has $\lambda_1$ boxes, the second row 
  $\lambda_2$ boxes etc. For example:
   \[
\lambda = (4,3,1) \qquad\longleftrightarrow\qquad \yng(4,3,1)
\]
   The boxes in a Young diagram are usually filled with (integer) numbers giving rise to either a {\bf standard Young tableau}, if the content of the
   boxes are strictly increasing along rows and columns, or a {\bf semistandard Young tableau}, if the contents are strictly increasing along columns but weakly increasing along rows.  The vector $(\lambda_1,\lambda_2,...)$ of the lengths of the rows of the Young tableau $T$ is called the {\bf shape} of the tableau
   and we denote it by $sh(T)$. 
   
   We will work with (the ring of) symmetric polynomials with integer coefficients, in some indeterminates $x_1,x_2,..., x_n$. The term ``symmetric''
   refers to the fact that the polynomials stay invariant under permutation of the indeterminates. We can also extend the notion of symmetric polynomials
   to the notion of symmetric functions by considering formal power series in an infinite number of indeterminates, which are invariant under permutations
   of the  indeterminates. We would prefer to refrain from exposing the formal framework, for which we refer to \cite{M98}, and rather go straight to the
    basic objects and examples.
    
    The most basic symmetric functions are the {\bf monomial symmetric functions}, which also form an algebraic basis for the the ring of symmetric functions.
    These are defined as follows: For a partition 
    $\lambda=(\lambda_1,\lambda_2,...)$, we 
    define the monomial $x^\lambda=x_1^{\lambda_1}x_2^{\lambda_2}\cdots$. The monomial symmetric function $m_\lambda$,
     indexed by $\lambda$, is the sum of all distinct monomials obtained from $x^\lambda$ by permuting the indeterminates $x_1,x_2,...$
     For example $m_{(2,1,1)}(x_1,x_2,...)=\sum_{i\neq j\neq k} x_i^2 x_j x_k$. When $\lambda=(1^r)=(\underbrace{\, 1,1,...,1\,}_r)$, then 
     $m_{(1^r)}$ becomes the {\bf elementary symmetric function} of degree $r$, which is also expressed as 
     \begin{align}\label{elementary}
     e_r(x_1,x_2,...)=\sum_{i_1<i_2<\cdots <i_r} x_{i_1}x_{i_2} \cdots x_{i_r}.
     \end{align}
    When $\lambda =(r)$, that is, the partition has only one part of length $r$, then $m_{(r)}$ becomes the {\bf power symmetric function}
    \begin{align}\label{power}
    p_r(x_1, x_2,...)=\sum_i x_i^r.
    \end{align}
    Finally, the {\bf complete symmetric functions} of degree $r$ are the sum of all monomials of degree $r$, that is
    \begin{align}\label{complete}
    h_r(x_1,x_2,...)=\sum_{|\lambda|=r} m_\lambda(x_1,x_2,...) = \sum_{i_1\leq i_2\leq \cdots \leq i_r} x_{i_1}x_{i_2} \cdots x_{i_r},
    \end{align}
    where $|\lambda|=\lambda_1+\lambda_2+\cdots$.
    
    The notions of elementary, power and complete symmetric functions can be extended so that these function are indexed by partitions.
    More precisely, the power symmetric function indexed by a partition $\lambda=(\lambda_1,\lambda_2,...)$ 
    is defined as $p_\lambda:=p_{\lambda_1}p_{\lambda_2}\cdots$ and, similarly, $e_\lambda:=e_{\lambda_1}e_{\lambda_2}\cdots$
    and  $h_\lambda:=h_{\lambda_1}h_{\lambda_2}\cdots$
    
    We close this basics discussion on symmetric functions by an expansion of the complete symmetric functions in terms of the power 
    symmetric functions, which brings in the numerical function $z_\lambda$. The latter plays an important role in the definition of Schur 
    and Macdonald (as well as other) functions via orthogonalisation. The relation, we allude to, can be proved via generating series, 
    see \cite{M88}, relation (1.8), and is
    \begin{align}\label{zlambda}
    h_n= \sum_{|\lambda|=n} z_\lambda^{-1} p_\lambda, \qquad \text{with} \qquad z_\lambda=\prod_{r\geq 1} r^{m_r} \cdot m_r!
    \end{align} 
    where the numbers $m_r$, $r\geq 1$, denote the number of parts in the partition $\lambda$ which have length equal to $r$.
     \subsection{Schur functions}\label{Schur-intro} 
     We will mostly restrict attention when Schur functions involve only a finite number of indeterminates,
     in which case we talk about {\it Schur polynomials}.
     We present three different ways to define Schur polynomials, each one of which has its own
     benefits and uses in this text. 
     
     The first one, which was the original definition given by Schur, is via a determinant. 
     In particular, for a partition $\lambda=(\lambda_1,...,\lambda_n)$, with $n$ parts, 
     \begin{align}\label{def:Schur1}
     s_\lambda(x_1,...,x_n):=\frac{\det \big(\,x_i^{\lambda_j+n-j}\,\big)_{1\leq i,j \leq n} }{\det \big(\, x_i^{n-j} \,\big)_{1\leq i,j \leq n}}.
     \end{align}
     This determinantal expression is crucial in expressing the law of observables of integrable models in terms of determinants
     and enabling the asymptotic analysis as will be seen in Section \ref{sec:Fred}. 
     
     The second definition is combinatorial. It will arise in Section \ref{geomLPP}, via the combinatorial methods that we will develop
     in Section \ref{sec:RSK}, as expressing the laws of observables of integrable models. In the combinatorial definition, Schur functions
     appear as generating functions of Young tableaux. In particular,
     \begin{align}\label{Schur-gen-tableau}
     s_\lambda(x_1,...,x_n)=\sum_{\sfT \colon sh(\sfT)=\lambda} x_1^{\sharp 1's} x_2^{\sharp 2's}\cdots x_n^{\sharp n's},
     \end{align}
     where the sum is over all Young tableaux with shape $\lambda$ and $\sharp 1's$ denotes the number of boxes in the tableau
     filled in with $1$, $\sharp 2's$ denotes the number of boxes in the tableau
     filled in with $2$ and so on.
     
     The third definition is via an orthogonalisation procedure, which is of Gram-Schmidt type. 
     This approach was generalised by Macdonald in \cite{M88}, as we will
     see below, in order to define the Macdonald polynomials and it does not restrict to polynomials. 
     To define the Schur polynomials in this way, we need, first, to introduce an inner product. This is defined  
     via its evaluation on the power symmetric functions (remember that these form a basis) as
     \begin{align}\label{Schur-inner}
     \langle p_\lambda, p_\mu \rangle = z_\lambda \delta_{\lambda, \mu},
     \end{align}
     where $z_\lambda$ is defined in \eqref{zlambda} and 
     $\delta_{\lambda, \mu}$ is the Kronecker delta and it is equal to $1$ if $\lambda=\mu$ and zero otherwise.
     Given this inner product, it can be shown, see \cite{M98}, that Schur functions are uniquely determined by
     their expansion in terms of the monomial symmetric functions:
     \begin{align*}
     \hskip -3cm {\rm (A)} \hskip 2cm s_\lambda = m_\lambda + \sum_{\mu<\lambda} K_{\lambda \, \mu} m_\mu,
     \end{align*}
     and their orthogonality with respect to the inner product as
     \begin{align*}
     \hskip -2cm {\rm (B)} \hskip 2cm \langle s_\lambda, s_\mu \rangle =0, \qquad \text{if} \qquad \lambda\neq \mu.
     \end{align*}
In (A) the sum is over all partitions $\mu$ such that $\mu<\lambda$, where the partial ordering $<$ on partitions is defined 
as $\mu<\lambda$, if $\mu_1+\cdots+\mu_i<\lambda_1+\cdots+\lambda_i$ for all $i\geq 1$.
     
     A fundamental identity involving Schur functions, with representation theoretic significance 
     (we refer to \cite{Bum04} for details on this aspect), is the so-called {\bf Cauchy identity}, 
     which reads as
     \begin{align*}
     \sum_{\lambda} s_\lambda(x_1,x_2,...) \,s_\lambda(y_1,y_2,...) = \prod_{i,j} \frac{1}{1-x_iy_j}.
     \end{align*}
     This identity, as well as identities of this type for other special functions, will play a very important role for our purposes as
     it contains the core of the probability measures, which describe the laws of solvable models in the KPZ class. The Cauchy
     identity for Schur polynomials will emerge in Section \ref{geomLPP}, see \eqref{GTschur} through $\rsk$, in relation to the law of the solvable
     last passage percolation. The role of more general Cauchy identities in building integrable stochastic dynamics
     will be explored in Section \ref{sec:Cauchy}. 
     
     Here we would like to emphasise that the Cauchy identity is actually equivalent to the orthogonality relation {\rm (B)}. We will
     show this in the next subsection in the more general context of Macdonald polynomials. 
     
     \subsection{Macdonald polynomials}\label{Macdonald:intro} 
     Macdonald polynomials were defined by Macdonald in \cite{M88}, see also \cite{M98},
     as a family of symmetric polynomials, depending on two parameters $q,t$ in a way that they degenerate, in certain limits of $q,t$
     to several other families of symmetric polynomials that includes Schur, Hall-Littlewood, Jack, zonal etc. In particular, Macdonald
     proved the following theorem:
     \begin{theorem}[\cite{M88}]\label{thm:macdonald}
     Consider the inner product $\langle \cdot, \cdot \rangle_{(q,t)}$ defined via its values on power symmetric polynomials as
     \begin{align}\label{macdonald-inner}
     \langle p_\lambda, p_\mu \rangle = z_\lambda(q,t) \delta_{\lambda, \mu},\qquad \text{with} \qquad
     z_\lambda(q,t) := z_\lambda \prod_{i=1}^{\ell(\lambda)}\frac{1-q^{\lambda_i}}{1-t^{\lambda_i}},
     \end{align}
    and $z_\lambda$ as in \eqref{zlambda}.
     Then, for each partition $\lambda$, there exists a unique symmetric function $P_\lambda(x_1,x_2,...)=P_\lambda(x_1,x_2,...; q,t)$ 
     such that 
       \begin{align*}
      {\rm (A)}& \hskip 2cm P_\lambda = m_\lambda + \sum_{\mu<\lambda} u_{\lambda \, \mu}(q,t) \, m_\mu \qquad \text{and}\\
      {\rm (B)}& \hskip 2cm \langle P_\lambda, P_\mu \rangle_{(q,t)} = 0, \qquad \text{if} \qquad \lambda\neq \mu.
     \end{align*}
     In $\rm (A)$ the coefficients $u_{\lambda \, \mu}(q,t)$ are rational functions in $q,t$.
     \end{theorem}
     We notice that when $q=t$, then $z_\lambda(q,q)=z_\lambda$ and, thus, the inner product in \eqref{macdonald-inner} becomes the
     same as in the Schur case \eqref{Schur-inner}. So in this case, by the uniqueness, the Macdoland polynomials are identical to the Schur.
     
     In \cite{M88}, Macdonald proceeds to establish the existence and uniqueness of the polynomials in the above theorem 
     as the eigenfunctions of 
     an operator $D$, which is (i) self-adjoint with respect to the inner product $\langle\cdot, \cdot \rangle_{(q,t)}$, 
     (ii) triangular relative to the basis $(m_\lambda)$, i.e. $Dm_\lambda=\sum_{\mu\leq \lambda} c_{\lambda \mu} m_\mu$, for some
     coefficients $c_{\lambda \mu}$ with $c_{\lambda \lambda}\neq c_{\mu\mu}$ whenever $\lambda\neq \mu$. Notice that, by the 
     requirement of triangularity of the operator, $c_{\lambda\lambda}$ are its eigenvalues and the self-adjointness together with the 
     fact that all eigenvalues are different implies the orthogonality of its eigenfunctions with respect the $\langle\cdot, \cdot \rangle_{(q,t)}$.
     The uniqueness of the Macdonald polynomials is also a consequence of the above properties of the operator $D$, which was 
     provided in \cite{M88} as
     \begin{align*}
     D=\sum_{i=1}^n \Big( \prod_{i\neq j} \frac{tx_i-x_j}{x_i-x_j}\Big) T_{q,x_i}
     \end{align*}  
  with the operator $T_{q,x_i}$, for $i=1,...,n$, defined via its action on a function $f(x_1,...,x_n)$ as
  \begin{align*}
  (T_{q,x_i} f) (x_1,...,x_n)= f(x_1,...,qx_i,...,x_n).
  \end{align*}     
  The fact that these operators possess the required properties in not obvious and requires a check, see \cite{M88}, Section 2.
  The eigenvalue $c_{\lambda\lambda}$ is also explicit and given by
   $q^{\lambda_1} t^{n-1}+q^{\lambda_2} t^{n-2}+\cdots+q^{\lambda_n}$, see \cite{M98}, VI (4.15).
   
   We will further discuss Macdonald polynomials and some useful, explicit formulas in Section \ref{sec:Cauchy} in relation to the construction 
   of integrable stochastic systems. We want to close, here, with the Cauchy identity for Macdonald polynomials and showing 
   how this is equivalent to the orthogonal property (B) in Theorem \ref{thm:macdonald}. To this end, we introduce the dual Macdonald polynomials
   \begin{align*}
   Q_\lambda(\cdot ; q,t) :=\frac{P_\lambda(\cdot; q,t)}{\langle P_\lambda(\cdot; q,t), P_\lambda(\cdot; q,t) \rangle_{(q,t)}},
   \end{align*}
   so that by condition (B) of Theorem \ref{thm:macdonald} it holds that $P_\lambda$ and $Q_\lambda$ are bi-orthonormal, that is, 
   $\langle P_\lambda(\cdot; q,t), Q_\lambda(\cdot; q,t) \rangle_{(q,t)} =\delta_{\lambda \mu}$. For indeterminates
   $x=(x_1,...,x_n), y=(y_1,...,y_n)$, the Cauchy identity may be written as
   \begin{align*}
   \sum_{\lambda} P_\lambda(x;q,t) Q_\lambda(y;q,t) &= H(x,y) \qquad \text{with} \\
    H(x,y):= H(x,y;q,t) &:= \prod_{i,j} \frac{(tx_iy_j;q)_\infty}{(x_iy_j;q)_\infty},
   \end{align*}
   where in the last expression $(a;q)_\infty:=\prod_{i=0}^\infty (1-aq^i)$ is the {\bf $q$-Polchammer symbol}.
   The equivalence of the Cauchy identity to the bi-orthogonality of the Macdonald polynomials is established in the following theorem
   \begin{theorem}[\cite{M88}, (2.6)] The following are equivalent 
   \begin{align}
   {\rm (i)} &\hskip 1cm \langle P_\lambda(\cdot; q,t), Q_\mu(\cdot; q,t) \rangle_{(q,t)} =\delta_{\lambda \mu}, \\ 
   {\rm (ii)} & \hskip 1cm \sum_{\lambda} P_\lambda(x;q,t) Q_\lambda(y;q,t) = H(x,y) .
   \end{align}
   \end{theorem}
   \begin{proof}
   Recall the power symmetric polynomials $p_\lambda$ and the numerical function $z_\lambda(q,t)$ from \eqref{macdonald-inner}. We will
   show at the end of this proof that
   \begin{align}\label{aux-equiv}
   H(x,y) = \sum_\lambda z_\lambda(q,t)^{-1} p_\lambda(x) p_\lambda(y).
   \end{align}
   For the moment let us assume this and let us set $p^*_\lambda:=z_\lambda(q,t) p_\lambda$, so that by \eqref{macdonald-inner} we have that 
   $\langle p^*_\lambda, p_\lambda \rangle_{(q,t)}= \delta_{\lambda \mu}$. Assume that $P_\lambda$ and $Q_\lambda$ admit the
   following expansions in terms of power symmetric polynomials $P_\lambda= \sum_\rho a_{\lambda \rho } p^*_\rho$ and 
   $Q_\lambda= \sum_\rho b_{\lambda \rho } p_\rho$ with certain coefficients, which we gather in a matrix form,
   indexed by partition, $A=(a_{\lambda \rho})$ and $B=(b_{\lambda \rho})$. Such expansions will always exists since the power symmetric 
   polynomials form a basis.
   
   It is straightforward, given the orthonormality of $p^*_\lambda, p_\lambda$, from \eqref{macdonald-inner}, that
   \begin{align*}
   \langle P_\lambda(\cdot; q,t), Q_\lambda(\cdot; q,t) \rangle_{(q,t)}= \sum _\rho a_{\lambda \rho} b_{\lambda \rho} = AB^*,
   \end{align*}
   and, thus, (i) is equivalent to having $AB^*=I$, the identity matrix. On the other hand,  using \eqref{aux-equiv}, we have that 
   (ii) is equivalent to 
   \begin{align*}
   \sum_\lambda P_\lambda (x) Q_\lambda(y) = \sum_\rho p_\rho^*(x) p_\rho(y),
   \end{align*}
   which, by replacing in the left-hand side $P_\lambda, Q_\lambda$ with their expansion in terms power symmetric polynomials,
   is equivalent to 
   \begin{align*}
   \sum_{\lambda} a_{\lambda \rho} b_{\lambda \sigma} = \delta_{\rho \sigma}.
   \end{align*}
   In matrix form, this may be written as $B^*A=I$, which is clearly equivalent to $A^*B=I$, which, as we saw, is equivalent to (i).
   
   Thus, it only remains to check  \eqref{aux-equiv}. To this end, we first compute
   \begin{align*}
   \log H(x,y) 
   &= \sum_{i,j} \sum_{r=0}^\infty \big( \log (1-x_iy_j q^r)^{-1}  -  \log (1-tx_iy_j q^r)^{-1} \big) \\
   &= \sum_{i,j} \sum_{r=0}^\infty \sum_{n\geq 1} \frac{1}{n} (x_iy_j q^r)^n (1-t^n) \\
   &= \sum_{n=1}^\infty \frac{1}{n} \frac{1-t^n}{1-q^n} p_n(x) p_n(y).
   \end{align*}
   Then,
   \begin{align*}
   H(x,y) = \prod_{n=1}^\infty \exp \Big(  \frac{1}{n} \frac{1-t^n}{1-q^n} p_n(x) p_n(y) \Big),
   \end{align*}
   and by Taylor expanding the exponential,
    \begin{align*}
   H(x,y) 
   &= \prod_{n=1}^\infty \sum_{m_n=0}^\infty \frac{1}{m_n!}  \Big(  \frac{1}{n} \frac{1-t^n}{1-q^n} p_n(x) p_n(y) \Big)^{m_n}\\
   &= \prod_{n=1}^\infty \sum_{m_n=0}^\infty \frac{1}{n^{m_n} m_n! }  \Big(  \frac{1-t^n}{1-q^n}  \Big)^{m_n} p_n(x)^{m_n} p_n(y)^{m_n}.
    \end{align*} 
    Finally, we interchange the sums and the products and rearrange the summands in terms of partitions $\lambda$, viewing the 
    $m_n$'s as the number of parts of $\lambda$ with length equal to $n$ and keeping in mind that, by definition, 
    $z_\lambda^{-1}=\prod_{n\geq 1}\frac{1}{n^{m_n} m_n!}$ and 
    $p_\lambda(x)=p_{\lambda_1}(x) p_{\lambda_2}(x)\cdots=\prod_{n\geq 1} p_n(x)^{m_n}$.
   The last equality comes from grouping together the terms $p_{\lambda_i}(x)$ with $\lambda_i=n$, since the number of these
   terms is (by definition) $m_n$. 
   \end{proof}
   \subsection{Whittaker functions}\label{Whittaker-intro}
   Whittaker functions comprise another family of special functions which appear in many different fields of mathematics such as
   number theory and automorphic forms \cite{Gold06, Bum84},
   mirror symmetry \cite{Giv97, Rie12, L13} and integrable systems \cite{K79, KL01, GLO12}. 
   Their more recent appearance in probability \cite{BO11, O12, COSZ14, OSZ14} is a further manifestation of their ubiquity.
   In this text, we will encounter Whittaker functions in Section \ref{sec:loggama-Whit} where they will emerge via the geometric $\rsk$ 
   as expressing the Laplace transform of the partition function of a solvable polymer model.

  Whittaker functions, like Schur functions, can be associated to various classical Lie groups (see, for example, \cite{GLO12})
  but in order to keep the discussion simple, we will restrict to the case of Whittaker functions on
  the general linear group $GL_n(\R)$. These will be denoted by $\psi^{\mathfrak{g}\mathfrak{l}_n}_\alpha(x)$ for 
  $\alpha=(\alpha_1,...,\alpha_n)\in\C^n$ and $x=(x_1,...,x_n)\in \R_+^n$.
  In this case, probably the easiest way to define the Whittaker function $\psi^{\mathfrak{g}\mathfrak{l}_n}_\alpha(x)$ 
  is as eigenfunction
  of the differential operator (known as the {\it quantum Toda Hamiltonian})
  \begin{align*}
  -\Delta +\sum_{i=1}^{n-1} e^{x_{i}-x_{i+1}} ,
  \end{align*}
  with eigenvalue $-|\alpha|^2=-\sum_{i=1}^n\alpha_i^2$. We note that $x_{i+1}-x_i=\langle \mathfrak{a}_i,x \rangle$,
  where here $\langle \cdot, \cdot \rangle$ is the usual inner product on $\R^n$ and $\mathfrak{a}_i$ are the positive 
  roots of the Lie group $\mathfrak{g}\mathfrak{l}_n$. The definition of the Whittaker functions for other classical groups
  amounts to replacing $ \mathfrak{a}_i$ with the positive roots of the group in question. We mention that in Section 
  \ref{sec:loggama-Whit} we will be mostly
  working with the Whittaker functions in logarithmic variables defined through
  $\psi^{\mathfrak{g}\mathfrak{l}_n}_\alpha(x_1,...,x_n) =: \Psi^{\mathfrak{g}\mathfrak{l}_n}_\alpha(e^{x_1},...,e^{x_n})$.
  
  In Section \ref{sec:loggama-Whit} we will discuss and use the orthogonality properties of Whittaker functions (Theorem \ref{thm:Plancerel}) 
  and their Cauchy identity (also known as Bump-Stade identity \eqref{bumpstade}, see also Section  \ref{sec:Cauchy}).
  For the moment we would only like to mention that $GL_n(\R)$-Whittaker functions can be obtained as a limit of Macdonald 
  polynomials when $t=0$ and that in another limit they degenerate to (a continuous version of) Schur functions.
   Regarding the first, we have that for the following choice of parameters:
  \begin{align}\label{Whitt-scale}
&  t=0, \quad q=e^{-\epsilon},  \quad m(\epsilon)= -\lfloor \frac{1}{\epsilon}\rfloor, \quad \cA(\epsilon)= -\frac{\pi^2}{6}\frac{1}{\epsilon} 
  - \log\frac{\epsilon}{2\pi} \quad  \text{and} \notag\\
  &\\
& \text{for $k=1,...,n$} \colon \quad z_k=e^{ \epsilon \alpha_k}, \quad \lambda_k=(n-2k) m(\epsilon) +\frac{1}{\epsilon} x_k, \notag
  \end{align}
  it holds that $\epsilon^{\tfrac{n(n-1)}{2}} e^{\tfrac{(n-1)(n+2)}{2}\cA(\epsilon)} P_\lambda(z_1,...,z_n; q, 0)$ converges, as $\epsilon\to 0$ to 
  $\psi^{\mathfrak{g}\mathfrak{l}_n}_\alpha(x)$. We refer to \cite{BC14}, Theorem 4.1.7 for details. 
  The limit to Schur functions takes place when 
  $\epsilon\to 0$ in $\epsilon^{n(n+1)/2} \Psi^{\mathfrak{g}\mathfrak{l}_n}_{\epsilon \alpha}( e^{x_1/\epsilon}, ..., e^{x_n/\epsilon})$; 
  we refer for details to \cite{OSZ14}, Section 8, or \cite{BZ19}, Section 4.
  
  
   \section{The Robinson-Schensted-Knuth correspondence}\label{sec:RSK}
 In this section we introduce the Robinson-Schensted-Knuth correspondence. We do this in a gradual manner, starting from the Robinson-Schensted 
  correspondence and the combinatorial {\it row insertion algorithm} and then continue to its Robinson-Schensted-Knuth correspondence, which 
  we also express in terms of piecewise linear transformations. The latter formulation plays an important role in the geometric lifting, which we 
  present in Section \ref{sec:grsk}, and is also instrumental in connecting to weights of lattice paths. Before starting we urge the reader to recall the
  notions of partition, Young diagram and Young tableaux, which were presented in the previous section.
   \vskip 2mm
 \subsection{ Robinson-Schensted correspondence.}\label{sec:RS}  
  Young tableaux are of significance in representation theory in part because they classify the irreducible representations of the symmetric group. We refer to \cite{F97} for the use of Young tableaux in combinatorics, representation theory and geometry. Of importance to us is that Young tableaux encode information on quantities like the {\it longest increasing subsequence} in a permutation.
   This is done via the Robinson-Schensted ($\rs$) algorithm, which  
   gives a one-to-one correspondence between a permutation $\sigma\in S_N$ 
   and a pair of standard Young tableaux, which we will denote by $(P,Q)$. 
   The algorithm is as follows: Consider a permutation 
   \begin{equation*}
   \sigma=\left({\begin{array}{cccc}
   1&2&\cdots& N\\
   x_1&x_2&\cdots&x_N
   \end{array}
   }\right),
   \end{equation*}
   where we denote $x_i:=\sigma(i)$. Then,
   \begin{itemize}
   \item
   Starting from a pair of empty tableaux $(P_0,Q_0)=(\emptyset,\emptyset)$, assume that we have inserted the first $i$ 
   {\bf biletters} $j \choose x_j$, for $1\leq j\leq i \leq N$, of the permutation
   $\sigma\in S_N$ and we have obtained a pair of Young tableaux $(P_i,Q_i)$.
   \item Next, we {\bf (row) insert} the biletter $i+1\choose x_{i+1}$ as follows: If the number $x_{i+1}$ is larger or equal \footnote{in the case of a permutation the
   ``or equal'' condition is void but it becomes relevant in the Robsinson-Schensted-Knuth generalisation.}
    than all the numbers of the first row of $P_i$, then a box is appended at the end of the first row of $P_{i}$ and its content 
    is set to be $x_{i+1}$.
     This is then the tableau 
    $P_{i+1}$. Also a box is appended at the end of the first row of $Q_i$ and its content is set to be $i+1$, giving the tableau $Q_{i+1}$.
    If, on the other hand, there is a box in the first row of $P_i$ with content strictly larger than $x_{i+1}$, then the content of the first such box becomes $x_{i+1}$
    and the replaced content, call it $b$, drops down and is {\it row inserted} in the second row of $P_i$ following the same rules and creating (possibly) a
    cascade of dropdowns (called {\bf bumps}). Eventually a box will be appended at the end of a row in $P_i$ or below its last row, in which case it creates a new row, and the content of this box will be the last bumped letter. At the same, corresponding, location a
    box will be added at $Q_i$ and its content will be set to be $i+1$.
    \item We repeat the above steps until all biletters have been row inserted.
   \end{itemize}
   
   Let us see how this algorithm works via an example. Consider the permutation
   \begin{align*}
   \left(\begin{array}{ccccccc}
   1&2&3&4&5&6&7\\
   3&5&1&6&2&4&7
   \end{array}
   \right)
   \end{align*} 
   The sequence is as follows:
    \begin{align*}
   &(\emptyset, \emptyset) \xrightarrow[]{3} \young(3) \quad \young(1) \xrightarrow[]{5} \young(35) \quad \young(12) \xrightarrow[]{1} \young(15,3) \quad \young(12,3)
   \xrightarrow[]{6} \young(156,3) \quad \young(124,3) \xrightarrow[]{2}\\
   &  \xrightarrow[]{2} \young(126,35) \quad \young(124,35)  \xrightarrow[]{4} \young(124,356) \quad \young(124,356) 
    \xrightarrow[]{7} \young(1247,356) \quad \young(1247,356) .
   \end{align*}
  In words, we have that we start by row inserting $`3$' and creating a box with content `$3$', identified with tableau $P_1$, and a box with content `$1$' constructing tableau $Q_1$.
   Then `$5$' is row inserted in $P_1$ and since it is larger than `$3$' it bypasses the box with content `$3$' and sits in a new box in the right of `$3$', creating
   $P_2$. A box with content `$2$' is also created in the right of the box with content `$1$' in $Q_1$ creating tableau $Q_2$. Then `$1$' is row inserted to $P_2$ and being smaller
    than `$3$' it bumps `$3$' and sits in the first box of $P_2$. `$3$' is then row inserted in the second row and since this is empty, it creates a new box whose content becomes `$3$'. At the same time a new box in the second row of $Q_2$ is created whose content is `$3$', giving $Q_3$. The procedure continues
    in this way. 
   \vskip 2mm
   There are a few observations to be made from this example. 
   \begin{itemize}
\item   Tableaux $P$ and $Q$ have the same shape, i.e. the lengths of the successive rows in each tableau are equal.
  \end{itemize}
  This is a general fact. The tableaux that $\rs$ produces have the same shape. This can be easily seen
    as at any stage of the algorithm a box is created at the same location in both the $P$ and $Q$ tableau.
     \begin{itemize}
\item   Tableaux $P$ and $Q$ are actually equal. 
  \end{itemize}  
     This is not a general fact but a consequence of the fact that the permutation matrix associated to the above permutation, is symmetric,
     or that $\sigma=\sigma^{-1}$. In general, as we will state below, if $(P,Q)$ is the output of a permutation $\sigma$, then the output of permutation $\sigma^{-1}$ is $(Q,P)$. Thus, if $\sigma=\sigma^{-1}$, then $P=Q$.
       \begin{itemize}
\item 
    A third observation that we make is that the length of the first row of either 
    output tableau $P$ and $Q$ (which in this case is $4$) equals the length of the longest increasing
    subsequence in the permutation $(3,5,1,6,2,4,7)$, which, for example (as there are more than one such), is
     the sequence $(3,5,6,7)$. Moreover, the length of the second longest increasing subsequence
    $(1,2,4)$ equals the length of the second row of the output tableaux.
    \end{itemize}
     This is also not a coincidence and goes by the name of Greene's theorem \cite{G74} 
     (see Theorem \ref{thm:greene} below),
     an extension of Schensted's theorem \cite{S61} (see Theorem \ref{schensted} below):     
    \begin{theorem}[Schensted]\label{schensted} The $\rs$ correspondence is a bijection between permutations and pairs of standard Young tableaux $(P,Q)$ of the same shape.
    If $\sigma\in S_N$ and $(P,Q)= \rs(\sigma)$ is the image of $\sigma$ under RS-correspondence, then 
    $(Q,P)=\rs(\sigma^{-1})$, where $\sigma^{-1}$ is the inverse of permutation $\sigma$. In particular, if $\sigma=\sigma^{-1}$, then $P=Q$.
    \end{theorem}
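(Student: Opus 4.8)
The plan is to prove the three assertions in turn: first, that Robinson--Schensted insertion always returns a pair of standard Young tableaux of the same shape; second, that the resulting map $\rs$ is a bijection; and third, that inverting the permutation interchanges the two tableaux. For \textbf{Step 1}, the equality of shapes has in effect already been noted — at every insertion step exactly one box is appended at the \emph{same} position in $P_i$ and in $Q_i$, so $sh(P_i)=sh(Q_i)$ throughout. That $Q_N$ is standard is immediate, since its boxes receive the labels $1,2,\dots,N$ in this order and each is placed at an outer corner, hence its label exceeds those above and to its left. For $P_N$ I would induct on the number of inserted biletters, using the basic \emph{bumping lemma}: within a single row-insertion each successively bumped value is strictly smaller than the value it displaces, and the columns in which the successive bumps occur are weakly decreasing. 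The first fact keeps every row strictly increasing; combined with the second it forces every column to be strictly increasing as well, because the value that settles into row $r+1$ lands weakly to the left of, and strictly above (in value) the entry it displaced one row up. This routine verification gives that $P_N$ is a standard Young tableau.

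For \textbf{Step 2} I would build the inverse by running the algorithm backwards. Given a pair $(P,Q)$ of standard tableaux of a common shape $\lambda\vdash N$, locate the box of $Q$ carrying the label $N$; by standardness it sits at an outer corner $c$ of $\lambda$. Remove the entry $b$ of $P$ in box $c$ and perform \emph{reverse row-insertion}: push $b$ into the row above, where it displaces the largest (equivalently, rightmost) entry that is strictly smaller than $b$; continue upward until an entry is ejected from the first row. Declare that ejected value to be $\sigma(N)$, delete the box $c$ from both tableaux, and recurse on the remaining pair of tableaux of size $N-1$. One checks step by step that reverse row-insertion undoes forward row-insertion — this is exactly where the strict inequalities of the bumping lemma are used, to guarantee that at each level there is a unique entry to displace and that the forward and backward bumping routes coincide — so the procedure reconstructs $\sigma$ uniquely and is a two-sided inverse of $\rs$. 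Hence $\rs$ is a bijection onto pairs of standard Young tableaux of equal shape.

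\textbf{Step 3} is the delicate point, and I expect it to be the main obstacle: the insertion algorithm treats the ``value data'' $x_i$ (which build $P$) and the ``position data'' $i$ (which build $Q$) in a manifestly asymmetric way, whereas passing from $\sigma$ to $\sigma^{-1}$ does nothing but interchange these two roles — in two-line notation $\sigma^{-1}$ is obtained from $\binom{1\ \cdots\ N}{x_1\ \cdots\ x_N}$ by swapping the two rows and re-sorting the columns by the new top row. To make that interchange transparent I would replace row insertion by one of its transpose-symmetric reformulations: either Viennot's geometric ``shadow line'' construction (plot the points $(i,x_i)$ in the plane and read off the tableaux from successive families of light rays, essentially the ray picture already used above for the Hammersley problem), or, equivalently, Fomin's growth-diagram description (label the vertices of the $N\times N$ grid carrying the permutation matrix by partitions according to local growth rules, so that the chains of partitions along the two boundary edges meeting at the far corner encode $P$ and $Q$). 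In either description, reflecting the picture across the main diagonal sends the permutation matrix of $\sigma$ to that of $\sigma^{-1}$, exchanges the data producing $P$ with the data producing $Q$, and leaves the construction rule invariant; hence $\rs(\sigma^{-1})=(Q,P)$. The real content of this step is the identification of the chosen geometric or growth construction with ordinary row insertion — a standard but not wholly trivial verification; granting it, the symmetry is immediate, and specializing to $\sigma=\sigma^{-1}$ gives $P=Q$ at once.
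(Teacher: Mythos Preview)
Your proof is correct and follows the classical combinatorial route. The paper, however, takes a genuinely different approach: rather than establishing the symmetry via Viennot's shadows or Fomin's growth diagrams, it derives Schensted's theorem as a corollary of the Noumi--Yamada matrix formulation of (geometric) $\rsk$ developed in Section~\ref{RSK_matrix}. In that formulation the $P$-tableau is the unique solution of the matrix equation $H(\bx^1)\cdots H(\bx^n)=H_k(\bp^k)\cdots H_1(\bp^1)$ built from the \emph{rows} of the input matrix, while the $Q$-tableau is the unique solution of the identical equation built from the \emph{columns} (Theorem~\ref{thm:grsk}). Since passing from $\sigma$ to $\sigma^{-1}$ transposes the permutation matrix --- interchanging rows and columns --- the symmetry $\rs(\sigma^{-1})=(Q,P)$ follows at once, with no auxiliary geometric construction; bijectivity likewise comes from the uniqueness of the matrix factorisation. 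Your approach has the virtue of being self-contained and requiring no algebraic setup; the paper's approach buys a uniform treatment that covers classical $\rsk$ and its geometric lifting simultaneously, and makes the bijection, the symmetry, and Greene's theorem all fall out of a single matrix identity after tropicalisation.
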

    \begin{theorem}[Greene]\label{thm:greene}
    Let $\sigma\in S_N$ and $(P,Q)=\rs(\sigma)$. Then, the 
     length $\lambda_1$ of the first row of the output tableaux $P$ or $Q$ equals the length of the longest increasing subsequence in $\sigma$. Moreover,
    the sum $\lambda_1+\lambda_2+\cdots+\lambda_r$ of the lengths of the first $r$ rows 
    equals the maximum possible length of dijoint unions of $r$ increasing subsequences 
    in $\sigma$. 
    \end{theorem}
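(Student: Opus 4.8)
The plan is to prove, for each $r$, the two inequalities $\lambda_1+\cdots+\lambda_r\ge I_r(\sigma)$ and $\lambda_1+\cdots+\lambda_r\le I_r(\sigma)$, where $I_r(\sigma)$ denotes the largest cardinality of a union of $r$ increasing subsequences of $\sigma$. I would treat the case $r=1$ — the one directly tied to the longest increasing subsequence and to Ulam's problem — first and in a self-contained way, and only then bootstrap to general $r$.

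For $r=1$, the key observation is that the evolution of the \emph{first row} of $P$ under successive row-insertions is exactly patience sorting: inserting a letter $x$ replaces the leftmost first-row entry strictly larger than $x$, or appends $x$ at the end, so a fixed box $(1,k)$ receives over time a sequence of letters of strictly decreasing values, and the final first-row length equals $\lambda_1$. Two remarks then finish this case. First, if $\sigma(j_1)<\cdots<\sigma(j_m)$ with $j_1<\cdots<j_m$ is an increasing subsequence, no two of these letters can ever occupy the same first-row box (that box would hold them in decreasing order of value), so $m\le\lambda_1$. Second, whenever $\sigma(i)$ is placed in a box $(1,k)$ with $k\ge2$, the content of box $(1,k-1)$ at that moment is $\sigma(i')$ for some $i'<i$ with $\sigma(i')<\sigma(i)$; recording this back-pointer at every insertion and following the resulting chain from any letter that has ever occupied box $(1,\lambda_1)$ down to box $(1,1)$ produces an increasing subsequence of length exactly $\lambda_1$. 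Hence $\lambda_1=I_1(\sigma)$, and the identical argument applied to the columns of $P$ recovers Schensted's statement that the number of rows of $P$ equals the longest decreasing subsequence of $\sigma$ \cite{S61}.

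For general $r$ I would pass to the poset picture: put on $\{1,\dots,N\}$ the order with $i\prec j$ iff $i<j$ and $\sigma(i)<\sigma(j)$, so that chains are increasing subsequences and antichains are decreasing ones; Greene's theorem then says that $\lambda$ is the partition with $\lambda_1+\cdots+\lambda_r$ equal to the maximal union of $r$ chains and $\lambda'_1+\cdots+\lambda'_r$ equal to the maximal union of $r$ antichains. The bound $I_r(\sigma)\le\lambda_1+\cdots+\lambda_r$ follows from a Dilworth/Mirsky-type count: any union of $r$ chains meets each member $A$ of a cover of the poset by decreasing subsequences in at most $\min(r,|A|)$ points, and if such a cover can be chosen with size profile $\lambda'$, summing over it yields $\sum_j\min(r,\lambda'_j)=\lambda_1+\cdots+\lambda_r$. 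For the reverse inequality I would induct on $r$, extracting one increasing subsequence of length $\lambda_1$ from the first row via the back-pointers above, showing that deleting the \emph{appropriate} maximal increasing subsequence from $\sigma$ leaves a word of RSK shape $(\lambda_2,\lambda_3,\dots)$, and applying the inductive hypothesis to that shorter word to obtain $r-1$ further, disjoint, increasing subsequences of total size $\lambda_2+\cdots+\lambda_r$.

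The main obstacle is the coordination between these two halves. Deleting an \emph{arbitrary} longest increasing subsequence need not peel off the first row of the shape — a four-letter permutation already illustrates this — and, dually, the existence of a decreasing-subsequence cover with size profile $\lambda'$ is not obvious; the genuine content is the structural part of the Greene--Kleitman package, namely that a single partition simultaneously controls all the $r$-chain and $r$-antichain unions and equals the RSK shape. For this I would follow Greene's original combinatorial argument \cite{G74}, or, equivalently, Viennot's geometric ``shadow line'' reading of $\rs$, which displays the successive rows of $P$ and $Q$ as the layers obtained by iteratively removing the maximal points of the plot $\{(i,\sigma(i))\}$ and makes the chain/antichain bookkeeping transparent. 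Once this is established, the two inequalities above and the $r=1$ base case are routine.
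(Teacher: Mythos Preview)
Your $r=1$ argument via patience sorting and back-pointers is clean and correct, and you are honest that for general $r$ the hard content is the Greene--Kleitman structure (or Viennot's shadow lines), to which you defer. This is a legitimate route, essentially the classical combinatorial one.

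The paper, however, takes a completely different path. It does not argue row by row or invoke chain/antichain duality at all. Instead it encodes $\rsk$ through the Noumi--Yamada matrix equation
\[
H(\bx^1)\cdots H(\bx^n)=H_k(\bp^k)\cdots H_1(\bp^1),
\]
observes via Lindstr\"om--Gessel--Viennot that the minors $\det H(\bx^1,\dots,\bx^n)^{1,\dots,r}_{j-r+1,\dots,j}$ are partition functions over $r$-tuples of non-intersecting down-right paths, and reads off the Gelfand--Tsetlin variables as ratios of such minors (Theorem~\ref{thm:NY}). Tropicalising --- replacing $(+,\times)$ by $(\max,+)$ --- turns these partition functions into last-passage functionals $\sigma^r_j=\max_{\pi_1,\dots,\pi_r}\sum_k\mathsf{wt}(\pi_k)$ over non-intersecting path ensembles, and the identity $z^N_1+\cdots+z^N_r=\sigma^r_N$ drops out uniformly in $r$. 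For a permutation matrix, $\sigma^r_N$ is exactly the maximal size of a union of $r$ increasing subsequences, so both Schensted and Greene follow at once.

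What each buys: your approach is elementary and self-contained for $r=1$, and makes the link to patience sorting and Ulam's problem transparent, but the general $r$ case requires separate, non-trivial combinatorics that you (rightly) do not reproduce. The paper's approach is uniform in $r$, requires no induction or chain/antichain bookkeeping, and --- crucially for the paper's purposes --- simultaneously yields the geometric-$\rsk$ analogue (polymer partition functions over non-intersecting paths) from the same identity before tropicalisation. The price is the upfront investment in the matrix formalism and LGV.
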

    We do not provide proofs of these theorems here as we will derive them later on as a corollary to a matrix formulation of the algorithm, which also covers the Robinson-Schensted-Knuth correspondence and its {\it geometric lifting}, 
    see (the end of) Section \ref{RSK_matrix}.
    \vskip 2mm
    \subsection{ Robinson-Schensted-Knuth correspondence.}\label{subsec:RSK}
   Bearing in mind that permutations are identified with matrices whose entries are either $0$ or $1$ and 
   no two $1$'s are in the same row or column (permutation matrices),
    one can see the $\rs$ correspondence as a bijection between permutation
   matrices and pairs of standard 
   Young tableaux. Knuth's generalisation constituted in extending $\rs$ as a bijection between matrices with nonnegative integer entries
   and pairs of {\it seminstandard} Young tableaux. We can think of a matrix $W=(w^i_j)_{\substack{1\leq i\leq n\\1\leq j\leq N}}$, 
   where $i$ indicates rows and $j$ columns, as a sequence of $n$ words
   \begin{align}\label{word}
   w^i:=1^{w^i_1}\cdots N^{w^{i}_N} := \underbrace{1\cdots1}_{w^i_1} \,\,\underbrace{2\cdots2}_{w^i_2} \,\cdots \,\underbrace{N\cdots N}_{w^i_N} 
   \end{align}
   with letters $1,2,...,N$, such that $w^i_j$ symbolises the number of letters $j$ in word $i$. Knuth's extension of $\rs$ correspondence,
   named Robinson-Schensted-Knuth ($\rsk$), consists of inserting, via the $\rs$ row-insertion, the letters of words 
   $w^1,w^2,...,w^n$ (in this order) with the letters
   of each word $w^i $ as in \eqref{word} being read from left to right.
   
   \vskip 2mm
  {\bf  Berenstein and Kirillov's $(max,+)$ formulation}.
   Berenstein and Kirillov \cite{BK95} adopted a different point of view of the $\rsk$ correspondence, which was to encode the combinatorial 
   transformations via piecewise linear transformations. This is a particularly useful approach for applications in the
   probabilistic models we are interested in and we will describe it now. The exposition here follows mainly the presentation in Noumi-Yamada \cite{NY04}. 
   \vskip 2mm
  Let us describe how the $P$ tableau is constructed. This is done by successive row insertions of words $w^1,w^2,...$ as described in the $\rs$
  correspondence. 
  Let us start by inserting $w^1$, that is the sequence of letters
  \begin{align*}
   \underbrace{1\cdots1}_{w^1_1} \,\,\underbrace{2\cdots2}_{w^1_2} \,\cdots \,\underbrace{N\cdots N}_{w^1_N}.
  \end{align*}
  Since the letters in $w^1$ are ordered from smaller to larger, the insertion of $w^1$ will produce the one-row tableau 
  \begin{align*}
  P_1=   
{\begin{tikzpicture}[baseline=1.1ex, scale=.6]
\draw (0.1,0.1) -- (10.5,0.1)--(10.5,0.9)--(0.1,0.9)--(0.1,0.1) ;
\draw[<->](0.2,1.2)--(2.3,1.2); \node at (1.3,1.7) {$w^1_1$};
\node at (0.4,0.5) {$1$}; \draw[dotted, thick] (0.7,0.5)--(2,0.5); \node at (2.2,0.5) {$1$}; \draw (2.5,0.1)--(2.5,0.9);
\draw[<->](2.6,1.2)--(4.7,1.2); \node at (3.7,1.7) {$w^1_2$};
\node at (2.8,0.5) {$2$}; \draw[dotted, thick] (3.1,0.5)--(4.3,0.5); \node at (4.5,0.5) {$2$}; \draw (4.8,0.1)--(4.8,0.9);
\draw[<->](7.8,1.2)--(10.2,1.2); \node at (9,1.7) {$w^1_N$};
\draw[dotted, thick] (5.0,0.5)--(7.3,0.5); \draw (7.5,0.1)--(7.5,0.9);  \node at (7.9,0.5) {$N$}; \draw[dotted, thick] (8.3,0.5)--(9.7,0.5); \node at (10.1,0.5) {$N$};
\end{tikzpicture}}
 \end{align*}
 \vskip 2mm
We note that we
can identify the row of a tableau with words by reading the letters from left to right. In the case of $P_1$, the tableau can be identified with the single word (recall the notation introduced in \eqref{word})
\begin{align*}
p^1:=1^{p^1_1}2^{p^1_2}\cdots N^{p^1_N}=1^{w^1_1} 2^{w^1_2}\cdots N^{w^1_N} = w^1.
\end{align*}
Next, we insert word $w^2$ into $P_1$ and this insertion will produce a new tableau $\tilde P_1$. We denote this schematically as
\begin{equation*}
\begin{array}{ccc}
& w^2 & \\
P_1 & \cross & \tilde P_1 .\\
&&
\end{array}
\end{equation*}
This insertion will change the first row $p^1$ of $P_1$ by (possibly) bumping some letters out of it and replacing them with letters from $w^2$. The bumped letters
will form a word, which will then be inserted in the second row of the tableau, which in the case of $P_1$ is $\emptyset$. We denote this schematically as
\begin{align*}
\begin{array}{ccc}
& w^2 & \\
p^1 & \cross & \tilde p^{\,\,1} ,\\
& v^2&
\end{array}
\end{align*}
with $p^{\,1}$ denoting the first row of $P_1$, $ \tilde p^{\,\,1}$ the first row of $\tilde P_1$ and $v^2$ the word that will form 
from the bumped down letters from $P_1$ after the insertion of $w^2$.

This picture is a building block of $\rsk$, since the row insertion of a word $w$ in a tableau $P$ consisting of
rows $p^1,p^2,...,p^n$, can be decomposed as
 \begin{align}\label{rs-iterate}
\begin{array}{ccc}
& w=:v^1 & \\
p^1 & \cross & \tilde p^{\,1} \\
& v^2&\\
p^2 & \cross & \tilde p{\, ^2} \\
&\vdots&\\
& v^n&\\
p^n & \cross & \tilde p^{\,n} .\\
&v^{n+1}&
\end{array}
\end{align}
This picture means that the letters that will drop down from $p^1$, after the insertion of $v^1=w$, will form a word $v^2$ which
will be inserted in $p^2$, forming a new row $\tilde p^{\,\,2}$, and will bump down letters which will then form a new word $v^3$ to be inserted into $p^3$ and so on.
\vskip 2mm
An important remark is that row $p^i$, which is constructed via the $\rs$ algorithm,  will only include letters with value larger
or equal to $i$. This is an easy consequence of the algorithm. For example, $p^2$ will not include $1$'s as 
`$1$' is the smallest possible letter and so when a `$1$' is inserted in the first row it will stay there, bumping out 
$2$'s, $\,3$'s,...
 \vskip 2mm
It will be important to have an explicit, algebraic expression of the transformation $\begin{array}{ccc}
& \ba& \\
\bx & \cross & \tilde\bx \\
& \bb&
\end{array}
$,
where variables $\bx:=i^{x_i}(i+1)^{x_{i+1}}\cdots N^{x_N}$ and $\ba:=i^{a_i}(i+1)^{a_{i+1}}\cdots N^{a_N}$, for $ i\geq 1$,
 are considered as input variables ($\bx$ corresponds to the generic case of an $i$-th row in a tableau) and
$\tilde\bx:=i^{\tilde x_i} (i+1)^{\tilde x_{i+1}}\cdots N^{\tilde x_N}$ and 
$\bb:=(i+1)^{b_{i+1}}{(i+2)}^{b_{i+2}}\cdots N^{b_N}$ are output variables, with $\tilde\bx$ being the
new row after the insertion of $\ba$ and $\bb$ being the bumped down letters (as explained in the previous paragraphs,
 $\bb$ will only have letters strictly larger than $i$). In particular, we want to express 
$\tilde x_i,\tilde x_{i+1},...$ and $b_{i+1},b_{i+2},...$ as piecewise linear transformations of $x_i,x_{i+1},...$ and $a_i,a_{i+1},...$
To do so, it will be more convenient to introduce cumulative variables
\begin{align*}
\xi_j:=x_i+\cdots + x_j, \quad \text{and}\quad \tilde\xi_j:=\tilde x_i+\cdots+\tilde x_j, \qquad \text{for} \,\, j\geq i.
\end{align*}
We derive the piecewise linear transformations as follows:
When inserted into $\bx$, the $a_i$ letters $i$ will bypass the already existing letters $i$ in $\bx$ and will be appended after the last $i$ in $\bx$. Thus, the new total number of letters $i$ will be
\begin{align*}
\tilde \xi_i=\xi_i+a_i.
\end{align*}
When this insertion is completed a number of $(i+1)$'s will be bumped off $\bx$. The number of these will equal
\begin{align}\label{bump1}
b_{i+1}= \min\big(\tilde \xi_i-\xi_i, \xi_{i+1}-\xi_i\big) =  \min\big(\tilde \xi_i, \xi_{i+1}\big)-\xi_i.
\end{align}
This is to be understood as follows: either $a_i$ is smaller than the number of $(i+1)$'s in $\bx$,  which equals 
$x_{i+1}=\xi_{i+1}-\xi_i$,
  and so there will only be $a_i=\tilde \xi_i-\xi_i$ number
of $(i+1)$'s bumped down, or $a_i$ is larger than or equal to $x_{i+1}$, in which case all of the $(i+1)$'s in $\bx$, the number of which equals $\xi_{i+1}-\xi_i$, will be bumped down.

We also record an alternative formula for the number of bumped $(i+1)$'s, which is 
\begin{align}\label{bump2}
b_{i+1}=a_{i+1}+x_{i+1} - \tilde x_{i+1} = a_{i+1} + (\xi_{i+1}-\xi_i) - (\tilde \xi_{i+1}-\tilde \xi_{i}),
\end{align}
where the first equality
 is to be understood as that the number of $(i+1)$'s, which will be bumped, equals the number of  $(i+1)$'s that existed in $\bx$ 
 (denoted by $x_{i+1}$) plus
the number of $(i+1)$'s that we inserted (denoted by $a_{i+1}$) 
minus the number of $(i+1)$'s that we finally see in $\tilde \bx$ (denoted by $\tilde x_{i+1}$). This is depicted in
 the following figures, where blocks marked with $i$ or $i+1$ indicate consecutive boxes occupied by $i$ or $i+1$.
 The first figure shows the case where the $i$'s inserted in $\bx$ do not bump out all $(i+1)$'s:
\begin{equation*}
{\begin{tikzpicture}[baseline=1.1ex, scale=.6]
\draw (0.1,0.1) -- (15,0.1)--(15,0.9)--(0.1,0.9)--(0.1,0.1) ;
\draw[dotted, thick] (0.5,0.5)--(2,0.5);
\draw (2.3,0.1)--(2.3,0.9);
\node at (3.3,0.5) {$i$};
\draw (4.3,0.1)--(4.3,0.9);
\node at (6,0.5) {$i$ / $i+1$};
\draw (7.5,0.1)--(7.5,0.9);
\node at (9,0.5) {$i+1$};
\draw (10.5,0.1)--(10.5,0.9);
\draw[dotted, thick] (11.5,0.5)--(13.5,0.5);
\draw[<->](4.3,1.2)--(7.5,1.2); \node at (6,1.7) {$x_{i+1}$};
\draw[<->](7.5,1.2)--(10.5,1.2); \node at (9.1,1.7) {$a_{i+1}$};
\draw[<->](4.3,-0.5)--(5.5,-0.5); \node at (5,-1) {\footnotesize{bumped}}; \node at (5,-1.6) {\footnotesize $(i+1)$'s};
\node at (5,-2.2) {\footnotesize{replaced by  $i$'s}};
\draw[<->](5.5,-0.5)--(10.5,-0.5); \node at (8,-1) {$\tilde x_{i+1}$};
\end{tikzpicture}}
\end{equation*}
and the next figure depicts the situation where the $i$'s inserted in $\bx$ have bumped out all $(i+1)$'s:
\begin{equation*}
{\begin{tikzpicture}[baseline=1.1ex, scale=.6]
\draw (0.1,0.1) -- (15,0.1)--(15,0.9)--(0.1,0.9)--(0.1,0.1) ;
\draw[dotted, thick] (0.5,0.5)--(2,0.5);
\draw (2.3,0.1)--(2.3,0.9);
\node at (3.3,0.5) {$i$};
\draw (4.3,0.1)--(4.3,0.9);
\node at (6,0.5) {$i$};
\draw (7.5,0.1)--(7.5,0.9);
\node at (9,0.5) {$i+1$};
\draw (10.5,0.1)--(10.5,0.9);
\draw[dotted, thick] (11.5,0.5)--(13.5,0.5);
\draw[<->](4.3,1.2)--(6.5,1.2); \node at (5.5,1.7) {$x_{i+1}$};
\draw[<->](7.5,1.2)--(10.5,1.2); \node at (9.1,1.7) {$a_{i+1}$};
\draw[<->](4.3,-0.5)--(7.5,-0.5); \node at (5,-1) {\footnotesize{the inserted $i$'s have}}; \node at (5,-1.6) {\footnotesize bumped all 
$(i+1)$'s in $\bx$};
\draw[<->](7.5,-0.5)--(10.5,-0.5); \node at (9,-1) {$\tilde x_{i+1}$};
\end{tikzpicture}}
\end{equation*}
in which case $\tilde x_{i+1}=a_{i+1}$ and so $b_{i+1}=a_{i+1}+x_{i+1}-\tilde x_{i+1}= x_{i+1}$.
\vskip 2mm
We now want to get an expression for $\tilde \xi_{i+1}$, which denotes 
the total number of numbers up to $i+1$ that exist in the output word 
$\tilde\bx$. Again, either the $i$'s that we inserted from $\ba$ did not bump all the $(i+1)$'s that existed in $\bx$ (first of the two figures above),
 in which case the $a_{i+1} $-many of $(i+1)$'s, which are inserted from $\ba$ will be appended at the end of the last $(i+1)$ in $\bx$, giving
 $\tilde \xi_{i+1}=\xi_{i+1}+a_{i+1}$, or the $a_i$-many $i$'s in $\ba$ bumped all the $(i+1)$'s in $\bx$ (second of the two figures above), in which case the new $(i+1)$'s from $\ba$ will
 be appended after the $i$'s in $\tilde\bx$. In this case, $\tilde \xi_{i+1}=\tilde \xi_i+a_{i+1}$. Altogether, we have that
 \begin{align}\label{bump2}
 \tilde \xi_{i+1}=\max\big(\tilde \xi_i,\xi_{i+1}\big)+a_{i+1}.
 \end{align} 

We can now iterate this procedure through diagram \eqref{rs-iterate}. The $\rsk$ row insertion via piecewise linear transformations can be 
summarised as
\begin{proposition}\label{RSK_maxplus}
Let $1\le i \le N$. Consider two {\it words} $\bx=i^{x_i}(i+1)^{x_{i+1}}\cdots N^{x_N}$ and $\ba=i^{a_i}(i+1)^{a_{i+1}}\cdots N^{a_N}.$ The {\it row insertion} of the word $\ba$ into the word $\bx$  denoted by  
\begin{align*}
\begin{array}{ccc}& \ba& \\ \bx & \cross & \tilde \bx \\& \bb& \end{array},
\end{align*}
  transforms  $(\bx,\ba)$ into a new pair $(\tilde \bx,\bb)$ with $\tilde \bx=i^{\tilde x_i}(i+1)^{\tilde x_{i+1}}\cdots N^{\tilde x_N}$ and $\bb=(i+1)^{b_{i+1}}\cdots N^{b_N}$, 
  which in cumulative variables 
  \begin{align}\label{cumul}
\xi_j=x_i+\cdots + x_j, \quad \text{and}\quad \tilde \xi_j=\tilde x_i+\cdots+\tilde x_j, \qquad \text{for} \,\, j\geq i,
\end{align}
  is encoded via
\begin{equation}\label{RSKrec}
\begin{cases}
\tilde \xi_i=\xi_i+a_i, &\\[4pt]
\tilde \xi_k =\max\big(\tilde \xi_{k-1}, \xi_k\big)+a_k, & i+1\le k\le N\\[5pt]
b_{k} = a_k+ (\xi_k-\xi_{k-1}) - (\tilde \xi_k-\tilde \xi_{k-1}), & i+1\leq k\leq N,
\end{cases}
\end{equation}
for $i<N$. If $i=N$, then $\tilde \xi_N=\xi_N+a_N$ and the output $\bb$ is empty and we write $\bb=\emptyset$. 
\end{proposition}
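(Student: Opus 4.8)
The plan is to isolate two elementary facts about Schensted row insertion and then to obtain the full system \eqref{RSKrec} by an induction on $N-i$, with the degenerate insertion stated at the end of the proposition serving as the base case $i=N$.

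First I would record the \emph{conservation law}: a row insertion never creates or destroys letters, it only relocates them among the rows of a tableau, so the multiset of letters of $\bx$ together with that of $\ba$ equals the multiset of letters of $\tilde\bx$ together with that of $\bb$; in particular $x_k+a_k=\tilde x_k+b_k$ for every value $k$. Since the smallest letter ever inserted is $i$, a letter equal to $i$ can never be bumped (bumping it would require inserting something $<i$), so $\bb$ has no $i$'s, i.e.\ $\bb=(i+1)^{b_{i+1}}\cdots N^{b_N}$ as asserted, and conservation at $k=i$ gives $\tilde x_i=x_i+a_i$, that is $\tilde\xi_i=\xi_i+a_i$. Rewriting conservation at a general level $k\ge i+1$ in the cumulative variables \eqref{cumul} is then exactly the third line of \eqref{RSKrec}, namely $b_k=a_k+(\xi_k-\xi_{k-1})-(\tilde\xi_k-\tilde\xi_{k-1})$. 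It then remains only to prove the middle line, the recursion $\tilde\xi_k=\max(\tilde\xi_{k-1},\xi_k)+a_k$.

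Second, since the letters of $\ba$ are inserted from left to right, the insertion of $\ba$ into $\bx$ first inserts the whole block $i^{a_i}$ and only afterwards inserts $(i+1)^{a_{i+1}}\cdots N^{a_N}$. Here I would prove a \emph{one-block lemma}: after inserting $i^{a_i}$ into $\bx$, the resulting row has exactly $\max(\tilde\xi_i,\xi_j)$ letters of value $\le j$, for every $j\ge i$. This is precisely what the two figures preceding the statement depict: each inserted $i$ either is appended (when no larger letter is present) or replaces the leftmost strictly larger letter, so throughout the insertion the working row keeps the form (a block of $i$'s) followed by (the letters of $\bx$ of value $>i$ with their smallest ones successively removed), whence the count of letters $\le j$ becomes $\tilde\xi_i+\max(0,\xi_j-\tilde\xi_i)=\max(\tilde\xi_i,\xi_j)$; the only case split is whether the inserted $i$'s have or have not removed every letter of value $\le j$. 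Granting the lemma, inserting $(i+1)^{a_{i+1}}\cdots N^{a_N}$ into the new row is an instance of the same problem with $i$ replaced by $i+1$ and with initial cumulative data $\xi^{(1)}_j:=\max(\tilde\xi_i,\xi_j)$. The induction hypothesis then yields $\tilde\xi_{i+1}=\xi^{(1)}_{i+1}+a_{i+1}=\max(\tilde\xi_i,\xi_{i+1})+a_{i+1}$ and, for $k\ge i+2$, $\tilde\xi_k=\max(\tilde\xi_{k-1},\xi^{(1)}_k)+a_k=\max\big(\tilde\xi_{k-1},\max(\tilde\xi_i,\xi_k)\big)+a_k$; since the cumulative counts are non-decreasing in the index we have $\tilde\xi_{k-1}\ge\tilde\xi_i$, so the inner maximum collapses to $\max(\tilde\xi_{k-1},\xi_k)$, giving the recursion for all $k$.

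The only step requiring genuine care is the one-block lemma, and inside it the verification that inserting the $a_i$ copies of $i$ cannot push the count of letters $\le j$ above $\max(\tilde\xi_i,\xi_j)$ --- equivalently, that every inserted $i$ which fails to bump a letter of value $\le j$ must come to rest strictly to the right of all letters of value $\le j$. This uses the fact that the working row is always weakly increasing together with the exact ``leftmost strictly larger letter'' bumping rule; everything else is arithmetic of $\min$ and $\max$ on the cumulative variables. Once the lemma is in place, \eqref{RSKrec} follows by the induction above, the degenerate case $i=N$ --- inserting $N^{a_N}$ into $N^{x_N}$ simply appends all the new letters, so $\tilde\xi_N=\xi_N+a_N$ and $\bb=\emptyset$ --- being the base.
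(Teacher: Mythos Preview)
Your proof is correct and follows essentially the same route as the paper's derivation preceding the proposition: the conservation identity $x_k+a_k=\tilde x_k+b_k$ is exactly the paper's alternative formula for the bumped letters, and your one-block lemma together with the induction on $N-i$ is a clean formalization of the paper's two-figure case analysis and its concluding ``We can now iterate this procedure''. The only difference is organizational---you package the iteration as a formal induction (and strengthen the intermediate statement to all $j\ge i$ rather than just $j=i+1$), whereas the paper works out the first two steps and leaves the rest implicit.
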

\vskip 2mm
It is worth noticing that the recursion
\begin{align*}
\tilde \xi_k =\max\big(\tilde \xi_{k-1}, \xi_k\big)+a_k,
\end{align*}
is actually the same as the recursion of last passage percolation \eqref{LPPrec}. 
To see this more clearly, we can consider the example of 
$\bx=1^{x_1}2^{x_2}\cdots N^{x_N}, \ba=1^{a_1}2^{a_2}\cdots N^{a_N}$ and
for $\xi_j=x_1+\cdots+x_j$ and $\tilde \xi_j=\tilde x_1+\cdots \tilde x_j$ we iterate as
\begin{align}\label{lpp-rec}
\tilde \xi_N &=\max\big(\tilde \xi_{N-1}+a_N, \xi_N+a_N\big)\notag\\
&=\max\Big(\max\big(\tilde \xi_{N-2}+a_{N-1}+a_N, \xi_{N-1}+a_{N-1}+a_N)\,,\, \xi_N+a_N\Big)\notag\\
&\,\,\vdots\notag\\
&=\max_{1\leq j \leq N } \big( \xi_j+a_j+\cdots+a_N)\notag\\
&= \max_{1\leq j \leq N } \big( x_1+\cdots+x_j+a_j+\cdots+a_N),
\end{align}
which, as shown in the figure below, is a last passage percolation on a two-row array
\begin{equation}\label{lpp-rec-diag}
{\begin{tikzpicture}[scale=.6]
\node at (-5,0.5) { {$\max_j$}}; \node at (-2.2,0.5){$\overset{\sum}{\substack{ \text{weights of nodes} \\ \text{along red path} } }$};
\draw (0,0) -- (10,0)--(10,1)--(0,1)--(0,0) ;
\draw (1,0)--(1,1); \draw (2,0)--(2,1);\draw (2,0)--(2,1); \draw (3,0)--(3,1); \draw (4,0)--(4,1); \draw (5,0)--(5,1); \draw (6,0)--(6,1);
\draw (7,0)--(7,1); \draw (8,0)--(8,1); \draw (9,0)--(9,1);  
\draw  [fill ] (0,0) circle [radius=0.1];\draw  [fill ] (1,0) circle [radius=0.1];\draw  [fill ] (2,0) circle [radius=0.1]; \draw  [fill ] (3,0) circle [radius=0.1];
\draw  [fill ] (4,0) circle [radius=0.1]; \draw  [fill ] (5,0) circle [radius=0.1];\draw  [fill ] (6,0) circle [radius=0.1];\draw  [fill ] (7,0) circle [radius=0.1];
\draw  [fill ] (8,0) circle [radius=0.1]; \draw  [fill ] (9,0) circle [radius=0.1];\draw  [fill ] (10,0) circle [radius=0.1];

\draw  [fill ] (0,1) circle [radius=0.1];\draw  [fill ] (1,1) circle [radius=0.1];\draw  [fill ] (2,1) circle [radius=0.1]; \draw  [fill ] (3,1) circle [radius=0.1];
\draw  [fill ] (4,1) circle [radius=0.1]; \draw  [fill ] (5,1) circle [radius=0.1];\draw  [fill ] (6,1) circle [radius=0.1];\draw  [fill ] (7,1) circle [radius=0.1];
\draw  [fill ] (8,1) circle [radius=0.1]; \draw  [fill ] (9,1) circle [radius=0.1];\draw  [fill ] (10,1) circle [radius=0.1];
\node at (10.5, 0.1) {$\ba$}; \node at (10.5, 0.9) {$\bx$};

\draw[red, ultra thick] (0,1)--(4,1)--(4,0)--(10,0);
\node at (4, 1.6) {$j$};
\end{tikzpicture}}
\end{equation}
This is an indication of the relevance of $\rsk$, and in particular this
 formulation, for last passage percolation and other models in the KPZ class. Moreover, this formulation is amenable to a generalisation which will be important
 in treating the positive temperature case relating to directed polymers. In Section \ref{RSK_matrix} we will prove this connection 
 in more detail.
 
\subsection{ Gelfand-Tsetlin parametrisation.}\label{sec:GT}
Gelfand-Tsetlin  ($\GT$)  patterns are triangular arrays of numbers $(z^i_{j})_{1\leq j \leq i\leq N}$ which interlace, meaning that
\begin{align}\label{interlace}
z^{i+1}_{j+1}\leq z^i_j\leq z^{i+1}_j,
\end{align}
and for this reason they are depicted as 
\begin{eqnarray}\label{Ppattern}
\begin{array}{cccccccccccc}
&&&&z^1_{1}&&&&&&\\
&&&z^2_{2}&&z^2_{1}&&&&&\\
&&z^3_{3}&&z^3_2&&z^3_{1}&&&&\\
&\iddots&\iddots&&&&\ddots&\ddots&&\\
z^N_{N}&z^N_{N-1}&&&\ldots&&&z^N_2&z^N_{1}&&&.\\
\end{array}
\end{eqnarray}
They provide a particularly useful parametrisation of Young tableaux:
given a Young tableau consisting of letters $1,2,...,N$ (not all of which have to appear in the tableau) the Gelfand-Tsetlin
variables $z^i_j$ are defined as
\begin{align}\label{GTvar}
z^i_j:= \sum_{k=j}^i \sharp\{\text{ $k$'s in the $j^{th}$ row}\}
\end{align}
Given this definition, the right inequality in \eqref{interlace} is immediate, while the left one is a consequence of the fact that  
entries along columns in a Young tableau are strictly increasing.
 \vskip 2mm
 The bottom row of a $\GT$ pattern is called the {\bf shape}, since $z^N_i$ equals the length of the $i$-th row of the corresponding tableau and thus
the collection of $z^N_1,z^N_2,...$ determines the shape of the tableau. We will denote the shape of a $\GT$ pattern $\sfZ$ by $sh(\sfZ)$ and similarly the
shape of a tableau $P$ by $sh(P)$. We will also often identify a $\GT$ pattern $\sfZ$ with the corresponding Young tableau $P$.
 \vskip 2mm
 Notice that definition \eqref{GTvar} is in agreement with definition \eqref{cumul} of the cumulative variables $\xi_j$ and 
 $\tilde \xi_j$ and so Gelfand-Tsetlin patterns 
  provide the natural parametrisation for the framework of $\rsk$ as described in Proposition \ref{RSK_maxplus}.
   Moreover, as we will see they provide a structure, which couples models 
   in the KPZ class, e.g. longest increasing subsequence or last passage percolation,
   with Random Matrices. To give a preliminary idea of how this comes about, 
   we point out that, as a consequence of Schensted's Theorem \ref{schensted}, 
    entry $z^N_1$ of a Gelfand-Tsetlin pattern is equal to the length of the first row of a Young Tableau.  
   On the other hand, it turns out that in certain situations {\it random } Gelfand-Tsetlin 
   patterns have a bottom row with law identical to the law of the eigenvalues of certain random matrices, see e.g. \cite{B01}. 
   Therefore, the element $z^N_1$ has a dual nature: it is an observable of models
   within the KPZ class and at the same time its distribution is identical (or closely related) to the distribution 
   of the largest eigenvalue of certain random matrices.  This coupling has played a central role in formulating the integrable structure
   of models in the KPZ universality and we will explore this in the coming sections.
   
   \vskip 2mm
   An important {\it invariant} of $\rsk$ is the {\bf type} of a tableau $P$, denoted by $ type(P)$. In $\GT$ parametrisation, this is defined to be the vector
   \begin{align*}
   \big(|z^i|-|z^{i-1}| \, \colon\, i=1,...,N\big), \quad \text{with} \quad |z^i|:=\sum_{j=1}^i z^{i}_j,
   \end{align*}  
   and the convention that $|z^0|=0$. Considering a pair of Gelfand-Tsetlin patterns $(\sfZ,\sfZ')$ as the output of $\rsk$ with input matrix $W=(w^{i}_{j}\colon 1\leq i \leq n, \, 1\leq j\leq N)$, that is $(\sfZ,\sfZ')=\rsk(W)$, with $\sfZ$ corresponding to the $P$ tableau and $\sfZ'$ to the $Q$ tableau in the $\rsk$ correspondence, then it holds that
   \begin{align}\label{type}
   |z^k|-|z^{k-1}|=\sum_{i=1}^n w^i_k.
   \end{align}
   This is due to the fact that both sides represent the number of letters $k$ inserted from $W$ via $\rsk$. This is clear for the right-hand side, since 
   (by definition) $w^{i}_j$ is considered as the number of letters $j$ in word $i$, while for the left-hand side this follows from
   \begin{align*}
   |z^k|-|z^{k-1}|&=\sum_{j=1}^k z^k_j - \sum_{j=1}^{k-1} z^{k-1}_j = z^k_k +\sum_{j=1}^{k-1} (z^{k}_j-z^{k-1}_j)\\
   &=\sharp\{\text{$k$'s in word $k$}\} + \sum_{j=1}^{k-1} \sharp\{\text{$k$'s in word $j$} \}.
   \end{align*}
   The {\it type} plays an important role in exactly solvable, via $\rsk$, probability models, as we will see in Sections \ref{geomLPP} and \ref{sec:loggamma}.
   
   \section{A geometric lifting of RSK - Kirillov's ``Tropical RSK''}\label{sec:grsk} 
   As we have seen in Proposition \ref{RSK_maxplus},
    $\rsk$ can be encoded in terms of piecewise linear recursive relations, using the
   $(\max,+)$ algebra. Kirillov \cite{K01} replaced the $(\max,+)$ in the set of $\rsk$'s piecewise linear relations with
   relations  $(+,\times)$,
   thus establishing a {\it geometric lifting} of $\rsk$, which he named {\it tropical} $\rsk$.  This name was given by Kirillov in honour of
   P. Sch\"utzenberger, see \cite{K01}, page 84, for some clues regarding the etymology of this name. 
   However, since the term tropical has been reserved for the opposite passage from the $(+,\times)$
   algebra to the $(\max,+)$, the term geometric $\rsk$ ($\grsk$) has now prevailed for the geometric lifting of the $\rsk$ correspondence. In this section we will present the construction of $\grsk$ following mostly a matrix reformulation by Noumi and Yamada \cite{NY04}  motivated by discrete integrable systems. The approach is closely related to that of
   Proposition  \ref{RSK_maxplus}. Let us start with the definition of the 
   {\it geometric row insertion}.  
   \subsection{Geometric $\rsk$ via a matrix formulation.}\label{RSK_matrix}
   We start with the following definition (compare to Proposition \ref{RSK_maxplus}):
   \begin{definition}\label{def:grsk}
Let $1\le i \le N$. Consider two {\it words} $\bx= (x_i,...,x_N)$ and $\ba=(a_i,...,a_N).$ We  define the {\bf geometric lifting of row insertion} or shortly {\bf geometric row insertion} of the word $\ba$ into the word $\bx$,  denoted by  
\begin{align*}
\begin{array}{ccc}& \ba& \\ \bx & \cross & \tilde \bx \\& \bb& \end{array},
\end{align*}
  as the transformation that takes  $(\bx,\ba)$ into a new pair $(\tilde \bx,\bb)$ with $\tilde \bx=(\tilde x_i, \cdots \tilde x_N)$ and
   $\bb=(b_{i+1}\cdots b_N)$, 
  which in cumulative variables 
  \begin{align}\label{g-cumul}
\xi_j=x_i\cdots x_j, \quad \text{and}\quad \tilde \xi_j=\tilde x_i\cdots \tilde x_j, \qquad \text{for} \,\, j\geq i, 
\end{align}
  is encoded via
\begin{equation}\label{gRSKrec}
\begin{cases}
\tilde \xi_i=\xi_i \cdot a_i, &\\[4pt]
\tilde \xi_k =a_k\big(\tilde \xi_{k-1}+ \xi_k\big), & i+1\le k\le N\\[5pt]
b_{k} = a_k\frac{\xi_k \,\tilde \xi_{k-1}}{\xi_{k-1} \,\tilde \xi_k}, & i+1\leq k\leq N.
\end{cases}
\end{equation}
for $i<N$.
If $i=N$, then $\tilde \xi_N=\xi_N \cdot a_N$ and the output $\bb$ is empty and we write $\bb=\emptyset$. 
\end{definition}
It was observed by Noumi and Yamada that the geometric row insertion $(\bx,\ba)\mapsto(\tilde{\bx},\bb)$, described
in Definition \ref{def:grsk} via relations \eqref{gRSKrec}, is equivalent to a system of equations, related to {\it discrete Toda systems}
(see \cite{NY04}, Lemma 2.2 and Remark 2.3):
\begin{equation}\label{eq:toda}
{\begin{split}
a_i x_i= \tilde x_i, &\quad \text{and} \quad a_j x_j=\tilde x_j b_j \quad \text{for}\,\, j\geq i+1,\\
\frac{1}{a_i}+\frac{1}{x_{i+1}}=\frac{1}{b_{i+1}}, &  \quad \text {and}\quad \frac{1}{a_j}+\frac{1}{x_{j+1}}=\frac{1}{\tilde x_j}+\frac{1}{b_{j+1}}
\quad \text{for}\,\, j\geq i+1.
\end{split}
}
\end{equation}
The derivation of the system of equations in \eqref{eq:toda} from \eqref{gRSKrec} is a matter of a simple algebraic manipulation.
\eqref{eq:toda} can be put into a matrix form as:
\begin{align}\label{matrixform}
&\begin{pmatrix}
1\\&\ddots\\ &&1\\&&&\bar a_i&1\\ &&&& \bar a_{i+1}&1\\ &&&&&\ddots&\ddots\\ &&&&&&& 1\\&&&&&&& \bar a_N 
\end{pmatrix}
\begin{pmatrix}
1\\&\ddots\\ &&1\\&&&\bar x_i&1\\ &&&& \bar x_{i+1}&1\\ &&&&&\ddots&\ddots\\ &&&&&&& 1\\ &&&&&&& \bar x_N 
\end{pmatrix}\\
&\qquad=\begin{pmatrix}
1\\&\ddots\\ &&1\\&&&\overline{\tilde x}_i&1\\ &&&& \overline {\tilde x}_{i+1}&1\\
 &&&&&\ddots&\ddots\\ &&&&&&& 1\\ &&&&&&& \overline {\tilde x}_N 
\end{pmatrix}
\begin{pmatrix}
1\\&\ddots\\ &&1\\&&&1&\\ &&&& \bar b_{i+1}&1\\  &&&&&\ddots&\ddots\\ &&&&&&& 1\\ &&&&&&& \bar b_N 
\end{pmatrix} \notag
\end{align}
where we have used the notation
\begin{align*}
\bar x :=\frac{1}{x},
\end{align*}
for a nonnegative real $x$. For a vector $\bx$ with nonnegative real entries, we will denote by $\bar\bx$ the vector 
whose entries are the inverses of the corresponding entries in $\bx$.
In the above matrices
the entries that are left empty are equal to zero. Moreover,
 in the first three matrices the upper left corner is an identity matrix of size $i-1$,
while in the fourth matrix the upper left corner is an identity matrix of size $i$.
We put this matrix equation in a more concrete notation as follows:
For a vector $\bx=(x_i,...,x_n)$ we define the matrix 
\begin{align}\label{Ematrices}
E_i(\bx):=\sum_{j=1}^{i-1} E_{jj}+\sum_{j=i}^N x_j E_{jj}+\sum_{j=i}^{N-1} E_{j,j+1},
\end{align}
where for $1\leq i,j\leq n$ we define the matrices $E_{ij}:=(\gd_{ai}\gd_{bj})_{1\leq a,b \leq n}$ with $\gd_{ab}$ being the Kronecker delta. In
other words, $E_{ij}$ has entry $(i,j)$ equal to $1$ and all others equal to zero. 
We will denote $E_1(\bx)$ simply by $E(\bx)$.
Then \eqref{matrixform} is written more concretely as
\begin{align}\label{E-eq}
E_i(\bar \ba) E_i(\bar \bx) = E_i({\overline {\tilde \bx}}) E_{i+1}(\bar \bb)
\end{align}
The entries of matrices $E_i(\bx)$, with $\bx=(x_i,...,x_N)$,
 can be readily read graphically from the following diagram 
  \begin{equation*}
{\begin{tikzpicture}[scale=.6]
\draw (10,0) -- (10,-1); \draw (9,0) -- (9,-1); \draw (8,0) -- (8,-1); \draw (7,0) -- (7,-1); \draw (6,0) -- (6,-1);
\draw (5,0) -- (5,-1); \draw (4,0) -- (4,-1); \draw (3,0) -- (3,-1); 
\draw (2,0) -- (2,-1); \draw (1,0) -- (1,-1);\draw (0,0) -- (0,-1);
 \draw(3,0)--(4,-1); \draw(4,0)--(5,-1); \draw(5,0)--(6,-1);
\draw(6,0)--(7,-1); \draw(7,0)--(8,-1); \draw(8,0)--(9,-1); \draw(9,0)--(10,-1);
\node at (10.7, -0.3) {$\bx$}; 
 \node at (7, 0.5) {$j$}; \node at (0, 0.5) {$1$}; \node at (0, -1.5) {$1$}; \draw[ultra thick, dotted] (0.5,0.5)--(2.5,0.5);
\draw[ultra thick, dotted] (0.5,-1.5)--(2.5,-1.5);
  \draw[ultra thick, dotted] (3.5,-1.5)--(6.5,-1.5);
 \draw[ultra thick, dotted] (3.5,0.5)--(6.5,0.5);  \draw[ultra thick, dotted] (7.5,0.5)--(9.5,0.5);
 \draw[ultra thick, dotted] (8.7,-1.5)--(9.5,-1.5); \node at (10, 0.5) {$N$}; \node at (10, -1.5) {$N$};
\draw[ultra thick, red, ->] (3,0)--(3,-1); \draw[ultra thick, blue, ->]  (7,0)--(8,-1);  \node at (7.8, -1.5) {$j+1$};
\node at (3, -1.5) {$i$}; \node at (3, 0.5) {$i$};
\foreach \i in {0,...,10} {\node[draw,circle,inner sep=1pt,fill] at (\i,0) {};
	}
\foreach \i in {0,...,10} {\node[draw,circle,inner sep=1pt,fill] at (\i,-1) {};
	}
\end{tikzpicture}} 
\end{equation*}
where on the diagonal edges and on the 
first $(i-1)$ vertical edges we assign the value $1$ and on the rest of the vertical edges we assign the values 
$x_i,x_{i+1},\dots, x_N$ in this order.  Then the $(k,\ell)$ entry of $E_i(\bx)$ is given by 
$E_i(\bx)_{(k,\ell)}=\sum_{\pi\colon (1,k)\to (2,\ell)} \sfw\sft(\pi)$, where the sum is over all down-right paths 
(consisting in this case of one step), along existing edges,
 starting from 
site $k$ in the top row to site $\ell$ in the bottom row and the weight of the path $\pi$ is given by the product of the weights
along the edges that path $\pi$ traces. Furthermore, one can easily check that for products of the form 
$E(\by^1,...,\by^k):=E_1(\by^1)E_2(\by^2)\cdots E_k(\by^k)$, where we understand that for $i=1,...,k $ the vector $\by^i=(y^i_i,...,y^i_N)$,
the entries can be read graphically from the following diagram:
  \begin{equation*}
{\begin{tikzpicture}[scale=.6]
\draw (10,0) -- (10,-5); \draw (9,0) -- (9,-5); \draw (8,0) -- (8,-5); \draw (7,0) -- (7,-5); \draw (6,0) -- (6,-5);
\draw (5,0) -- (5,-5); \draw (4,0) -- (4,-5); \draw (3,0) -- (3,-4); \draw (2,0) -- (2,-3); \draw (1,0) -- (1,-2);
\draw (0,0) -- (0,-1);

\draw(0,0)--(5,-5); \draw(1,0)--(6,-5); \draw(2,0)--(7,-5); \draw(3,0)--(8,-5); \draw(4,0)--(9,-5); \draw(5,0)--(10,-5);
\draw(6,0)--(10,-4); \draw(7,0)--(10,-3); \draw(8,0)--(10,-2); \draw(9,0)--(10,-1);
\node at (10.7, -0.3) {$\by^1$};  \node at (10.7, -4.3) {$\by^k$};
\node at (3, 0.5) {$i$}; 
\draw[ultra thick, red, ->] (3,0)--(3,-1)--(4,-2)--(4,-3)--(5,-4)--(5,-5); 
\node at (5, -5.5) {$j$}; 
\draw[ultra thick, dotted] (10.6,-1)--(10.6,-3);
\foreach \i in {0,...,10} {\node[draw,circle,inner sep=1pt,fill] at (\i,0) {};
	}
\foreach \i in {0,...,4} {\node[draw,circle,inner sep=1pt,fill] at (\i,-\i-1) {};
	}
\foreach \i in {5,...,10} {\node[draw,circle,inner sep=1pt,fill] at (\i,-5) {};
	}
\end{tikzpicture}} 
\end{equation*}
where a vertical edge connecting $(a,b)$ to $(a+1,b)$ (in matrix coordinates) is assigned the weight $y^a_b$ and all the
diagonal edges are assigned weight one. Entry $(i,j)$ of the matrix $E(\by^1,...,\by^k )$ is given by 
$E(\by^1,...,\by^k)_{(i,j)}=\sum_{\pi\colon (1,i)\to(k\wedge j +1,j)} \sfw\sft(\pi)$, where the sum is over all down-right paths, along existing edges,
 from site $(1,i)$ (in matrix coordinates) in the
top row to site $(k\wedge j+1, j)$,  $k\wedge j:=\min(k,j)$, along the lower border
and where the weight of a path is $\sfw\sft(\pi):=\prod_{{\be}\in \pi} w_{{\be}}$, with the product over all edges ${\be}$ that are traced by the path $\pi$.
 
More remarkably, minor determinants of matrices $ E(\by^1,...,\by^k)$ have also  a similar graphical representation. If we denote
by   $\det E(\by^1,...,\by^k)^{i_1,...,i_r}_{j_1,...,j_r} $ the determinant of the sub-matrix of $ E(\by^1,...,\by^k)$ which
includes rows $i_1<\cdots<i_r$ and columns $j_1<\cdots<j_r$ then
 \begin{equation}\label{eq:non-int-det}
  \det E(\by^1,...,\by^k)^{i_1,...,i_r}_{j_1,...,j_r} = \sum_{\pi_1,...,\pi_r \, \in \,\Pi^{\,i_1,...,i_r}_{\,j_1,...,j_r}} \sfw\sft(\pi_1) \cdots \sfw\sft(\pi_r),
  \end{equation}
  where $\Pi^{\,i_1,...,i_r}_{\,j_1,...,j_r}$ is the set of directed, non-intersecting paths, starting at locations $i_1,...,i_r$
   in the top row and ending at locations $j_1,...,j_r$ at the bottom border of the grid :
    \begin{equation*}
{\begin{tikzpicture}[scale=.6]
\draw (10,0) -- (10,-5); \draw (9,0) -- (9,-5); \draw (8,0) -- (8,-5); \draw (7,0) -- (7,-5); \draw (6,0) -- (6,-5);
\draw (5,0) -- (5,-5); \draw (4,0) -- (4,-5); \draw (3,0) -- (3,-4); \draw (2,0) -- (2,-3); \draw (1,0) -- (1,-2);
\draw (0,0) -- (0,-1);

\draw(0,0)--(5,-5); \draw(1,0)--(6,-5); \draw(2,0)--(7,-5); \draw(3,0)--(8,-5); \draw(4,0)--(9,-5); \draw(5,0)--(10,-5);
\draw(6,0)--(10,-4); \draw(7,0)--(10,-3); \draw(8,0)--(10,-2); \draw(9,0)--(10,-1);
\node at (10.7, -0.3) {$\by^1$};  \node at (10.7, -4.3) {$\by^k$};
\node at (1, 0.5) {$i_1$}; \draw[ultra thick, dotted] (2,0.5)--(6,0.5); \node at (7, 0.5) {$i_r$};
\draw[ultra thick, red, ->] (1,0)--(1,-1)--(2,-2)--(2,-3)--(3,-4); \draw[ultra thick, blue, ->]  (7,0)--(7,-2)--(9,-4)--(9,-5);
\node at (3, -5.5) {$j_1$}; \draw[ultra thick, dotted] (4,-5.5)--(8,-5.5); \node at (9, -5.5) {$j_r$};
\draw[ultra thick, dotted] (10.6,-1)--(10.6,-3);
\foreach \i in {0,...,10} {\node[draw,circle,inner sep=1pt,fill] at (\i,0) {};
	}
\foreach \i in {0,...,4} {\node[draw,circle,inner sep=1pt,fill] at (\i,-\i-1) {};
	}
\foreach \i in {5,...,10} {\node[draw,circle,inner sep=1pt,fill] at (\i,-5) {};
	}
\end{tikzpicture}} 
\end{equation*}
Notice that the non-intersecting property and the orderings $i_1<\cdots<i_r$ and $j_1<\cdots j_r$
 enforces that the path starting at $i_a$ will end at $j_a$ for all $a=1,...,r$.
 
Since $E(\by^1,...,\by^k)_{(i,j)}=\sum_{\pi\colon (1,i)\to(k\wedge j+1,j)} \sfw\sft(\pi)$,
 \eqref{eq:non-int-det} is a consequence of the Lindstr\"om-Gessel-Viennot theorem \cite{L73, GV89, S90}.
 This theorem has played a central role in the developments
around {\it determinantal processes} as it gives determinantal expressions for the weights (or probability) of non-intersecting paths.
 Its precise statement is as follows:
\begin{theorem}[Lindstr\"om-Gessel-Viennot]\label{thm:LGV}
Let $G=(V,E)$ be a directed, acyclic graph with no multiple edges, with each edge $\be$ being assigned
 a {\it weigth} $\sfw\sft({\be})$. A path $\pi$
on $G$ is assigned a weight $\sfw\sft(\pi)=\prod_{\be\in\pi}\sfw\sft(\be) $. We say that two paths on $G$ are {\it non-intersecting} if they do not
share any vertex. Consider, now,  $(u_1,...,u_r)$ and $(v_1,...,v_r)$ two disjoint 
subsets of $V$ and denote by $\Pi^{\,u_1,...,u_r}_{\,v_1,...,v_r}$ the set of all $r$-tuples of non-intersecting paths $\pi_1,...,\pi_r$ 
that start from $u_1,...,u_r$ and end at $v_1,...,v_r$, respectively. We assume that $\{u_1,...,u_r\}$ and $\{v_1,...,v_r\}$
 have the property that for $i<j$ and $i'>j'$,
 any two paths $\pi\in \Pi^{u_i}_{v_j}$ and $ \pi'\in \Pi^{u_{i'}}_{v_{j'}}$, which start at $u_i,u_{i'}$ and end at $v_j, v_{j'}$, necessarily intersect.
Then
\begin{align*}
\det\Big( \sum_{\pi\in \Pi^{\,u_i}_{\,v_j}} \sfw\sft(\pi) \Big)_{1\leq i,j\leq r} = \sum_{\pi_1,...,\pi_r\,\in\, \Pi^{\,u_1,...,u_r}_{\,v_1,...,v_r}} \sfw\sft(\pi_1)\cdots\sfw\sft(\pi_r)
\end{align*} 
\end{theorem}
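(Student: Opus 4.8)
The plan is to prove the identity by expanding the determinant into a signed sum over tuples of paths and then exhibiting a sign-reversing, weight-preserving involution that kills every tuple containing an intersection. First I would write, using multilinearity of the determinant,
\[
\det\Big( \sum_{\pi\in \Pi^{\,u_i}_{\,v_j}} \sfw\sft(\pi) \Big)_{1\leq i,j\leq r}
= \sum_{\sigma\in S_r} \sign(\sigma) \prod_{i=1}^r \Big( \sum_{\pi_i\in \Pi^{\,u_i}_{\,v_{\sigma(i)}}} \sfw\sft(\pi_i) \Big)
= \sum_{(\sigma,\,\pi_1,\dots,\pi_r)} \sign(\sigma)\, \sfw\sft(\pi_1)\cdots \sfw\sft(\pi_r),
\]
where the last sum runs over all data consisting of a permutation $\sigma\in S_r$ together with paths $\pi_i\colon u_i\to v_{\sigma(i)}$ for $i=1,\dots,r$. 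I would then split this sum according to whether the $r$-tuple $(\pi_1,\dots,\pi_r)$ is pairwise vertex-disjoint (``non-intersecting'') or not.

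For the non-intersecting part, the ordering hypothesis on $\{u_i\}$ and $\{v_j\}$ forces $\sigma=\mathrm{id}$: if $\sigma$ had an inversion, i.e.\ indices $i<j$ with $\sigma(i)>\sigma(j)$, then $\pi_i\in\Pi^{\,u_i}_{\,v_{\sigma(i)}}$ and $\pi_j\in\Pi^{\,u_j}_{\,v_{\sigma(j)}}$ would necessarily intersect by assumption, contradicting disjointness. Hence the non-intersecting contribution is exactly $\sum_{\pi_1,\dots,\pi_r\in\Pi^{\,u_1,\dots,u_r}_{\,v_1,\dots,v_r}} \sfw\sft(\pi_1)\cdots \sfw\sft(\pi_r)$, which is the asserted right-hand side. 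It therefore remains to show that the intersecting tuples contribute $0$.

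To do this I would construct an involution $\Phi$ on the set of intersecting data. Fix once and for all a linear order on the vertex set $V$. Given an intersecting tuple $(\sigma,\pi_1,\dots,\pi_r)$, let $i_0$ be the smallest index such that $\pi_{i_0}$ shares a vertex with some other path, let $p$ be the first vertex along $\pi_{i_0}$ (traversed from $u_{i_0}$) that lies on another path, and let $j_0$ be the smallest index $\neq i_0$ with $p\in\pi_{j_0}$. Define $\Phi$ by exchanging the portions of $\pi_{i_0}$ and $\pi_{j_0}$ strictly after $p$: this produces a new path from $u_{i_0}$ to $v_{\sigma(j_0)}$ and one from $u_{j_0}$ to $v_{\sigma(i_0)}$, so $\sigma$ is replaced by $\sigma\circ(i_0\,j_0)$ and $\sign(\sigma)$ flips. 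Because the combined multiset of edges traversed by the $r$ paths is unchanged, the product $\sfw\sft(\pi_1)\cdots \sfw\sft(\pi_r)$ is preserved. Acyclicity guarantees that the swapped objects are again genuine directed paths (a directed path in a DAG cannot revisit a vertex), and I would use the minimality in the choice of $i_0$ and of $p$, together with acyclicity, to check that the same selection rule recovers the same triple $(i_0,p,j_0)$ from $\Phi(\sigma,\pi_1,\dots,\pi_r)$, so that $\Phi^2=\mathrm{id}$. The intersecting data then pair off into orbits of size two carrying opposite signs and equal weight, so their total contribution vanishes, which finishes the proof.

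The step I expect to be the main obstacle is precisely the verification that $\Phi$ is an involution: swapping tails at $p$ could a priori create or destroy intersections elsewhere and thereby change which triple $(i_0,p,j_0)$ the rule picks out next time. The key point to nail down is that the union of vertices used by the family is invariant under $\Phi$, that the paths $\pi_1,\dots,\pi_{i_0-1}$ and the head segments of $\pi_{i_0},\pi_{j_0}$ up to and including $p$ are untouched, and that acyclicity forbids a vertex before $p$ on $\pi_{i_0}$ from reappearing on the tail of $\pi_{i_0}$ after $p$; together these pin down $i_0$, then $p$, then $j_0$ after applying $\Phi$, so $\Phi$ is its own inverse.
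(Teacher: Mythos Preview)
The paper does not actually prove this theorem: it merely states it as a tool and cites the original references \cite{L73, GV89, S90}. So there is nothing to compare against on the paper's side.

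Your argument is the classical tail-swapping involution proof and is correct. The verification that $\Phi^2=\mathrm{id}$, which you flag as the delicate point, goes through exactly as you outline: after the swap, the paths $\pi_1,\dots,\pi_{i_0-1}$ are untouched and the total vertex set of the family is unchanged, so $i_0$ is recovered; the prefix of $\pi_{i_0}$ up to $p$ is unchanged, and a vertex on that prefix cannot lie on the new $\pi_{j_0}'$ (its prefix was already excluded by the choice of $p$, and its tail is the old tail of $\pi_{i_0}$, so a coincidence there would mean $\pi_{i_0}$ revisits a vertex, impossible in a DAG), so $p$ is recovered; and $p$ still lies on $\pi_{j_0}'$ while the paths with smaller index are unchanged, so $j_0$ is recovered. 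One small cosmetic point: you introduce a linear order on $V$ but never use it, since ``first vertex along $\pi_{i_0}$'' refers to the order along the path. You can simply drop that sentence.
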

There is also a set of matrices dual to $E_i(\bx)$, which we now introduce, the entries and minor determinants 
of which are given in terms of the total weights of paths moving in the more standard up-right, directed fashion.
Let us start with the $i=1$ case,
 where we recall the convention that $E_1(\bx)=E(\bx)$, and define 
\begin{align}\label{HErelation}
H(\bx) :=D E(\bar{\bx})^{-1} D^{-1},\qquad \text{with} \qquad D=\text{diag}\big((-1)^{i-1}\big)_{i=1}^N.
\end{align}
An easy computation shows that $H(\bx)= \sum_{1\leq i\leq j \leq N} x_i x_{i+1}\cdots x_j E_{ij}$, that is, the $(i,j)$ entry of $H(\bx)$
equals $ x_i x_{i+1}\cdots x_j$ if $i \leq j$ and zero otherwise.

In general, for $k\geq1$, and $\bx=(x_k,...,x_n)$, in which case we note that 
$ H(\bx)=H(x_k,...,x_N)$ is a $k\times k$ matrix, we define the $N\times N$ matrix
\begin{align*}
H_k(\bx): =\left[ \begin{array}{cc}
I_{k-1} & {\bf 0}\\
{\bf 0} & H(\bx)
\end{array}
 \right]
\end{align*}
where $I_{k-1}$ is a $(k-1)\times (k-1)$ identity matrix.
Then using \eqref{HErelation} equation \eqref{E-eq} can be equivalently written as 
\begin{align}\label{H-eq}
H_i( \bx) H_i( \ba) = H_{i+1}(\bar \bb) H_i( {\tilde \bx}) 
\end{align}
Similarly to $E(\by^1,...,\by^k)$, products of the form $H(\by^1,...,\by^n):=H(\by^1)\cdots H(\by^n)$ or more generally $H(\by^1,...,\by^k):=H_k(\by^k)\cdots H_1(\by^1)$ 
   have the property that their entries and their minor determinants 
  are given via ensembles of non-intersecting paths.  This is again a consequence of the Lindstr\"om-Gessel-Viennot theorem: 
  denoting by $\det H(\by^1,...,\by^k)^{i_1,...,i_r}_{j_1,...,j_r}$ the minor determinant 
  of $H(\by^1,...,\by^k)$ consisting of rows $i_1<\cdots<i_r$ and columns $j_1<\cdots<j_r$, then
    \begin{equation*}
  \det H(\by^1,...,\by^k)^{i_1,...,i_r}_{j_1,...,j_r} = \sum_{\pi_1,...,\pi_r} \sfw\sft(\pi_1) \cdots \sfw\sft(\pi_r)
  \end{equation*}
  where the sum is over up-right, non crossing paths, starting at locations $i_1,...,i_r$ at the bottom border (including possibly the diagonal part) and ending at locations
  $j_1,...,j_r$  at the top row of the grid. 
   \begin{equation*}
{\begin{tikzpicture}[scale=.6]
\draw (0,0) -- (10,0) ; \draw (1,-1) -- (10,-1); \draw (2,-2) -- (10,-2); \draw (3,-3) -- (10,-3); \draw (4,-4) -- (10,-4);
\draw (10,0) -- (10,-4); \draw (9,0) -- (9,-4); \draw (8,0) -- (8,-4); \draw (7,0) -- (7,-4); \draw (6,0) -- (6,-4);
\draw (5,0) -- (5,-4); \draw (4,0) -- (4,-4); \draw (3,0) -- (3,-3); \draw (2,0) -- (2,-2); \draw (1,0) -- (1,-1);
\draw  [fill ] (0,0) circle [radius=0.1];\draw  [fill ] (1,0) circle [radius=0.1];\draw  [fill ] (2,0) circle [radius=0.1]; \draw  [fill ] (3,0) circle [radius=0.1]; \draw  [fill ] (4,0) circle [radius=0.1]; \draw  [fill ] (5,0) circle [radius=0.1];\draw  [fill ] (6,0) circle [radius=0.1];\draw  [fill ] (7,0) circle [radius=0.1]; \draw  [fill ] (8,0) circle [radius=0.1]; \draw  [fill ] (9,0) circle [radius=0.1];\draw  [fill ] (10,0) circle [radius=0.1];

\draw  [fill ] (1,-1) circle [radius=0.1];\draw  [fill ] (2,-1) circle [radius=0.1];\draw  [fill ] (3,-1) circle [radius=0.1]; \draw  [fill ] (4,-1) circle [radius=0.1]; \draw  [fill ] (5,-1) circle [radius=0.1]; \draw  [fill ] (6,-1) circle [radius=0.1];\draw  [fill ] (7,-1) circle [radius=0.1];\draw  [fill ] (8,-1) circle [radius=0.1]; \draw  [fill ] (9,-1) circle [radius=0.1]; \draw  [fill ] (10,-1) circle [radius=0.1];

\draw  [fill ] (2,-2) circle [radius=0.1];\draw  [fill ] (3,-2) circle [radius=0.1]; \draw  [fill ] (4,-2) circle [radius=0.1]; \draw  [fill ] (5,-2) circle [radius=0.1]; \draw  [fill ] (6,-2) circle [radius=0.1];\draw  [fill ] (7,-2) circle [radius=0.1];\draw  [fill ] (8,-2) circle [radius=0.1]; \draw  [fill ] (9,-2) circle [radius=0.1]; \draw  [fill ] (10,-2) circle [radius=0.1];

\draw  [fill ] (3,-3) circle [radius=0.1]; \draw  [fill ] (4,-3) circle [radius=0.1]; \draw  [fill ] (5,-3) circle [radius=0.1]; \draw  [fill ] (6,-3) circle [radius=0.1];\draw  [fill ] (7,-3) circle [radius=0.1];\draw  [fill ] (8,-3) circle [radius=0.1]; \draw  [fill ] (9,-3) circle [radius=0.1]; \draw  [fill ] (10,-3) circle [radius=0.1];

 \draw  [fill ] (4,-4) circle [radius=0.1]; \draw  [fill ] (5,-4) circle [radius=0.1]; \draw  [fill ] (6,-4) circle [radius=0.1];\draw  [fill ] (7,-4) circle [radius=0.1];\draw  [fill ] (8,-4) circle [radius=0.1]; \draw  [fill ] (9,-4) circle [radius=0.1]; \draw  [fill ] (10,-4) circle [radius=0.1];
\node at (10.7, 0.1) {$\by^1$};  \node at (10.7, -4.0) {$\by^k$};
\node at (4, 0.5) {$j_1$}; \draw[ultra thick, dotted] (4.7,0.5)--(6.3,0.5); \node at (7, 0.5) {$j_r$};
\draw[ultra thick, red, ->] (2,-2)--(3,-2)--(3,-1)--(4,-1)--(4,0);
\draw[ultra thick, blue, ->]  (5,-4)--(5,-2)--(6,-2)--(6,-1)--(7,-1)--(7,0);
\node at (1.5, -2) {$i_1$}; \draw[ultra thick, dotted] (2,-2.5)--(3.7,-4); \node at (5, -4.5) {$i_r$};
\draw[ultra thick, dotted] (10.6,-1)--(10.6,-3);
\end{tikzpicture}} 
\end{equation*}
Each vertex $(a,b)$ of the grid is assigned a weight $y^a_b$ and in this case the total weight of a path is $\sfw\sft(\pi)=\prod_{(a,b)\in \pi} y^a_b$.
\vskip 2mm
 Noumi and Yamada \cite{NY04} used these observations in order to give a matrix reformulation of geometric $\rsk$ 
 allowing the output of geometric RSK to be expressed in terms of 
 total weight (often also called partition functions) of ensembles of non-intersecting paths.
 The main theorem towards this is the following whose origins are in studies around total positivity  \cite{BFZ96} (but see also 
 \cite{K01}).
 
\begin{theorem}\label{thm:NY}
Given a matrix $ \sfX:=(x^i_j \colon 1\leq i \leq n, \,1\leq j \leq N)=: (\bx^1,...,\bx^n)^{\sfT}$ the matrix equation
\begin{align}\label{H-eq}
H(\bx^1) H(\bx^2) \cdots H(\bx^n) = H_k(\by^k) H_{k-1}(\by^{k-1}) \cdots H_1(\by^1) ,\qquad k=\min(n,N),
\end{align}
has a unique solution $(\by^1,...,\by^k)$ with $\by^i:=(y^i_i,...,y^i_N)$, given by
\begin{align}\label{H-uni}
y^i_i=\frac{\tau^i_i}{\tau^{i-1}_i},\quad \text{and}\quad y^i_j=\frac{\tau^i_j\,\tau^{i-1}_{j-1}}{\tau^{i-1}_j\,\tau^i_{j-1}}\quad \text{for} \quad i<j,
\end{align}
where 
\begin{align}\label{H-weight}
\tau^i_j :=\sum_{\pi_1,...,\pi_i\in \Pi^{1,...,i}_{j-i+1,...,j}} \sfw\sft(\pi_1)\cdots \sfw\sft(\pi_i)
\end{align}
 is the total weight (partition function) of an ensemble of $i$ non-intersecting, down-right paths $\pi_1,...,\pi_i$, along the entries of $X$, starting from $(1,1),...,(1,i)$ and ending at $(n,j-i+1),...,(n,j)$, respectively,
with the weight of a path $\pi_r$ given by $\sfw\sft(\pi_r):=\prod_{(a,b)\in \pi_r}  x^a_b$.
\end{theorem}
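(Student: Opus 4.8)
The plan is to realise the geometric $\rsk$ correspondence as an iteration of the elementary geometric row insertion, to read the factorisation off from this iteration, and then to identify the resulting parameters by a Lindstr\"om--Gessel--Viennot computation. Throughout write $M:=H(\bx^1)\cdots H(\bx^n)$. For existence, recall that the elementary geometric row insertion of Definition \ref{def:grsk}, written in the Toda form \eqref{eq:toda}, is exactly the matrix identity $H_i(\bx)\,H_i(\ba)=H_{i+1}(\bar\bb)\,H_i(\tilde\bx)$ (the $H$-analogue of \eqref{E-eq}). Starting from the trivial identity $M=H_1(\bx^1)H_1(\bx^2)\cdots H_1(\bx^n)$ one inserts the rows $\bx^2,\dots,\bx^n$ one at a time; inserting $\bx^m$ triggers a cascade of bumps which, level by level, converts a block $H_{m-1}(\,\cdot\,)\cdots H_1(\,\cdot\,)\,H_1(\bx^m)$ into $H_m(\,\cdot\,)H_{m-1}(\,\cdot\,)\cdots H_1(\,\cdot\,)$ by repeated use of the elementary identity (first at level $1$, then level $2$, and so on). After all $n$ insertions this yields $M=H_k(\by^k)\cdots H_1(\by^1)$ with $\by^i=(y^i_i,\dots,y^i_N)$, the depth saturating at $k=\min(n,N)$ because, by the last clause of Definition \ref{def:grsk}, a bump that would descend below row $N$ produces the empty word and the cascade stops. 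Keeping track of this cascade of bumps, together with the index shifts $H_i\rightsquigarrow H_{i+1}$, is the combinatorial heart of the argument and is where I expect the bulk of the work to lie; it runs exactly parallel to the chain of bumps in ordinary $\rsk$ underlying Proposition \ref{RSK_maxplus}.

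For uniqueness I would peel off factors. Since $H_m(\,\cdot\,)$ acts as the identity on the first $m-1$ coordinates whenever $m\ge2$, the first row of $H_k(\by^k)\cdots H_1(\by^1)$ coincides with the first row of $H_1(\by^1)$, namely $(y^1_1,\,y^1_1y^1_2,\,y^1_1y^1_2y^1_3,\dots)$, so $\by^1$ is already determined by $M$. Right-multiplying by $H_1(\by^1)^{-1}$ strips this factor and leaves $H_k(\by^k)\cdots H_2(\by^2)$, which is block diagonal of the form $\mathrm{diag}\bigl(1,\,H_{k-1}(\,\cdot\,)\cdots H_1(\,\cdot\,)\bigr)$ on the shifted index set $\{2,\dots,N\}$; recursing on the lower-right block pins down $\by^2,\dots,\by^k$, so the solution is unique.

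It remains to identify the $\by^i$ with the claimed ratios. By the lattice-path realisation of products of $H$-matrices recorded just before the statement (a consequence of the Lindstr\"om--Gessel--Viennot theorem), the $(a,b)$ entry of $M=H(\bx^1)\cdots H(\bx^n)$ is the partition function of directed down-right paths from $(1,a)$ to $(n,b)$ along the entries of $\sfX$, each weighted by the product of $x^\alpha_\beta$ over the sites visited; hence the flag minor on rows $\{1,\dots,i\}$ and the consecutive columns $\{j-i+1,\dots,j\}$ is precisely the non-intersecting partition function $\tau^i_j$ of \eqref{H-weight},
\begin{equation*}
\det M^{\,1,\dots,i}_{\,j-i+1,\dots,j}=\tau^i_j .
\end{equation*}
Applying the same theorem to the factored form $M=H_k(\by^k)\cdots H_1(\by^1)$ and its staircase path model (vertex $(\alpha,\beta)$ carrying weight $y^\alpha_\beta$), one checks that the non-intersecting family feeding this particular flag minor is forced to be unique --- the path issued from source $a$ enters at $(a,a)$, is pushed straight rightwards inside its own layer, is then blocked by the path of index $a-1$ and must drop and run leftwards along the lower layers to the matching sink --- and collecting its weights gives
\begin{equation*}
\det M^{\,1,\dots,i}_{\,j-i+1,\dots,j}=\prod_{a=1}^{i}\ \prod_{b=a}^{j}y^a_b .
\end{equation*}
Comparing the two evaluations gives $\prod_{b=i}^{j}y^i_b=\tau^i_j/\tau^{i-1}_j$, with the convention $\tau^0_\ell\equiv1$, and taking successive ratios in $j$ produces $y^i_i=\tau^i_i/\tau^{i-1}_i$ and $y^i_j=\tau^i_j\,\tau^{i-1}_{j-1}/(\tau^{i-1}_j\,\tau^i_{j-1})$ for $i<j$, which is \eqref{H-uni}; since this exhibits $\by$ as an explicit function of the flag minors of $M$, it re-proves uniqueness as well. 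The two points I would treat most carefully are the saturation $k=\min(n,N)$ and the asserted uniqueness of the forced configuration in the staircase, both of which amount to monitoring the staircase boundary; alternatively the last displayed identity can be obtained as the evaluation of a twisted minor of a product of elementary bidiagonal matrices, in the total-positivity spirit of the origins cited for the theorem.
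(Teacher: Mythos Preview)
Your proposal is correct and, for the heart of the argument (identifying the $\by$'s via minors), takes exactly the same route as the paper: evaluate the flag minor $\det M^{\,1,\dots,i}_{\,j-i+1,\dots,j}$ on both sides of the factorisation, use the Lindstr\"om--Gessel--Viennot path interpretation of products of $H$-matrices to get $\tau^i_j$ on the $\bx$-side, observe that on the staircase/$\by$-side the non-intersecting family is forced and contributes $\prod_{1\le a\le i}\prod_{a\le b\le j} y^a_b$, and then take ratios. Your verbal description of the forced configuration is a little garbled (the paths in the $\by$-lattice are up-right, not ``drop and run leftwards''), but the weight you extract is correct.

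Where you differ is in existence and uniqueness. The paper does not prove these at all; it simply cites the Berenstein--Fomin--Zelevinsky result (via \cite{NY04}, Propositions~1.5, 1.6, Theorem~2.4) that any band upper-triangular matrix with nonvanishing consecutive flag minors admits a unique such factorisation, checks that $M$ satisfies those hypotheses, and moves on. You instead sketch two self-contained arguments: existence by iterating the elementary $H$-identity for geometric row insertion (which is precisely the construction the paper lays out \emph{after} the theorem, in the paragraphs leading to \eqref{matrixRSK}), and uniqueness by peeling off $\by^1$ from the first row and recursing on the lower-right block. Both are valid and more elementary than invoking \cite{BFZ96}; your peeling argument in particular is a clean direct proof that the paper does not give. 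One small caveat: for the peeling you implicitly use that $H_1(\by^1)$ is invertible, i.e.\ that the $y^1_j$ are nonzero, which follows from $\tau^1_j>0$ under the standing assumption of positive entries---worth stating.
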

\begin{proof}
The fact that the left-hand side of \eqref{H-eq} can be written uniquely as a product of the form of the right-hand side of \eqref{H-eq} 
 follows from a more general result of Berenstein-Fomin-Zelevinsky \cite{BFZ96}, which states that any upper triangular matrix 
$A=(a^i_j)_{1\leq i,j\leq n}$ such that
 \begin{itemize}
 \item $a^i_j=0$, if $j<i$ or $j>i+m$ for some $m\leq n$, i.e. $A$ is a ``band'' upper triangular matrix . Moreover,
  we assume that $a^i_j=1$ for $j=i+m$,
 \item the minor determinants 
 \begin{align*}
 Q_{i,j}:=Q_{i,j}(A):= \det A^{1,...,j-i+1}_{i,i+1,...,j},
 \end{align*}
 are non-zero for all $i,j$ such that $i\leq j$ and $i\leq m$,
 \end{itemize}
 can be written uniquely in the form $H_k(\by^k) H_{k-1}(\by^{k-1}) \cdots H_1(\by^1)$.
 Let us note that the second bullet is essentially enforced by the first but we preferred to state it separately as
 the non-vanishing of these minor determinants is an important feature in this matrix representation as well as in terms of the path
 representation that follows.
  We refer for this general result and proofs
to \cite{NY04}, Propositions 1.5, 1.6 and Theorem 2.4. The proof of this statement is a clever linear algebra using the
Gauss decomposition but we prefer to omit it as we would like to go straight to the connection to path weights. 

 It is easy to check that the left-hand side of \eqref{H-eq} satisfies the above conditions.
Here we will only prove that the solution to \eqref{H-eq} is given via
\eqref{H-uni}, which shows the relation between the minor determinants of products \eqref{H-eq} and partition functions of paths.

From the graphical representation of the minor determinants of $H(\bx^1) H(\bx^2) \cdots H(\bx^n) $, we have that
\[
\tau^i_j=\det \big( H(\bx^1) H(\bx^2) \cdots H(\bx^n)  \big)^{1,...,i}_{j-i+1,...,j}.
\]
But by \eqref{H-eq} this is equal to $\det\big(H_k(\by^k) H_{k-1}(\by^{k-1}) \cdots H_1(\by^1)\big)^{1,...,i}_{j-i+1,...,j}$ and again from the
graphical representation we see that this equals
\begin{equation}\label{y-paths}
\sum_{\gamma_1,...,\gamma_r} \sfw\sft(\gamma_1)\cdots \sfw\sft(\gamma_r)\equiv 
\sum_{\gamma_1,...,\gamma_r}
\begin{tikzpicture}[baseline={([yshift=-.5ex]current bounding box.center)},vertex/.style={anchor=base,
    circle,fill=black!25,minimum size=18pt,inner sep=2pt}, scale=0.35]
\draw (0,0) -- (10,0) ; \draw (1,-1) -- (10,-1); \draw (2,-2) -- (10,-2); \draw (3,-3) -- (10,-3); \draw (4,-4) -- (10,-4);
\draw (10,0) -- (10,-4); \draw (9,0) -- (9,-4); \draw (8,0) -- (8,-4); \draw (7,0) -- (7,-4); \draw (6,0) -- (6,-4);
\draw (5,0) -- (5,-4); \draw (4,0) -- (4,-4); \draw (3,0) -- (3,-3); \draw (2,0) -- (2,-2); \draw (1,0) -- (1,-1);
\draw  [fill ] (0,0) circle [radius=0.1];\draw  [fill ] (1,0) circle [radius=0.1];\draw  [fill ] (2,0) circle [radius=0.1]; \draw  [fill ] (3,0) circle [radius=0.1]; \draw  [fill ] (4,0) circle [radius=0.1]; \draw  [fill ] (5,0) circle [radius=0.1];\draw  [fill ] (6,0) circle [radius=0.1];\draw  [fill ] (7,0) circle [radius=0.1]; \draw  [fill ] (8,0) circle [radius=0.1]; \draw  [fill ] (9,0) circle [radius=0.1];\draw  [fill ] (10,0) circle [radius=0.1];

\draw  [fill ] (1,-1) circle [radius=0.1];\draw  [fill ] (2,-1) circle [radius=0.1];\draw  [fill ] (3,-1) circle [radius=0.1]; \draw  [fill ] (4,-1) circle [radius=0.1]; \draw  [fill ] (5,-1) circle [radius=0.1]; \draw  [fill ] (6,-1) circle [radius=0.1];\draw  [fill ] (7,-1) circle [radius=0.1];\draw  [fill ] (8,-1) circle [radius=0.1]; \draw  [fill ] (9,-1) circle [radius=0.1]; \draw  [fill ] (10,-1) circle [radius=0.1];

\draw  [fill ] (2,-2) circle [radius=0.1];\draw  [fill ] (3,-2) circle [radius=0.1]; \draw  [fill ] (4,-2) circle [radius=0.1]; \draw  [fill ] (5,-2) circle [radius=0.1]; \draw  [fill ] (6,-2) circle [radius=0.1];\draw  [fill ] (7,-2) circle [radius=0.1];\draw  [fill ] (8,-2) circle [radius=0.1]; \draw  [fill ] (9,-2) circle [radius=0.1]; \draw  [fill ] (10,-2) circle [radius=0.1];

\draw  [fill ] (3,-3) circle [radius=0.1]; \draw  [fill ] (4,-3) circle [radius=0.1]; \draw  [fill ] (5,-3) circle [radius=0.1]; \draw  [fill ] (6,-3) circle [radius=0.1];\draw  [fill ] (7,-3) circle [radius=0.1];\draw  [fill ] (8,-3) circle [radius=0.1]; \draw  [fill ] (9,-3) circle [radius=0.1]; \draw  [fill ] (10,-3) circle [radius=0.1];

 \draw  [fill ] (4,-4) circle [radius=0.1]; \draw  [fill ] (5,-4) circle [radius=0.1]; \draw  [fill ] (6,-4) circle [radius=0.1];\draw  [fill ] (7,-4) circle [radius=0.1];\draw  [fill ] (8,-4) circle [radius=0.1]; \draw  [fill ] (9,-4) circle [radius=0.1]; \draw  [fill ] (10,-4) circle [radius=0.1];
\draw[ultra thick, red] (0,0)--(5,0);
\draw[ultra thick, blue] (1,-1)--(6,-1)--(6,0);
\draw[ultra thick, black] (2,-2)--(7,-2)--(7,0);
\draw[thick, dotted] (1.7,-2.5)--(2.3,-3.3);
\end{tikzpicture}
\end{equation}
where the summation is over non-intersecting, up-right  paths on the trapezoidal lattice with weights $\by$, that start from vertices $(1,1),...,(i,i)$ in the lower-left border
of the lattice and go to vertices $(1,j-i+1),...,(1,j)$ at the upper border.
 As seen in the figure in relation \eqref{y-paths}, there is only one such $i$-tuple of paths with total weight 
$\prod_{1\leq a \leq  i \,, \,a \leq b \leq j} y^a_b$. Writing similarly $\tau^{i-1}_j, \tau^{i-1}_{j-1}, \tau^{i}_{j-1}$ we see that
\begin{align*}
\frac{\tau^i_i}{\tau^{i-1}_i}=y^i_i,\quad \text{and for $i<j$}\quad \frac{\tau^i_j\,\tau^{i-1}_{j-1}}{\tau^{i-1}_j\,\tau^i_{j-1}} = y^i_j.
\end{align*}
\end{proof}
\vskip 2mm
Let us now describe how geometric $\rsk$ can be encoded in this matrix formulation. We will make reference to the following diagram
 (refer to Section \ref{subsec:RSK} and relation \eqref{rs-iterate} for a reminder on the notation):
\begin{equation*}
\begin{array}{cccccccc}
           &\bx^1&          &  \bx^2=:\bx^{2,1}     &        & \bx^3=:\bx^{3,1} & &      \\[2pt]
\emptyset      &\cross &\by^{1,1}   &\cross              & \by^{2,1} &\cross          & \by^{3,1}   & \cdots\\[3pt]
&        &         & \bx^{2,2}               &        & \bx^{3,2}        & &      \\[2pt] 
&        &\emptyset      &\cross              & \by^{2,2}&\cross          &\by^{3,2}   & \cdots\\[3pt]
&        &         &          &                  & \bx^{3,3}        & &      \\[2pt]
&        &         &          & \emptyset &\cross          & \by^{3,3}  & \cdots\\[3pt]
&        &         &          &                  &         & &      \\
&        &         &          &       &         & \emptyset & \cdots \\[3pt]
\end{array}
\end{equation*}
where $\bx^i=(x^i_1,...,x^i_N)$ for $i\geq 1$, is a sequence of words, which are successively row inserted via geometric $\rsk$. 
We should keep in mind the useful identification of entries $x^i_j$ with number of letters `$j$' in a `{\it word} ' 
 $\bx^i$ as in \eqref{word}.
\vskip 2mm
Let us now describe geometric $\rsk$ in this matrix
 language translating essentially from the language of $\rsk$ as described in Section  \ref{subsec:RSK}.  For this reason we will
 be using the terms {\it tableau, row insertion, word} etc.
 
Initially, we have an empty tableau $\emptyset$ to which we insert the first word $\bx^1$ as
$\crossrsk{\bx^1}{\emptyset}{\by^{1,1}}{\emptyset}$.
 Of course, in this situation  the output tableau will only have one row $\by^{1,1}$ and
 $\by^{1,1}=\bx^1$, which we trivially encode via the matrix equation 
\begin{align}\label{H-eq1}
H(\bx^1)=H(\by^{1,1}).
\end{align}
Next, to the single-row tableau $\by^{1,1}$ we insert $\bx^2$ as $\crossrsk{\bx^2}{\by^{1,1}}{\by^{2,1}}{\bx^{2,2}}$.
 Here, $\by^{2,1}$  corresponds to
the updated first row of the new tableau and $\by^{2,2}=\bx^{2,2}$ to its second row. We notice that $\bx^{2,2}$ is the word consisting of the dropdown letters after the insertion of $\bx^2$ into $\by^{1,1}$, which are then inserted into the empty second row and this is the reason why the second row of the updated tableau
 $\by^{2,2}$ equals $\bx^2$. This second set of insertions can be be encoded via a matrix equation, which is derived by multiplying 
 \eqref{H-eq1} on the right by $H(\bx^{2})$ and using \eqref{H-eq} to obtain
 \begin{align}\label{H-eq2}
 H(\bx^1) H(\bx^2) = H_1(\by^{1,1}) H(\bx^2) \stackrel{\eqref{H-eq}}{=} H_2(\by^{2,2})   H_1(\by^{2,1}).
 \end{align} 
 The fact that this matrix multiplication and the output variables $\by^{2,2}, \by^{2,1}$ give the output of geometric row insertion
 with input $\bx^1,\bx^2$ is a consequence of Definition \ref{def:grsk}, its matrix reformulation \eqref{E-eq}, \eqref{H-eq}
  and finally Theorem \ref{thm:NY}.
  
 In a similar way, we encode the third group of row insertions :
 \begin{align*}
 \crossrsk{\bx^3}{\by^{2,1}}{\by^{3,1}}{\bx^{3,2}}\qquad & \text{[word $\bx^3$ is row inserted to the first line $\by^{2,1}$ of the current tableau]}, \\
  \crossrsk{\bx^{3,2}}{\by^{2,2}}{\by^{3,2}}{\bx^{3,3}} \qquad & \text{[word $\bx^{3,2}$ formed by the dropped down letters are
    row  }\\
    &\quad \text{inserted to the second line $\by^{2,2}$ of the current tableau]}, \\
 \crossrsk{\bx^{3,3}}{\emptyset}{\by^{3,3}}{\emptyset} \qquad &\text{[the dropdown letters from the previous insertion form the new row $\by^{3,3}$]},
 \end{align*}
  via multiplying on the right \eqref{H-eq2} by $H(\bx^3)$
 and using successively relation \eqref{H-eq} as
 \begin{align*}
 H(\bx^1) H(\bx^2) H(\bx^3) &= H_2(\by^{2,2})   H_1(\by^{2,1}) H(\bx^3)  
 \stackrel{\eqref{H-eq}}{=}  H_2(\by^{2,2})   H_2(\bx^{3,2}) H_1(\by^{3,1})\\
& \stackrel{\eqref{H-eq}}{=}  H_3(\bx^{3,3})   H_2(\by^{3,2}) H_1(\by^{3,1})
 = H_3(\by^{3,3})   H_2(\by^{3,2}) H_1(\by^{3,1})
 \end{align*}
 We point out that the second equality above is a matrix representation of the diagram 
 $\crossrsk{\bx^3}{\by^{2,1}}{\by^{3,1}}{\bx^{3,2}}$, the third equality of the diagram 
 $ \crossrsk{\bx^{3,2}}{\by^{2,2}}{\by^{3,2}}{\bx^{3,3}}$ and the fourth of the (trivial) diagram 
 $ \crossrsk{\bx^{3,3}}{\emptyset}{\by^{3,3}}{\emptyset} $.
 
 This procedure continues during the first $N$ insertions at which stage the resulting tableau will have the full depth of $N$
 rows. After that, no additional rows will be created in the subsequent tableaux and the process continues as follows
\begin{equation*}
\begin{array}{cccccccc}
           &\bx^{N+1}=:\bx^{N+1,1}&           & \bx^{N+2}=:\bx^{N+2,1}    &        &  \bx^{N+3}=:\bx^{N+3,1} & &      \\[2pt]
\by^{N,1}        &\cross         &\by^{N+1,1}      &\cross              & \by^{N+2,1} &\cross          & \cdots  &\\[3pt]
           &\bx^{N+1,2}          &           &\bx^{N+2,2}            &        & \bx^{N+3,2}         & &      \\[2pt]
\by^{N,2}         &\cross         &\by^{N+1,2}         &\cross              &  \by^{N+2,2} &\cross          &\cdots  & \\[3pt]
           &\bx^{N+1,3}           &           & \bx^{N+2,3}              &        & \bx^{N+3,3}         & &      \\[2pt]
\by^{N,3}      &\cross         &\by^{N+1,3}         &\cross              & \by^{N+2,3}    &\cross          & \cdots  &\\[3pt]
     &\bx^{N+1,4}           &           & \bx^{N+2,4}             &        & \bx^{N+3,4}       & &      \\
  \vdots         &\vdots           & \vdots          & \vdots               & \vdots       & \vdots           &\vdots & \\[3pt]
               & \bx^{N+1,N}          &           & \bx^{N+2,N}             &        & \bx^{N+3,N}        & &      \\[2pt]
\by^{N,N}       &\cross         &\by^{N+1,N}          &\cross              &  \by^{N+2,N}     &\cross          & \cdots  & \\[3pt]
               &\emptyset           &           &\emptyset              &        & \emptyset       & &      \\
\end{array}
\end{equation*}
Overall, the above, two-step procedure is encoded via the matrix equation
\begin{align}\label{matrixRSK}
\begin{cases} \,\,\,
H(\bx^1) H(\bx^2)\cdots H(\bx^n) = H_n(\by^{n,n}) H_{n-1}(\by^{n,n-1}) \cdots  H_{1}(\by^{n,1}),& \quad \text{if}\,\, n\leq N \\
&\\
\,\,\,H(\bx^1) H(\bx^2)\cdots H(\bx^n) = H_N(\by^{n,N}) H_{N-1}(\by^{n,N-1}) \cdots  H_{1}(\by^{n,1}), & \quad \text{if}\,\, n \geq N  
\end{cases}
\end{align}
\vskip 2mm
We have seen how to encode (geometric) row insertion in a matrix formulation, thus producing the $P$ tableau of geometric $\rsk$.
We can state the geometric $\rsk$ as a one-to-one correspondence 
between input matrices  $\sfX=(x^i_j \colon 1\leq i \leq n, \,1\leq j \leq N)$
and two sets of variables $P:=(p^i_j\colon 1\leq i\leq N\wedge n, \,i\leq j \leq N)$ and 
$Q:=(q^i_j\colon 1\leq i\leq N\wedge n, \,i\leq j \leq n)$. These will be the analogues of the $P$ and $Q$ tableaux in the 
standard $\rsk$ correspondence. For a full proof of this theorem (in particular the reconstruction of $\sfX$ from $(P,Q)$),
we refer to \cite{NY04}, Section 3 and Theorem 3.8.
\begin{theorem}\label{thm:grsk}
Consider a matrix $\sfX:=(x^i_j \colon 1\leq i \leq n, \,1\leq j \leq N)$ with nonnegative entries and denote by
$ (\bx^1,...,\bx^n)$ its rows and by $(\bx_1,...,\bx_N)$ its columns. 
Then there exists a one-to-one correspondence between $\sfX$ and a set of variables $\bp^{\,i}:=(p^{\.i}_i,...,p^{\,i}_N)$ for $i=1,...,\min(n,N)$
and  $\bq^{\,i}=(q^{\,i}_i,...,q^{\,i}_n)$ for $i=1,...,\min(n,N)$, which are uniquely determined via equations
\begin{align}
H(\bx^1) H(\bx^{2}) \cdots H(\bx^n) &= H_k(\bp^k) H_{k-1}(\bp^{k-1}) \cdots H_1(\bp^1) ,\qquad k=\min(n,N),\label{grsk-eq1}\\
H(\bx_1) H(\bx_{2}) \cdots H(\bx_N) &= H_k(\bq^k) H_{k-1}(\bq^{k-1}) \cdots H_1(\bq^1) ,\qquad k=\min(n,N),\label{grsk-eq2}
\end{align}
Variables $(p^i_j)$ and $(q^i_j)$ are given in terms of the input variables $(x^i_j)$ via relations \eqref{H-eq} and \eqref{H-weight}.
\end{theorem}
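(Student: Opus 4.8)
The plan is to read the forward map off Theorem~\ref{thm:NY} and then to build its inverse by reversing geometric row insertions, peeling off one row of $\sfX$ at a time. For the forward direction, apply Theorem~\ref{thm:NY} to $\sfX$ itself: equation~\eqref{grsk-eq1} is precisely the factorisation appearing there, so it admits a unique solution $(\bp^1,\dots,\bp^k)$ with $k=\min(n,N)$, and the entries $p^i_j$ are given by \eqref{H-uni}--\eqref{H-weight} with the path weights taken from $\sfX$. Comparing \eqref{grsk-eq1} with \eqref{matrixRSK} identifies $\bp^i$ with the $i$-th row $\by^{n,i}$ of the tableau obtained by iterating the geometric row insertion of $\bx^1,\dots,\bx^n$, so $P$ really is the ``geometric $P$-tableau''. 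Likewise, apply Theorem~\ref{thm:NY} to the transpose $\sfX^{\sfT}$, whose rows are the columns $\bx_1,\dots,\bx_N$ of $\sfX$: equation~\eqref{grsk-eq2} is the factorisation of that theorem for $\sfX^{\sfT}$, it has a unique solution $(\bq^1,\dots,\bq^k)$, and the $q^i_j$ are given by the same formulas with the path weights read off $\sfX^{\sfT}$. This makes $\sfX\mapsto(P,Q)$ a well-defined map carrying the claimed formulas, with $Q$ the geometric analogue of the recording tableau, namely the $P$-tableau of $\sfX^{\sfT}$, in analogy with the relation $\rs(\sigma^{-1})=(Q,P)$ of Theorem~\ref{schensted}.

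To prove injectivity I would construct the inverse by induction on $n$, transposing $\sfX$ first if necessary so that $n\le N$ (which merely swaps the roles of $P$ and $Q$). The $n$-th group of insertions is a column of local moves: the top one inserts $\bx^n$ into the first row of the current tableau, each subsequent one inserts the letters bumped out of the row above, and the bottom one inserts into an empty row. Each move is an instance of the matrix identity~\eqref{E-eq}, equivalently of the triangular Toda-type system~\eqref{eq:toda}. I would reverse this column from the bottom upwards: the bottom move, whose input row is empty, is reversed with no extra data and simply returns the bottom row of $P$; for a generic move the output pair $(\tilde\bx,\bb)$ falls one scalar short of determining the input $(\bx,\ba)$, but the triangular structure of~\eqref{eq:toda} shows that fixing one further coordinate (say the leading entry of the old row, equivalently $a_i$) makes the reversal an explicit cascade. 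The scalars consumed along this column are exactly what the $n$-th column of $Q$ records: it encodes the multiplicative ``type'' of the inserted word $\bx^n$, the geometric counterpart of~\eqref{type}. Reversing the column therefore reconstructs $\bx^n$ and turns $P$ into the $P$-tableau $P'$ of the matrix $\sfX'$ obtained by deleting the last row of $\sfX$, while deleting the last column of $Q$ yields the $Q'$ attached to $\sfX'$; the inductive hypothesis then recovers $\sfX'$, hence $\sfX$.

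The step I expect to be the genuine obstacle is this reconstruction: verifying that the scalar supplied by $Q$ at each stage is exactly the one making every local un-insertion simultaneously well defined and positivity preserving --- equivalently, that the data $(P,Q)$ is neither over- nor under-determined as the algorithm is run backwards. As in the proof of Theorem~\ref{thm:NY}, this rests on the uniqueness half of the Berenstein-Fomin-Zelevinsky factorisation of band upper triangular matrices, now applied to the matrices produced along the way; for the complete verification we follow \cite{NY04}, Section~3 and Theorem~3.8.
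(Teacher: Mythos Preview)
Your proposal is correct and mirrors the paper's own treatment: the forward map and the explicit formulas for $(p^i_j)$ and $(q^i_j)$ come straight from Theorem~\ref{thm:NY} applied to $\sfX$ and to $\sfX^{\sfT}$, while the reconstruction of $\sfX$ from $(P,Q)$ is deferred to \cite{NY04}, Section~3 and Theorem~3.8. The paper does not provide a self-contained proof of the bijection either, so your outline of the inverse via reversing the cascade of row insertions (and your honest identification of the matching between the extra scalars and the $Q$-data as the genuine obstacle) already goes a bit beyond what the paper spells out.
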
 

 An immediate consequence of the formulation of $\grsk$ as in Theorem \ref{thm:grsk} and in particular the matrix equations
 \eqref{grsk-eq1},  \eqref{grsk-eq2}
   is that if the input matrix $\sfX$ is symmetric then the $P$ and $Q$ tableaux of $\grsk$ are equal.
 \vskip 2mm
  {\bf Geometric $\rsk$ on (geometric) Gelfand-Tsetlin patterns.}  It is useful to put geometric $\rsk$ and Theorem \ref{thm:grsk} under a Gelfand-Tsetlin
 framework. 
To this end, if $\bp^1,...,\bp^k$ and $\bq^1,...,\bq^k$ are as in \eqref{grsk-eq1}, \eqref{grsk-eq2}, set
 \begin{align*}
 z^i_j:= p^j_j \,p^{\,j}_{j+1}\cdots p^{\,j}_{i-1}\,p^{\,j}_i &\quad\text{for} \quad 1\leq j\leq i\leq N, \,\,\text{and}\,\, j\leq n\wedge N,\\
 (z^i_j)':= q^{\,j}_j \,q^{\,j}_{j+1}\cdots q^{\,j}_{i-1}\,q^{\,j}_i &\quad\text{for} \quad 1\leq j\leq i\leq n, \,\,\text{and}\,\, j\leq n\wedge N.
 \end{align*}
 Then Theorem \ref{thm:grsk} establishes a bijection between matrices $\sfX:=(x^i_j \colon 1\leq i \leq n, \,1\leq j \leq N)$ with nonnegative
 entries and a pair $(\sfZ,\sfZ')=\grsk(\sfX)$. 
 We will call the arrays $\sfZ=(z^i_j\colon 1\leq j\leq i\leq N, j\leq n\wedge N)$ and 
 $\sfZ'=((z^i_j)'\colon 1\leq j\leq i\leq n, j\leq n\wedge N)$, {\bf geometric Gelfand-Tsetlin patterns}, even though in general
  they do not satisfy the interlacing constraints $z^{i+1}_{j+1}\leq z^i_j \leq z^{i+1}_j$ (however, they do degenerate to
 genuine Gelfand-Tsetlin patterns in the combinatorial limit described in the next paragraph). For short, we will often denote 
 geometric Gelfand-Tsetlin patterns by $\gGT$, while at other times, when it is clear from the context, we may omit the adjective
 ``goemetric''.
 
 Bearing in mind property \eqref{H-uni} we obtain that variables $z^i_j$ are given in terms of ratios of partition functions
\begin{align}\label{GT_polymers}
z^i_j=\frac{\tau_i^{\,j}}{\tau_{i-1}^{\,j}}=\frac{\sum_{\pi_1,...,\pi_j\,} \sfw\sft(\pi_1)\cdots \sfw\sft(\pi_j)}{\sum_{\pi_1,...,\pi_{j-1}} \sfw\sft(\pi_1)\cdots \sfw\sft(\pi_{j-1})},
\end{align}
 where the sum in the numerator is over directed, 
non-intersecting paths along entries of $\sfX$ starting from $(1,1),...,(1,j)$ and ending at $(n,i-j+1),...,(n,i)$, respectively,
and the denominator sum is 
over directed, non-intersecting paths starting from $(1,1),...,(1,j-1)$ and ending at $(n,i-j+2),...,(n,i)$, respectively.
In particular,  
\begin{align}\label{zN1_poly}
z^N_1=\sum_{\pi\colon(1,1)\to(n,N)} \sfw\sft(\pi)= \sum_{\pi\colon(1,1)\to(n,N)} \, \prod_{(a,b)\in \pi} x^a_b,
\end{align}
 which defines the {\bf partition function} of the {\bf directed polymer model}.
\vskip 2mm
{\bf Passage to standard (combinatorial) $\rsk$ setting. }
We will now see how the geometric $\rsk$ framework degenerates to the standard $\rsk$ framework described in Section 
\ref{subsec:RSK} and how in this way we can obtain 
via Theorem \ref{thm:grsk} both
Schensted's and Greene's theorems as well as the links between $\rsk$ and last passage percolation alluded to in \eqref{lpp-rec}
and \eqref{lpp-rec-diag}.

Replacing in \eqref{gRSKrec} 
 variables $\xi_k,  \tilde\xi_k, a_k, b_k$ by $e^{\xi_k/\epsilon}, e^{\tilde\xi_k/\epsilon}, e^{a_k/\epsilon}, e^{b_k/\epsilon}$, taking the
 logarithm on both sides of each relation therein and multiplying by $\epsilon$, the set of equations  \eqref{gRSKrec} 
 may be written as
  \begin{equation}
\begin{cases}
 \tilde \xi_i= \xi_i + a_i, &\\[4pt]
\tilde \xi_k =a_k+ \epsilon \log \big( e^{\tilde\xi_{k-1}/\epsilon}+ e^{\xi_k/\epsilon}\big), & i+1\le k\le N\\[5pt]
b_{k} = a_k+ (\xi_k -  \xi_{k-1})- (   \tilde \xi_k - \tilde\xi_{k-1}), & i+1\leq k\leq N.
\end{cases}
\end{equation}
Taking now the limit $\epsilon\to 0$ these reduce to the piecewise linear transformations \eqref{RSKrec} defining the standard
$\rsk$ correspondence. Replacing also the variables $x^i_j, p^i_j, q^i_j$ in Theorem \ref{thm:grsk} by 
$e^{x^i_j/\epsilon},  e^{p^i_j/\epsilon}, e^{q^i_j/\epsilon}$ we obtain
in the limit $\epsilon\to0$ the $\rsk$ correspondence, in the sense that variables $(p^i_j)$ and $(q^i_j)$ encode the $P$ and $Q$ tableaux of the standard $\rsk$.  

In particular, the solution to the degeneration, as $\epsilon\to0$, of problem \eqref{grsk-eq1} is given via the degeneration of relations  \eqref{H-uni}, 
\eqref{H-weight}  as:
\begin{align*}
p^i_i=\sigma^i_i - \sigma^{i-1}_i
\quad \text{and}\quad p^i_j=\sigma^i_j\ + \sigma^{i-1}_{j-1} - \sigma^{i-1}_j-\sigma^i_{j-1}\quad \text{for} \quad i<j,
 \end{align*}
 \begin{align*}
\text{with}\qquad \sigma^i_j :=\max_{\pi_1,...,\pi_i\in \Pi^{1,...,i}_{j-i+1,...,j}} \sum_{k=1}^i\sfw\sft(\pi_k)
\end{align*}
 being last passage percolation functionals corresponding to ensembles of $i$ non-intersecting, down-right paths $\pi_1,...,\pi_i$, 
 starting from $(1,1),...,(1,i)$ and ending at $(n,j-i+1),...,(n,j)$, respectively.
The weight of a path $\pi_r$ in this case is $\sfw\sft(\pi_r):=\sum_{(a,b)\in \pi_r}  x^a_b$.

Passing to the Gelfand-Tsetlin variables, we set
 \begin{align*}
 z^i_j:= p^j_j +p^{\,j}_{j+1}+\cdots +p^{\,j}_{i-1} +p^{\,j}_i = 
  \sigma_i^{\,j}-\sigma_{i-1}^{\,j}  
 \end{align*}
for $1\leq j\leq i\leq N, \,\,\text{and}\,\, j\leq n\wedge N$. From this we get that
 \begin{align*}
 z^N_1+\cdots z^N_j:=  \sigma_N^{\,j},  
 \end{align*}
 which in the case $j=1$ is Schensted's theorem (see also \eqref{lpp-rec-diag}) and for $j>1$ is Greene's theorem.
 \vskip2mm

\subsection{$\rsk$ and geometric $\rsk$ via local moves}\label{sec:localmove}
In this section we are going to show how $\rsk$ and $\grsk$ can be presented  in an alternative way
 as a bijection between matrices and a composition of a sequence
of local operations that involve only $2\times2$ subarrays of the matrices. This formulation plays a crucial role in
extracting useful properties. An example that we will present here is Theorem \ref{thm:jac}. The construction of 
$\rsk$ and $\grsk$ as a sequence of local operations also forms the basis of constructions of statistical mechanics models 
that capture multipoint correlations of observables in the KPZ class. Particular examples that we will discuss in Section 
\ref{LPP-Airyprocess} are the {\it polynuclear growth process} (PNG) and the {\it Airy line ensemble}. 

Let us start with the presentation of $\rsk$ and geometric $\rsk$  as bijections between matrices with nonnegative entries.
This is done by gluing together the output (geometric or standard) Gelfand-Tsetlin 
patterns $(\sfZ,\sfZ')$. In particular, if $\sfX=(x_{i,j})_{1\leq i\leq n, 1\leq j\leq N}$ we can consider $\rsk(\sfX)$ or $\grsk(\sfX)$
as the $n\times N$ matrix $\sfT=(t_{i,j})_{1\leq i \leq n, 1\leq j\leq N}$ with 
\begin{align}\label{GTglue}
t_{i,j}=\begin{cases}
z^{n-i+j}_{n-i+1}  &,\quad \text{for}\quad 1\leq i\leq n, \,1\leq j\leq N,\,\,\,\, \text{with} \,\,\,\, i-j\geq n-N\\
&\\
(z^{N+i-j}_{N-j+1})' &,\quad \text{for}\quad 1\leq i\leq n, \,1\leq j\leq N,\,\,\,\, \text{with} \,\,\,\, i-j\leq n-N.
\end{cases}
\end{align}
More suggestively, we can look at the following pictures
\begin{equation*}
 {\begin{tikzpicture}[baseline=1.1ex, scale=.7]
\draw (0,0) -- (6,0) -- (6,4) -- (0,4) -- (0,0);  
\draw[dashed] (6,0) -- (2,4);
\node at (2,2) {$\sfZ$};  \node at (5,3) {$\sfZ'$};
\node at (3,-1) {glued $\GT$ patterns when $n\leq N$};
\end{tikzpicture}}
\qquad \qquad
 {\begin{tikzpicture}[baseline=1.1ex, scale=.7]
\draw (0,0) -- (4,0) -- (4,6) -- (0,6) -- (0,0);  
\draw[dashed] (4,0) -- (0,4);
\node at (1.2,1.2) {$\sfZ$};  \node at (2.5,3.4) {$\sfZ'$};
\node at (2,-1) {glued $\GT$ patterns when $n\geq N$};
\end{tikzpicture}}
\end{equation*}
where on the dashed line we put the shape variables $(z^N_1,...,z^N_{n\wedge N}) = \big( (z^n_1)',...,(z^n_{n\wedge N})' \big)$,
with $z^N_1=(z^n_1)'$ occupying the lower-right corner and $z^1_1, (z^1_1)'$ occupying the lower-left and the upper-right corners,
respectively.
Notice that when $n< N$, then the number $n$ of inserted words is smaller than the number $N$ of letters in the alphabet and then
the $\sfZ$ pattern does not have full triangular shape. This is because the corresponding Young tableau from the $\rsk$ insertion of
the $n$ words will only have at most $n$ rows, if $n\leq N$. On the other hand, when $n\geq N$, then the Young tableau
 may have more than $n$ rows but, still, it necessarily has at most $N$ rows, since the maximum number of rows in a Young tableau,
 which is an output of $\rsk$ involving $N$ letters, is at most $N$. So, in the case $n\geq N$ we represent the output Gelfand-Tsetlin pattern 
$\sfZ$ as a complete triangle of length $N$.
Similar considerations hold for the $\sfZ'$ pattern with the roles of $n,N$ reversed since the output tableau corresponding to the $\sfZ'$ pattern
is the outcome of insertions of numbers $1,...,n$ (this will be the $\sfQ$ tableau).
\vskip 2mm
As a bijection between nonnegative matrices, $\grsk$  has a remarkable volume preserving property as noted in \cite{OSZ14} 
and stated in the following theorem:
\begin{theorem}\label{thm:jac}
Let $\sfX=(x_{i,j})_{1\leq i\leq n, 1\leq j\leq N}$ be a matrix with nonnegative entries and $\sfT=\grsk(\sfX)$, where 
$\sfT=(t_{i,j})_{1\leq i\leq n, 1\leq j\leq N}$ is produced
by gluing together the Gelfand-Tseltin patterns of $\grsk$, as in \eqref{GTglue}. Then the mapping
\begin{align*}
(\log x_{i,j}\colon 1\leq i\leq n, 1\leq j\leq N) \mapsto (\log t_{i,j}\colon 1\leq i\leq n, 1\leq j\leq N),
\end{align*}
has Jacobian $\pm1$.
\end{theorem}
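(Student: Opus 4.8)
The plan is to realize $\grsk$, viewed as a birational self-map of the set of $n\times N$ matrices with positive entries, as a composition of finitely many \emph{elementary local moves}, and then to check that each such move preserves Lebesgue measure once one passes to logarithmic coordinates.

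First I would invoke the local-move description of $\grsk$ from \cite{OSZ14}. Unwinding the recursive construction of $\grsk$ --- equivalently, threading the geometric row insertions of Definition~\ref{def:grsk} through the successive rows of the tableau as in the diagrams following Theorem~\ref{thm:NY}, and then gluing the output patterns as in \eqref{GTglue} --- one obtains a fixed sequence of maps $L_1,\dots,L_m$, with $m=m(n,N)$ finite, such that $\sfT=\grsk(\sfX)=L_m\circ\cdots\circ L_1(\sfX)$, where each $L_k$ is a birational self-map of the positive $n\times N$ matrices that alters a \emph{single} entry $t$ of the current array according to a rule of the form
\[
t \;\longmapsto\; \frac{\nu_k}{t},
\]
with $\nu_k$ a subtraction-free rational function of the remaining, frozen entries (for a bulk site $\nu_k$ is a sum of two neighbouring entries, for a boundary site a single neighbour, and for the initial corner $\nu_k\equiv 1$). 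This is the geometric lifting of the classical realization of $\rsk$ by growth diagrams and local moves: in the combinatorial limit $\epsilon\to0$ discussed after Theorem~\ref{thm:grsk}, the move $t\mapsto\nu_k/t$ degenerates to the affine toggle $t\mapsto\nu_k-t$. I would quote this decomposition rather than reprove it, since it is exactly what is carried out in \cite{OSZ14} and is the genuinely combinatorial part of the argument.

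Granting the decomposition, the Jacobian computation is immediate. Put $\xi_{i,j}:=\log x_{i,j}$ and let $\Lambda_k:=\log\circ\,L_k\circ\exp$ be the induced map on these coordinates, so the map in the statement is $\Lambda_m\circ\cdots\circ\Lambda_1$. Each $\Lambda_k$ fixes every coordinate except one, say $\xi_\ell$, which it sends to $g(\xi):=\log\nu_k\big(\text{frozen coordinates}\big)-\xi_\ell$; hence the Jacobian matrix of $\Lambda_k$ is the $nN\times nN$ identity matrix with its $\ell$-th row replaced by $\nabla g$, and its determinant is the $(\ell,\ell)$-entry of that row, namely $\partial g/\partial\xi_\ell=-1$. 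The boundary and corner variants (with $\nu_k$ a single frozen variable, or $\nu_k\equiv1$) give the same value $-1$. Therefore
\[
\det D\big(\log\circ\,\grsk\circ\exp\big)=\prod_{k=1}^m\det D\Lambda_k=(-1)^m=\pm1,
\]
which is the assertion.

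The only non-routine ingredient is thus the local-move decomposition with moves of the reciprocal form $t\mapsto\nu_k/t$, and I expect this to be the main obstacle: it requires unravelling the bumping cascade of geometric row insertion into its elementary steps and verifying that each step modifies exactly one entry of the glued Gelfand--Tsetlin array in the stated way (a claim one can cross-check on small cases --- e.g. for $n=N=2$ the resulting four-variable map has Jacobian $-1$). Once this bookkeeping is in place --- and it is done in \cite{OSZ14} --- volume-preservation reduces to the one-line determinant computation above.
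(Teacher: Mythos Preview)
Your approach is essentially the paper's: decompose $\grsk$ into the local moves of \cite{OSZ14} and check that each is volume-preserving in logarithmic coordinates. The paper states this via the two-entry maps $\ell_{i,j}$ acting on $2\times2$ blocks,
\[
\begin{pmatrix} a & b \\ c & d\end{pmatrix}\longmapsto\begin{pmatrix} \dfrac{bc}{a(b+c)} & b \\ c & d(b+c)\end{pmatrix},
\]
rather than single-entry toggles, but the Jacobian check is the same triangular computation you give.

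One small correction: not every elementary step in the decomposition has the reciprocal form $t\mapsto\nu_k/t$. The boundary moves are multiplicative, e.g.\ $x_{i,1}\mapsto x_{i,1}\,x_{i-1,1}$, and if you split the bulk $\ell_{i,j}$ into two single-entry steps one of them is $d\mapsto d(b+c)$. In log coordinates these have Jacobian $+1$ rather than $-1$. This does not affect your argument --- the product is still $\pm1$ --- but your blanket claim that $\nu_k$ is ``a single neighbour'' on the boundary with the move $t\mapsto\nu_k/t$ is not the correct transformation there.
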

We will prove this theorem below, after setting up the appropriate framework. 
This property is not obvious and the fact that it also holds in two other cases where the input matrix $\sfW$ is square and
has either symmetry with respect to the diagonal, i.e. $w_{ij}=w_{ji}$ \cite{OSZ14}, or the anti-diagonal, i.e. $w_{ij}=w_{n+1-j,n+1-i}$ \cite{BOZ21} ,
seems to indicate that there may be a deeper algebraic or geometric reason behind it. With regards to this, the framework on \cite{L13}, Section 2.4
might offer a useful perspective. With regards to the value of the above theorem in the analysis of solvable KPZ models, it
will play a very important role in Section \ref{sec:loggamma}, Theorem \ref{thm:log-gamma-laplace},
when we will compute the Laplace transform of a solvable directed polymer model in terms of Whittaker functions.

It appears to be difficult to derive Theorem \ref{thm:jac} from Noumi-Yamada's matrix formulation but it turns out to be easily checked once
a different point of view on the $\grsk$ is adopted. The latter amounts to a decomposition of $\grsk$ (as well as $\rsk$) 
into a set of {\it local moves}, which we will describe next. This local move description is close 
to the construction of the integrable interface called {\bf polynuclear growth model}, which plays an important role in encoding
multipoint correlations and which we will describe in Section \ref{LPP-Airyprocess}. In fact, the construction of the latter has been motivated
by the local rules that we will describe now.
 We also note that the local moves in the combinatorial setting can be identified with Fomin's construction 
of $\rsk$ \cite{F86, F95}, see also \cite{Kr06}. The geometric setting corresponds to a geometric lifting of this to the $(+,\times)$ algebra.

Crucial towards the point of view of local moves are the relations
\begin{align}\label{local_drop}
b_{i+1}=  \min\big(\tilde \xi_i, \xi_{i+1}\big)-\xi_i,
\end{align}
(see also \eqref{bump1}), which in the standard $\rsk$ setting record
the number of letters $i$ being bumped down from a row after the insertions of letters `$i$' from a new word, and relation
\begin{align}\label{local_new}
\tilde \xi_{i+1}=\max\big(\tilde \xi_i,\xi_{i+1}\big)+a_{i+1},
\end{align}
(see also \eqref{bump2}), which in the standard $\rsk$ setting records
the length up to letter `$i+1$' of a row in a Young tableau after the insertion of the string of letters `$i+1$' from the new word.
The geometric lifting of these relations (i.e. replacing $(\max,+)$ operations with $(+\times)$) may be written as
\begin{align*}
b_{i+1}= \frac{\tilde \xi_i \,\xi_{i+1}}{\xi_i(\tilde \xi_i+\xi_{i+1})} 
\qquad \text{and}\qquad 
\tilde \xi_{i+1}=a_{i+1}\,\big(\tilde \xi_i+\xi_{i+1}\big).
\end{align*}
Keeping these in mind as motivation, let us define the {\bf local moves}: The fundamental transformation is 
  the mapping $\ell_{i,j}$, for $2\leq i \leq n$ and $2\leq j\leq N$, which transforms a matrix $\sfX$ by 
   replacing only its $2\times 2$ submatrix (leaving all other entries unchanged)
\begin{align*}
\left(\begin{array}{cc} x_{i-1,j-1} & x_{i-1,j}\\ x_{i,j-1} & x_{i,j} \end{array} \right)
\end{align*}
by its image under the map 
\begin{align}\label{eq:localmove}
&\\
& \left(\begin{array}{cc} a & b\\ c & d\end{array} \right) \mapsto 
\left(\begin{array}{cc} b\wedge c-a & b\\ c & d+b\vee c \end{array}\right)
\qquad \text{or} \qquad \left(\begin{array}{cc} a & b\\ c & d\end{array} \right) \mapsto 
\left(\begin{array}{cc} \frac{b c}{a(b+c)} & b\\ c & d(b+ c) \end{array}\right) \notag
\end{align}
in the combinatorial (standard) $\rsk$ or geometric setting
(notice that the geometric lifting of $\min(b,c)=-\max(-b,-c)$ is $(b^{-1}+c^{-1})^{-1}$), respectively.
 In the case $j=1$, the transformation $\ell_{i,1}$ only changes the subarray 
\begin{equation*}
\left(\begin{array}{c} x_{i-1,1} \\ x_{i,1} \end{array}\right) 
\mapsto \left(\begin{array}{c} x_{i-1,1} \\ x_{i,1} + x_{i-1,1} \end{array}\right)
\qquad\text{or}\qquad 
\left(\begin{array}{c} x_{i-1,1} \\ x_{i,1} \end{array}\right) 
\mapsto \left(\begin{array}{c} x_{i-1,1} \\ x_{i,1}  \cdot x_{i-1,1} \end{array}\right)
\end{equation*}
 while in the case $i=1$ the transformation $\ell_{1,j}$ only changes the subarray $(x_{1,j-1} \,,\, x_{1,j})$ to $(x_{1,j-1} \,,\, x_{1,j} + x_{1,j-1})$
 in the combinatorial setting or to $(x_{1,j-1} \,,\, x_{1,j} \cdot x_{1,j-1})$ in the geometric setting.
 The transformation $\ell_{1,1}$ is the identity. 

Based on this, the row insertion of the $k^{th}$ row of the input matrix $\sfX$, which we denote by $R_k$,
is decomposed as
\begin{align*}
R_k=\rho^k_N\circ \cdots \circ \rho^k_2\circ \rho^k_1 
\end{align*}
with
\begin{align}\label{rho-comp}
\rho^i_j:=\begin{cases}
\ell_{1,j-i+1}\circ \cdots \circ \ell_{i-1,j-1}\circ \ell_{i,j}\quad& \text{if} \quad i\leq j,\\
\\
\ell_{i-j+1,1}\circ\cdots\circ\ell_{i-1,j-1} \circ \ell_{i,j} \quad & \text{if} \quad i\geq j.
\end{cases}
\end{align}
The fact that the composition $R_n\circ\cdots\circ R_1$ in the geometric setting is equivalent to Noumi-Yamada's matrix formulation
was shown in \cite{OSZ14}, Section 4.
A pictorial representation of $\rho^4_2$ and $\rho^4_5$ is shown in Figure \ref{pictorial-local2}.
\vskip 1mm
\begin{figure}
{\begin{tikzpicture}[scale=0.5]
\draw  [fill=red ] (2,12) circle [radius=0.2]; \draw  [fill=red ] (4,10) circle [radius=0.2]; \draw  [fill=red ] (6,8) circle [radius=0.2];
\draw  [fill=red ] (8,6) circle [radius=0.2];
\draw  [fill=blue ] (2,6) circle [radius=0.2]; \draw  [fill=blue ] (0,8) circle [radius=0.2];
\draw[dashed] (7.5,12.5)--(12.5,7.5); \draw[dashed] (-0.5,8)--(12.5,8);
\draw (7,7) circle [radius=1.6]; \draw (5,9) circle [radius=1.6]; \draw (3,11) circle [radius=1.6]; \draw (1,12) ellipse (1.8cm and 1cm);
\draw (1,7) circle [radius=1.6]; \draw (0,9) ellipse (1cm and 1.8cm);
\draw [thick, decorate,decoration={brace,amplitude=4pt},xshift=0.5cm,yshift=0pt]   (13,12) -- (13,8) node [midway,right,xshift=.1cm] 
{   $(\sfZ,\sfZ')-\GT$ patterns before update };
\draw [thick, decorate,decoration={brace,amplitude=4pt},xshift=0.5cm,yshift=0pt]   (13,4) -- (13,0) node [midway,right,xshift=.1cm] 
{   not yet inserted $\sfX$ entries };
\node at (18.5, 6) {$\longleftarrow\,\,\,$  currently inserted $\sfX$-row};
\node at (7,7) {$\ell_{4,5}$}; \node at (5,9) {$\ell_{3,4}$}; \node at (3,11) {$\ell_{2,3}$};\node at (0.9,12) {$\ell_{1,2}$};
\node at (1,7) {$\ell_{4,2}$}; \node at (0,9) {$\ell_{3,1}$};
\foreach \i in {0,1,...,6}
\foreach \j in {0,...,6}{
		\node[draw,circle,inner sep=1pt,fill] at (2*\i,2*\j) {};
}
\end{tikzpicture}
}
\caption{This figure shows the sequence of moves involved in the application of operations  $\rho^4_2$ and $\rho^4_5$.
The blue nodes indicate the entries that will be modified after application of $\rho^4_2$ and the red those that will be modified
after application of $\rho^4_5$. The circles indicate the entries that are involved in the $\ell$ operation as indicated in the center
of the circles.
 }\label{pictorial-local1}
\end{figure} 
 \begin{figure}
{\begin{tikzpicture}[scale=0.5]
\draw[dashed] (5.5,12.5)--(12.5,5.5); \draw[dashed] (-0.5,6)--(12.5,6);
\draw [thick, decorate,decoration={brace,amplitude=4pt},xshift=0.5cm,yshift=0pt]   (13,12) -- (13,6) node [midway,right,xshift=.1cm] 
{   updated patterns $(\tilde\sfZ,\tilde\sfZ')$ after $R_4$ };
\draw [thick, decorate,decoration={brace,amplitude=4pt},xshift=0.5cm,yshift=0pt]   (13,4) -- (13,0) node [midway,right,xshift=.1cm] 
{   not yet inserted $\sfX$ entries };
\foreach \i in {0,1,...,6}
\foreach \j in {0,...,6}{
		\node[draw,circle,inner sep=1pt,fill] at (2*\i,2*\j) {};
}
\end{tikzpicture}
}
\caption{ Figure \ref{pictorial-local1} showed two snapshots of the insertion of the fourth row via operation $R_4$.
 Before the insertion of the fourth row, the first three lines in Figure \ref{pictorial-local1}
comprised the $\sfZ$ and $\sfZ'$ patterns (glued together) and the rest of the lines comprised the rows $(x_{4,j})_{j\geq 1}, (x_{5,j})_{j\geq 1},...$
of the input matrix $\sfX$,
which were waiting to be inserted via operations $R_4,R_5,...$. This figure shows the form of the array after operation $R_4$, where 
now the first {\it four} rows comprise the current 
patterns $\sfZ$ and $\sfZ'$ (glued together) and the rest comprise the remaining row of the input matrix
$\sfX$, which are waiting to be inserted via operations $R_5,R_6,...$. 
}
\label{pictorial-local2}
\end{figure}
\vskip 1mm
Let us try to explain this decomposition. We do so by looking at the example of transformations $R_4$ and $\ell_{4,5}$ in the
combinatorial setting.
  We start by noticing that $R_4$ will eventually 
  transform the fourth row of the array in Figure \ref{pictorial-local1} to the (new) rightmost diagonal, $\tilde z_1$, of the new Gelfand-Tsetlin pattern $\tilde \sfZ$. In particular, after application of 
  $\rho^4_1, \rho^4_2, \rho^4_3, \rho^4_4$ the first four entries of the fourth row will be transformed to
  the updated values of the Gelfand-Tsetlin pattern: $\tilde z^{\,1}_1, \tilde z^{\,2}_1, 
  \tilde z^{\,3}_1$ and $\tilde z^{\,4}_1$, respectively.  Next, we want to apply $\rho^5$ and from \eqref{rho-comp} this will start with the application of
  $\ell_{4,5}$. The latter  will be applied as (see also Figure \ref{pictorial-local1})
\begin{align*}
\left({\begin{array}{cc}
z^4_1 & z^5_1 \\
&\\
\tilde z^{\,4}_1 & x_{4,5}
\end{array}
}
\right)
\mapsto
\left({\begin{array}{cc}
\min\big(\tilde z^{\,4}_1, z^{\,5}_1\big)-z^{\,4}_1 & z^{\,5}_1 \\
&\\
\tilde z^{\,4}_1 & x_{4,5}+\max\big( \tilde z^{\,4}_1\,,\,z^5_1 \big)
\end{array}
}
\right).
\end{align*}
Comparing to \eqref{local_drop} and \eqref{local_new} we see that the bottom-right entry in the right-hand side matrix above coincides with
$\tilde z^{\,5}_1$, i.e.
 the total length of letters up to `$5$' in the first row of the $P$ tableau after the 
 insertion of $x_{4,5}$-many $5$'s, which takes place during the insertion of the fourth row of the input matrix via $R_4$. 
 On the other hand, the top-left entry in the matrix of the right-hand side above records the number of letters `$5$' that have been bumped down
 and which will then be inserted via $\ell_{3,4}$ to the next row of the corresponding tableau. The bumping of $5's$ then continues,
  operations $\ell_{2,3}$ and $\ell_{1,2}$,  until the end of the 
 tableau is reached.
\vskip 2mm
The local move decomposition of the $\grsk$ makes the proof of Theorem \ref{thm:jac} easy:
 \begin{proof}[Proof of Theorem \ref{thm:jac}]
 It is straightforward and easy to check that the Jacobian of each transformation $\ell_{i,j}$ in logarithmic variables is $\pm1$.
 Therefore $\grsk$ is also volume preserving as a sequence of compositions of $\ell_{i,j}$ operations.
 \end{proof}
 
 One more advantage of the local move decomposition of $\rsk$ and $\grsk$ is that it can be generalised as a bijective 
 map between polygonal arrays of the form of Young tableaux type as: 
 \begin{equation*}
X=\begin{tikzpicture}[baseline={([yshift=-.5ex]current bounding box.center)},vertex/.style={anchor=base,
    circle,fill=black!25,minimum size=18pt,inner sep=2pt}, scale=0.3]
\draw[thick] (0,0)--(0,11); 
\draw[thick] (0,11)--(11,11);
\draw[thick] (11,11)--(11,9); 
\draw[thick] (11,9)--(9,9);
\draw[thick] (9,9)--(9,6);
\draw[thick] (9,6)--(6,6); 
\draw[thick] (6,6)--(6,3);
\draw[thick] (6,3)--(3,3); 
\draw[thick] (3,3)--(3,0);
\draw[thick] (3,0)--(0,0); 
 \node at (3,6) {$(x_{i,j})$};
\end{tikzpicture}
\qquad\mapsto\qquad 
T=\begin{tikzpicture}[baseline={([yshift=-.5ex]current bounding box.center)},vertex/.style={anchor=base,
    circle,fill=black!25,minimum size=18pt,inner sep=2pt}, scale=0.3]
\draw[thick] (0,0)--(0,11); 
\draw[thick] (0,11)--(11,11);
\draw[thick] (11,11)--(11,9); 
\draw[thick] (11,9)--(9,9);
\draw[thick] (9,9)--(9,6);
\draw[thick] (9,6)--(6,6); 
\draw[thick] (6,6)--(6,3);
\draw[thick] (6,3)--(3,3); 
\draw[thick] (3,3)--(3,0);
\draw[thick] (3,0)--(0,0); 
 \node at (3,6) {$(t_{i,j})$};
\end{tikzpicture}
\end{equation*}
This mapping has the property that the outer corners of the output array $T$ are equal to the last passage percolation times or polymer
partition functions of down-right paths from $(1,1)$ to the corresponding corners.
 More precisely, consider $\sfX=\big(x_{i,j}\colon (i,j)\in ind(\sfX)\big)$ a polygonal array with a suitable index set $ind(\sfX)$; if
  $(i,j)$ is such that $(i,j)\in ind(X)$ and $(i+1,j), (i,j+1)\notin ind(\sfX)$ then
 \begin{equation}\label{local_greene}
 \begin{split}
  t_{i,j}=\max_{\pi\colon (1,1)\to (i,j)} \,\sum_{(i,j)\,\in \,ind(\sfX)} x_{i,j}&\qquad \text{for} \quad \sfT=\rsk(\sfX)\qquad \text{or}\\
 t_{i,j}=\sum_{\pi\colon (1,1)\to (i,j)} \,\prod_{(i,j) \,\in\, ind(\sfX)} x_{i,j}&\qquad \text{for} \quad \sfT=\grsk(\sfX).
 \end{split}
 \end{equation}
 This is depicted in the following picture where the coloured vertices in the right-hand side are equal to the last passage percolation or 
 polymer partition functions of the paths with the same colour in the left-hand side:
\begin{equation}\label{rsk-array}
\begin{tikzpicture}[baseline={([yshift=-.5ex]current bounding box.center)},vertex/.style={anchor=base,
    circle,fill=black!25,minimum size=18pt,inner sep=2pt}, scale=0.3]
\draw[thick] (0,0)--(0,11); 
\draw[thick] (0,11)--(11,11);
\draw[thick] (11,11)--(11,9);
\draw[thick] (11,9)--(9,9);
\draw[thick] (9,9)--(9,6);
\draw[thick] (9,6)--(6,6);
\draw[thick] (6,6)--(6,3);
\draw[thick] (6,3)--(3,3);
\draw[thick] (3,3)--(3,0);
\draw[thick] (3,0)--(0,0);
\draw[ultra thick, red] (0,11)--(4,11)--(4,10)--(11,10)--(11,9);
\draw[ultra thick, blue] (0,11)--(2,11) -- (2,8)--(4,8)--(4,7)--(8,7)--(8,6)--(9,6);
\draw[ultra thick, green] (0,11)--(0,9) -- (5,9)--(5,3)--(6,3);
\draw[ultra thick, brown] (0,11)--(0,5) -- (2,5)--(2,0)--(3,0);
\end{tikzpicture}
\qquad \longmapsto \qquad 
\begin{tikzpicture}[baseline={([yshift=-.5ex]current bounding box.center)},vertex/.style={anchor=base,
    circle,fill=black!25,minimum size=18pt,inner sep=2pt}, scale=0.3]
\draw[thick] (0,0)--(0,11); 
\draw[thick] (0,11)--(11,11);
\draw[thick] (11,11)--(11,9); \draw  [fill=red ] (10.9,9) circle [radius=0.3];
\draw[thick] (11,9)--(9,9);
\draw[thick] (9,9)--(9,6);
\draw[thick] (9,6)--(6,6); \draw  [fill=blue ] (8.9,6) circle [radius=0.3];
\draw[thick] (6,6)--(6,3);
\draw[thick] (6,3)--(3,3); \draw  [fill=green ] (5.9,3) circle [radius=0.3];
\draw[thick] (3,3)--(3,0);
\draw[thick] (3,0)--(0,0); \draw  [fill=brown ] (2.9,0) circle [radius=0.3];
\end{tikzpicture}
\end{equation}
 
 This property has been useful towards identifying joint, multi-point laws of last passage percolation and polymer models 
 \cite{J03, NZ17} (with the former leading to a full convergence result towards the Airy process). 
We will further discuss this, in relation to the polynuclear growth process, in Section \ref{LPP-Airyprocess}. 

\section{A solvable Last Passage Percolation model}\label{geomLPP}
We now have all the tools to analyse an exactly solvable model in the KPZ class. This is the last passage percolation with geometric variables
\footnote{the adjective geometric here has the more standard probabilistic interpretation and is not related to the way it was used to describe the geometric lifting of $\rsk$ earlier.}
In this section we will show how $\rsk$ allows to write explicitly the distribution of the last passage percolation time in terms of Schur functions.
The determinantal expression of the Schur functions will then set the stage for asymptotic analysis, which we will present in the following section.

To set things up, we consider a matrix $\sfW=(\bw^{\,i}_j)_{\substack{ 1\leq i \leq m,\\ 1\leq j \leq n}}$, where we assume that the entries are independent random variables with geometric distribution
\begin{align}\label{geom}
\bbP(\bw^i_j=w^i_j) = (1-p_iq_j) (p_iq_j)^{w^i_j} \,\ind_{w^i_j\in\{0,1,2,...\}},\qquad 1\leq i\leq m\,,\, 1\leq j\leq n,
\end{align}
where $p_i,q_j$ are parameters in $(0,1)$. The first question we want to ask is whether we can compute the law of
\begin{align}\label{def:geomlpp}
\tau_{m,n} :=\max_{\pi\in {\bf \Pi}_{m,n}} \sum_{(i,j)\in \pi} \bw^{\,i}_j,
\end{align}
where ${\bf\Pi}_{m,n}$ is the set of down-right paths going from site $(1,1)$ to site $(m,n)\in\N^2$ 
(using the matrix rather than the cartesian index notation). 
For simplicity, let us assume that $m=n=N$, although the general 
case can also be treated following similar reasoning. For conciseness we will also denote $\tau_{N,N}$ by $\tau_N$. 
The answer to this question is affirmative
 and the reason is that the geometric distribution fits the framework and the properties of $\rsk$. In particular, we can 
 answer the posed question by carrying out the following steps:
\vskip2mm
{\bf Step 1. Combinatorial analysis.} $\rsk$ gives a bijection between a matrix with nonnegative entries and a pair of Young tableaux $(P,Q)$ or equivalently a pair of $\GT$ patterns $(\sfZ,\sfZ')$. In particular,
 $\tau_{N}$ equals the length of the first row of the tableaux $P,Q$, which in $\GT$ parametrisation is $z^N_1=(z^N_1)'$. 
 
 This step may naturally bring up the question:
 ``since we are only interested in $z^n_1$ why do we need all the extra quantities $(z^i_j\,,\, (z^i_j)';\, 1\leq j\leq i\leq N)$ coming from $\rsk$ '' ? To give an answer to
 this, let us look back at the Hammersley process, Figure \ref{fig:hammer}. In order to find the length of the up-right path that collects the largest number
 of points, we have to count how many vertical or horizontal lines reach the upper or right side of the square. The difficulty lies in the fact that there
 are many cancellations of rays inside the square (when a horizontal and a vertical line meet they annihilate each other) that are not directly visible
 on the top or right sides of the square. We can try to recover 
 lost information as follows: from the annihilation points we draw a second (red) generation of vertical and horizontal rays, which
 again annihilate each other when they meet. Then from the annihilation points of the red rays we start a third (blue) generation of vertical and and horizontal rays, which
 again annihilate each other when they meet and so on as in the following figure
 \begin{equation*}
\begin{tikzpicture}[scale=.6]
\draw (0,0) -- (0,10)--(10,10)--(10,0)--(0,0) ;
\draw  [fill ](1,1)  circle [radius=0.1]; \draw (1,1)--(1,10.5);  \draw (1,1)--(10.5, 1);
\draw  [fill ](1.4, 8.1)  circle [radius=0.1]; \draw (1.4,8.1)--(1.4,10.5); \draw (1.4,8.1)--(2.1,8.1); \draw  [red, fill ](2.1,8.1)  circle [radius=0.1] ; \draw [red, thick] (2.1,8.1)--(2.1,10.5); \draw [red, thick](2.1,8.1)--(5.2,8.1);
\draw  [blue, fill ](5.2,8.1)  circle [radius=0.1]; \draw [blue, thick]  (5.2,10.5)--(5.2,8.1) --(7.9,8.1)--(7.9,6.8)--(10.5,6.8);
\draw  [brown, fill ] (7.9,8.1)circle [radius=0.1]; \draw [brown, thick] (7.9,10.5)--(7.9,8.1)--(10.5,8.1);
\draw  [fill ](2.1, 1.8)  circle [radius=0.1]; \draw(2.1,1.8)--(2.1,8.1); \draw(2.1,1.8)--(10.5, 1.8);
\draw  [fill ](3.4, 4.1)  circle [radius=0.1]; \draw(3.4, 4.1) --(3.4,10.5); \draw(3.4,4.1)--(7.9, 4.1);
\draw  [fill ](6.2, 5.6)  circle [radius=0.1]; \draw  (6.2, 5.6)--(6.2,10.5); \draw (6.2, 5.6)--(10.5, 5.6);
\draw  [fill ](5.2, 4.6)  circle [radius=0.1]; \draw(5.2, 4.6)--(10.5, 4.6); \draw(5.2, 4.6)--(5.2, 6.8);
\draw  [fill ](4.3, 6.8)  circle [radius=0.1]; \draw (4.3, 6.8)--(5.2, 6.8);   \draw (4.3, 6.8)--(4.3, 10.5); \draw  [red, fill ] (5.2, 6.8) circle [radius=0.1]; \draw [red, thick ](5.2, 6.8)--(5.2,8.1);
 \draw [red, thick ](5.2, 6.8)--(7.9,6.8)--(7.9,4.1)--(10.5,4.1);
\draw  [red, fill ] (7.9,4.1) circle [radius=0.1];
\draw  [blue, fill ] (7.9,6.8) circle [radius=0.1]; 
\draw  [fill ](7.9, 2.8)  circle [radius=0.1]; \draw(7.9, 2.8)--(7.9,4.1); \draw(7.9, 2.8)--(10.5,2.8); 
\draw  [fill ](7.2, 8.4)  circle [radius=0.1]; \draw (7.2, 8.4)--(7.2,10.5);  \draw (7.2, 8.4)--(8.4,8.4);
\draw  [fill ](8.4, 7.4)  circle [radius=0.1]; \draw (8.4, 7.4)--(8.4, 8.4); \draw(8.4, 7.4)--(10.5, 7.4); 
\draw  [red,fill ](8.4, 8.4)  circle [radius=0.1];\draw [red, thick] (8.4, 8.4)--(10.5, 8.4); \draw [red, thick](8.4, 8.4)--(8.4, 10.5);
\end{tikzpicture}
 \end{equation*}
 Having all the generations of rays arriving at the two sides of the square is enough to restore the whole information. The information of the rays of all generations that
 arrive to the sides is actually the information that is contained in $\rsk$ (this description of cancelling rays is actually Viennot's construction of $\rsk$). Having now all the information the question comes down to whether
 the probability distribution of the image of $\rsk$ as well as the marginal over the quantity of interest $z^N_1$ are tractable. This is handled in the next two steps. 
\vskip 2mm
{\bf Step 2. Push forward law.} The law of the random weight matrix 
$\sfW$ from \eqref{geom} can be written explicitly in terms of $\GT$ variables as
\begin{align}\label{push1}
\bbP(\sfW=\{w^i_j\}) &= \prod_{i,j}(1-p_iq_j) \, \prod_i p_i^{\sum_j w^i_j}\, \prod_{j} q_j^{\sum_i w^i_j}\notag\\
&= \prod_{i,j}(1-p_iq_j) \, \prod_i p_i^{|(z^i)'|-|(z^{i-1})'|}\, \prod_{j} q_j^{|z^j|-|z^{j-1}|}.
\end{align}
Here we related $\sum_i w^i_j$ to the {\it type} of the $P$-tableau, $\big(|z^j|-|z^{j-1}|\big)_{j=1,...,N}$  as $\sum_i w^i_j = |z^j|-|z^{j-1}|$
  and the sum $\sum_j w^i_j$  to the {\it type} of the $Q$-tableau $|(z^i)'|-|(z^{i-1})'|$ as $\sum_j w^i_j =|(z^i)'|-|(z^{i-1})'|$.
   The former is just \eqref{type}, while the latter follows from the fact that 
 \begin{align*}
 \text{if $\rsk(\sfW)=(P,Q)=(\sfZ,\sfZ')$, then $\rsk(\sfW^\sft)=(Q,P)=(\sfZ',\sfZ)$}.
 \end{align*}
 \vskip 2mm
{\bf Step 3. Marginalisation and determinantal measures.}
 We are now ready to compute $\bbP(\tau_N\leq u)$. Using the previous two steps we have that
\begin{align*}
\bbP(\tau_N\leq u) =\sum_{\gl\colon \gl_1\leq u} \,\,
\sumtwo{(Z,Z')\, \text{pair of $\GT$ patterns}}{ \text{with shape $\gl$  } }\bbP\big(\rsk(\sfW) = (Z,Z')\big)
\end{align*}
 and by \eqref{push1} this equals 
\begin{align*}
&\prod_{i,j}(1-p_iq_j)\, \sum_{\gl\colon \gl_1\leq u} \,\,
\sumtwo{(Z,Z')\, \text{pair of $\GT$ patterns}}{ \text{with shape $\gl$  } }  \prod_i p_i^{|(z^i)'|-|(z^{i-1})'|}\, \prod_{j} q_j^{|z^j|-|z^{j-1}|} \\
&=  \prod_{i,j}(1-p_iq_j)\, \sum_{\gl\colon \gl_1\leq u} \,\,
\sumtwo{Z\colon  \text{$\GT$ pattern}}{ \text{with shape $\gl$  } }  \prod_{j} q_j^{|z^j|-|z^{j-1}|}
 \sumtwo{Z'\colon \text{$\GT$ pattern}}{ \text{with shape $\gl$  } }  \prod_i p_i^{|(z^i)'|-|(z^{i-1})'|},
\end{align*}
and now each of the two rightmost summands are recognised to be the 
 Schur functions, whose expression as generating series of Young tableaux \eqref{Schur-gen-tableau} 
 may be rewritten in the Gelfand-Tsetlin notation as
\begin{align}\label{GTschur}
s_\gl(q) := \sumtwo{Z\colon  \text{$\GT$ pattern}}{ \text{with shape $\gl$  } }  \prod_{j} q_j^{|z^j|-|z^{j-1}|}.
\end{align}
Thus,
the above induces that (this step is really a change of notation)
\begin{align}\label{schur1}
 \bbP(\tau_N\leq u)= \prod_{i,j}(1-p_iq_j)\, \sum_{\gl\colon \gl_1\leq u} s_\gl(q)  \,s_\gl(p).
\end{align}
We have, thus, computed the law of last passage percolation in terms of special functions, which furthermore possess many nice properties.
In particular, they can be written in terms of determinants and in fact there are more than one such formulae. For example,
 if $\lambda=(\gl_1,\gl_2,...)$ is a partition and $p_1,p_2,...,p_N$ are nonnegative parameters (or variables), then 
\begin{align}\label{schur_form}
s_\lambda(p)=\frac{\det\big(p_i^{\gl_j+N-j}\big)_{1\leq i,j\leq N}}{\det\big(p_i^{N-j}\big)_{1\leq i,j\leq N}},
\end{align} 
where in the denominator one recognises the Vandermonde determinant, which can be computed as $\Delta_N(p):=\prod_{1\leq i<j\leq N}(p_i-p_j)$.
\vskip 2mm
The next question we want to ask is whether we can perform asymptotic analysis. For this, we have
\vskip 2mm
{\bf Step 4. Fredholm determinants.} Relation \eqref{schur_form}
 allows to express \eqref{schur1} as a {\it Fredholm determinant}, in a form that is suitable to take the asymptotic limit
  and prove convergence to Tracy-Widom GUE distribution. 
  We will introduce the notion of a Fredholm determinant and some of its properties in the next section. 
  The significance of expressing \eqref{schur1} and other such probabilities 
  in terms of Fredholm determinants is that
doing so facilitates taking the limit of $N$ tending to infinity. In \eqref{schur1}, $N$ is the number of varables $\gl_1,...,\gl_N$, over which the
sum in \eqref{schur1} is taken. Thus, taking the limit $N\to\infty$ corresponds to the number of summations  
to infinity and the meaning of such limit is not clear at all at this stage. The key to 
resolving this difficulty is the notion of Fredholm determinants,
 which re-expresses such sums and integrals in a way 
 that the limit in $N$ becomes unambiguous and tractable. We will see how this is done in the next section.

\section{Determinantal measures, Fredholm determinants and asymptotics}\label{sec:Fred}
In this section we pick up from the closing paragraph of the previous section and we develop the framework that allows for
the asymptotic analysis of expressions like \eqref{schur1}. This framework usually goes under the name of {\it determinantal measures}
and {\it determinantal processes} and it is very general and widely applicable.
We will work with the example of expression \eqref{schur1} and we will arrive to Proposition \ref{prop:LPPcontour},
which re-expresses \eqref{schur1} in the form of a Fredholm determinant.
 In Section \ref{sec:dpp} we will generalise the notion of a determinantal measure to the notion of
{\it determinantal point process} and present a theorem which describes the structure of multipoint correlations.
This becomes useful in the analysis of joint laws of last passage percolation (but also of other other models that we do not
discuss here), which we describe in Section \ref{LPP-Airyprocess}.
In Section \ref{sec:asymptot} we describe the {steepest descent method}, which is the main tool for asymptotic analysis in this setting.
Picking up from Proposition \ref{prop:LPPcontour}, we will apply this method in order to derive that the asymptotic law of last passage percolation
is the Tracy-Widom GUE law, see Theorem \ref{thm:lpp-asympt}.
 Finally, in Section \ref{LPP-Airyprocess}, which focuses on multipoint statistics, we have included a discussion
on some recent important constructions of universal objects: the Airy line ensemble, the KPZ fixed point and the Directed Landscape.

\subsection{Determinantal measures and application to last passage percolation}\label{sec:det-meas}
We are now going to introduce the notion of determinantal measures and Fredholm determinants. 
We will demonstrate how two basic tools from determinantal calculus, the Cauchy-Binet or 
Andreief's identity and the Sylvester's identity, can be used to turn a determinantal measure into a Fredholm determinant.

In many statistical models we encounter probability measures of the form
\begin{align}\label{meas_part}
\mu_N(f):=\frac{Z_N(f)}{Z_N},
\end{align}  
where $\mu_N(f)$ denotes expectation of a functional $f\in L^2(\cX,\mu)$ on a measure space $(\cX,\mu)$ and
\begin{align}\label{partition}
Z_N(f):= \int_{\cX^N} \det\big(\phi_i(x_j)\big)_{1\leq i,j\leq N} \,  \det\big(\psi_i(x_j)\big)_{1\leq i,j\leq N} \,\,\,f(x_1)\cdots 
f(x_N)\,\, \mu(\dd x_1) \cdots \mu(\dd x_N)
\end{align} 
 the quantity $Z_N=Z_N(1)$ is typically known as the {\it partition function}.
Measures with determinants in this form in the right-hand side 
are known as {\it determinantal measures}. 
We note that the functions $\phi_i,\psi_i$ can be either non-negative, in which case, we have a genuine probability measure,
or they may also be allowed to take negative values, in which case we deal with signed measures.
For more regarding determinantal measures and processes we refer to \cite{B11, J05}. 
\vskip 2mm
Due to \eqref{schur_form} we see that the Schur measure
\begin{align}
\bbP(\lambda) := \prod_{i,j}(1-p_iq_j)\, s_\gl(q)  \,s_\gl(p).
\end{align}
 (see \eqref{schur1}) on partitions $\gl$ is a determinantal measure with $\phi_i(\lambda_j):=p_i^{\lambda_j+N-j}$
 and $\psi_i(\lambda_j):=q_i^{\lambda_j+N-j}$
 How to work with this measure will become more transparent in the analysis of the solvable last passage percolation model in Proposition \ref{prop:LPPcontour}, in particular see \eqref{LPPdet}. 
 This measure was introduced by Okounkov \cite{O01}.
 \vskip 2mm
  Determinantal probabilites such as \eqref{schur1} can be written in terms of objects called {\it Fredholm determinants} and this is crucial in obtaining asymptotics. 
We will exhibit this in the example of the Schur measure and last passage percolation with geometric weights.
\vskip 2mm
Let us first define the notion of Fredholm determinant.
Given an integral operator $K$ acting on $L^2(\cX,\mu)$ of a general measure space $(\cX,\mu)$ by
\begin{align*}
Kf(x)=\int_\cX K(x,y) f(y) \,\mu(\dd y),
\end{align*}
we define the Fredholm determinant associated to $K$ by
\begin{align}\label{Fred}
\det(I+K)_{L^2(\cX,\mu)}:= 1+\sum_{n=1}^\infty \frac{1}{n!}\ \int_{\cX^n} \det\big( K(x_i,x_j)\big)_{n\times n} \, \mu(\dd x_1) \cdots \mu(\dd x_n).
\end{align}
Here $I$ is the identity map.
Of course, one has to make sure that this infinite series is convergent. This is usually guaranteed by requiring that $K$ is a {\it trace class operator}. We refer to \cite{S79} for more details, but let us go through a quick sketch:
For a compact operator $K$ on a Hilbert space, 
say $L^2(\cX,\mu)$, we define its trace class norm as
$\|K\|_1:={\rm Tr}\sqrt{K^*K}$, where $K^*$ is the adjoint of $K$ and the square root can be defined via operator calculus, since
$K^*K$ is self-adjoint. In the case of a trace class norm operator one can obtain that \eqref{Fred} is well defined and the Fredholm determinant is bounded
by
\begin{align*}
|\det(I+K)_{L^2(\cX,\mu)}\,| \leq e^{\|K\|_1}.
\end{align*}
Moreover, one has the following continuity result 
\begin{align}\label{Fred-cont}
|\det(I+K_1)_{L^2(\cX,\mu)} -\det(I+K_2)_{L^2(\cX,\mu)}\,| \leq \|K_1-K_2\|_1\,\, e^{\|K\|_1+\|K\|_2+1}.
\end{align}
A consequence of this inequality is that if we would like to establish convergence of certain Fredholm determinants, it
is enough to establish the convergence of the corresponding operators in the trace class norm.
\vskip 2mm
A way to get a feeling about definition
\eqref{Fred} is to consider the case where $K$ is an $N\times N$ matrix and let $\lambda_1,...,\lambda_N$ denote its
eigenvalues. Then 
\begin{align}\label{Fred2}
\det(I+K)= \prod_{i=1}^N (1+\lambda_i) = 1 +\sum_{m=1}^N \sum_{1\leq i_1<\cdots <i_m\leq N} \gl_{i_1}\cdots \gl_{i_m}.
\end{align}
From the standard property of trace, $\sum_{i=1}^N \gl_i = {\rm Tr} K= \sum_x K(x,x)$, one sees immediately the identification of the first non-trivial terms in \eqref{Fred} and \eqref{Fred2}.
The rest of the terms have similar interpretation as traces of tensor products of $K$, see \cite{S79} for details.  
Without getting into details, we mention that Fredholm determinants formalise in some sense the inclusion-exclusion
principle. A flavour of this fact can be taken from expressions \eqref{eq:detpoint} and \eqref{eq:nopoint} later on.  
\vskip 2mm
We will now state two important tools that will allow to relate determinantal measures to Fredholm determinants.
\begin{proposition}[Cauchy-Binet or Andreief identity]\label{CB}
Consider a collection of functions $\big(\phi_i(\cdot)\big)_{1\leq i \leq N}$ and $\big(\psi_i(\cdot)\big)_{1\leq i \leq N}$, which belong to $L^2(\cX,\mu)$ of
 a measure space $(\cX,\mu)$. Then
\begin{align*}
&\frac{1}{N!}\int_{\cX^N} \det\big(\phi_i(x_j)\big)_{1\leq i,j\leq N} \,  \det\big(\psi_i(x_j)\big)_{1\leq i,j\leq N} \,\,\, \mu(\dd x_1) \cdots \mu(\dd x_N)\\
&\qquad =\det\Big( \int_\cX \phi_i(x) \psi_j(x) \, \mu(\dd x) \Big)_{1\leq i,j\leq N}.
\end{align*}
\end{proposition}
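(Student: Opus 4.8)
The statement is the classical Cauchy–Binet (Andr\'eief) identity, and I would prove it by expanding both determinants on the left via the Leibniz formula and then recognizing the result as a Leibniz expansion of the right-hand side. First I would write
\begin{align*}
\det\big(\phi_i(x_j)\big)_{1\le i,j\le N} = \sum_{\sigma\in S_N} \operatorname{sgn}(\sigma) \prod_{j=1}^N \phi_{\sigma(j)}(x_j),
\qquad
\det\big(\psi_i(x_j)\big)_{1\le i,j\le N} = \sum_{\tau\in S_N} \operatorname{sgn}(\tau) \prod_{j=1}^N \psi_{\tau(j)}(x_j).
\end{align*}
Multiplying these and integrating term by term over $\cX^N$ (the functions are in $L^2$, so each product $\phi_{\sigma(j)}(x)\psi_{\tau(j)}(x)$ is integrable by Cauchy–Schwarz, and the sum over $S_N\times S_N$ is finite, so interchanging sum and integral is legitimate), the integral factorizes over the coordinates $x_1,\dots,x_N$. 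This gives
\begin{align*}
\frac{1}{N!}\sum_{\sigma,\tau\in S_N}\operatorname{sgn}(\sigma)\operatorname{sgn}(\tau)\prod_{j=1}^N \int_\cX \phi_{\sigma(j)}(x)\psi_{\tau(j)}(x)\,\mu(\dd x).
\end{align*}

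**Reindexing.** The key step is the change of summation variable. Writing $M_{k\ell}:=\int_\cX \phi_k(x)\psi_\ell(x)\,\mu(\dd x)$ and substituting $j\mapsto \sigma^{-1}(j)$ in the product (a bijection of $\{1,\dots,N\}$), each term becomes $\operatorname{sgn}(\sigma)\operatorname{sgn}(\tau)\prod_{j=1}^N M_{j,\,\tau(\sigma^{-1}(j))}$. Setting $\rho:=\tau\circ\sigma^{-1}$ and using $\operatorname{sgn}(\sigma)\operatorname{sgn}(\tau)=\operatorname{sgn}(\sigma)\operatorname{sgn}(\rho)\operatorname{sgn}(\sigma)=\operatorname{sgn}(\rho)$, the summand depends on $\sigma$ only through the constraint that $\rho$ ranges over all of $S_N$ as $\tau$ does, for each fixed $\sigma$. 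Hence the double sum collapses: for each of the $N!$ choices of $\sigma$ we get the same inner sum $\sum_{\rho\in S_N}\operatorname{sgn}(\rho)\prod_{j=1}^N M_{j,\rho(j)}$, and the prefactor $1/N!$ exactly cancels the overcounting. The surviving expression is $\sum_{\rho\in S_N}\operatorname{sgn}(\rho)\prod_{j=1}^N M_{j,\rho(j)} = \det(M_{ij})_{1\le i,j\le N} = \det\big(\int_\cX \phi_i(x)\psi_j(x)\,\mu(\dd x)\big)_{1\le i,j\le N}$, which is the right-hand side.

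**Main obstacle.** There is no serious analytic obstacle here; the only point requiring a word of care is the legitimacy of interchanging the finite sum over permutations with the $N$-fold integral, which is immediate once one notes each integrand $\prod_j \phi_{\sigma(j)}(x_j)\psi_{\tau(j)}(x_j)$ is absolutely integrable over $\cX^N$ by Tonelli/Fubini together with the Cauchy–Schwarz bound $\int_\cX |\phi_k\psi_\ell|\,\dd\mu \le \|\phi_k\|_{L^2}\|\psi_\ell\|_{L^2}<\infty$. The genuinely substantive step is the permutation bookkeeping in the paragraph above---getting the cancellation of signs and the $1/N!$ exactly right---but this is routine combinatorics rather than a real difficulty. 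I would present the proof in essentially the three displayed steps above: Leibniz expansion, factorization of the integral, and the reindexing $\rho=\tau\sigma^{-1}$.
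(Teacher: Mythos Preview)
Your proof is correct and follows exactly the approach the paper indicates: the paper does not spell out a proof but simply remarks that both this proposition and the next ``can be proven easily by expanding the determinants and essentially using Fubini,'' referring to \cite{AGZ10}. Your Leibniz expansion, factorization via Fubini, and reindexing $\rho=\tau\sigma^{-1}$ is precisely that argument, carried out in full detail.
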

The following proposition is commonly known as Sylvester's identity:
\begin{proposition}\label{abba}
Consider general measure spaces $(\cX,\mu), (\cY,\nu) $ and
 trace class operators $A\colon L^2(\cY,\nu) \to L^2(\cX,\mu)$ and $B\colon L^2(\cX,\mu) \to  L^2(\cY,\nu) $. Then
\begin{align*}
\det\big(I+AB\big)_{L^2(\cX,\mu)} = \det\big(I+ BA\big)_{ L^2(\cY,\nu)}.
\end{align*}
\end{proposition}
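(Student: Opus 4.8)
The plan is to verify the identity term-by-term in the series definition \eqref{Fred} of the Fredholm determinant, reducing it to the corresponding finite-dimensional statement. First I would expand both sides using \eqref{Fred}: the left-hand side is $1+\sum_{n\geq1}\frac1{n!}\int_{\cX^n}\det\big((AB)(x_i,x_j)\big)_{n\times n}\,\mu(\dd x_1)\cdots\mu(\dd x_n)$, and the right-hand side is the analogous series with kernel $BA$ integrated against $\nu$ on $\cY^n$. Writing $(AB)(x,x')=\int_\cY A(x,y)B(y,x')\,\nu(\dd y)$ and $(BA)(y,y')=\int_\cX B(y,x)A(x,y')\,\mu(\dd x)$, the goal becomes the identity, for each fixed $n\geq 1$,
\begin{align*}
\frac1{n!}\int_{\cX^n}\det\Big(\int_\cY A(x_i,y)B(y,x_j)\,\nu(\dd y)\Big)_{1\le i,j\le n}\,\prod_k\mu(\dd x_k)
=\frac1{n!}\int_{\cY^n}\det\Big(\int_\cX B(y_i,x)A(x,y_j)\,\mu(\dd x)\Big)_{1\le i,j\le n}\,\prod_k\nu(\dd y_k).
\end{align*}

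The key step is to apply the Cauchy--Binet identity (Proposition \ref{CB}) in the form that turns a determinant of integrals into an integral of a product of determinants. Reading Proposition \ref{CB} from right to left with $\phi_i(\cdot)=A(x_i,\cdot)$ and $\psi_j(\cdot)=B(\cdot,x_j)$ on the measure space $(\cY,\nu)$, the left-hand integrand above equals $\frac1{n!}\int_{\cY^n}\det\big(A(x_i,y_k)\big)_{i,k}\det\big(B(y_k,x_j)\big)_{k,j}\,\prod_k\nu(\dd y_k)$. Substituting this in and interchanging the $\cX^n$ and $\cY^n$ integrations (justified by absolute convergence, since $A$ and $B$ are trace class, hence Hilbert--Schmidt, so the relevant kernels are in $L^2$ and the multiple integrals converge absolutely), the left-hand side of the $n$-th term becomes
\begin{align*}
\frac1{(n!)^2}\int_{\cY^n}\int_{\cX^n}\det\big(A(x_i,y_k)\big)_{i,k}\,\det\big(B(y_k,x_j)\big)_{k,j}\,\prod_k\mu(\dd x_k)\,\prod_k\nu(\dd y_k).
\end{align*}
Now apply Proposition \ref{CB} once more, this time on $(\cX,\mu)$ with the roles of the two matrix families arranged so that the inner $\cX^n$-integral collapses to $n!\det\big(\int_\cX B(y_i,x)A(x,y_j)\,\mu(\dd x)\big)_{i,j}$; what remains is exactly the $n$-th term of the right-hand side. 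Summing over $n$ (including the trivial $n=0$ term, which is $1$ on both sides) gives the claim.

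The main obstacle is the analytic bookkeeping rather than the algebra: one must ensure that each application of Cauchy--Binet is legitimate (the relevant functions lie in $L^2$ of the appropriate space) and that the Fubini interchange of the $\cX^n$ and $\cY^n$ integrals is valid. Both follow from the trace-class hypothesis, which guarantees that $A$ and $B$ are Hilbert--Schmidt, so their kernels are square-integrable and $AB$, $BA$ are trace class with convergent Fredholm expansions; I would invoke \cite{S79} for these standard facts. One should also note the symmetry: the finite-dimensional prototype is the familiar fact that $\det(I+MN)=\det(I+NM)$ for rectangular matrices, and the proof above is precisely its Fredholm analogue, with Cauchy--Binet playing the role of the Cauchy--Binet formula for determinants of matrix products. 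An alternative, slicker route would be to appeal directly to the known identity $\det(I+AB)=\det(I+BA)$ for trace-class operators as proved in \cite{S79}, but carrying out the series manipulation makes the argument self-contained given the tools already introduced.
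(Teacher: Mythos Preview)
Your proof is correct and matches the paper's approach: the paper does not give a detailed argument but simply remarks that both Propositions \ref{CB} and \ref{abba} ``can be proven easily by expanding the determinants and essentially using Fubini,'' referring to \cite{AGZ10}. Your term-by-term expansion of the Fredholm series, with two applications of Cauchy--Binet sandwiching a Fubini interchange, is precisely the fleshed-out version of that hint.
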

Both propositions can be proven easily by expanding the determinants and essentially using Fubini; see \cite{AGZ10}. 
We can now prove a general result:
\begin{proposition}\label{prop:Fred}
Consider a collection of functions $\big(\phi_i(\cdot)\big)_{1\leq i \leq N}$ and $\big(\psi_i(\cdot)\big)_{1\leq i \leq N}$, which belong to $L^2(\cX,\mu)$ of
 a measure space $(\cX,\mu)$. Define the matrix
\begin{align}\label{gij}
\sfG_{ij}:=\int_\cX \phi_i(x) \psi_j(x)\, \mu(\dd x)
\end{align}
and assume that it is invertible. Define also the operator $K$ with kernel
\begin{align}\label{Fred_kern}
K(x,y):=\sum_{i,j} \psi_i(x) \big(\sfG^{-1}\big)_{ij} \,\phi_j(y).
\end{align}
Then, following notation \eqref{partition}, it holds that, for a general bounded function $g$ on $\cX$,
\begin{align*}
\frac{Z_N(1+g)}{Z_N}= \det\big(I+gK\big)_{L^2(\cX,\mu)}.
\end{align*}
\end{proposition}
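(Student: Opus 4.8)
The plan is to expand the numerator $Z_N(1+g)$ directly using the definition \eqref{partition}, expand the product $\prod_{k=1}^N (1+g(x_k))$, and for each subset of factors where $g$ appears, apply the Cauchy--Binet identity (Proposition \ref{CB}) to collapse the two determinants into a single determinant of a matrix built from $\sfG$ and from "twisted" inner products involving $g$. Then I would re-sum these determinants into a single $\det(\sfG + \sfG')$-type expression, where $\sfG'$ is the matrix with entries $\int_\cX \phi_i(x)\psi_j(x) g(x)\,\mu(\dd x)$; factoring out $\sfG$ and using $Z_N = N! \det \sfG$ (which is itself Proposition \ref{CB} applied with $g\equiv 0$) turns the ratio $Z_N(1+g)/Z_N$ into $\det(I + \sfG^{-1}\sfG')$, a determinant of an $N\times N$ matrix.

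The second half is to recognize $\det(I+\sfG^{-1}\sfG')$ as the Fredholm determinant $\det(I+gK)_{L^2(\cX,\mu)}$. Here I would introduce the finite-rank factorization: let $A\colon \mathbb{C}^N \to L^2(\cX,\mu)$ send the $j$-th basis vector to $\phi_j$, and let $B\colon L^2(\cX,\mu)\to \mathbb{C}^N$ be the map $h \mapsto \big(\int_\cX \psi_i(x) g(x) h(x)\,\mu(\dd x)\big)_i$ composed with multiplication by $\sfG^{-1}$ — more precisely, write $gK$ as a composition of operators between $L^2(\cX,\mu)$ and the $N$-dimensional space so that $BA$ (on $\mathbb{C}^N$) is exactly $\sfG^{-1}\sfG'$ while $AB$ (on $L^2$) is exactly $gK$ with kernel $g(x)K(x,y)$ as in \eqref{Fred_kern}. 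Then Proposition \ref{abba} gives $\det(I+gK)_{L^2(\cX,\mu)} = \det(I+\sfG^{-1}\sfG')_{\mathbb{C}^N}$, closing the chain of equalities. Since all operators are finite rank they are automatically trace class, so the hypotheses of Proposition \ref{abba} are met and the Fredholm determinant \eqref{Fred} reduces to a finite sum that matches \eqref{Fred2}.

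Concretely the key steps in order are: (i) expand $\prod_{k}(1+g(x_k)) = \sum_{S\subseteq\{1,\dots,N\}} \prod_{k\in S} g(x_k)$ inside the integral \eqref{partition}; (ii) for a fixed $S$ of size $m$, integrate out the $x_k$ with $k\in S$ (carrying the extra $g$) and the $x_k$ with $k\notin S$ (without $g$) via Cauchy--Binet, obtaining a sum over ways to choose which rows/columns of $\sfG$ vs.\ $\sfG'$ are used; (iii) recognize the total as a single $N\times N$ determinant by the multilinearity of the determinant in its columns, namely $\sum_S (\text{minors}) = \det(\sfG + \sfG')$; (iv) divide by $Z_N = N!\det\sfG$ to get $\det(I+\sfG^{-1}\sfG')$; (v) identify this with $\det(I+gK)_{L^2(\cX,\mu)}$ via the $AB \leftrightarrow BA$ trick of Proposition \ref{abba}, checking that the kernel of $AB$ is precisely $g(x)\sum_{i,j}\psi_i(x)(\sfG^{-1})_{ij}\phi_j(y)$.

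The main obstacle — though it is bookkeeping rather than a genuine difficulty — is step (iii): being careful that when one expands the product over $S$ and applies Cauchy--Binet separately on the $S$ and $S^c$ blocks, the resulting sum of products of complementary minors of $\sfG$ and $\sfG'$ reassembles correctly (with the right signs) into $\det(\sfG+\sfG')$ rather than some other combination. This is exactly the content of the multilinear expansion of a determinant whose columns are sums $\mathbf{g}_j + \mathbf{g}'_j$, but one must track the combinatorial indexing carefully to see the Cauchy--Binet output lands on the correct minors. Everything else is a direct application of the two preceding propositions and Fubini.
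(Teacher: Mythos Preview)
Your proposal is correct and follows essentially the same two-stage architecture as the paper: reduce $Z_N(1+g)/Z_N$ to an $N\times N$ determinant $\det(I+\sfG^{-1}\sfG')$ via Cauchy--Binet, then pass to the Fredholm determinant on $L^2(\cX,\mu)$ via the $\det(I+AB)=\det(I+BA)$ identity. Your steps (iv)--(v) match the paper's argument almost verbatim (only the roles of $A$ and $B$ are labelled in the opposite direction).

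The one difference is that you have made the first half considerably harder than necessary. Your steps (i)--(iii) expand $\prod_k(1+g(x_k))$ over subsets $S$, attempt to apply Cauchy--Binet block-by-block, and then reassemble complementary minors of $\sfG$ and $\sfG'$ into $\det(\sfG+\sfG')$. The paper bypasses this entirely: it applies Cauchy--Binet (Proposition~\ref{CB}) \emph{once}, with the single weight function $f=1+g$, and reads off directly
\[
Z_N(1+g)=N!\,\det\Big(\int_\cX \phi_i(x)\psi_j(x)(1+g(x))\,\mu(\dd x)\Big)_{i,j}=N!\,\det(\sfG+\sfG').
\]
So the ``main obstacle'' you identify in step (iii) is self-inflicted and disappears if you do not expand the product in the first place. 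There is nothing wrong with your route, but the bookkeeping you worry about is avoidable.
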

\begin{proof}
The proof is a consequence of Propositions \ref{CB} and \ref{abba}. Let $f=1+g$. By the Cauchy-Binet identity
we have that 
\begin{align*}
Z_N(f) &= \det\Big(\int_\cX\phi_i(x) \psi_j(x) \, f(x)  \, \dd\mu \Big)_{1\leq i,j\leq N}\\
&=  \det\Big(\int_\cX\phi_i(x) \psi_j(x) + \int_\cX\phi_i(x) \psi_j(x) g(x)  \, \dd\mu \Big)_{1\leq i,j\leq N}\\
&= \det\Big(\sfG_{ij}+ \int_\cX\phi_i(x) \psi_j(x) g(x)  \, \dd\mu \Big)_{1\leq i,j\leq N},
\end{align*}
where by setting $g=0$ or equivalently $f=1$ we see that $Z_N=\det\sfG$. Using the multiplicativity of determinants, ie $\det(AB)=\det(A)\det(B)$ 
and denoting by $(\sfG^{-1})_{ij}$ the $(i,j)$ entry of matrix $\sfG^{-1}$, we see that
\begin{align}\label{detratio1}
\frac{Z_N(1+g)}{Z_N}
&=\det\Big(\gd_{ij} + \sum_k (\sfG^{-1})_{ik} \int_\cX\phi_k(x) \psi_j(x) g(x)  \, \dd\mu\Big)_{1\leq i,j\leq N}\notag\\
&=\det\Big(\gd_{ij} +  \int_\cX g(x)  \sum_k (\sfG^{-1})_{ik} \,\phi_k(x) \psi_j(x)  \, \dd\mu\Big)_{1\leq i,j\leq N}.
\end{align}
Now, we will use Proposition \ref{abba} with
\begin{align*}
A\colon  L^2(\cX,\mu) \to \ell^2(\{1,...,N\}) \quad \text{with} \quad A(i;x):= \sum_k (\sfG^{-1})_{ik} \,\phi_k(x)
\end{align*}
acting on functions in $f\in L^2(\cX,\mu)$ as $(Af)(i):=\int_{\cX} A(i;x) f(x) \mu(\dd x)$, with the result being
an element of $\ell^2(\{1,...,N\}$,
and
\begin{align*}
B\colon \ell^2(\{1,...,N\}) \to  L^2(\cX,\mu)  \quad \text{with} \quad B(x;i):=g(x)  \psi_i(x),
\end{align*}
acting on elements $h\in \ell^2(\{1,...,N\}$ as $(Bh)(x):=g(x)\sum_{i=1}^N \psi_i(x) \, h(i)$, with the result
being an element of  $L^2(\cX,\mu)$.
Then, by Proposition \ref{abba}, \eqref{detratio1} may be written as
\begin{align*}
\frac{Z_N(1+g)}{Z_N} = \det\Big(I+ AB\Big)_{\ell^2(\{1,...,N\})} = \det\Big(I+ BA\Big)_{L^2(\cX,\mu)}
\end{align*}
where the kernel of operator $BA$ may be written explicitly as
\begin{align*}
BA(x,y)&=\sum_{\ell=1}^N B(x;\ell) A(\ell,y) =g(x)  \sum_{1\leq \ell,k\leq N} \psi_\ell(x) \, \,(\sfG^{-1})_{\ell, k} \,\,\phi_k(y)\\
\end{align*}
for $x,y\in\cX$, completing the proof in view of \eqref{Fred_kern}.
\end{proof}
The difficulty that arises when one would like to apply concretely  Proposition \ref{prop:Fred} is to actually invert the
matrix $\sfG$. In some situations this can be done, as we will now see by applying this to 
 last passage percolation with geometric weights. We have
\begin{proposition}\label{prop:LPPcontour}
Consider a matrix $\sfW=(\bw^i_j)_{1\leq i,j\leq N}$ with distribution $\bbP$ as in \eqref{geom} and 
$\tau_N=\max_{\pi\in {\bf \Pi}_{N,N} }\sum_{(i,j)\in \pi} \bw^i_j$.
Then
\begin{align*}
\bbP\big( \tau_N\leq x\big) = \det\big(I+g_N K^{LPP}_N\big)_{L^2(\bbN)},
\end{align*}
with $g_N=\ind_{[x+N,\infty)}$ and for $t,s\in\N$ the kernel of operator $K^{LPP}_N$ is given by 
\begin{align}\label{KLPP}
K^{LPP}_N(t,s)=\frac{1}{(2\pi \iota)^2}\int_{\gamma_1} \dd \zeta \int_{\gamma_2} \dd \eta \,\,
\frac{\eta^s \zeta^t}{1-\zeta\eta} \,\,\prod_{j=1}^N \Big(\frac{1-\eta q_j}{\eta-p_j}\Big)
\prod_{i=1}^N \Big(\frac{1-p_i\zeta}{\zeta-q_i}\Big) 
\end{align}
where $\gamma_2$ is the circle in the complex plane with counter-clockwise orientation, centred at zero and radius one and $\gamma_1$ 
is the circle with counter-clockwise orientation, centred at zero of radius $r<1$. Without loss of generality, we assume that all $p_i,q_i, 1\leq i\leq N$ are small enough so that they are contained within contour $\gamma_1$. We also note that $\iota=\sqrt{-1}$.
\end{proposition}
\begin{proof}
Using \eqref{schur1} and \eqref{schur_form} we can write
\begin{align}\label{LPPdet}
\bbP\big( \tau_N\leq x\big) = \frac{\prod_{1\leq i,j\leq N}(1-p_iq_j) }{\Delta_N(p)\Delta_N(q)} \sum_{x\geq \gl_1\geq \gl_2\geq\cdots\geq \gl_N\geq 0} 
\det\big(p_i^{\gl_j+N-j}\big)_{1\leq i,j\leq N} \det\big(q_i^{\gl_j+N-j}\big)_{1\leq i,j\leq N},
\end{align}
where $\Delta_N(p)$ and $\Delta_N(q)$ are Vandermonde determinants.
To bring the sum in \eqref{LPPdet} into form \eqref{meas_part}, \eqref{partition}, we make the change of variables $t_j:=\gl_j+N-j$ and write it as
\begin{align*}
\sum_{x+N-1\geq t_1 > t_2 >\cdots >t_N\geq 0} 
\det\big(p_i^{t_j}\big)_{1\leq i,j\leq N} \det\big(q_i^{t_j}\big)_{1\leq i,j\leq N}
\end{align*}
 Noticing that this sum is symmetric in 
variables $t_1,...,t_N$ and that the summand vanishes if two of these are equal to each other (because the determinants do so in this case),
 we can extend the sum via symmetrization and write it as
\begin{align*}
\frac{1}{N!} \sum_{t_1,...,t_N\in \bbN}  \det\big(p_i^{t_j}\big)_{1\leq i,j\leq N} \det\big(q_i^{t_j}\big)_{1\leq i,j\leq N} \,\,\ind_{[0,x+N-1]}(t_1)\cdots \ind_{[0,x+N-1]}(t_N).
\end{align*}
Thus, we can write \eqref{LPPdet} in the form \eqref{meas_part} and \eqref{partition} with $\phi_i(t)=p_i^t$, $\psi_j(t)=q_j^t$, $t\in \{0,1,...\}$
and $\mu$ being the counting measure.
We will now use Proposition \ref{prop:Fred} and in this setting we compute
\begin{align}
\sfG_{ij}=\sum_{t\geq 0} (p_iq_j)^t = \frac{1}{1-p_iq_j}.
\end{align}
To invert this matrix we will use Cramer's formula, which states that
\begin{align}\label{cramer}
\big(\sfG^{-1}\big)_{ij}=\frac{(-1)^{i+j}\det \sfG^{\,ji}}{\det\sfG},
\end{align}
where $\sfG^{\,ji}$ denotes the minor matrix derived from $\sfG$ by deleting row $j$ and column $i$. One of the computable determinants goes under the name
of Cauchy determinant and is
\begin{align*}
\det\Big(\frac{1}{a_i-b_j}\Big)_{1\leq i,j\leq N} = (-1)^{\tfrac{n(n-1)}{2}} \frac{\Delta_N(a) \Delta_N(b)}{\prod_{1\leq i,j\leq N}(a_i-b_j)}
\end{align*}
and so 
\begin{align}\label{detg}
\det\sfG=\frac{\prod_{1\leq k<\ell\leq N}(p_k-p_\ell) \,\, \prod_{1\leq k<\ell\leq N}(q_k-q_\ell) }{\prod_{k,\ell=1}^N(1-p_kq_\ell)}
\end{align}
one can also compute $\det\sfG^{\,ji}$ observing that this is also a Cauchy determinant of the same type and so the same formula as in \eqref{detg}
will be valid, just without terms which contain variables $p_i$ and $q_j$. Thus, we obtain that
\begin{align*}
\big(\sfG^{-1}\big)_{ji} = \frac{\prod_{1\leq \ell\leq N}(1-p_jq_\ell) \, \prod_{1\leq k\leq N}(1-p_kq_i)}{(1-p_jq_i) 
\,\prod_{\ell\neq j} (p_j-p_\ell)\, \prod_{k\neq i} (q_i-q_k)}.
\end{align*}
Inserting this into \eqref{Fred_kern} with the choice
$\psi_i(t)=q_i^t$ and $\phi_j(s)=p_j^s$, we obtain that \eqref{LPPdet} can be written as a Fredholm determinant 
\begin{align*}
\bbP\big( \tau_N\leq x\big)  = \det\big(I+g_N K^{LPP}_N\big)_{L^2(\bbN)}
\end{align*}
with 
\begin{align*}
K^{LPP}_N(t,s) = \sum_{1\leq i,j \leq N} q_i^t p_j^s \,\, \frac{\prod_{1\leq \ell\leq N}(1-p_jq_\ell) \, \prod_{1\leq k\leq N}(1-p_kq_i)}{(1-p_jq_i) 
\,\prod_{\ell\neq j} (p_j-p_\ell)\, \prod_{k\neq i} (q_i-q_k)}.
\end{align*}
Using the Residue Theorem we can write this kernel in an integral form
\begin{align*}
K^{LPP}_N(t,s)=\frac{1}{(2\pi \iota)^2}\int_{\gamma_1} \dd \zeta \int_{\gamma_2} \dd \eta \,\,
\frac{\eta^s \,\zeta^t}{1-\zeta\eta} \,\,\prod_{j=1}^N \Big(\frac{1-\eta q_j}{\eta-p_j}\Big)
\prod_{i=1}^N \Big(\frac{1-p_i\zeta}{\zeta-q_i}\Big) 
\end{align*}
finishing the proof. 
 With regards to the application of the residue theorem we note that the function $\prod_{j=1}^N (\tfrac{1-\eta q_j}{\eta-p_j})$
 has poles $(p_i)$ included in the contout $\gamma_2$ and the function
$\prod_{i=1}^N (\tfrac{1-p_i\zeta}{\zeta-q_i})$ has poles $(q_i)$ included inside $\gamma_1$, while the choice of the contours excludes a pole from the case $\zeta\eta=1$.
\end{proof}
\vskip 2mm
{\bf Last Passage Percolation with exponential weights.}
If we scale the parameters of the $p_i,q_j$ of the geometric random variables $\bw^i_{j}$ in \eqref{geom}
 as $p_i\mapsto e^{-\gb_i \epsilon}$ and $q_j\mapsto e^{-\ga_j \epsilon}$ , then the rescaled geometric random variables $\epsilon \bw^i_j$
converge, as $\epsilon\to0$ to exponential variables with parameters $\alpha_j+\beta_i$. It, therefore, follows that last passage percolation with 
exponential variables with parameters $\alpha_j+\beta_i$ is also solvable and the kernel of the corresponding 
Fredholm determinant $K^{\text{exp}}_{N}(t,s)$ is derived from the limit
\begin{align}\label{Kexp_scale}
K^{\text{exp}}_{N,\ga\,,\gb}(t,s) =\lim_{\epsilon\to0} \epsilon^{-1}  K^{LPP}_{N,\epsilon\ga,\epsilon\gb}
\Big(\frac{t}{\epsilon},\frac{s}{\epsilon}\Big),
\end{align}
which can be computed after a change of variables $\zeta\mapsto e^{-\epsilon z}$ and $\eta\mapsto e^{\epsilon y}$ giving
\begin{align}\label{Fred_exp}
K^{\text{exp}}_{N,\ga,\gb}(t,s) = \frac{1}{(2\pi \iota)^2}\int_{\Gamma_1} \dd z \int_{\Gamma_2} \dd y \,\,
\frac{1}{z-y} \frac{e^{-tz}}{e^{-sy}}\,\,\prod_{j=1}^N \frac{\ga_j-y}{\gb_j+y}
\prod_{i=1}^N \frac{\gb_i+z}{\ga_i-z},
\end{align}
where contour $\Gamma_1$ is a straight, upwards oriented, 
vertical line with fixed, positive real part and with parameters $\ga_j, j=1,...,N$ lying on its right
and $\Gamma_2$ is the straight, upwards oriented, vertical line with real part zero.
The reason for scaling as in \eqref{Kexp_scale} comes from the series expansion of the
Fredholm determinant and scaling the integrals in that expansion with $\epsilon$ accordingly.

\subsection{Determinantal point processes}\label{sec:dpp}
The notion of determinantal measure \eqref{partition} can be extended to the notion of a {\it determinantal point process}.
 This notion is important in many aspects
but for us it is important because it allows to analyse joint laws of observables in solvable stochastic systems. An example that we will see later in Section \ref{sec:asymptot} is on the joint laws of last passage percolation times. 

Let $\cX$ be a complete, separable metric space. Typical examples are $\R^d$ or $\Z^d$. 
Consider a kernel $K(\cdot,\cdot)$ defined on $\cX\times \cX$.
Informally, a {\bf determinantal point process} on $\cX$ is a probability distribution $\bbP$ 
on all possible locally finite collections of points from $\cX$, such that
for any finite collection of locations $y_1,...,y_n\in\cX$, the probability (or probability density)
that these  locations are occupied by points of $\xi$ is given by $\det\big( K(y_i,y_j)\big)_{1\leq i,j\leq n}$. More formally, 
\begin{definition}\label{def:detpoint}
A determinantal point process is a 
probability distribution on the space of locally finite, counting measures $\xi$ on $\cX$, such that for any bounded, measurable function $g\colon \cX\to\R$,
supported on a bounded set $B\subset \cX$, it holds that
\begin{align}\label{eq:detpoint}
\E\big[ \prod_{i=1}^{\xi(B)} (1-g(x_i)\,) \big] = \sum_{n=0}^\infty \frac{(-1)^n}{n!} \int_{\cX^n} \det\big(K(x_i,x_j)\big)_{1\leq i,j\leq n} \prod_{i=1}^n g(x_i) \prod_{i=1}^n \mu(\dd x_i).
\end{align}
\end{definition}
Notice that the right hand side in \eqref{eq:detpoint} is the expansion of the Fredholm determinant $\det\big( I-gK\big)_{L^2(\cX)}$.
Setting $g(x)=1_{B}(x)$ for a subset $B\subset \cX$, we have that \eqref{eq:detpoint} takes the form
\begin{align}\label{eq:nopoint}
\bbP\big(\text{no point in B}\big) = \det\big(1-\ind_B K \big)_{L^2(\cX)}.
\end{align}
A very useful theorem, which captures the determinantal correlation structure of measures given as product of determinants is the following.
\begin{theorem}[\cite{J03}]\label{joh:multi}
Consider the space $\{1,...,N\}\times \R$ with fixed boundary points $(x^{(0)}_1,...,x^{(0)}_n)\in \{0\}\times\R^n$ 
and $(x^{(N+1)}_1,...,x^{(N+1)}_n)\in \{N+1\}\times\R^n$,
kernels $\phi_{r,r+1}(\cdot,\cdot) \colon \R^2\to\R, r\geq 0, $ and a reference measure $\mu(\dd x)$ on $\R$.
Then on the space of point configurations $\bx=(x^{(1)},...,x^{(N)})\in \{1,...,N\}\times \R $ 
with $x^r:=(x^{(r)}_1,...,x^{(r)}_n)\in\R^n$, for $r=1,...,N$, the measure  
\begin{align*}
\mu_{n,N}(\dd \bx):=\frac{1}{Z_{n,N}} \prod_{r=0}^{N} \det\big( \phi_{r,r+1}(x_i^{(r)},x_j^{(r+1)}) \big)_{1\leq i,j\leq n} 
\prod_{\substack{r=1,...,N \\ i=1,...,n}} \mu(\dd x_i^{(r)}),
\end{align*}
 where $Z_{n,N}$ is the normalisation,
defines a determinantal point process with kernel
\begin{align}\label{eq:joh:mullti}
K_{n,N}(r,x;s,y):= -\phi_{r,s}(x,y) +\sum_{i,j=1}^n \phi_{r,N+1}(x,x_i^{(N+1)}) \big[A^{-1}\big]_{i,j} \phi_{\,0,s}(x_j^{(0)}, y),
\end{align}
where denoting by $*$ the convolution, i.e. $(\phi*\psi)(x,y):=\int_\R \phi(x,z)\psi(z,y) \mu(\dd z)$, we have defined
\begin{align*}
&\phi_{r,s}(x,y):=\phi_{r,r+1}*\cdots * \phi_{s-1,s} (x,y), \qquad \text{for}\quad 0\leq r < s \leq N+1 \\
& \text{and} \qquad \phi_{r,s}(x,y):= 0, \hskip 3.2cm \text{if} \quad  r\geq s,
\end{align*}
and the matrix $A$ is defined as
\begin{align*}
A_{i,j} : = \phi_{\,0,N+1}(x^{(0)}_i, x^{(N+1)}_j) \qquad \text{for $1\leq i,j\leq n$}.
\end{align*}
\end{theorem}
We will see an application of this theorem in Section \ref{LPP-Airyprocess} when discussing the multi-point correlation structure
of the last passage percolation problem. For more applications of this theorem, we refer to the reviews \cite{J05, J17}

\subsection{Asymptotics and the Tracy-Widom law.}\label{sec:asymptot}
We will now give a sketch of how asymptotics are performed and how the $N^{1/3}$ scaling and the Tracy-Widom law emerge.
In a single phrase this can be described as
$$
 \text{{\it steepest descent method and Taylor expansion up to the third order.} }
 $$
 We will start by describing the method of steepest descent and then we will apply it to obtain the Tracy-Widom GUE asymptotic
 law for the rescaled exponential last passage percolation. 
 At the end of the section we will arrive at the following theorem:
 \begin{theorem}\label{thm:lpp-asympt}
 Let $\tau^{{\rm exp}}_N$ be the last passage percolation time from $(1,1)$ to $(N,N)$ on a lattice with exponential 
 weights ${\rm Exp(2\alpha)}$, for some parameter $\alpha>0$. Then, for $\sff=2/\alpha$ and $\sigma=2^{1/3}/\alpha$, it holds that
 the law of $\sigma^{-1}N^{-1/3} \big( \tau_N^{\rm exp} - \sff N\big)$ converges to the 
 one-point marginal of the ${\rm Airy_2}$ process, which coincides with the Tracy-Widom GUE distribution.
 More precisely, we have that
\begin{align*}
\lim_{N\to\infty}\bbP\big( \tau_N^{\text{exp}}\leq \sff N+\sigma N^{1/3} x\big) &=
\det\big(I+K_{\rm{Airy}_2}\big)_{L^2(x,\infty)},
\end{align*}
where the kernel $K_{{\rm Airy}_2}$ may be written explicitly as
\begin{align*}
K_{{\rm Airy}_2}(t,s)= \int_0^\infty Ai(\lambda+t) \,Ai(\lambda+s)\,\dd \lambda, \qquad t,s\in \R,
\end{align*}
with $Ai(\cdot)$ being the Airy function.
\end{theorem}
 \begin{remark}{\rm
 The Tracy-Widom GUE law describes the
limit of the probability that the largest eigenvalue of a random matrix from the
Gaussian Unitary Ensemble (GUE) is less than $x$, \cite{AGZ10}.
Even though the one-point marginal of the 
 $\text{Airy}_2$ process coincides with the Tracy-Widom GUE law, we presented the above theorem in term of the Airy process
 because, as we will see in the next subsection, this captures the correlations and joint law of the last passage percolation time from 
 $(1,1)$ to different lattice points $(x,y)$ on the line $x+y=N$.}
 \end{remark}
 \begin{remark}{\rm
 The Airy function $Ai(x)$ is the solution of the second order ODE $u''=xu$ subject to the condition that $u(x)\to0$ when $x\to\infty$.
 It admits the contour integral representation \eqref{airy-contour} that we will use below. For further information we refer to \cite{L72}.}
 \end{remark}

 Let us start by briefly describing the principles of the steepest descent method. A nice account of this, as well as other 
 classical asymptotic methods, can be found in \cite{AF03}. Another pedagogical account via examples on eigenvalue statistics
 of random matrices can be found in \cite{R12}. 
 
 Suppose that one is interested in the asymptotic behaviour of the integral
 \begin{align*}
 \int_{I} f(x) \,e^{N g(x)}\,\dd x,\qquad \text{as} \qquad N\to\infty,
 \end{align*}
 where $f,g$ are real functions and $I$ is an open interval in $\R$. If $g$ has a unique maximum, say $x_0\in I$,
  then Laplace asymptotics tell us that the main contribution to this integral
 will come from a neighbourhood around $x_0$. 
 In fact, if $g''(x_0)<0$, then a Taylor expansion (up to the second order) 
$g(x)=g(x_0) + \tfrac{1}{2}g''(x_0)\,(x-x_0)^2 +o\big( (x-x_0)^2\big) $ will give that the integral will be asymptotic to
\begin{align*}
f(x_0) \,e^{Ng(x_0)} \int_{(x_0-\epsilon,x_0+\epsilon)} e^{N\big(\tfrac{1}{2}g''(x_0)\,(x-x_0)^2 +o( (x-x_0)^2) \big)} \dd x
\,\approx\,
\Big(\frac{2\pi}{-Ng''(x_0)}\Big)^{1/2} f(x_0) \,e^{Ng(x_0)},
\end{align*} 
  where the last approximation follows from an evaluation of a Gaussian integral
  (the meaning of $\approx$ is that the ratio of the two sides converges to $1$). We now want to consider asymptotics when the
  integral is over a complex contour $\gamma$ with $f,g$ being complex valued, i.e.,
   \begin{align*}
 \int_\gamma f(z) \,e^{N g(z)}\,\dd z,\qquad \text{as} \qquad N\to\infty.
 \end{align*}
 The idea in this situation is to reduce to Laplace asymptotics by deforming the contour (using Cauchy's theorem) so that
 it passes through the critical point(s) of $g$ in a way such that the contribution along the contour away from the critical point is negligible. 
 This can, for example, be achieved if the real part of $g$ along the contour and away from the critical point,
 is strictly smaller than the real part of $g$ at the critical point, 
 while its imaginary part is (ideally) equal to zero.  
 
 Let us see how this idea can be applied in the example of last passage percolation with exponential weights. We will
 look at the Fredholm kernel we derived in \eqref{Fred_exp} and set the parameters $\alpha_i=\beta_j=a$ for all $i,j$. In this case, the kernel may be written as
 \begin{align*}
K^{\text{exp}}_{N,a}(t,s) = \frac{1}{(2\pi \iota)^2}\int_{\Gamma_1} \dd z \int_{\Gamma_2} \dd y \,\,
\frac{1}{z-y} \frac{e^{-tz}}{e^{-sy}}\,\, \Big(\frac{a-y}{a+y}\Big)^N
\Big(\frac{a+z}{a-z}\Big)^N,
\end{align*}
Since $\Re(z-y)>0$ we can write $(z-y)^{-1}=\int_0^\infty e^{-\lambda(z-y)}\,\dd \lambda $.  Setting 
\begin{align*}
g(z)=\log(a+z)-\log(a-z) 
\end{align*}
we have that
 \begin{align}\label{ker_exp}
K^{\text{exp}}_{N,a}(t,s) = \frac{1}{(2\pi \iota)^2} \int_0^\infty \dd\lambda \int_{\Gamma_1} \dd z 
\,\, e^{N g(z)-(\lambda+t)z} \,\, \int_{\Gamma_2} \dd y \, \,e^{-Ng(y)+(\lambda+s)y}.
\end{align}
Since we expect that $\tau_N^{\text{exp}}\sim \sff N + \sigma N^{1/3}\times (\text{fluctuations})$ for some constant $\sff$
\footnote{the macroscopic 
scale $N$ is indeed expected from applications of the sub-additive ergodic theorem, although the value of $\sff$ is not 
a priori known; even though it seems as if we cheat since we
``know'' the lower order term $N^{1/3}$ this would actually come naturally out of the asymptotic analysis that we are about to scketch} 
which represents the macroscopic behavior and a $\sigma$ which represents the standard deviation,
we want to compute the asymptotics of
 \begin{align*}
\bbP\big( \tau_N^{\text{exp}}\leq \sff N+\sigma N^{1/3} x\big) &= \det\big(I+ K^{\text{exp}}_N\big)_{L^2(\sff N+N^{1/3} x,\infty)} \\
 &=\det\Big(I+ K^{\text{exp}}_N(\cdot+\sff N+\sigma N^{1/3} x, \cdot+\sff N+\sigma N^{1/3} x)\Big)_{L^2(0,\infty)},
\end{align*}
which by the Fredholm expansion \eqref{Fred} and the continuity of Fredhom determinants \eqref{Fred-cont},
 amounts to computing the limit of
\begin{align}\label{scaleKexp}
\lim_{N\to\infty} \sigma N^{1/3}\, K^{\text{exp}}_N(\sigma N^{1/3} \,t+\sff N+\sigma N^{1/3} x, \sigma N^{1/3}\,s+\sff N+\sigma N^{1/3} x).
\end{align}
for $s,t\in\R$.
Given expression \eqref{ker_exp}, and setting
\begin{align*}
G(z)=\log(a+z)-\log(a-z)-\sff z,
\end{align*}
 computing \eqref{scaleKexp} amounts to computing the asymptotics of
 \begin{align}\label{Gamma12}
\frac{\big(\sigma N^{1/3}\big)^2}{(2\pi \iota)^2} \int_0^\infty \dd\lambda \int_{\Gamma_1} \dd z 
\,\, e^{N G(z)-\sigma N^{1/3}(\lambda+t+x)z} \,\, \int_{\Gamma_2} \dd y \, \,e^{-NG(y)+\sigma N^{1/3}(\lambda+s+x)y},
\end{align}
where we have scaled $\lambda$ as $\lambda\mapsto \sigma N^{1/3}\lambda$.
The two contour integrals have essentially the same structure, so let us focus on the $z$-integral.
We notice that $G(0)=0$ and if $\sff=2/a$, which will be our choice, then also $G'(0)=0$, so that $z=0$ is a critical point of $G$.
 However, at $z=0$ we also have that $G''(0)=0$, so in the Taylor 
expansion towards identifying the steepest descent contour we will need, this time, to go up to the third order;
thus, $NG(z)=N \,(G'''(0)z^3/6+o(z^3)\,)$ around $z=0$, with $G'''(0)=4/a^3$, which implies that
 \begin{itemize}
 \item
 the scaling $z\mapsto  N^{-1/3}z$ makes the leading term $G'''(0) Nz^3 / 6$ independent of $N$. Moreover,
  as one can estimate (we omit an elementary computation needed here),
   the contribution away from a neighbourhood of order $N^{-1/3}$ from the critical point is
 negligible, thus allowing us to localize to a neighbourhood within this scale. 
 Notice also that this change of variables turns the term
 $\sigma N^{1/3}(\lambda+t+x)z$ in \eqref{Gamma12} into $\sigma (\lambda+t+x)z$, making it invariant under scaling with $N$.
 These facts  make the emergence of the exponent $N^{1/3}$ clear.
 
 \item if $z=r e^{\pm \iota \pi/3}$ with $r$ positive real, then $z^3=-r^3<0$. Thus the appropriate steepest descent contour 
 (at least in an $O(N^{-1/3})$-neighbourhood around $z=0$) is 
 $$\gamma_1=\{re^{-\iota\pi/3} \colon r\in(-\infty,0)\} \cup \{re^{\iota\pi/3} \colon r\in(0,\infty)\},$$ traced upwards.
 \end{itemize}
 Changing the $\Gamma_1$ contour in \eqref{Gamma12} to $\gamma_1$ and ignoring the contribution outside
  a neighbourhood $B(0,N^{-1/3}R)$, for some $R>0$ which will eventually tend to infinity, we have the asymptotics 
 \begin{align*}
& \frac{\sigma N^{1/3}}{2\pi\iota}\int_{\gamma_1\cap B(0,N^{-1/3}R)} e^{N (G'''(0) z^3/6+o(z^3))-\sigma N^{1/3}(\lambda+t+x)z}\,\,\dd z \\
& =  \frac{\sigma}{2\pi\iota} \int_{\gamma_1\cap B(0,R)} e^{G'''(0) z^3/6-\sigma (\lambda+t+x)z}\,\,\dd z  
 \xrightarrow[R\to\infty]{}  \frac{\sigma}{2\pi\iota} \int_{\gamma_1} e^{ G'''(0) z^3/6-\sigma (\lambda+t+x)z}\,\,\dd z.
 \end{align*}
 Changing variables $z\mapsto (2 / G'''(0))^{1/3}\,z $ in the last integral and choosing $\sigma = (G'''(0) / 2)^{1/3}$, we have that the above integral equals
 \begin{align}\label{airy-contour}
 \frac{1}{2\pi\iota} \int_{\gamma_1} e^{ z^3/3- (\lambda+t+x)z}\,\,\dd z  = Ai(\lambda+t+x),
 \end{align}
 where $Ai(\cdot)$ is the so-called {\bf Airy function} given precisely by the last contour integral. 
 
 In the same manner, contour $\Gamma_2$ in \eqref{Gamma12}
  (which has to lie on the left of $\Gamma_1$) should be deformed to the contour $-\gamma_1$, the
same localization procedure around $z=0$ then leads to
\begin{align*}
 \frac{\sigma N^{1/3}}{2\pi\iota}\int_{\Gamma_2} e^{-NG(y)+\sigma N^{1/3}(\lambda+s+x)y}
 \xrightarrow[N\to\infty]{}  \frac{1}{2\pi\iota} \int_{-\gamma_1} e^{-z^3/3+(\lambda+s+x)z}\,\,\dd z = Ai(\lambda+s+x).
 \end{align*}
 Put together, we have that for $\sff=2/a$ and $\sigma=(G'''(0) / 2)^{1/3} = (2/ a^3)^{1/3}$
 \begin{align*}
&\lim_{N\to\infty}\sigma N^{1/3}\, K^{\text{exp}}_N(\sigma N^{1/3} \,t+\sff N+\sigma N^{1/3} x, \sigma N^{1/3}\,s+\sff N+\sigma N^{1/3} x) \\
&\hskip 0.5cm= \int_0^\infty Ai(\lambda+t+x) \,Ai(\lambda+s+x)\,\dd \lambda
=K_{{\rm Airy}_2}(t+x,s+x),
\end{align*}
Thus, we have shown that, for the
$\sff,\sigma$ as above
 \begin{align*}
\lim_{N\to\infty}\bbP\big( \tau_N^{\text{exp}}\leq \sff N+\sigma N^{1/3} x\big) &=
\det\big(I+K_{\rm{Airy}_2}\big)_{L^2(x,\infty)}.
\end{align*}

\subsection{Multipoint distributions and Airy processes.}\label{LPP-Airyprocess}

Let us start with a discussion on spatial correlations.
In the previous paragraphs we saw how to obtain the scaling limit of the law of the last passage percolation time from $(1,1)$
to a fixed point $(N,N)$. In particular, in the case of exponential passage times we just checked that
 \begin{align*}
\lim_{N\to\infty}\bbP\big( \tau_N^{\text{exp}}\leq \sff N+\sigma N^{1/3} x\big) &=
\det\big(I+K_{\rm{Airy}_2}\big)_{L^2(x,\infty)},
\end{align*}
with $\sff,\sigma$ explicitly computed.
The analogous result also holds in the case of geometrically distributed weights after the proper adjustments of the parameters $\sff, \sigma, \rho$
(see also Theorem \ref{thm:airy} below).
Furthermore, one is interested in spatial correlations and a rigorous statement of the asymptotics of the type of \eqref{GUEasy}. Cast in the framework of last passage percolation one is interested to know if
 there is a limit as $N\to\infty$ of the process
\begin{align}\label{2/3-LPP}
\Bigg\{
\frac{\tau_{N+\lfloor\rho N^{2/3}\,z\rfloor, N-\lfloor\rho N^{2/3}\,z\rfloor}-\sff N}{\sigma \,N^{1/3}}
\Bigg\}_{z\in\bbR},
\end{align} 
for suitable constants $\sff, \sigma, \rho$. .
It turns out \cite{J03} that (for explicitly computable constants $\sff,\sigma,\rho$)
the process \eqref{2/3-LPP} converges to the process $ \big\{ \Ai_2(z)-z^2\big\}_{z\in\bbR}$,
where $\Ai_2(z)$ is the so-called {\rm ${\bf Airy}_2$} process, 
originally introduced by Pr\"ahofer and Spohn \cite{PS02}.
The ``two\,'' here refers to the link with eigenvalue statistics of GUE 
random matrices which are characterised by a parameter $\beta=2$.
The ${\rm Airy}_2$ process is a stationary, continuous process and the distribution of its one-point marginal coincides with the Tracy-Widom GUE law.
Its finite dimensional  distributions are given by
\begin{align}\label{def:airy}
\bbP\big(\Ai_2(z_1)\leq \xi_1,...,\Ai_2(z_k)\leq \xi_k\big) =\det(I-\chi_\xi \,K_{{\rm Airy}_2}^{\rm ext} \,\chi_\xi)_{L^2(\{z_1,...,z_k\}\times \bbR)},
\end{align}
where for $\ell=1,...,k$ we define $\chi_\xi(z_\ell,u):=\ind_{\{u\geq \xi_\ell\}}$ and $K_{{\rm Airy}_2}^{\rm ext}$
 is the {\bf extended $\text{Airy}_2$ kernel,} defined by
\begin{align*}
K_{{\rm Airy}_2}^{\rm ext}(z,\xi;z', \xi'):=
\begin{cases}
\,\,\int_0^\infty e^{-\gl (z-z')} Ai(\xi+\gl) Ai(\xi'+\gl) \dd \gl, &\qquad \text{if} \,\,z\geq z',\\
&\\
-\int_{-\infty}^0 e^{-\gl (z-z')} Ai(\xi+\gl) Ai(\xi'+\gl) \dd \gl, &\qquad \text{if} \,\,z< z',
\end{cases}
\end{align*}
with $Ai(\cdot)$ being the Airy function. The expansion of the 
Fredholm determinant in \eqref{def:airy} may be written as 
\begin{align}\label{eq:airy_exp}
&\det(I-\chi_\xi\,K_{{\rm Airy}_2}^{\rm ext}\,  \chi_\xi)_{L^2(\{z_1,...,z_k\}\times \bbR)}\\
&= 1+ \sum_{n=1}^\infty \frac{(-1)^n}{n!} \sum_{s_1,...,s_n\in\{z_1,...,z_k\}} \int_{\xi_1}^\infty\cdots \int_{\xi_n}^\infty \det\big(  K_{{\rm Airy}_2}^{\rm ext} (s_i,y_i;s_j,y_j)  \big)_{n\times n}
 \dd y_1\cdots\dd y_n.  \notag
\end{align}
We can state more precisely the following theorem.
\begin{theorem}[\cite{J03}]\label{thm:airy}
Consider a matrix $\sfW=(w_{ij})_{i,j\geq 1}$ of passage times, geometrically distributed according to \eqref{geom} with $p_i=q_j=\sqrt{q}$ for $i,j\geq 1$. Then the
 process \eqref{2/3-LPP} of last passage times $\tau_{N+\lfloor\rho N^{2/3}\,z\rfloor, N-\lfloor\rho N^{2/3}\,z\rfloor}$ from $(1,1)$ to
  $\big(N+\lfloor\rho N^{2/3}\,z\rfloor, N-\lfloor\rho N^{2/3}\,z\rfloor\big)$
 with the choice 
\begin{align*}
\sff=\frac{2\sqrt{q}}{1-\sqrt{q}} , \qquad   \sigma= \frac{(\sqrt{q})^{1/3}(1+\sqrt{q})^{1/3}}{1-q},  \qquad \rho=\frac{1+\sqrt{q}}{1-\sqrt{q}} \, \sigma^{-1},
\end{align*}
converges in the topology of continuous functions on compact intervals to the process $ \big\{ \Ai_2(z)-z^2\big\}_{z\in\bbR}$,
where $\Ai_2(z)$ is the ${\rm  Airy}_2$ process.
\end{theorem}
We will sketch the proof of this theorem in the next paragraph, when we will introduce the polynuclear growth process. 
The analogous convergence of the joint laws of last passage percolation with exponential passage times was 
achieved by Pr\"ahofer-Spohn \cite{PS02}. Both \cite{PS02} and  \cite{J03} achieved these results via the analysis of the 
{\it polynuclear growth model} (PNG), which we will describe next.
\vskip 2mm
{\bf Polynuclear growth process.} 
This model can be viewed as a graphical representation of $\rsk$ via the local moves described in Section \ref{sec:localmove}. 
As we will see, this construction leads to an ensemble of non-intersecting
paths. The Lindstr\"om-Gessel-Viennot theorem can, then, be applied to give an {\it extended} determinantal measure 
from which a Fredholm determinant can be derived for the joint law of the last passage times following the general scheme of Section 
\ref{sec:dpp}. 

Let us first describe the construction of PNG. 
Assume we have a matrix of weights $\sfW=(w_{ij})_{i,j\geq 1}$ and let us only consider its triangular part $(w_{ij} \colon i+j\leq N+1)$. 
This is, actually, an array as in \eqref{rsk-array}, to which we can apply $\rsk$ via local moves as described there. For the purposes of the exposition here, it is better to rotate it $135^o$
counter-clockwise as in Figure \ref{fig:png1}. It is also more convenient to consider the change of coordinates 
$(x,t):=(i-j,i+j-1)$, where $t$ will play the role of time. The array $(w_{ij} \colon i+j\leq N+1)$ can be thought of as encoding the heights of the blocks ({\it plateaux})
 that will be nucleated along the interface.

We consider, initially at time $t=0$, an infinite number of straight lines at heights $0,-1,-2,...$ and the process starts at time $t=1$ with a plateau of height 
$w_{11}$ being created on top of the first line at location $x=0$. The plateau will then start growing at unit speed left and right and at time $t=2$ two new plateaux of
heights $w_{1,2}$ and $w_{2,1}$ will be created on top of the first (now deformed) line at locations $x=-1, x=1$, respectively. The plateaux continue to grow at unit
speed left and right and when two plateaux overlap, then the overlapping region drops down and builds on top of the next line, see Figure \ref{fig:png2}. After the drop-downs
have taken place, say at time $t$, new plateaux are created on top of the first line with heights $w_{ij}$, for $i+j-1=t$ and at locations $x=i-j$ for 
all $i,j\geq 1$ such that $i+j-1=t$.
The plateaux will continue to grow at unit speed left and right along all level lines and whenever an overlap occurs, 
a cascade starts down the lines. 
We note that
the height of the drop-down block at some location and the new height (after the new nucleation) of the line at the location where the overlap has taken place 
are encoded by the local transform \eqref{eq:localmove}. Indeed,  let us assume that the height of a certain line, at location $x$ is $a$ and the height of this 
line at 
locations $x\pm1$ is $b,c$, respectively. Then the overlap, after the extension of the blocks at $x\pm1$, will be $b\wedge c-a$ while the height at $x$ will be $b\vee c$ and
after the addition of the new block, at location $x$, of height, say, $d$, the height there will be $d+b\vee c$. This is exactly transform \eqref{eq:localmove}.

\begin{figure}[t]
	\begin{center}
		\begin{tikzpicture}[scale=.7]

		\draw[-,thick] (-3,0)--(-0.5,0); \draw[-,thick] (0.5,0)--(3,0); \draw[-,thick] (-3,-0.5)--(3,-0.5);\draw[-,thick] (-3,-1)--(3,-1);

		\draw[-,thick](-0.5,0)--(-0.5,1.5)--(0.5,1.5)--(0.5,0);
		\draw[<->,red] (1.5,0)--(1.5,1.5);
		\draw[->,green](0.5,0.7)--(1,0.7);
		\draw[->,green](-0.5,0.7)--(-1,0.7);
		\node at (2,1) {\small{{\bf $w_{11}$}}};
		
		\fill[black]  (0,0.5) circle [radius=0.1];
		\fill[black]  (1,1) circle [radius=0.1];
		\fill[black]  (-1,1) circle [radius=0.1];
		\fill[black]  (0,2) circle [radius=0.1];				
		\fill[black]  (2,2) circle [radius=0.1];
		\fill[black]  (-2,2) circle [radius=0.1];
		
		\draw[-,thick](8,0)--(10.5,0); \draw[-,thick](13.5,0)--(16,0); \draw[-,thick] (8,-0.5)--(16,-0.5); \draw[-,thick] (8,-1)--(16,-1);
		\draw[-,thick] (10.5,0)--(10.5,1.5)--(13.5,1.5)--(13.5,0);
		\draw[-,thick,red] (10.5,1.5)--(10.5,2.5)--(11.5,2.5)--(11.5,1.5);
		\draw[-,thick,red] (12.5,1.5)--(12.5,2)--(13.5,2)--(13.5,1.5);		
		\fill[black]  (12,0.5) circle [radius=0.1];
		\fill[black]  (13,1) circle [radius=0.1];
		\fill[black]  (11,1) circle [radius=0.1];
		\fill[black]  (12,2) circle [radius=0.1];				
		\fill[black]  (14,2) circle [radius=0.1];
		\fill[black]  (10,2) circle [radius=0.1];
		\draw[<->,thick,red] (9.5,0)--(9.5,2.5);
		\draw[<->,red,thick] (14.5,0)--(14.5,2);
		\draw[<->,red,thick] (10.3,1.5)--(10.3,2.5);
		\draw[<->,red,thick] (13.7,1.5)--(13.7,2);	
	    \draw[->,green](13.5,0.7)--(14,0.7);
	    \draw[->,green](10.5,0.7)--(10,0.7);
		\draw[->,green](13.5,2.2)--(14,2.2);
		\draw[->,green](10.5,2.6)--(10,2.6);
		\draw[->,green](12.5,2.2)--(12,2.2);
	    \draw[->,green](11.5,2.6)--(12,2.6);    
	    \node at (14.1,1.7) {\small{{\bf $w_{12}$}}};
	    \node at (10.5,2.8) {\small{{\bf $w_{21}$}}};
	    \node at (8.3,1.5) {\small{{\bf $w_{11}+w_{21}$}}};
	    \node at (15.7,1) {\small{{\bf $w_{11}+w_{12}$}}};	
		\end{tikzpicture}
	\end{center}  
	
	\caption{ \small  The PNG at times $t=1,2$. The dots correspond to the entries $(w_{ij})$ of the input array, which determine the nucleations and the heights of the plateaux. 
	Once created, the plateaux will expand left/right at unit speed.
	}\label{fig:png1}
\end{figure}
 \begin{figure}[t]
 	\begin{center}
 		\begin{tikzpicture}[scale=0.7]
 		\fill[black]  (0,0.5) circle [radius=0.1];
 		\fill[black]  (1,1) circle [radius=0.1];
 		\fill[black]  (-1,1) circle [radius=0.1];
 		\fill[black]  (0,2) circle [radius=0.1];				
 		\fill[black]  (2,2) circle [radius=0.1];
 		\fill[black]  (-2,2) circle [radius=0.1];
 		\draw[black, thick](-5,0)--(-2.5,0); \draw[black, thick](5,0)--(2.5,0); \draw[black, thick](-5,-0.5)--(5,-0.5);  \draw[black, thick](-5,-1)--(5,-1);
 		\draw[-,black, thick](-2.5,0)--(-2.5,2.5)--(0.5,2.5)--(0.5,1.5)--(2.5,1.5)--(2.5,0);
 		\draw[-,red,thick] (-5,-0.1)--(-2.6,-0.1)--(-2.6,1.5)--(-0.5,1.5)--(-0.5,2)--(2.6,2)--(2.6,0.1)--(5,0.1);
 		\draw[dashed,thick](-0.5,1.5)--(0.5,1.5);
 		\fill[lightgray](-0.5,1.5)--(0.5,1.5)--(0.5,2)--(-0.5,2)--(-0.5,1.5);

 		\draw[-,thick](8,0)--(9.5,0); \draw[-,thick](14.5,0)--(16,0); \draw[-,thick] (8,-0.5)--(11.5,-0.5);  \draw[-,thick] (12.5,-0.5)--(16,-0.5); \draw[-,thick] (8,-1)--(16,-1);
 		\draw[dotted,thick] (9.5,0)--(9.5,2.5)--(12.5,2.5)--(12.5,2)--(14.5,2)--(14.5,0);
 		\draw[-,thick,red] (9.5,2.5)--(9.5,3.5)--(10.5,3.5)--(10.5,2.5);
 		\draw[-,thick,red] (11.5,2.5)--(11.5,4)--(12.5,4)--(12.5,2.5);	
 		\draw[-,thick,red] (13.5,2)--(13.5,2.5)--(14.5,2.5)--(14.5,2);
 		\draw[-,thick] (11.5,-.5)--(11.5,0)--(12.5,0)--(12.5,-0.5);
 		\fill[lightgray](11.5,-0.5)--(11.5,0)--(12.5,0)--(12.5,-0.5)--(11.5,-0.5);			
 		\fill[black]  (12,0.5) circle [radius=0.1];
 		\fill[black]  (13,1) circle [radius=0.1];
 		\fill[black]  (11,1) circle [radius=0.1];
 		\fill[black]  (12,2) circle [radius=0.1];				
 		\fill[black]  (14,2) circle [radius=0.1];
 		\fill[black]  (10,2) circle [radius=0.1];
 		
 		\draw[<->,thick,red] (9.3,2.5)--(9.3,3.5);
 		\draw[<->,red,thick] (12.7,2.5)--(12.7,4);
 		\draw[<->,red,thick] (14.7,2)--(14.7,2.5);

		 \node at (8.8,3) {\small{{\bf $w_{31}$}}};
		 \node at (13.2,3.3) {\small{{\bf $w_{22}$}}};
		 \node at (15.2,2.3) {\small{{\bf $w_{13}$}}};	
 	
 		\end{tikzpicture}
 	\end{center}  
 	\caption{ \small  At time $t=3$, an overlap of height $\min(h_{12},h_{21})-h_{11}$ drops down and a new plateau is created at $x=0$.
 	}\label{fig:png2}
 \end{figure}
\vskip 2mm

 \begin{figure}[t]
 	\begin{center}
 		\begin{tikzpicture}[scale=0.55]
		\draw[-, thick]  (-10,0)--(-4.5, 0)--(-4.5,6)--(-3.5,6)--(-3.5, 4)--(-2.5, 4)--(-2.5,7)--(-1.5,7)--(-1.5, 3)--(-0.5,3)--(-0.5,6.5)--(0.5, 6.5)--(0.5, 4)--(1.5,4)--(1.5, 7)--(2.5, 7)--(2.5,4)--(3.5, 4)--(3.5, 5)--(4.5,5)--(4.5,0)--(10,0);
		\draw[-,thick] (-10,-0.5)--(-2.5, -0.5)--(-2.5,2)--(-1.5,2)--(-1.5, 1)--(-0.5,1)--(-0.5, 2.5)--(0.5, 2.5)--(0.5,1)--(1.5,1)--(1.5, 3)--(2.5,3)--(2.5, -0.5)--(10,-0.5);
		\draw[-,thick] (-10,-1)--(-0.5, -1)--(-0.5, 0.7)--(0.5, 0.7)--(0.5,-1)--(10,-1);
		\draw[-,thick] (-10,-1.5)--(10,-1.5);
		\draw[-,thick] (-10,-2)--(10,-2);
		\fill[black]  (-4,6) circle [radius=0.15]; \fill[black]  (-3,4) circle [radius=0.15]; \fill[black]  (-2,7) circle [radius=0.15]; \fill[black]  (-1,3) circle [radius=0.15];
		\fill[black]  (0,6.6) circle [radius=0.15]; \fill[black]  (1,4) circle [radius=0.15]; \fill[black]  (2,7) circle [radius=0.15]; \fill[black]  (3,4) circle [radius=0.15];
		\fill[black]  (4,5) circle [radius=0.15];   \fill[red]  (5,0) circle [radius=0.15];   \fill[red]  (-5,0) circle [radius=0.15];
		\fill[red]  (-4,-0.5) circle [radius=0.15]; \fill[red]  (-3,-0.5) circle [radius=0.15]; \fill[black]  (-2,2) circle [radius=0.15]; \fill[black]  (-1,1) circle [radius=0.15];
		\fill[black]  (0,2.5) circle [radius=0.15]; \fill[black]  (1,1) circle [radius=0.15];  \fill[black]  (2,3) circle [radius=0.15];  \fill[red]  (3,-0.5) circle [radius=0.15];
		 \fill[red]  (4,-0.5) circle [radius=0.15];  \fill[red]  (5,-0.5) circle [radius=0.15];   \fill[red]  (-5,-0.5) circle [radius=0.15];
		 \fill[red]  (-4,-1) circle [radius=0.15]; \fill[red]  (-3,-1) circle [radius=0.15]; \fill[red]  (-2,-1) circle [radius=0.15]; \fill[red]  (-1,-1) circle [radius=0.15];
		 \fill[black]  (0,0.7) circle [radius=0.15];  \fill[red]  (4,-1) circle [radius=0.15]; \fill[red]  (3,-1) circle [radius=0.15]; \fill[red]  (2,-1) circle [radius=0.15]; 
		 \fill[red]  (1,-1) circle [radius=0.15];   \fill[red]  (5,-1) circle [radius=0.15];   \fill[red]  (-5,-1) circle [radius=0.15];
\end{tikzpicture}
 	\end{center}  
 	\caption{ \small  A caricature of PNG after the insertion of blocks with heights encoded by an array $(w_{ij}\colon i+j\leq 6)$.
	The black dots show the parts of the lines where blocks of heights encoded by the weights $(w_{ij}\colon i+j\leq 6)$ have been added
	and the red dots indicate the parts that are still frozen.
 	}\label{fig:multi-png}
 \end{figure}		
		Figure \ref{fig:multi-png} shows an example of how the interface will look like. By the discussion around \eqref{rsk-array}, the heights of the {\it top} line
		at locations marked by the dots, encode the last passage times from $(1,1)$ to the corresponding entry $(i,j)$ in the array $\sfW$, for $(i,j)$ such that $i+j-1=N$
		(corresponding to the peaks of the top line) or $i+j-1=N-1$ (corresponding to the valleys of the top line). 
		Moreover, it is a consequence of Greene's theorem that the sum of the heights of the PNG lines above a site
		$x=i-j$, with $i+j=N+1$, is equal to the sum $\sum_{k\leq i, \ell\leq j} w_{k\ell}$. These
		and some more properties of PNG can be found in \cite{J03}, Section 3 or in \cite{NZ17} Proposition 2.6 regarding the 
		geometric lifting of PNG.
		We also note that if we let the process run for long time, we would
		observe that the line ensemble would have a parabolic curvature. This would correspond to the $-z^2$ drift in Theorem \ref{thm:airy}. 

It is a consequence of this graphical construction, or, equivalently, of $\rsk$, that the line ensemble representing PNG will be a
non-intersecting line ensemble. Moreover, using the properties of $\rsk$ in the same way we used them in Section \ref{geomLPP},
it follows that, when the entries of $(w_{ij}\colon i+j\leq N+1)$ are distributed according to the geometric distribution \eqref{geom},
then the total weight of the PNG ensemble after time $t=N$ (recall that $t=i+j-1$ and we have inserted the subarray 
$(w_{ij}\colon i+j\leq N+1)$) will have the product form
\begin{align}\label{png-prod}
\frac{1}{Z_N}\prod_{r=-N}^{N-1} \, \prod_{i=1}^N \phi_{r,r+1}(x^{(r)}_i,x^{(r+1)}_i)
\end{align}
with fixed boundary points $x^{(-N)}_i=x^{(N)}_i=-i$ for $i=1,...,N$, and the normalisation
 \begin{align*}
 Z_N:=\frac{\prod_{i=1}^N(1-p_i)^n \prod_{j=1}^N(1-q_j)^n}{\prod_{i+j\leq N+1}(1-p_iq_j)}.
 \end{align*}
In accordance to Figure \ref{fig:multi-png}, we have assumed $N=2n-1$ is odd.
The weights $\phi_{r,r+1}$ are 
given explicitly (again, we assume $N$ is odd; in the case of even $N$ the order of up and down steps is reversed), by
\begin{align}\label{pngweights1}
\phi_{-N+2j, -N+2j+1}(x,y)=
\begin{cases}
(1-p_{j+1}) p_{j+1}^{y-x}, \qquad &\text{if} \quad y\geq x\\
0, \qquad & \text{if} \quad y< x
\end{cases}\quad \text{for} \quad j=0,...,N-1,
\end{align}
and
\begin{align}\label{pngweights2}
\phi_{-N+2j+1, -N+2j+2}(x,y)=
\begin{cases}
(1-q_{N-j}) q_{N-j}^{y-x}, \qquad &\text{if} \quad y\leq x\\
0, \qquad & \text{if} \quad y> x
\end{cases}\quad \text{for} \quad j=0,...,N-1.
\end{align}
We refer for the details of this derivation to its original source, \cite{J03}, Section 3.

The product structure of the measure \eqref{png-prod}, together with the non-intersecting property implies, via the 
Lindstr\"om-Gessel-Viennot theorem \eqref{thm:LGV}, that the point process 
$\{x^{(r)}_i \colon r=-N,...,N, \, i=1,...,N\}$
that marks the heights of the PNG line ensemble
(see red and black dots in Figure \ref{fig:multi-png}) is a determinantal point process with law
\begin{align*}
\frac{1}{Z_N} \prod_{r=-N}^{N-1} \det\big( \phi_{r,r+1}(x^{(r)}_i,x^{(r+1)}_j)\big)_{1\leq i,j \leq n}.
\end{align*}
Theorem \ref{joh:multi} allows then to 
write the kernel of the determinantal process as in \eqref{eq:joh:mullti}. It is important to note that the specific structure of the weights  
\eqref{pngweights1}, \eqref{pngweights2} together with the fact that the $j^{th}$ PNG path starts and ends at the same height $-j+1$
allows, via the theory of T\"oplitz matrices \cite{BE09}, for the explicit inversion of matrix $A$
in \eqref{eq:joh:mullti}.
Let us record the (extended) kernel of the PNG point process, as given in Theorem 3.14 in \cite{J03}).
For $|u|,|v|<n$, $x,y\in \Z$, $N=2n-1$, we have that
\begin{align*}
K^{\rm PNG}_N(2u,x;2v,y)&=\frac{1}{(2\pi \iota)^2} \int_{\gamma_{r_2}} \frac{\dd z}{z} \int_{\gamma_{r_1}} \frac{\dd w}{w} \frac{w^y}{z^x} \frac{z}{z-w} G(z,w)\\
&\qquad - \frac{1}{2\pi\iota} \int_{\gamma_1} z^{y-x} G(z,z) \frac{\dd z}{z} \, \ind_{u < v}, \qquad \text{with} \\
G(z,w)&:=\frac{\prod_{j=u}^{n-1} \Big(\tfrac{1-q_{n-j}}{1-q_{n-j}z}\Big) \prod_{j=-n+1}^v \Big( \tfrac{1-p_{n+j}}{1-p_{n+j}/w}\Big)}
{\prod_{j=-n+1}^u \Big( \tfrac{1-p_{n+j}}{1-p_{n+j}/z}\Big) \prod_{j=v}^{n-1} \Big(\tfrac{1-q_{n-j}}{1-q_{n-j}w}\Big)},
\end{align*}
where the (counter-clockwise oriented) 
contours are such that $\gamma_1$ is the circle centred at $0$ and with radius $1$ and $\gamma_{r_1}, \gamma_{r_2}$
are the circles centred at zero and with radii $r_1, r_2$, respectively, satisfying $1-\epsilon < r_1<r_2<1+\epsilon$, for some $\epsilon>0$,
and the assumption that $\min(1/q_j \colon j\geq 1)>1+\epsilon$ and $1-\epsilon>\max(p_j\colon j\geq 1) $.

As already mentioned, the fact that the heights of the top line at locations $x=i-j$, for $i,j$ with $i+j=N+1$ record the last passage percolation
times from $(1,1)$ to $(i,j)$. Combining this with the fact that the PNG process is a determinantal process, Theorem \ref{joh:multi}
and Definition \ref{def:detpoint} allow to express the joint laws of the last passage percolation as 
\begin{align*}
\bbP\big( \tau_{i,j}\leq \xi_x \,\,\text{for}\,\, i+j=N+1, i-j=x, \, |x|<N\big)
= \det\big( I - \chi_\xi K_N^{\rm PNG} \chi_\xi \,\big)_{\ell^2\big( \{-N+1,...,N-1\}\times \Z\big)},
\end{align*}
where for $|x|<N$ we define $\chi_\xi(x,z):=\ind_{z\geq \xi_x}$.
The proof of Theorem \ref{thm:airy} will follow after taking the suitable limit of the above Fredholm determinant as this was described in 
Section \ref{sec:asymptot}.
\vskip 2mm
{\bf The Airy line ensemble.}
Theorem \ref{thm:airy} was restricted to the scaling limit of just the top line of the PNG line ensemble. One could also consider the $N^{1/3}, N^{2/3}$ scaling
of all the layers in PNG. After removal of the parabolic drift one arrives to either the {\it multi-layer Airy process} \cite{PS02} or the {\it Airy line ensemble}
\cite{CH14}, which is an ensemble of continuous, locally Brownian paths indexed by $\N$. 
\vskip 2mm
{\bf The KPZ fixed point.} The discussion, so far, concerned the process of last passage percolation times on a {\it triangular array}
$(w_{ij} \colon i,j\geq 1, i+j\leq N+1 )$ from $(1,1)$ to the edge sites $(i,j)$ with $i+j=N+1$. Equivalently, cast in the framework of the
corner growth process of Section \ref{sec:corner}, this concerns the statistics of the profile of the interface when initially it starts from having
the shape of a corner. With regards to TASEP (Section \ref{secTASEP}) this would correspond to the case when all particles occupy all
sites on the negative semi-axis. One, though, would also be interested to describe the growth process for an arbitrary initial interface
or, equivalently, for arbitrary initial positions of particles. Besides the case of corner initial conditions
 (also known as {\it wedge} initial conditions),
multi-point correlations were also studied in \cite{BFPS07} in the case of
{\it flat} initial condition, i.e. when the interface is a zig-zag curve, alternating between heights zero and one.
In the case of TASEP this would correspond to particles occupying every other
site of $\Z$. The breakthrough of extracting the joint distribution of the particles in TASEP, starting 
from arbitrary initial conditions was achieved in  \cite{MQR21}. There, starting from formulas in \cite{BFPS07},
new Fredholm determinant formulas, with kernels expressed in terms
of hitting probabilities of certain random walks, were derived for the law of the location of TASEP particles starting from
general initial conditions. In order to present this, we need to introduce some notation. 
   
   First, we define the kernels:
\begin{align*}
\cS_{-t,-n}(z_1,z_2)&:=\frac{1}{2\pi\iota} \int_{\gamma_r}  \frac{(1-w)^n}{2^{z_2-z_1} w^{n+1+z_2-z_1}} e^{t(w-1/2)} \dd w \,, \\
\bar{\cS}_{-t,-n}(z_1,z_2)&:=\frac{1}{2\pi\iota} \int_{\gamma_r}  \frac{(1-w)^{z_2-z_1+n-1}}{2^{z_1-z_2} w^n} e^{t(w-1/2)} \dd w
\end{align*}
where $\gamma_r$ is a circle with counter-clockwise orientation, centred at zero and with radius $r<1$.

We also introduce the random walk
$(\sfB_n)_{n\geq 0}$  with jumps strictly to the left and distributed according to a geometric random variable with parameter $1/2$.
More precisely, its transition kernel is $Q(x,y):=2^{y-x}\ind_{y<x}$. The $n$-step transition kernel is denoted by $Q^n(x,y)$.

Define, also, the kernel
\begin{align*}
\bar{\cS}^{{\rm epi}(X_0)}_{-t,n}(z_1,z_2)&:= \E_{\sfB_0=z_1} \big[ \bar{\cS}_{-t,n-\tau}(\sfB_\tau,z_2) \, \ind_{\tau<n}\big],
\end{align*}
where $\E$ is with respect to the geometric random walk $(\sfB_n)_{n\geq 0}$ and $\tau$ is the hitting time
\begin{align*}
\tau:=\min\{m\geq 0 \colon \sfB_m>X_0(m+1)\,\},
\end{align*}
given a sequence $X_0(1)>X_0(2)>\cdots$, which will represent the initial locations of particles in TASEP.
The theorem of Matetski-Quastel-Remenik is the following:
\begin{theorem}[\cite{MQR21}.]\label{fixed-point-thm}
Consider the TASEP with particles at initial positions
 $\infty>X_0(1)>X_0(2)>\cdots$,  at time zero. Denote the positions of the particles at time
$t$ by $X_t(1)>X_t(2)>\cdots$. Then, for any collection $1\leq n_1<n_2<\cdots < n_m$, we have that
\begin{align*}
\bbP\big( X_t(n_j) < a_j, \, j=1,...,m\big) = \det\big( I-\chi_a K_t^{\rm TASEP} \chi_a\big)_{\ell^2(\{n_1,...,n_m\}\times \Z)},
\end{align*}
with
\begin{align*}
K_t^{\rm TASEP}(n_i,\cdot; n_j,\cdot):=-Q^{n_j-n_i} \ind_{n_i<n_j} + (\cS_{-t,-n_i})^* \bar{\cS}^{{\rm epi}(X_0)}_{-t,n_j}
\end{align*}
and $\chi_a(n_j,x):=\ind_{x> a_j}$.
\end{theorem}
The path towards the above theorem was the construction of a bi-orthogonal set of functions, in the spirit of \cite{BFPS07}, via a solution to
a boundary value problem of a difference of operator related to the geometric transition kernel $Q(x,y)$ and with a boundary determined
by the initial condition of TASEP.
More recently, it was shown in \cite{MR22} that this approach is applicable to a wide range of determinantal models that admit Schutz-type \cite{S97}
determinantal formulas for their transition probabilities.  An alternative derivation of Theorem \ref{fixed-point-thm} via $\rsk$, which also covers the
case of TASEP with both particle and time inhomogeneous rates was provided in \cite{BLSZ22}.
\vskip 2mm
{\bf The Airy sheet and the Directed Landscape.} Two, more general, notions are those of the Airy sheet and the Directed Landscape established
\cite{DOV18} (see also \cite{CQR15} for proposing the notion of the Airy sheet). Similarly, in some sense, to the KPZ fixed point these notions
aim to be the universal objects that capture the scaling limits of the whole field of last passage percolation times  
$ \big\{ \tau\big( (x,n)\to (y,m)\big) \colon x,y\in \Z, n.m\in \N, \,\text{with}\,\, n<m \,\big\}$
(as well as other models in the KPZ class) when
 both the starting and end points vary. The Airy line ensemble and the ${\rm Airy}_2$ process that
we discussed earlier turn out to be marginals of these more universal objects.

The Airy sheet was constructed in \cite{DOV18} as the two-parameter scaling limit of a {\it semi-discrete} last passage percolation model,
also known as {\it Brownian last passage percolation}. Very briefly, given $n$ independent Brownian motions $B_1(\cdot),...,B_n(\cdot)$,
the Brownian last passage percolation is the quantity
\begin{align}\label{brown-lpp}
B\big[ (x,n) \to (y,1)\big]:=\sup_{x=t_0<t_1<\cdots < t_n=y} \, \sum_{i=1}^n \big( B_{n+1-i}(t_i)-B_{n+1-i}(t_{i-1})\big) .
\end{align}
One can think of the Brownian last passage percolation as the Brownian scaling limit for $N$ large (e.g. via Donsker's invariance principle) of the
usual last passage percolation on an array $(w_{ij}\colon 1\leq i\leq n, 1\leq j\leq N)$, where the entries of the $i^{th}$ row
$(w_{ij} \colon 1\leq j \leq N)$ represent the increments of the Brownian motion $B_i(\cdot)$. The theorem that establishes the existence of the
Airy sheet is the following:
\begin{theorem}[\cite{DOV18}]\label{thm:dov}
Let $B\big[(\cdot,\cdot)\to(\cdot,\cdot)\big]$ be the two-parameter family of Brownian last passage percolation. Then there exists
 a two-parameter process $\cS(\cdot,\cdot)\colon\R^2\to\R$ such that
\begin{align*}
B\big[ (2x/n^{1/3}, n)\to (1+2y/n^{1/3},1)\big] = 2\sqrt{n} +2(y-x)n^{1/6} + n^{1/2} \big( \cS+o_n\big)(x,y),
\end{align*}  
where $o_n$ are random functions that converge to zero in compact sets in the sense that, for some $a>1$ and
$K\subset \R^2$ compact,  $\bbE\big[  a^{\sup_K  |o_n|^{3/2}}\big]\to 0$.
\end{theorem}
The axiomatic definition of the Airy sheet was also given in \cite{DOV18} and is as follows:
\begin{definition}[Airy sheet - \cite{DOV18}]\label{def:dov1}
The Airy sheet is a random continuous function $\cS\colon \R^2\to\R$ such that
\begin{itemize}
\item[(i)] $\cS$ is invariant under spatial shifts, i.e. for any $t\in\R$, $\cS(\cdot+t,\cdot+t)$ has the same law as $\cS(\cdot,\cdot)$,
\item[(ii)] if $\mathfrak{h}=(\mathfrak{h}_1, \mathfrak{h}_2,...)$ is the Airy line ensemble minus a parabola,
 then the Airy sheet and the Airy line ensemble can
be defined in the same probability space, so that $\cS(0,\cdot)=\mathfrak{h}_1(\cdot)$ and for all $(x,y,z)\in\bbQ_+\times \bbQ^2$, there 
exists a random $K_{x,y,z}\in\Z$ such that, $a.s.$, for all $k\geq K_{x,y,z}$
\begin{align*}
\mathfrak{h}\big[ (-\sqrt{k/2x}, k) \to (z,1)\big] - \mathfrak{h}\big[ (-\sqrt{k/2x}, k) \to (y,1)\big] = \cS(x,z) -\cS(x,y),
\end{align*} 
\end{itemize}
where $\mathfrak{h}\big[ (x,m)\to (y,n)\big]$ denotes the last passage percolation on the Airy line ensemble (minus a parabola) 
in the same sense as in \eqref{brown-lpp}
but with Brownian motions therein replaced by $\mathfrak{h}_1,\mathfrak{h}_2,...$.
\end{definition} 
The Airy sheet appears as a marginal of a four-parameter process called the {\bf Directed Landscape}. This was also defined and constructed in
\cite{DOV18},
 \begin{definition}[The directed landscape - \cite{DOV18}]\label{defDOV2}
 Define the set of quadruples $\R^4_{\uparrow}:=\{(x,s;y,t) \in \R^4 \colon s<t\}$.
 The directed landscape is a random, continuous function $\cL\colon \R^4_{\uparrow} \to \R$ satisfying
 \begin{itemize}
 \item[(i)] The so-called metric composition law:
 \begin{align*}
 \cL(x,r;y,t) = \max_{z\in \R} \big[ \cL(x,r;z,s)+ \cL(z,s;y,t)\big],\qquad \text{for any} \,\,(x,r;y,t)\in \R^4_{\uparrow} \,\,\text{and}\,\, s\in(r,t),
 \end{align*}
 \item[(ii)] for any set of disjoint intervals $(t_i,t_i+s_i^3), i\geq 1$, it holds that $\cL(\cdot,t_i;\cdot, t_i+s_i^3)$ are independent Airy sheets $\cS_i$
 scaled as $s_i\cS(\cdot/s_i^2, \cdot / s_i^2)$.
 \end{itemize}
 \end{definition}
 Observing the convergence result in Theorem \ref{thm:dov}, which gives rise to the Airy sheet as the limit of (Brownian) last passage
 percolation with varying start and end points, and item (ii) in Definition \ref{def:dov1}, which regards the last passage percolation on the
 Airy line ensemble, one may be motivated to conclude that the two last passage percolations have some direct relation. 
 This is, indeed, the case
 and this was the key point established in \cite{DOV18}, which led to the above constructions. 
  As was observed by Dauvergne 
 (see \cite{C20}) this fact is a consequence of the Noumi-Yamada formulation of $\rsk$ and we will briefly explain this now in the discrete 
 setting.
 
 Let us start from the $\grsk$ setting; we can then easily pass to the $\rsk$ setting by tropicalisation (i.e. replace $(+,\times)$ with $(\max,+)$). 
 Assume an input matrix $\sfX=\big(x^i_{j} \colon 1\leq i \leq n, 1\leq j \leq N\big)$. As already seen in Theorem \ref{thm:grsk}, the output tableau $P$
 of $\grsk(\sfX)$ can be read through the matrix equation
 \begin{align*}
H(\bx^1) H(\bx^{2}) \cdots H(\bx^n) &= H_k(\bp^k) H_{k-1}(\bp^{k-1}) \cdots H_1(\bp^1) ,\qquad k=\min(n,N),\
\end{align*}
Without loss of generality, let us assume that $N\geq n$; this is also what would be needed if we would like to scale in $N$ and 
arrive to a Brownian last passage percolation via an application of Donsker's principle.
As we have already seen in Theorem \ref{thm:NY}, the $(i,j)$ entry ($j\geq i$) of either matrix in the above  equation admits a 
representation as the weight of directed paths. In particular, the equality between the $(i,j)$ entries of the above matrices 
is graphically given by
  \begin{equation}\label{grapheq}
  \sum_{\pi} \prod_{(k,\ell)\in \pi}
  {\begin{tikzpicture}[baseline={([yshift=-.ex]current bounding box.center)},vertex/.style={anchor=base,
    circle,fill=black!25,minimum size=18pt,inner sep=2pt}, scale=0.4]
    \draw (0,0) grid (10,-4);
  \foreach \i in {0,1,...,10}{
		\foreach \j in {0,...,4}{
		\node[draw,circle,inner sep=1pt,fill] at (\i,-\j) {};
	}}
	\draw[ultra thick, blue, ->]  (3,0)--(3,-1)--(5,-1)--(5,-2)--(6,-2)--(6,-3)--(7,-3)--(7,-4);
	\node at (-1, 0.1) {$\bx^1$};  \node at (-1, -4.0) {$\bx^n$};
	 \node at (7, -4.5) {$j$};  \node at (3, 0.7) {$i$};
\end{tikzpicture}} 
\,\, \,\,\, = \,\,
  \sum_{\pi} \prod_{(k,\ell)\in \pi}
{\begin{tikzpicture}[baseline={([yshift=-.5ex]current bounding box.center)},vertex/.style={anchor=base,
    circle,fill=black!25,minimum size=18pt,inner sep=2pt}, scale=0.4]
\draw (0,0) -- (10,0) ; \draw (1,-1) -- (10,-1); \draw (2,-2) -- (10,-2); \draw (3,-3) -- (10,-3); \draw (4,-4) -- (10,-4);
\draw (10,0) -- (10,-4); \draw (9,0) -- (9,-4); \draw (8,0) -- (8,-4); \draw (7,0) -- (7,-4); \draw (6,0) -- (6,-4);
\draw (5,0) -- (5,-4); \draw (4,0) -- (4,-4); \draw (3,0) -- (3,-3); \draw (2,0) -- (2,-2); \draw (1,0) -- (1,-1);
\draw  [fill ] (0,0) circle [radius=0.1];\draw  [fill ] (1,0) circle [radius=0.1];\draw  [fill ] (2,0) circle [radius=0.1]; \draw  [fill ] (3,0) circle [radius=0.1]; \draw  [fill ] (4,0) circle [radius=0.1]; \draw  [fill ] (5,0) circle [radius=0.1];\draw  [fill ] (6,0) circle [radius=0.1];\draw  [fill ] (7,0) circle [radius=0.1]; \draw  [fill ] (8,0) circle [radius=0.1]; \draw  [fill ] (9,0) circle [radius=0.1];\draw  [fill ] (10,0) circle [radius=0.1];

\draw  [fill ] (1,-1) circle [radius=0.1];\draw  [fill ] (2,-1) circle [radius=0.1];\draw  [fill ] (3,-1) circle [radius=0.1]; \draw  [fill ] (4,-1) circle [radius=0.1]; \draw  [fill ] (5,-1) circle [radius=0.1]; \draw  [fill ] (6,-1) circle [radius=0.1];\draw  [fill ] (7,-1) circle [radius=0.1];\draw  [fill ] (8,-1) circle [radius=0.1]; \draw  [fill ] (9,-1) circle [radius=0.1]; \draw  [fill ] (10,-1) circle [radius=0.1];

\draw  [fill ] (2,-2) circle [radius=0.1];\draw  [fill ] (3,-2) circle [radius=0.1]; \draw  [fill ] (4,-2) circle [radius=0.1]; \draw  [fill ] (5,-2) circle [radius=0.1]; \draw  [fill ] (6,-2) circle [radius=0.1];\draw  [fill ] (7,-2) circle [radius=0.1];\draw  [fill ] (8,-2) circle [radius=0.1]; \draw  [fill ] (9,-2) circle [radius=0.1]; \draw  [fill ] (10,-2) circle [radius=0.1];

\draw  [fill ] (3,-3) circle [radius=0.1]; \draw  [fill ] (4,-3) circle [radius=0.1]; \draw  [fill ] (5,-3) circle [radius=0.1]; \draw  [fill ] (6,-3) circle [radius=0.1];\draw  [fill ] (7,-3) circle [radius=0.1];\draw  [fill ] (8,-3) circle [radius=0.1]; \draw  [fill ] (9,-3) circle [radius=0.1]; \draw  [fill ] (10,-3) circle [radius=0.1];

 \draw  [fill ] (4,-4) circle [radius=0.1]; \draw  [fill ] (5,-4) circle [radius=0.1]; \draw  [fill ] (6,-4) circle [radius=0.1];\draw  [fill ] (7,-4) circle [radius=0.1];\draw  [fill ] (8,-4) circle [radius=0.1]; \draw  [fill ] (9,-4) circle [radius=0.1]; \draw  [fill ] (10,-4) circle [radius=0.1];
\node at (10.7, 0.1) {$\bp^1$};  \node at (10.7, -4.0) {$\bp^n$};
 \node at (7, 0.7) {$j$};
\draw[ultra thick, red, ->]  (3,-3)--(5,-3)--(5,-2)--(6,-2)--(6,-1)--(7,-1)--(7,0); \node at (2.5, -3) {$i$};
\end{tikzpicture}} 
\end{equation}
Each side is to be understood as the sum over all directed paths of the products of the weights on sites traced by the paths,
starting from site $(1,i)$ and ending at site $(n,j)$ in the left-hand side and starting from site $(i\wedge n,i)$ and ending at site $(1,j)$ 
in the right-hand side. 

In the zero temperature limit
$\sum_\pi\prod_{(k,\ell)\in\pi}$ will be replaced by $\max_\pi \sum_{(k,\ell)\in\pi}$ reducing to last passage percolation between the marked sites.
Switching to the last passage percolation setting, the variables $(p^i_j \colon 1\leq i \leq n\,,\, i\leq j\leq N \,)$ encode the increments of the Airy line ensemble.
More precisely, we can pass to the Gelfand-Tsetlin notation (see Section \ref{sec:GT})  $z^{i}_j:= p^j_j + p^{j}_{j+1}+\cdots + p^j_i$
and represent $(z^i_j \colon1\leq i \leq N\ \, n \leq j \leq i \,)$, which we represent
 graphically as in Figure \ref{fig:lppAiry}. Connecting the points with non-intersecting lines 
produces a PNG representation, with the jumps of the lines being given by the corresponding variable $p^i_j$. The optimising, red path in 
the right-hand side of \eqref{grapheq} is the red path in Figure \ref{fig:lppAiry}, which in agreement with \eqref{grapheq} starts from
the point $z_{i\wedge n}^i$ and ends at point $z^{j}_1$. Thus, equation \eqref{grapheq} shows that the value of the last passage
percolation on a rectangular array equals the value of last past percolation on the PNG line ensemble. 
In particular, the fact that, in a suitable, large $N$ limit, the last passage percolation on the grid reduces to the Brownian last passage percolation
and the fact that the PNG line ensemble converges to the Airy line ensemble,
 clarifies the link between Definition \ref{def:dov1} and Theorem \ref{thm:dov}.
\begin{figure}[t]
 	\begin{center}
 		\begin{tikzpicture}[scale=0.45]
		\fill[black]  (0,10) circle [radius=0.175]; \fill[black]  (0,8) circle [radius=0.175]; \fill[black]  (0,6) circle [radius=0.175]; \fill[black]  (0,5) circle [radius=0.175];
		\fill[black]  (2,9) circle [radius=0.15]; \fill[black]  (2,7) circle [radius=0.15]; \fill[black]  (2,5.5) circle [radius=0.15]; \fill[black]  (2,4) circle [radius=0.15];
		\fill[black]  (4,8) circle [radius=0.15]; \fill[black]  (4,6) circle [radius=0.15]; \fill[black]  (4,5) circle [radius=0.15]; \fill[black]  (4,3) circle [radius=0.15];
		\fill[black]  (6,7) circle [radius=0.15]; \fill[black]  (6,5.5) circle [radius=0.15]; \fill[black]  (6,3.5) circle [radius=0.15]; \fill[black]  (6,2) circle [radius=0.15];
		\fill[black]  (8,6.25) circle [radius=0.15]; \fill[black]  (8,4.5) circle [radius=0.15]; \fill[black]  (8,2.75) circle [radius=0.15];
		 \fill[black]  (10,5) circle [radius=0.15]; \fill[black]  (10,3.5) circle [radius=0.15];
		 \fill[black]  (12,4) circle [radius=0.15];
		 \draw[-, thick] (0,10)--(1,10)--(1,9)--(3,9)--(3,8)--(5,8)--(5,7)--(7,7)--(7,6.25)--(9,6.25)--(9,5)--(11,5)--(11,4)--(13,4)--(13,0)--(17,0);
		 \draw[-, thick] (0,8)--(1,8)--(1,7)--(3,7)--(3,6)--(5,6)--(5,5.5)--(7,5.5)--(7,4.5)--(9,4.5)--(9,3.5)--(11,3.5)--(11,-1)--(17,-1);
		 \draw[-, thick] (0,6)--(1,6)--(1,5.5)--(3, 5.5)--(3,5)--(5,5)--(5,3.5)--(7, 3.5)--(7, 2.75)--(9, 2.75)--(9,-2)--(17,-2);
		 \draw[-, thick] (0,5)--(1,5)--(1,4)--(3,4)--(3,3)--(5,3)--(5,2)--(7,2)--(7,-3)--(17,-3);
		 \draw[-, ultra thick] (0,-3)--(0,12);
		 \draw[<-, ultra thick, red]  (2,9)--(3,9)--(3,8)--(4,8)--(4,6)--(5,6)--(5,5.5)--(6,5.5)--(6,3.5)--(7,3.5)--(7,2.75)--(8,2.75);
		 \node at (-1,10) {\small{{ $z^N_1$}}};  \node at (-1,5.5) {\small{{ $z^N_n$}}}; \node at (-1,8.5) {$\vdots$}; \node at (-1,7.5) {$\vdots$};
		 \node at (12, 4.7) {\small{{ $z^1_1$}}};  \node at (10.4, 5.7) {\small{{ $z^2_1$}}};  \node at (10.4, 4.1) {\small{{ $z^2_2$}}};
		  \node at (6.35, 2.5) {\small{{ $z^n_n$}}};
\end{tikzpicture}
 	\end{center}  
 	\caption{ \small  This figure is an alternative depiction of the right-hand side of equation \eqref{grapheq}.
	It shows the last passage percolation on the PNG line ensemble,  where
	the red path maximises the sum of the jumps.
	We have only drawn half of the PNG line ensemble, which corresponds to the $P$ tableau as
given by the graphical equation \eqref{grapheq}. The graphical representation of the $Q$ tableau would occupy the left of the vertical line.
The fact that the paths go only upwards, as opposed to Figure \ref{fig:multi-png} has to do with the
fact that the input array here is a matrix $\sfX=(x^i_j \colon 1\leq i\leq n, 1\leq j\leq N )$, while in Figure \ref{fig:multi-png} the input is a triangular array. 
 	}\label{fig:lppAiry}
 \end{figure}		
\vskip 2mm		
{\bf Multi-time distributions.} 		
There has recently been important progress in computing temporal correlations of last passage percolation and TASEP models. 
Recent works \cite{BL19, J19, JR21, L20} have computed limits of the form
\begin{align*}
\lim_{N\to\infty}
\bbP\Big( \tau_{t_iN+\lfloor c_1(t_iN)^{1/3}\rfloor\, , \, t_iN-\lfloor c_1(t_iN)^{1/3} \rfloor } \leq c_2\,(t_iN) + c_3 u_i(t_iN)^{2/3} ,\,\,\text{for} \,\,i=1,...,k  \Big) 
\end{align*} 
for $0<t_1<\cdots <t_k$, $u_1,...,u_k\in\R$ and with $c_1,c_2,c_3$ suitable constants.
It is worth noting that the formulas that arise have the form of an integral of a Fredholm determinant as opposed to the spatial correlation
case, where, as we saw, it is captured by just Fredholm determinants.

\section{An exactly solvable Random Polymer model}\label{sec:loggamma}
\subsection{The log-gamma polymer and Whittaker functions}\label{sec:loggama-Whit}
The positive temperature solvable counterpart to last passage percolation  is the
{\bf log-gamma polymer model}. 
In this section we will show how this model can be analysed in a fashion that, largely, parallels the way that last passage percolation was analysed 
in the previous sections. In particular, $\grsk$ will be used instead of $\rsk$ and Whittaker functions will emerge at the place of Schur functions.
Some important differences between the two models will also be highlighted. Before proceeding, we mention a convention that we will follow on contour
integrals: lines, typically denoted by $\ell$, will be upwards directed, while circles, typically denoted by $C$ or $\gamma$ will have positive orientation.

To define the model, let us consider a matrix   $\sfW=(\bw^{\,i}_j \,\colon\, 1\leq i \leq m\,,\, 1\leq j \leq n)$.
The partition function of the polymer model is defined as
\begin{align*}
Z_{m,n}:=\sum_{\pi\,\in\,\Pi_{(m,n)}} \prod_{(i,j)\in \pi} \bw^{\,i}_j,
\end{align*}
 where $\Pi_{(m,n)}$ is the set of down-right paths from entry $(1,1)$ to entry $(m,n)$ (in matrix notation). We immediately notice that the polymer partition
 $Z_{m,n}$ fits the framework of $\grsk$, encoded as the rightmost entries $z^m_1=(z^m_1)'$
  of the output Gelfand-Tsetlin patterns $(\sfZ,\sfZ')=\grsk(\sfW)$, see \eqref{zN1_poly}.
 \vskip 2mm 
 The integrable distribution on $\sfW$ amounts to considering
 entries $(\bw^i_j)$ as independent random variables with an {\bf inverse-gamma} distribution
\begin{align}\label{log-gamma-measure}
\bbP(\bw^i_j=w_{ij}) = \frac{1}{\Gamma(\alpha_i+\beta_j)} 
 w_{ij}^{-\alpha_i-\beta_j} e^{-1/w_{ij}} \frac{\dd w_{ij}}{w_{ij}},\qquad 1\leq i\leq m\,,\, 1\leq j\leq n,
\end{align}
where for $a>0$, $\Gamma(a)=\int_0^\infty x^{a-1}e^{-x} \dd x$ is the Gamma function (which admits a meromorphic extension to the
complex plane except for the negative integers, including $0$)
and $\alpha_i,\beta_j$ real parameters with $\alpha_i+\beta_j>0$. 
The reason this model is called ``log-gamma'' and not ``inverse-gamma'' is due to the fact that
the inverse-gamma variables $\bw^i_j$ appear naturally  
when the more standard formulation \eqref{DPRMp2p} for the polymer partition function is used with  disorder $\omega(n,x)$
distributed as log-gamma variables.
\vskip 2mm
The log-gamma polymer was introduced by Sepp\"al\"ainen
\cite{S12} inspired by ideas from hydrodynamics \cite{BCS06}, \cite{BS10}. In particular, he observed that
the inverse-gamma variables are the unique solutions-in-law to the following set of transformations:
\vskip 1mm
\begin{itemize}
\item[] if $U,V,Y$ are independent, positive random 
variables, then the variables  
\begin{align}\label{burke}
U':=Y(1+UV^{-1}),\quad V':=Y(1+VU^{-1}),\quad Y':=(U^{-1}+V^{-1})^{-1}
\end{align}
have the same distribution as $(U,V,Y)$ if and only if the random variable $U$
 is such that $U^{-1}$ is distributed according to
a Gamma variable $\text{Gamma}(\theta,r)$, which has probability density function $\Gamma(\theta)^{-1} r^{\theta} x^{\theta-1} e^{-rx}$ 
on the positive real line. (we denote this as 
$U^{-1}\sim \text{Gamma}(\theta,r)$) and, moreover $V^{-1}$ and $Y^{-1}$ are distributed as
 $V^{-1}\sim \text{Gamma}(\mu-\theta,r)$ and $Y^{-1}\sim\text{Gamma}(\mu,r)$, for some parameters $0<\theta<\mu$. 
\end{itemize}
\vskip 1mm
This is Lemma 3.2 in \cite{S12}. The role of variables $U$ is played by ratios of partition functions $Z_{m,n}/Z_{m-1,n}$, of $V$ by ratios $Z_{m,n}/Z_{m,n-1}$, while the role of $Y$ is played by variables $\bw^m_n$ and
 equations \eqref{burke} are derived directly from the recursive equation 
$Z_{m,n}=\bw^m_n(Z_{m-1,n}+Z_{m,n-1})$, first by dividing both sides by $Z_{m-1,n}$, then by $Z_{m,n-1}$ and finally by both $Z_{m,n}$ and $\bw^m_n$.
\vskip 2mm
In terms of hydrodynamics, the ratios $Z_{m,n}/Z_{m-1,n}$ and $Z_{m,n}/Z_{m,n-1}$ can be thought of as a (geometric lifting of) ``flow'' through edges $\{(m-1,n),(m,n)\}$ and 
$\{(m,n-1),(m,n)\}$, respectively and the invariance of transformations \eqref{burke} indicate that this
``flow'' is stationary; a property related to Burke's theorem from queueing theory, \cite{BCS06}, \cite{OY01}. Ideas of relating
longest increasing subsequence problems (and thus also last passage percolation / polymer models)
 to hydrodynamics can be traced back to the work
of Aldous and Diaconis \cite{AD95} (see also \cite{CG06}).
 
\vskip 2mm
A dynamic analysis of the log-gamma polymer based on Noumi-Yamada's matrix formulation of $\grsk$ and 
{\it intertwining of Markov processes} was performed in \cite{COSZ14}. We will come back to the dynamic approach in Section \ref{sec:dynamics}.
Here we will expose a bijective approach based on the local moves formulation of $\grsk$ (Section \ref{sec:localmove}), which parallels the steps of the solution of last passage percolation.  Let us start by summarising the steps (compare also with the corresponding steps in Section \ref{geomLPP}) before exposing some of the details:

\begin{itemize}
\item[{\bf Step 1: Combinatorial Analysis.}]
 The first step is to analyse the combinatorial structure of the polymer and this is done with the use of $\grsk$. 
As we have seen, $\grsk$ maps an input matrix to two (geometric) Gelfand-Tsetlin patterns, the bottom-right corner of each one 
being identical to the random polymer partition function, see \eqref{zN1_poly}.
\item[{\bf Step 2: Push forward measure.}] Next we need to determine the push forward measure $\bbP\circ\grsk^{-1}$
 under $\grsk$. At this step the choice of an inverse-gamma measure is crucial as it maps to a tractable measure. 
\item[{\bf Step 3: Special functions and harmonic analysis.}] Once the push forward measure is identified, we compute its 
marginal on the shape of the geometric $\GT$ patterns and at this point emerge special functions, called {\bf Whittaker functions},
see Section \ref{Whittaker-intro}. 
These play the counterpart role of Schur functions in last passage percolation.
Whittaker functions are eigenfunctions
of integrable operators and possess a Plancherel (Fourier analysis) theory. 
This helps to compute the Laplace transform of the first coordinate of the
shape, which coincides with the polymer partition function.
\item[{\bf Step 4: Contour integrals.}] The Plancherel theory of Whittaker functions is combined with the evaluation of certain integrals
of Whittaker functions, which are expressed
 in terms of Gamma functions, leading to an $n$-fold contour integral for the Laplace transform of 
the polymer partition function that involves products of Gamma functions. Let us remark that even though this integral looks complicated, it is already a simplification as originally the polymer partition function is a function of $m\times n$ variables 
and so the computation of the Laplace transform involves an $m\times n$-fold integral.
\item[{\bf Step 5: Fredholm determinant and asymptotics.}]
 The last step is to turn the contour integral of the previous step into a Fredholm determinant. This essentially reduces
 the asymptotic analysis to that of contour integrals of a single variable, where the dimension $n$  of the contour integral in Step 4
 appears now as a parameter in the integrand. The asymptotic analysis can now be performed via the use of the Steepest Descent method 
 as described in Section \ref{sec:asymptot}. 
\end{itemize}
Steps 1-4 were performed in \cite{OSZ14} via a bijective approach and in \cite{COSZ14} via a dynamic approach, while Step 5
was performed in \cite{BCR13}. Notice that compared to the four-step solution to the last passage percolation model, the solvability of the
log-gamma polymer contains an extra step (Step 4). This step bypasses the lack of a determinantal structure in Whittaker functions,
which was present in the last passage percolation and Schur case.
Still, the absence or lack of a full understanding of a determinantal structure makes the derivation of the 
Fredholm determinant (Step 5), currently, ad hoc.
\vskip 2mm
Let us now present some details of the above outline. Step 1 has already been described in
Section \ref{sec:grsk}, see \eqref{zN1_poly}.
 To perform Step 2, we need to uncover some interesting properties of $\grsk$, which are summarised in
the following theorem. For simplicity we state the theorem for the case of a square input matrix $\sfW$. This theorem was proved in
\cite{OSZ14}.
\begin{theorem}\label{grsk_prop}
Let $(\sfZ,\sfZ')=\grsk(\sfW)$ be the output of $\grsk$ with input matrix $\sfW=(w_{ij}\colon 1\leq i,j\leq n)$, with
$\sfZ=(z^i_j\colon 1\leq j\leq i\leq  n)$ and $\sfZ'=((z^i_j)'\colon 1\leq j\leq i\leq  n)$ being the corresponding Gelfand-Tsetlin patterns.
Then it holds that
\begin{itemize}
\item[{\bf 1.}]   the products over columns and over rows of entries in the $\sfW$ matrix are expressed in terms of products and ratios in the 
Gelfand-Tsetlin patterns as 
\begin{align*}
\prod_{i=1}^n w_{ij} &=\frac{z^j_1\cdots z^j_j}{z^{j-1}_1\cdots z^{j-1}_{j-1}} \qquad \text{for}\qquad  j=1,...,n\quad\text{and} \\
\prod_{j=1}^n w_{ij} &=\frac{(z^i_1)'\cdots (z^i_i)'}{(z^{i-1}_1)'\cdots (z^{i-1}_{i-1})'} \qquad \text{for}\qquad  i=1,...,n.
\end{align*}
\item[{\bf 2.}] 
\begin{align*}
\sum_{1\leq i,j \leq n}\frac{1}{w_{ij}}&=\frac{1}{z^n_n}+\sum_{1\leq j\leq i\leq n}\Bigg(\frac{z^i_j}{z^{i+1}_{j}}+\frac{z^{i+1}_{j+1}}{z^i_j}\Bigg) +
\sum_{1\leq j\leq i\leq n}\Bigg(\frac{(z^i_j)'}{(z^{i+1}_{j})'}+\frac{(z^{i+1}_{j+1})'}{(z^i_j)'}\Bigg) \\
&=:\frac{1}{z^n_n}+\cE(\sfZ)+\cE(\sfZ')
\end{align*}
where the last line is the definition of the {\it energy} $\cE(\sfZ)$ of a geometric $\GT$ pattern $\sfZ$.
\item[{\bf 3.}] the mapping 
\begin{align*}
(\log w_{ij}\colon 1\leq i,j\leq n) \mapsto 
(\log z^i_j, \log (z^i_j)'\colon 1\leq j\leq i\leq n)
\end{align*}
has Jacobian $\pm 1$.
\end{itemize}
\end{theorem}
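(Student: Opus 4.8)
The plan is to prove the three statements essentially independently, using two different guises of $\grsk$: the Noumi--Yamada matrix formulation for \textbf{1}, and the local-move decomposition of Section~\ref{sec:localmove} for \textbf{2} and \textbf{3}. Statements \textbf{1} and \textbf{3} are short; the real content is \textbf{2}, which has no counterpart in classical $\rsk$, and there the bookkeeping of the local moves will be the main obstacle.

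For \textbf{1} I would work in the matrix formulation. By \eqref{GT_polymers} (equivalently, by telescoping the product $z^i_j=p^j_j p^j_{j+1}\cdots p^j_i=y^j_j\cdots y^j_i$ using the expressions \eqref{H-uni}), we have $z^i_j=\tau^j_i/\tau^{j-1}_i$, where $\tau^j_i=\det\big(H(\bx^1)\cdots H(\bx^n)\big)^{1,\dots,j}_{i-j+1,\dots,i}$ and $\bx^i=(w_{i1},\dots,w_{in})$ are the rows of $\sfW$. Telescoping once more in the column index gives $z^j_1 z^j_2\cdots z^j_j=\prod_{j'=1}^j \tau^{j'}_j/\tau^{j'-1}_j=\tau^j_j$, which is the leading $j\times j$ principal minor of $H(\bx^1)\cdots H(\bx^n)$. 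Now each $H(\bx^i)$ is upper triangular with diagonal $(w_{i1},\dots,w_{in})$, and for upper triangular matrices the leading $j\times j$ minor of a product equals the product of the leading $j\times j$ minors; hence $\tau^j_j=\prod_{i=1}^n\prod_{j'=1}^j w_{ij'}$. Dividing the expressions for $j$ and $j-1$ yields $\prod_i w_{ij}=\tau^j_j/\tau^{j-1}_{j-1}=(z^j_1\cdots z^j_j)/(z^{j-1}_1\cdots z^{j-1}_{j-1})$, the first identity. The second follows by applying the first to $\sfW^{\sft}$ together with the fact, immediate from \eqref{grsk-eq1}--\eqref{grsk-eq2}, that $\grsk(\sfW^{\sft})=(\sfZ',\sfZ)$. (One could equally well read \textbf{1} off from the Lindström--Gessel--Viennot picture: $\tau^j_j$ counts $j$ non-intersecting down-right paths from the first $j$ columns of the top row to the first $j$ columns of the bottom row of the full $n\times n$ grid, and the only such family consists of the $j$ straight vertical columns, of total weight $\prod_{i,\,j'\le j}w_{ij'}$.)

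Statement \textbf{3} is Theorem~\ref{thm:jac} in disguise. By the gluing rule \eqref{GTglue}, the entries of $\sfT=\grsk(\sfW)$ are exactly the entries of $\sfZ$ and $\sfZ'$, the common shape $z^n_j=(z^n_j)'$ being recorded once, so (after this identification of the $n^2$ free variables) the map $(\log w_{ij})\mapsto(\log z^i_j,\log (z^i_j)')$ coincides with the map $(\log x_{ij})\mapsto(\log t_{ij})$ of Theorem~\ref{thm:jac}. That theorem in turn reduces to checking that each elementary move $\ell_{a,b}$ is volume preserving in logarithmic coordinates: for the $2\times2$ map $\left(\begin{smallmatrix}a&b\\ c&d\end{smallmatrix}\right)\mapsto\left(\begin{smallmatrix}bc/(a(b+c))&b\\ c&d(b+c)\end{smallmatrix}\right)$ the entries $b,c$ are untouched and the Jacobian of $(\log a,\log d)\mapsto(\log a',\log d')$ is lower triangular with $-1$ and $+1$ on the diagonal; since $\grsk=R_n\circ\cdots\circ R_1$ is a composition of such moves, its log-Jacobian is $\pm1$.

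Statement \textbf{2} is the substantive one, and here I would use the local moves and induct on $n$. The mechanism is the ``reciprocal conservation'' built into geometric row insertion: the second line of the Toda-type system \eqref{eq:toda}, $\tfrac{1}{a_i}+\tfrac{1}{x_{i+1}}=\tfrac{1}{b_{i+1}}$ and $\tfrac{1}{a_j}+\tfrac{1}{x_{j+1}}=\tfrac{1}{\tilde x_j}+\tfrac{1}{b_{j+1}}$, says that one row insertion moves reciprocals around in a way that telescopes. Concretely: assume the identity holds for the patterns $(\hat\sfZ,\hat\sfZ')=\grsk$ of the first $n-1$ rows of $\sfW$; insert the $n$-th row via $R_n=\rho^n_n\circ\cdots\circ\rho^n_1$, each $\rho^n_j$ a string of moves $\ell_{a,b}$, and follow how the quantity $1/z_{\mathrm{apex}}+\cE(\cdot)+\cE(\cdot)'$ changes. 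A single move $\ell_{a,b}$ replaces a $2\times2$ block $\left(\begin{smallmatrix}a&b\\ c&d\end{smallmatrix}\right)$ by $\left(\begin{smallmatrix}bc/(a(b+c))&b\\ c&d(b+c)\end{smallmatrix}\right)$, and only the pattern edges incident to the four corners of the block are affected, by amounts that can be written down exactly; summing these increments along $\rho^n_j$ telescopes (this is precisely the geometric lifting of the min/max identities \eqref{local_drop}, \eqref{local_new}), and summing over $j=1,\dots,n$ harvests the terms $1/w_{n1},\dots,1/w_{nn}$ plus one boundary term. One then checks this boundary term closes the induction, i.e. that the surviving quantity is exactly $1/z^n_n$. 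The main obstacle is exactly this bookkeeping --- keeping track of which edges of which pattern are modified at each stage of $R_n$, pairing the increments so the telescoping is visible, and confirming that the leftover is $1/z^n_n$ and nothing more; I would organize it on the Young-diagram-shaped array of Section~\ref{sec:localmove}, using the corner identities \eqref{local_greene} as a running consistency check. Once the single-insertion statement is set up correctly the induction itself is routine.
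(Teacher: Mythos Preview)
Your plan matches the paper's own treatment essentially line for line: the paper does not give a detailed proof but simply remarks that item \textbf{1} is the geometric lifting of the type relation \eqref{type}, that item \textbf{3} is Theorem~\ref{thm:jac} (proved via the local moves), and that item \textbf{2} ``can be proved using the local moves and induction'' --- precisely your strategy. Your argument for \textbf{1} via the leading principal minors of $H(\bx^1)\cdots H(\bx^n)$ is in fact more explicit than the paper's one-line pointer to \eqref{type}, and the paper also mentions an alternative route to \textbf{2} through the Noumi--Yamada formalism and Gauss LDU decomposition, but your inductive local-move approach is the one it endorses as the primary method.
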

Item 1. in the above theorem is the geometric lifting of \eqref{type}, which 
relates the `{\it type}' of the geometric Gelfand-Tsetlin patterns to the input matrix $\sfW$.
Item 3. is the volume preserving property of $\grsk$, see Theorem \ref{thm:jac}.
Item 2 can be proved using the local moves and induction. 
\vskip 2mm
The push forward measure $\bbP\circ\grsk^{-1}$ can be now easily derived via Theorem \ref{grsk_prop}. 
For a (geometric) Gelfand-Tsetlin pattern $\sfZ=(z^i_j\colon 1\leq j\leq i \leq n)$, 
we define the {\bf type}, denoted by $type(\sfZ)$,  as the vector
\begin{align*}
\text{type}(\sfZ):=\Big(\frac{z^i_1\cdots z^i_i}{z^{i-1}_1\cdots z^{i-1}_{i-1}} \,;\, i=1,...,n\Big),
\end{align*} 
and for a vector $\alpha=(\alpha_1,...,\alpha_n)\in\mathbb{C}^n$ we also denote 
\begin{align*}
\text{type}(\sfZ)^{-\alpha}:=\prod_{i=1}^n\text{type}(\sfZ)_i^{-\alpha_i}.
\end{align*}
We can now write the push forward measure $\bbP\circ\grsk^{-1}$. We start by  noting that
\begin{align}\label{push}
&\bbP\Big( \bw^i_{j} \in \dd w_{ij} \,\,\,\, \text{for} \,\,\,\, 1\leq i,j\leq n \Big) = \frac{1}{\prod_{i,j}\Gamma(\alpha_i+\beta_j)} 
 \prod_{i,j}w_{ij}^{-\alpha_i-\beta_j} e^{-1/w_{ij}} \frac{\dd w_{ij}}{w_{ij}}\notag\\
 &=
 \frac{1}{\prod_{i,j}\Gamma(\alpha_i+\beta_j)} 
 \prod_i\Big(\prod_{j}w_{ij}\Big)^{-\alpha_i}  \prod_j\Big(\prod_{i}w_{ij}\Big)^{-\beta_j} e^{-\sum_{i,j}\frac{1}{w_{ij}}} 
 \prod_{i,j}\frac{\dd w_{ij}}{w_{ij}}\notag\\
 &=\frac{1}{\prod_{i,j}\Gamma(\alpha_i+\beta_j)}  \text{type}(\sfZ)^{-{\beta}}  \text{type}(\sfZ')^{-\alpha} \,
 e^{-1/z_{nn}-\cE(\sfZ)-\cE(\sfZ')} \prod_{1\leq j\leq i\leq n} \frac{\dd z^i_j}{z^i_j}\frac{\dd (z^i_j)'}{(z^i_j)'}.
\end{align}
The first equality in the above sequence is just the definition of the inverse-gamma probability law of the array $\sfW=\{\bw^i_{j}\}$, while the
last equality should be viewed as a change of variables from $\sfW\mapsto (\sfZ,\sfZ')$ via the use of the properties of $\grsk$ listed in
Theorem \ref{grsk_prop}.

We now observe from \eqref{push} that the push forward measure essentially factorizes over the $\sfZ$ and $\sfZ'$ variables, conditionally on fixing
 their (common) bottom rows, which are the {\bf shape} variables of each pattern 
 \begin{align}\label{geoshape}
sh(\sfZ):=(z^n_j\colon j=1,...,n)\qquad \text{and} \qquad sh(\sfZ'):=(\,(z^n_j)'\colon j=1,...,n)
\end{align}
 and which coincide. Therefore, the marginal of this measure on the
common shape of the geometric $\GT$ patterns $\sfZ,\sfZ'$ can be computed as we will demonstrate below.
Towards this task, we introduce the $GL_n(\bbR)$-Whittaker 
functions via Givental's \cite{Giv97} integral formula
\begin{align}\label{int-giv}
 \Psi^{\mathfrak{gl}_n}_{\gl}(x) :=  \int_{\gGT(x)}\text{type}(\sfZ)^{-{\gl}}  \,
 \exp\Big(-\cE(\sfZ) \,\Big) \prod_{1\leq j\leq i\leq n-1} \frac{\dd z^i_j}{z^i_j},
\end{align}
for a vector $x\in\R^n$,
with the integral running over all geometric Gelfand-Tsetlin patterns $\sfZ$ with shape (bottom row) equal to $x$. 
It is worth noticing the similarity of this integral formula to formula \eqref{GTschur} of Schur functions. In fact, it is not difficult to
check that $\epsilon^{\tfrac{n(n-1)}{2}} \Psi^{\mathfrak{gl}_n}_{\epsilon\gl}(e^{x_1/\epsilon},...,e^{x_n/\epsilon}) $ converges as $\epsilon\to0$ to 
a Riemann sum version of \eqref{GTschur}.
 We should remark at this point on a confusing notational convention between Whittaker
$\Psi^{\mathfrak{gl}_n}_{\gl}(x)$ and Schur $s_\gl(x)$ functions: the `shape' variables $x$ in $\Psi^{\mathfrak{gl}_n}_{\gl}(x)$ correspond to $\gl$ in $s_\gl(x)$ and the parameter
variables $\gl$ (also called `spectral variables') of $\Psi^{\mathfrak{gl}_n}_{\gl}(x)$ correspond to the argument $x$ in $s_\gl(x)$.
\vskip 2mm
Given, now, the integral formula \eqref{int-giv} and \eqref{push}, \eqref{geoshape}, we can write
\begin{align}\label{shape}
\bbP\big(sh(\sfZ)=sh(\sfZ'\big)\,\in\,\dd x)= \frac{1}{\Gamma_{\alpha,\beta}}  e^{-1/x_n} \Psi^{\mathfrak{gl}_n}_{\alpha}(x) \Psi^{\mathfrak{gl}_n}_{\beta}(x) \prod_{i=1}^n\frac{\dd x_i}{x_i},
\end{align}
where we denoted $\Gamma_{\alpha,\beta}:=\prod_{i,j}\Gamma(\alpha_i+\beta_j)$.
\eqref{shape} is a result of  integrating  \eqref{push} over all variables $(z^i_j)$ and
$(\,(z^i_j)'\,)$ except the shape variables $z^n_j=(z^n_j)', j=1,...,n$, which are fixed to be equal to $x_j$ for $j=1,...,n$ .

 Let us observe that integrating \eqref{shape} over all $x\in \bbR_+^n$ yields the identity
\begin{align}\label{bumpstade}
\int_{\bbR_+^n} e^{-1/x_n} \Psi^{\mathfrak{gl}_n}_{\alpha}(x) \Psi^{\mathfrak{gl}_n}_{\beta}(x) \prod_{i=1}^n\frac{\dd x_i}{x_i}
= \prod_{i,j}\Gamma(\alpha_i+\beta_j).
\end{align}
Even though the parameters $\alpha,\beta$ are real the above identity can be extended in the case of complex parameters such that $\Re(\alpha_i+\beta_j)>0$.
This type of identity is known in the number theory literature as the Bump-Stade identity and plays an important role in the theory
of automorphic forms. Identity \eqref{bumpstade}
 was conjectured by Bump \cite{Bum84} and was proven by Stade \cite{Sta02} via the use of Mellin transformations and Mellin-Barnes type integrals (these are special integrals of products and ratios of Gamma functions). Here we have obtained the Bump-Stade identity as a direct consequence of $\grsk$ and its properties.
Identity  \eqref{bumpstade} can also be viewed  as the analogue of Cauchy identity, which we will discuss later in Section \ref{sec:Cauchy}.
\vskip 2mm
Identity \eqref{bumpstade} is an important ingredient in Step 4, but first we need to complete Step 3 whose basic ingredient is
the Plancherel theory for $GL_n(\bbR)$-Whittaker functions which we now state.
\begin{theorem}\label{thm:Plancerel}
The integral transform
$$\hat f(\lambda) = \int_{(\R_{+})^n} f(x) \Psi^{\mathfrak{gl}_n}_\lambda(x) \prod_{i=1}^n \frac{\dd x_i}{x_i}$$
defines an isometry from $L_2((\R_{+})^n, \prod_{i=1}^n \dd x_i/x_i)$ onto 
$L^{sym}_2((\iota\R)^n,s_n(\lambda) \dd\lambda)$, where $L_2^{sym}$ is the space of $L_2$
functions which are symmetric in their variables (the variables $\lambda_1,\lambda_2,...$ are unordered), $\iota=\sqrt{-1}$ and 
\begin{align}\label{eq:sklyanin}
s_n(\lambda)=\frac1{(2\pi\iota)^n n!} \prod_{i\ne j} \Gamma(\lambda_i-\lambda_j)^{-1}
\end{align}
is the {\em Sklyanin} measure.
That is, for any two functions $f,g\in L_2((\R_{+})^n, \prod_{i=1}^n \dd x_i/x_i)$, it holds that
\[
\int_{(\R_{+})^n} f(x) g(x) \prod_{i=1}^n\frac{\dd x_i}{x_i} = \int_{(\iota\R)^n} \hat{f}(\lambda) \overline{\hat g (\gl)} s_n(\gl) \dd \gl.
\]
\end{theorem}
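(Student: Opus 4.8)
\emph{Proof proposal.}\ The plan is to prove Theorem~\ref{thm:Plancerel} --- the Plancherel theorem for $GL_n(\R)$-Whittaker functions, equivalently the spectral decomposition of the quantum open Toda chain, of which the $\Psi^{\mathfrak{gl}_n}_\lambda$ are the joint eigenfunctions with eigenvalues the symmetric functions of $\lambda$ --- by induction on $n$, exploiting the branching structure already present in Givental's integral formula \eqref{int-giv}. Concretely, I would isolate in \eqref{int-giv} the bottom level $n$ of a $\gGT$ pattern with shape $x$: fixing the $(n-1)$-st level $y=(y_1,\dots,y_{n-1})$, the type factor of level $n$ contributes $\big(\tfrac{x_1\cdots x_n}{y_1\cdots y_{n-1}}\big)^{-\lambda_n}$, the energy terms linking levels $n-1$ and $n$ contribute $\exp(-\sum_j (y_j/x_j+x_{j+1}/y_j))$, and the integral over levels $1,\dots,n-2$ reproduces $\Psi^{\mathfrak{gl}_{n-1}}_{(\lambda_1,\dots,\lambda_{n-1})}(y)$. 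This yields a branching identity $\Psi^{\mathfrak{gl}_n}_\lambda(x)=\int_{(\R_+)^{n-1}}Q_{\lambda_n}(x;y)\,\Psi^{\mathfrak{gl}_{n-1}}_{(\lambda_1,\dots,\lambda_{n-1})}(y)\prod_j\dd y_j/y_j$, with $Q_\nu$ the explicit kernel above.

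First I would dispose of the base case $n=1$: there $\Psi^{\mathfrak{gl}_1}_\lambda(x)=x^{-\lambda}$, the integral transform is the Mellin transform, and after the substitution $x=e^u$ it is the Fourier transform of $u\mapsto f(e^u)$, so the asserted isometry --- with $s_1\equiv 1/(2\pi\iota)$ --- is the classical Fourier--Plancherel theorem. For the inductive step I would set $(Qf)(y;\nu):=\int f(x)Q_\nu(x;y)\prod_i\dd x_i/x_i$, so that the branching identity reads $\hat f(\lambda)=\widehat{Qf(\cdot\,;\lambda_n)}^{(n-1)}(\lambda_1,\dots,\lambda_{n-1})$, the $\mathfrak{gl}_{n-1}$-transform of $Qf(\cdot\,;\lambda_n)$. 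Applying the inductive Plancherel in the first $n-1$ variables and then integrating out $\lambda_n$, the theorem for $n$ reduces to a single Plancherel-type identity for the family $\{Q_\nu\}_{\nu\in\iota\R}$, in which the $\lambda_n$-weight is exactly the ratio $s_n(\lambda)/s_{n-1}(\lambda_1,\dots,\lambda_{n-1})$, which is $\prod_{i<n}|\Gamma(\lambda_i-\lambda_n)|^{-2}$ up to a constant. The Gamma-factor, which couples $\lambda_n$ to the remaining spectral variables, is matched by passing to the dual Mellin--Barnes representation of $\Psi^{\mathfrak{gl}_{n-1}}$ and evaluating the resulting overlap integrals of $Q$-kernels in closed form; these are Gamma-function identities of exactly the Bump--Stade type \eqref{bumpstade}. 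Equivalently, one can establish the distributional orthogonality relation $\int_{(\R_+)^n}\Psi^{\mathfrak{gl}_n}_\lambda(x)\overline{\Psi^{\mathfrak{gl}_n}_\mu(x)}\prod_i\dd x_i/x_i=s_n(\lambda)^{-1}\delta(\lambda-\mu)$ by inserting the branching recursion, computing the inner $x$-integrals via Gamma integrals, and shifting contours to extract the delta; the isometry statement then follows by a density argument.

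The bulk of the work, and the main obstacle, is \emph{completeness}: surjectivity of the transform onto $L^2_{\mathrm{sym}}((\iota\R)^n,s_n\,\dd\lambda)$ and the absence of any point spectrum, so that the Plancherel measure is precisely the Sklyanin measure \eqref{eq:sklyanin} supported on $(\iota\R)^n$. This amounts to showing that the transform composed with its formal adjoint is the identity, and it is here that the contour manipulations above must be justified rigorously: the poles of the $\Gamma$-factors in the $Q$-kernels must be shown not to cross the contours of integration --- which is exactly what pins the spectral variables to the imaginary axis --- and one needs uniform bounds on the growth of $\Psi^{\mathfrak{gl}_n}_\lambda(x)$ in both $x$ and $\lambda$ to license the applications of Fubini and the contour shifts. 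Such bounds are available from \eqref{int-giv} via Laplace-type estimates on the energy $\cE$, but propagating them cleanly through the induction is the delicate part. A more structural alternative is to invoke essential self-adjointness of the commuting Toda Hamiltonians on a suitable core and to identify their joint spectral resolution with the Whittaker transform; this replaces the contour analysis by operator theory, at the price that the precise form \eqref{eq:sklyanin} of the Plancherel measure must then be recovered by a separate computation.
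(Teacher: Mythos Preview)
The paper does not supply a proof of this theorem at all: it is a survey, and Theorem~\ref{thm:Plancerel} is stated as a known input and immediately applied (to derive the contour integral \eqref{laplace_integral} for the Laplace transform of the log-gamma partition function). So there is nothing in the paper to compare your argument against.

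Judged on its own terms, your outline is a recognisable and legitimate route to the result --- the recursive scheme exploiting the Givental/branching structure of $\Psi^{\mathfrak{gl}_n}$, reducing the rank-$n$ Plancherel theorem to the rank-$(n{-}1)$ case plus a one-parameter spectral problem for the ``raising'' kernel $Q_\nu$. This is essentially the Kharchev--Lebedev / Gerasimov--Kharchev--Lebedev--Oblezin approach, and the appearance of the Gamma ratio $\prod_{i<n}|\Gamma(\lambda_i-\lambda_n)|^{-2}$ as the $\lambda_n$-weight is exactly what one expects. You are also honest about where the real work lies: the isometry direction is relatively mechanical once the Gamma integrals are in hand, but \emph{completeness} (equivalently, that the continuous Sklyanin measure exhausts the spectrum of the Toda chain, with no bound states) requires either careful control of the pole structure under contour deformation or an operator-theoretic argument for essential self-adjointness of the commuting Hamiltonians on a suitable core. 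Your proposal names both options but does not execute either; as a proof \emph{plan} it is sound, but the gap between sketch and proof here is substantial and is precisely the content of the cited literature rather than something one can fill in a paragraph.
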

  We are now in a position, using the Plancherel theorem and identity \eqref{bumpstade}, to compute the Laplace transform of 
  the polymer partition function in a terms of a contour integral. 
  \begin{theorem}\label{thm:log-gamma-laplace}
  Let $Z_n=Z_{(n,n)}$ denote the point-to-point partition function of a log-gamma polymer with parameters 
  $(\alpha,  \beta)$ as in \eqref{log-gamma-measure}.
   Then its Laplace transform is given by the contour integral formula
  \begin{align}\label{laplace_integral}
  \bbE\Big[ e^{-sZ_{n}}\Big]= \int_{\ell_\delta^n} \dd \lambda \,
  s_n(\lambda) \prod_{1\leq j,j'\leq n} \Gamma(\lambda_j-\beta_{j'}) 
  \prod_{j=1}^n 
  \frac{s_n^{-\lambda_j}\prod_{1\leq i\leq n} \Gamma(\alpha_i+\lambda_j)}{s^{-\gb_j}\prod_{1\leq i\leq n} \Gamma(\alpha_i+\beta_j)},
  \end{align}
  where $s_n(\lambda)$ is the Sklyanin measure \eqref{eq:sklyanin}
  and the contour line $\ell_\gd$ is a vertical line with real part $\gd>0$ chosen so that the $\beta$ parameters lie to its left
  and the $\ga$ parameters lie to its right, so that the arguments of the Gamma functions 
  appearing in the above expression have positive real parts.
  \end{theorem} 
  \begin{proof}
  Let $(\sfZ,\sfZ')=\grsk(\sfW)$. We then have that the partition function $Z_n$ equals entry $z^n_1=(z^n_1)'$ of the $\sfZ$ and
  $\sfZ'$ geometric $\GT$ patterns. Therefore,
  \begin{align*}
  \bbE\Big[ e^{-sZ_{n}}\Big] &= \int e^{-sz^n_1} \,\,\bbP\circ\grsk^{-1}(\dd\sfZ,\dd\sfZ')\\
  & \stackrel{\eqref{push}}{=}\frac{1}{\Gamma_{\alpha,  \beta}} 
   \int e^{-sz^n_1} \text{type}(\sfZ)^{-{\beta}}  \text{type}(\sfZ')^{-\alpha} \,
 e^{-1/z_{nn}-\cE(\sfZ)-\cE(\sfZ')} \prod_{1\leq j\leq i\leq n} \frac{\dd z^i_j}{z^i_j}\frac{\dd (z^i_j)'}{(z^i_j)'}.
  \end{align*}   
  Integrating over all variables except the common shape, which we denote by $x\in\R^n$, we have
  \begin{align*}
   \bbE\Big[ e^{-sZ_{n}}\Big] 
   =\frac{1}{\Gamma_{\alpha,  \beta}} \int_{\bbR_+^n} e^{-s x_1-1/x_n} \Psi^{\mathfrak{gl}_n}_{\alpha}(x) \Psi^{\mathfrak{gl}_n}_{\beta}(x) \prod_{i=1}^n\frac{\dd x_i}{x_i},
  \end{align*}
  and using the Plancherel theorem \ref{thm:Plancerel} we write this as
  \begin{align}\label{Planch_appl}
  \frac{1}{\Gamma_{\alpha,  \beta}} \int_{i\bbR^n} 
   \dd \lambda  \, s_n(\lambda)
   \Big( \int_{\bbR_+^n} e^{-1/x_n} \Psi^{\mathfrak{gl}_n}_{\alpha}(x)  
   \Psi^{\mathfrak{gl}_n}_{\gl}(x) \prod_{i=1}^n\frac{\dd x_i}{x_i}  \Big)
   \Big( \int_{\bbR_+^n} e^{-s x_1} 
    \Psi^{\mathfrak{gl}_n}_{\beta}(x)  \Psi^{\mathfrak{gl}_n}_{-\gl}(x) \prod_{i=1}^n\frac{\dd x_i}{x_i} \Big),
  \end{align}
  The first parenthesis is given by \eqref{bumpstade} as
  \begin{align}\label{stad1}
  \int_{\bbR_+^n} e^{-1/x_n} \Psi^{\mathfrak{gl}_n}_{\alpha}(x)  
   \Psi^{\mathfrak{gl}_n}_{\gl}(x) \prod_{i=1}^n\frac{\dd x_i}{x_i}
   =\prod_{i,j} \Gamma(\alpha_i+\lambda_j).
  \end{align}
  The second parenthesis also reduces easily to \eqref{bumpstade} via the
  properties of Whittaker functions
  \begin{align}
&\hskip 2cm\Psi^{\mathfrak{gl}_n}_{\alpha}(s x)=s^{-\sum_i\alpha_i} \Psi^{\mathfrak{gl}_n}_{\alpha}(x) \qquad \text{and} \notag\\
&\Psi^{\mathfrak{gl}_n}_{\alpha}(x)=\Psi^{\mathfrak{gl}_n}_{-\alpha}(x') \quad \text{with}\quad x_i':=1/x_{n-i+1}
\quad \text{for} \quad i=1,2...,n, \label{eq:Whit_inv2}
\end{align}
both of which are easily checked via integral formula \eqref{int-giv}. Via the change of variables $x_i\mapsto1/x_{n-i+1}$
this yields  that
\begin{align}\label{stad2}
\int_{\bbR_+^n} e^{-s x_1}  \Psi^{\mathfrak{gl}_n}_{\beta}(x)  \Psi^{\mathfrak{gl}_n}_{-\gl}(x) \prod_{i=1}^n\frac{\dd x_i}{x_i} \notag
 &= \int_{\bbR_+^n} e^{-s /x_n}  \Psi^{\mathfrak{gl}_n}_{\beta}(x')  \Psi^{\mathfrak{gl}_n}_{-\gl}(x')\prod_{i=1}^n\frac{\dd x_i}{x_i}  \notag\\
 & \stackrel{\eqref{eq:Whit_inv2}}{=} \int_{\bbR_+^n} e^{-s /x_n}  \Psi^{\mathfrak{gl}_n}_{-\beta}(x)  \Psi^{\mathfrak{gl}_n}_{\gl}(x)\prod_{i=1}^n\frac{\dd x_i}{x_i}  \notag\\ 
 & \stackrel{\eqref{bumpstade}}{=}s^{\sum_{j=1}^n(\beta_j-\lambda_j)} \prod_{1\leq j,j'\leq n} \Gamma(\lambda_j-\beta_{j'}).
\end{align}
Inserted in \eqref{Planch_appl}, relations \eqref{stad1}, \eqref{stad2} lead to the desired formula
 after a contour shift in the $\dd \lambda$ integration by $\delta$, which makes sure that the contour integral crosses no poles.
 This contour shift can be justified thanks to the sufficient decay at $|\Im(\lambda_i)|\to\infty$ for $i=1,...,n$
 of the integrand in \eqref{laplace_integral} as 
 checked via the asymptotics of the Gamma function
 \begin{align*}
\lim_{b\to\infty}|\Gamma(a+\iota b)| e^{\frac{\pi}{2}|b|}|b|^{\frac{1}{2}-a}=\sqrt{2\pi},
\qquad a>0, \,b \in \mathbb{R} 
\end{align*}
Similarly, the asymptotics of the gamma function and \eqref{stad2} can be used to check the legitimate application 
of the Plancherel theorem in \eqref{Planch_appl}, which amounts to checking the $L^2(\dd \gl)$
 integrability of the two expressions in parentheses in \eqref{Planch_appl}.
  \end{proof}
  Let us note that we have presented the Laplace transform of $Z_n$ because in this case the emergence of Whittaker functions
 in the proof is more transparent. However, a similar formula holds for a general point-to-point partition function $Z_{(m,n)}$,
 with $m\geq n$. In particular,
   \begin{align*}
  \bbE\Big[ e^{-sZ_{(m,n)}}\Big]= \int_{\ell_\delta^n} \dd \lambda 
  \,s_n(\lambda) \prod_{1\leq j,j'\leq n} \Gamma(\lambda_j-\beta_{j'}) 
  \prod_{j=1}^n\frac{s^{-\lambda_j}\prod_{1\leq i\leq m} \Gamma(\alpha_i+\lambda_j)}{s^{-\gb_j}\prod_{1\leq i\leq m} \Gamma(\alpha_i+\beta_j)}.
  \end{align*}
  \vskip 2mm
  Having this formula as a starting point and employing intuition derived from Macdonald Processes \cite{BC14}, Borodin-Corwin-Remenik \cite{BCR13} were able to rewrite the Laplace transform of the point-to-point log-gamma polymer with parameters  
  $(\ga,\gb)$ in terms
  of a Fredholm determinant as 
  \begin{theorem}
  Let us use the abbreviation DPRM for \it{``directed polymer in random medium''}.
  Let $Z_{(m,n)}$ be the point-to-point partition function of a log-gamma DPRM with parameters $( \ga,\gb)$.
   Then its Laplace transform
  is given by the Fredholm determinant
   \begin{align}\label{BCR_Fredholm}
  \bbE\Big[e^{-u Z_{(m,n)} }\Big]=\det(I+K^{{\rm DPRM}}_{u,\ga,\gb})_{L^2(C_{\delta_1})},
 \end{align}
   where the kernel $K^{{\rm DPRM}}_{u,\ga,\gb}:L^2(C_{\delta_1})\to L^2(C_{\delta_1})$ equals 
 \begin{align*}
K^{{\rm DPRM}}_{u,\ga,\gb}(v,v')=\frac{1}{2\pi \iota} \int_{\ell_{\delta_2}} \frac{dw}{w-v'}\frac{\pi}{\sin(\pi(v-w))} 
 \frac{u^{w}}{u^v}\, \frac{\prod_{i=1}^m \Gamma(\ga_i-w) \prod_{j=1}^n  \Gamma(v+\gb_j)  } {
 \prod_{i=1}^m \Gamma(\ga_i-v) \prod_{j=1}^n \Gamma(w+\gb_j) } ,
\end{align*}
for $0<\gd_1<\gd_2$,
$\Re(\ga_i)>\gd_2$ for all $i$ and $|\gb_j|<\gd_1$ for all $j$. That is, all $\gb_j$ lie to the left of $\ell_{\gd_2}$
and all $\ga_i$ lie at the its right.
\end{theorem}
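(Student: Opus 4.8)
The plan is to start from the contour-integral formula \eqref{laplace_integral} for $\bbE[e^{-sZ_n}]$ (and its generalisation to $Z_{(m,n)}$) and convert the $n$-fold integral over $\ell_\delta^n$, weighted by the Sklyanin measure, into a Fredholm determinant on a contour in the complex plane. The mechanism is the one pioneered in the Macdonald process literature \cite{BC14} and carried out for the log-gamma polymer in \cite{BCR13}: one recognises the $n$-fold integral as (up to normalisation) a sum of residues that can be resummed into a single Fredholm determinant whose kernel carries the parameter $n$ only through the ratio of Gamma products. Concretely, the first step is to rewrite the Sklyanin measure $s_n(\lambda)=\frac{1}{(2\pi\iota)^n n!}\prod_{i\ne j}\Gamma(\lambda_i-\lambda_j)^{-1}$ together with the factor $\prod_{j,j'}\Gamma(\lambda_j-\beta_{j'})$ appearing in \eqref{laplace_integral}, absorbing as much as possible of the $\beta$-dependence into a product over $j$ of a single-variable function, so that the integrand becomes of the form $\frac{1}{n!}\oint\cdots\oint \prod_{i\ne j}\frac{\lambda_i-\lambda_j}{\lambda_i-\lambda_j+1}\prod_j F(\lambda_j)\,d\lambda_j$ for an explicit $F$ built out of Gamma functions and the exponential term $s^{-\lambda_j}$.

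The second step is the algebraic heart of the argument: express this symmetric $n$-fold integral as a Fredholm determinant. Here I would invoke the identity (see \cite{BC14}, and its use in \cite{BCR13}) that for suitable contours
\begin{align*}
\frac{1}{n!}\int\cdots\int \prod_{1\le i\ne j\le n}\frac{\lambda_i-\lambda_j}{\lambda_i-\lambda_j+1}\prod_{j=1}^n F(\lambda_j)\,\frac{d\lambda_j}{2\pi\iota}
= \det(I+K)_{L^2(C)},
\end{align*}
where the kernel $K(v,v')$ is a single contour integral
\begin{align*}
K(v,v')=\frac{1}{2\pi\iota}\int_{\ell}\frac{dw}{w-v'}\,\frac{\pi}{\sin(\pi(v-w))}\,\frac{F(v)}{F(w)}\cdot(\text{correction})
\end{align*}
obtained by the standard manipulation: write $\prod_{i\ne j}\frac{\lambda_i-\lambda_j}{\lambda_i-\lambda_j+1}$ as a Cauchy-type determinant, apply the Andreief/Cauchy--Binet identity (Proposition \ref{CB}) to collapse the $n$-fold integral to an $n\times n$ determinant, and then identify the resulting determinantal sum $\sum_n$ with the Fredholm expansion \eqref{Fred}. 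Matching $F$ against the integrand of \eqref{laplace_integral} — with $s=u$, with the $\prod_{i=1}^m\Gamma(\alpha_i+\lambda_j)$ becoming $\prod_{i=1}^m\Gamma(\alpha_i-w)$ in the numerator of the kernel after the substitution $\lambda_j\mapsto w$, etc. — produces exactly $K^{{\rm DPRM}}_{u,\ga,\gb}$. The role of the $\frac{\pi}{\sin(\pi(v-w))}$ factor is to encode the geometric series in the poles of $\Gamma$, i.e.\ it is what remains of the $\prod_{j,j'}\Gamma(\lambda_j-\beta_{j'})$ pole structure after resummation; the shift between $\delta_1$ and $\delta_2$ is precisely what keeps $\frac{1}{\sin(\pi(v-w))}$ from hitting its poles and what ensures all Gamma arguments stay in the right half-plane.

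The third and final step is analytic bookkeeping: one must justify (a) that all contour deformations needed to bring \eqref{laplace_integral} into the Cauchy-determinant form are legitimate, i.e.\ the relevant integrands decay as $|\Im|\to\infty$ — this follows, as in the proof of \eqref{laplace_integral}, from the asymptotics $|\Gamma(a+\iota b)|\sim\sqrt{2\pi}\,|b|^{a-1/2}e^{-\pi|b|/2}$; (b) that $K^{{\rm DPRM}}_{u,\ga,\gb}$ is trace class on $L^2(C_{\delta_1})$ so that $\det(I+K)$ is well defined (again via Gamma asymptotics, which give exponential decay of the kernel along the contour); and (c) that the Fredholm series obtained term-by-term from the expansion of \eqref{laplace_integral} converges to $\det(I+K)$, which is where the constraint $0<\delta_1<\delta_2$, $\Re(\alpha_i)>\delta_2$, $|\beta_j|<\delta_1$ is used to control the poles of $1/\sin$ and of the Gamma functions simultaneously. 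The main obstacle is step two — the passage from the symmetric $n$-fold integral to the single-kernel Fredholm determinant is not a formal manipulation but relies on a nontrivial combinatorial identity (essentially the one underlying the Macdonald-process formulas), and making the contour choices compatible so that the Cauchy--Binet collapse and the subsequent re-summation are both valid requires care; everything else is either quoted from \cite{BCR13, BC14} or is routine Gamma-function asymptotics. Since the statement is attributed to Borodin--Corwin--Remenik, in the exposition I would present this as a guided tour of their argument rather than reprove the Macdonald-process identity from scratch, referring to \cite{BCR13} for the combinatorial core and to Section \ref{sec:Fred} (Propositions \ref{CB}, \ref{abba}, \ref{prop:Fred}) for the determinantal machinery.
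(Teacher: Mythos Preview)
The paper does not prove this theorem; it is stated as a result of Borodin--Corwin--Remenik \cite{BCR13} and then immediately used as input for the zero-temperature degeneration to exponential last passage percolation. So your proposal is to be measured against \cite{BCR13} rather than against anything in the text itself, and at the level of overall strategy and references your three-step plan is the right one.

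That said, the concrete shape you give to Step~1 is borrowed from the wrong setting. The cross-term $\prod_{i\ne j}\frac{\lambda_i-\lambda_j}{\lambda_i-\lambda_j+1}$ is the structure one meets in the $q$-Whittaker/Macdonald moment formulas of \cite{BC14}; it does not arise from the Sklyanin measure $\prod_{i\ne j}\Gamma(\lambda_i-\lambda_j)^{-1}$ in the continuous Whittaker integral \eqref{laplace_integral}, where the cross-term is genuinely a ratio of Gamma functions and not a rational function. The actual \cite{BCR13} argument runs in the opposite direction from what you sketch: one starts from the Fredholm side, expands $\det(I+K)$ as the series \eqref{Fred}, and for each $k$-fold term evaluates the $C_{\delta_1}$-integrals by residues at the $n$ poles $\{-\beta_j\}$ enclosed by $C_{\delta_1}$; terms with $k>n$ vanish for lack of poles, and the surviving terms reassemble into the $n$-fold integral \eqref{laplace_integral}. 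The factor $\pi/\sin(\pi(v-w))$ enters through the reflection identity $\Gamma(z)\Gamma(1-z)=\pi/\sin(\pi z)$ applied to the Sklyanin--Gamma combination, not through a geometric-series resummation of $\Gamma$-poles as you suggest. Your Step~3 (trace class and contour bookkeeping via $|\Gamma(a+\iota b)|\sim \sqrt{2\pi}\,|b|^{a-1/2}e^{-\pi|b|/2}$) is accurate.

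In short: no gap in the plan, but the specific integrand you write down in Step~1 and the ``Cauchy determinant then Andreief'' description of Step~2 do not match the Whittaker case; if you present this, the honest route is to state the determinantal identity as a lemma and cite \cite{BCR13} for the proof, exactly as the paper does.
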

\begin{remark}{
\rm A very interesting, alternative Fredholm determinant formula for the Laplace transform of $Z_{(m,n)}$
(with a kernel closer in form to that in \eqref{KUexp})  has recently been obtained 
in \cite{IMS21a, IMS21b, IMS21c}. This is achieved via a novel framework that links to determinantal models 
(periodic and free boudary Schur measures \cite{B07, BBNV18, BBNV19}) via the use of crystal theory and {\it skew} $\rsk$ algorithms.  }
\end{remark}
It is instructive to compare the Fredholm determinant formula for the log-gamma polymer to the Fredholm
determinant for the last passage percolation with exponential weights \eqref{Fred_exp}. For this,
 we set the log-gamma parameters to be $(\epsilon \ga, \epsilon\gb)$ and $u=e^{-U/\epsilon}$. 
 Then, using standard asymptotics, the Laplace transform of the log-gamma partition function with parameters
 $(\epsilon \ga, \epsilon\gb)$ converges as
\begin{align*}
 \bbE\Big[\exp\big(- e^{-\frac{U}{\epsilon}}\,  Z^{(\epsilon \ga, \epsilon \beta)}_{(m,n)} ) \Big]=
 \bbE\Big[\exp\big(- e^{-\frac{1}{\epsilon} ( U-\epsilon\log Z^{(\epsilon \ga, \epsilon \beta)}_{(m,n)} ) }\Big] \xrightarrow[\epsilon\to 0]{} 
 \bbP\Big( \tau^{(\ga, \beta)}_{(m,n)} < U\Big),
\end{align*}
where 
\[
\tau^{( \ga,  \beta)}_{(m,n)}= \max_{\pi\colon (1,1)\to (m,n)} \sum_{(i,j)\in \pi} \go_{ij},
\qquad \text{with } \qquad \bbP(\go_{ij}\in \dd x)= (\ga_i+\beta_j) e^{-(\ga_i+\beta_j)x} \,\dd x.
\]
To see how the Fredholm determinant in \eqref{BCR_Fredholm} scales with $\epsilon$ we notice that when the parameters
 of the log-gamma are taken to be $(\epsilon\ga,\epsilon \gb)$, 
 we can choose the $C_{\gd_1}$ to be $\epsilon C_{\gd_1}$
 and also scale the imaginary part of the line $\ell_{\gd_2}$ by $\epsilon$, which with an abuse of notation we denote by  
 $\epsilon\ell_{\gd_2}$.
 By the expansion of the Fredholm determinant  we have that 
 \begin{align*}
 \det(I+K^{(\epsilon \ga,\epsilon\gb)}_{\exp(-U/\epsilon)})_{L^2(\epsilon C_{\delta_1})} &=
1+\sum_{k\geq 1} \frac{1}{k!}\int_{\epsilon C_{\delta_1}} \cdots   \int_{\epsilon C_{\delta_1}}\det \Big( K^{(\epsilon \ga,\epsilon\gb)}_{\exp(-U/\epsilon) }(v_i,v_j)\Big) \,\dd v_1\cdots \dd v_n \\
& = 1+\sum_{k\geq 1}\frac{1}{k!} \int_{ C_{\delta_1}} \cdots   \int_{ C_{\delta_1}}\det \Big( \epsilon K^{(\epsilon \ga,\epsilon\gb)}_{\exp(-U/\epsilon)}(\epsilon v_i, \epsilon v_j)\Big) \,\dd v_1\cdots \dd v_n
 \end{align*}
 and
 \begin{align*}
& \epsilon K^{(\epsilon \ga,\epsilon\gb)}_{\exp(-U/\epsilon)}(\epsilon v, \epsilon v') \\
 & = \frac{\epsilon}{2\pi \iota} \int_{\epsilon \ell_{\delta_2}} \frac{dw}{w-\epsilon v'}\frac{\pi}{\sin(\pi(\epsilon v-w))} 
 \frac{e^{-wU/\epsilon }}{e^{-vU/\epsilon}}\, \frac{\prod_{i=1}^m \Gamma(\epsilon \ga_i-w) \prod_{j=1}^n  \Gamma(\epsilon v+\epsilon
 \gb_j)  } {
 \prod_{i=1}^m \Gamma(\epsilon\ga_i-\epsilon v) \prod_{j=1}^n \Gamma(w+\epsilon \gb_j) } 
 \end{align*}
 and changing variables $w\mapsto\epsilon w$ and taking the limit $\epsilon \to 0$, using that $\Gamma(x)\sim 1/x$ for $x\sim0$,
  this converges to
 \begin{align*}
  K^{( \ga,\gb) }_{U, {\rm exp}}( v,  v') &=
 \frac{1}{2\pi \iota} \int_{ \ell_{\delta_2}} \frac{dw}{(w-v')(v-w)} 
 \frac{e^{-wU}}{e^{-vU}}\, \frac{\prod_{i=1}^m (\ga_i-v) \prod_{j=1}^n (w+\gb_j) }{\prod_{i=1}^m (\ga_i-w) \prod_{j=1}^n (v+\gb_j)  }\\
 &=  \frac{1}{2\pi \iota}  \int_{ \ell_{\delta_2}} \frac{dw}{(w-v')}  \int_0^\infty \dd \gl \,e^{\gl(v-w)}\,
 \frac{e^{-wU}}{e^{-vU}}\, \frac  {\prod_{i=1}^m (\ga_i-v) \prod_{j=1}^n (w+\gb_j) } {\prod_{i=1}^m (\ga_i-w) \prod_{j=1}^n (v+\gb_j)  },
 \end{align*}
 where in the second equality the use of $(v-w)^{-1}=\int_0^\infty e^{\lambda(v-w)}\dd \lambda$ is justified by the fact that
 $\Re(v-w)<0$, since $\delta_1<\delta_2$ and $v\in C_{\delta_1}, w\in\ell_{\delta_2}$.
   Considering, now, the operators 
 \[
 A(v,\gl):= e^{\gl v} \qquad\text{and} \qquad 
 B(\gl,v'):= \frac{1}{2\pi \iota}  \int_{ \ell_{\delta_2}} \frac{dw}{(w-v')} \,e^{-\gl w}\,
 \frac{e^{-wU}}{e^{-vU}}\, \frac {\prod_{i=1}^m (\ga_i-v) \prod_{j=1}^n (w+\gb_j) }{\prod_{i=1}^m (\ga_i-w) \prod_{j=1}^n (v+\gb_j)  } 
 \]
 and using the identity $\det(I+AB)=\det(I+BA)$ we can write the Fredholm determinant of $K^{( \ga,\beta) }_{U, {\rm exp}}( v,  v') $ on $L^2({C_{\gd_1}})$ as a Fredholm determinant on $L^2(\bbR_+)$ with respect to the kernel
  \begin{align*}
  \tilde K^{( \ga,\gb) }_{U, {\rm exp}}( t,  s) 
 &=  \frac{1}{(2\pi \iota)^2}  \int_{C_{\gd_1}} \dd v  \int_{ \ell_{\delta_2}} \frac{dw}{(w-v)} 
 \frac{e^{-(t+U)w}}{e^{-(s+U)v}}\, \frac{\prod_{i=1}^m (\ga_i-v) \prod_{j=1}^n (w+\gb_j) }{\prod_{i=1}^m (\ga_i-w) \prod_{j=1}^n (v+\gb_j)  } 
 \end{align*}
 which is equivalent (upon changing $t+U$ and $s+U$ to $t$ and $s$, respectively) to the Fredholm determinant on $L^2(U,\infty)$ with kernel 
  \begin{align}\label{KUexp}
  \tilde K^{( \ga,\gb) }_{U, {\rm exp}}( t,  s) 
 &=  \frac{1}{(2\pi \iota)^2}  \int_{C_{\gd_1}} \dd v  \int_{ \ell_{\delta_2}} \frac{dw}{(w-v)} 
 \frac{e^{-t w}}{e^{-s v}}\, \frac  {\prod_{i=1}^m (\ga_i-v) \prod_{j=1}^n (w+\gb_j) }{\prod_{i=1}^m (\ga_i-w) \prod_{j=1}^n (v+\gb_j)  }.
 \end{align}
 In this way we have recovered the formulas of the exponential last passage percolation.
\subsection{Variants of the log-gamma polymer}\label{sec:loggamma_var}
We will present here a variant of the log-gamma polymer and also discuss geometries other than the point-to-point.
\vskip 2mm
{\bf The strict-weak polymer.}
The strict-weak polymer was introduced by Sepp\"all\"ainen \cite{S10b} and was analysed in \cite{CSS15, OO15}. In this polymer model the paths are directed towards down and right but the right steps can only be south-east diagonal (instead of straight east
as in the usual polymer models, see \eqref{strict-weak-paths}) 
and the disorder is {\it gamma distributed} and associated to the edges rather than to the sites.
We will see, however, that the strict-weak polymer falls within the same structure as that of
 the log-gamma polymer. We will present here
a different point of view than that of \cite{CSS15, OO15}.

 Theorems \ref{thm:NY} and \ref{thm:grsk} have shown that, given a matrix $\sfW=\{\bw^i_j\colon 1\leq i\leq m, 1\leq j\leq n\}$,
(assume that $m\geq n$) then the $\sfZ=(z^i_j)_{1\leq j\leq i \leq n}$ Gelfand-Tsetlin pattern of $\grsk(\sfW)$ can be represented as the solution of the matrix equation
\begin{align}\label{theHeq}
H(\bw^1) H(\bw^{2}) \cdots H(\bw^m) &= H_n(\bp^n) H_{n-1}(\bp^{n-1}) \cdots H_1(\bp^1) ,
\end{align}
where $\bw^1,...,\bw^n$ represent the rows of $\sfW,\,\, \bp^k=(p^k_k,...,p^k_n)\in \R^{n-k+1}$ and 
\begin{align}\label{zvariables}
z^i_j=p^j_j \, p^j_{j+1}\cdots p^j_i, \qquad \text{for}\qquad 1\leq j\leq i\leq n.
\end{align} 
 Theorem \ref{thm:NY}, in particular, gives the representation of $z^n_1$ as the polymer partition 
function $\sum_{\pi\colon (1,1)\to(m,n)}\prod_{(i,j)\in \pi} \bw^i_j$, where the sum is over all paths that go from $(1,1)$ to $(m,n)$ by
moving either down or right at each step.
But one can also write equation \eqref{theHeq} in terms of the dual matrices $E_i$ defined in \eqref{Ematrices} via relation 
\eqref{HErelation} as
\begin{eqnarray}\label{Eequation}
E(\bar\bw^n)\cdots E(\bar\bw^1)=E_1(\bar\bp^1)\cdots E_N(\bar\bp^n),\qquad m\geq n.
\end{eqnarray}
where we recall the notation that, for a vector $\bx=(x_1, x_2,...)$, then $\bar\bx=(1/x_1, 1/x_2,...)$.
As we have seen in Section \ref{RSK_matrix}, the entries of either of the above matrix products have path representations. 
In particular,
  \begin{equation}\label{strict-weak-paths}
\Big(E(\bar\bw^n)\cdots E(\bar\bw^1)\Big)_{1m}=\sum_\pi \,\,
{\begin{tikzpicture}[baseline={([yshift=-.5ex]current bounding box.center)},vertex/.style={anchor=base,
    circle,fill=black!25,minimum size=18pt,inner sep=2pt}, scale=0.6]
\draw (10,0) -- (10,-8); \draw (9,0) -- (9,-8); \draw (8,0) -- (8,-8); \draw (7,0) -- (7,-8); \draw (6,0) -- (6,-8);
\draw (5,0) -- (5,-8); 

\draw(5,-6)--(6,-7); \draw(5,-5)--(7,-7); \draw(5,-4)--(8,-7); \draw(5,-3)--(9,-7); \draw(5,-2)--(10,-7); 
\draw(5,-1)--(10,-6); \draw(5,0)--(10,-5);\draw(6,0)--(10,-4); \draw(7,0)--(10,-3); \draw(8,0)--(10,-2); \draw(9,0)--(10,-1);
\node at (10.7, -0.3) {$\bar\bw^1$}; \node at (10.7, -1.3) {$\bar\bw^2$};  \node at (10.7, -7.3) {$\bar\bw^m$};
\draw[ultra thick, red, ->] (5,0)--(5,-1)--(6,-2)--(7,-3)--(7,-4)--(8,-5)--(9,-6)--(10,-7)--(10,-8); 
\draw[ultra thick, dotted] (10.6,-2.3)--(10.6,-6);
\foreach \i in {5,...,10} {\node[draw,circle,inner sep=1pt,fill] at (\i,0) {};
	}
\foreach \i in {5,...,10} {\node[draw,circle,inner sep=1pt,fill] at (\i,-8) {};
	}
\end{tikzpicture}} 
\end{equation}
where the sum is over all directed paths from the upper-left corner to the lower-right one, with the weights of the diagonal edges being
equal to $1$ and the vertical ones being equal to $w^i_j$.  Moreover,
\begin{equation*}
\Big(E_1(\bar\bp^1)\cdots E_N(\bar\bp^n) \Big)_{1m}=\sum_\pi\,\,
{\begin{tikzpicture}[baseline={([yshift=-.5ex]current bounding box.center)},vertex/.style={anchor=base,
    circle,fill=black!25,minimum size=18pt,inner sep=2pt}, scale=0.6]
\draw (10,0) -- (10,-6); \draw (9,0) -- (9,-5); \draw (8,0) -- (8,-4); \draw (7,0) -- (7,-3); \draw (6,0) -- (6,-2);
\draw (5,0) -- (5,-1); 

\draw(5,0)--(10,-5);\draw(6,0)--(10,-4); \draw(7,0)--(10,-3); \draw(8,0)--(10,-2); \draw(9,0)--(10,-1);
\node at (10.7, -0.3) {$\bar\bp^1$}; \node at (10.7, -1.3) {$\bar\bp^2$};  \node at (10.7, -5.3) {$\bar\bp^n$};
\draw[ultra thick, red, ->] (5,0)--(6,-1)--(7,-2)--(8,-3)--(9,-4)--(10,-5)--(10,-6); 
\draw[ultra thick, dotted] (10.6,-2.3)--(10.6,-4.3);
\foreach \i in {5,...,10} {\node[draw,circle,inner sep=1pt,fill] at (\i,0) {};
	}
\foreach \i in {5,...,10} {\node[draw,circle,inner sep=1pt,fill] at (\i,-\i+4) {};
	}
\end{tikzpicture}} 
\end{equation*}
where the set of paths and the assignment of weights is as above. However, in this case there is only one admissible path and its weight is $1/p^n_n$, which by \eqref{zvariables} equals $1/z^n_n$. Combining the two, we have that \eqref{Eequation}, evaluated on both sides at entry $(1,m)$, gives
\begin{align*}
\sumtwo{\pi\colon (1,1)\to(n,n)}{\pi\colon \text{strict-weak}} \, \prod_{\be\in\pi} \frac{1}{\bw_e}= \frac{1}{z^n_n},
\end{align*}
where the product is over edges traced by path $\pi$. 
If we assume that the weight matrix $\sfW=\{\bw^i_j\colon 1\leq i\leq m, 1\leq j\leq n\}$ is distributed according to the log-gamma
measure \eqref{log-gamma-measure}, then the left-hand side is the partition function of the strict-weak polymer and we see that it 
equals the inverse of the bottom-left corner of the Gelfand-Tsetlin pattern of $(\sfZ,\sfZ')=\grsk(\sfW)$, while the partition function
of the log-gamma polymer equals the bottom-right corner $z^n_1$ of the same Gelfand-Tsetlin pattern. Thus both the strict-weak
and the log-gamma polymer fall within the same integrable structure and the steps described in the previous section for the
analysis of the log-gamma polymer can be applied to analyse the strict-weak polymer and obtain Tracy-Widom
fluctuations.
\vskip 2mm
{\bf Multipoint correlations.}
Multipoint spatial correlations for {\it positive temperature models}, e.g. log-gamma polymer, KPZ equation etc.,
 are less understood compared to their zero temperature counterparts. However, progress has recently been made in this direction.
 
 A first indication of the convergence of the joint law of the solution to the KPZ equation 
 at spatially separated points (at the same time) to the two-point function
 of the $\text{Airy}_2$ process has appeared in the physics literature \cite{D13, D13b, D14}. This was done
  via the non rigorous Bethe ansatz method and under certain assumptions. In the mathematics literature, contour integral
 expressions for the joint Laplace transform of the point-to-point partition functions of the log-gamma polymer
  $Z_{(m_1,n_1)}, Z_{(m_2,n_2)}$, for  $m_1\leq n_1, m_2\geq n_2$  have been derived in \cite{NZ17} via the use of
  geometric $\rsk$ (in particular its local move formulation as described in Section \ref{sec:localmove}, see the end of that section)
 and using the general scheme described in the previous section.  The formula from \cite{NZ17} may be written as
\begin{align}\label{eq:2p_intro}
& \bbE\Big[e^{-u_1 Z_{(m_1,n_1)}-u_2Z_{(m_2,n_2)}}\Big] \notag \\
 = &\int_{(\ell_\delta)^{m_1} } \dd \lambda
    \,\,s_{m_1}(\lambda)  \prod_{1\leq i,i'\leq m_1} \Gamma(-\ga_i+\gl_{i'})
     \prod_{i=1}^{m_1}\frac{u_1^{-\gl_i}\, \prod_{j=n_2+1}^{n_1}\Gamma(\gl_i+\gb_j) }{ u_1^{-\ga_i}\, \prod_{j=n_2+1}^{n_1}\Gamma(\ga_i+\gb_j) } \notag\\
&\,\times   \int_{(\ell_{\delta+\gamma})^{n_2} } \dd \mu  
    \,\,s_{n_2}(\mu)  \prod_{1\leq j,j'\leq n_2} \Gamma(-\gb_j+\mu_{j'}) 
    \prod_{j=1}^{n_2}\frac{u_2^{-\mu_j} \, \prod_{i=m_1+1}^{m_2}\Gamma(\mu_j+\ga_i)  }{u_2^{-{\gb_j}} \,  \prod_{i=m_1+1}^{m_2}\Gamma(\gb_j+\ga_i) } \notag\\
 &\,\,   \quad\times \prod_{\substack{1\leq i\leq m_1 \\ 1\leq j\leq n_2 }} \frac{\Gamma(\gl_i+\mu_j)}{\Gamma(\ga_i+\gb_j)}\,,
  \end{align} 
  where $\ell_\delta, \ell_{\delta+\gamma}$ are the vertical lines $\delta+\iota \bbR$ and $\delta+\gamma+\iota \bbR$, with $\delta,\gamma>0$ (with $\ga<\gd$ and $\gb<\gd+\gamma$),
   of the complex plane and $s_n(\lambda)$ is the Sklyanin measure \eqref{eq:sklyanin}. The most interesting term in the contour
   integral is the last product involving $\Gamma(\gl_i+\mu_j)$, which couples the $\lambda$ and $\mu$ variables. Without this term
   the two integrals decouple to essentially the ones giving the Laplace transform of a single partition function, see \eqref{laplace_integral}. Thus this term contains all the correlations. Its presence, though,
    obscures the asymptotic analysis as 
   it does not lead to any (obvious) determinantal structure. 
    Under a certain assumption of convergence of some series, see  Section 4 of \cite{NZ17},
    convergence to the $\text{Airy}_2$ process was derived in \cite{NZ17} by analysing further 
    formula \eqref{eq:2p_intro}.
    
    An important contribution has been made by Dimitrov \cite{D20}, who for a {\it stochastic six vertex model}
    was able to establish fully the convergence of the two-point functions of its height function to the two-point function of the ${\rm Airy}_2$ process.
    This model possesses a parameter $t\in(0,1)$, which when sent to $1$ in a suitable fashion, leads to the convergence of the,
    suitably scaled, height function 
    to the partition function of the log-gamma polymer. Dimitrov, was able to establish the convergence to the ${\rm Airy}_2$ process
    under the assumption that the parameter $t$ is small enough. This allows, in combination with some fine analysis, 
    to tame the analogous series,
    whose convergence was taken as an assumption in \cite{NZ17}.    
    
    More recently there have been two more very important developments in the study of multipoint correlations \cite{QS20, V20}. 
    The first one has established that the large time limit of the multi-point laws of the solution, itself, to the KPZ equation
     are governed by those of the KPZ fixed point. The second shows that the large time limit of the multi-point laws of the Brownian polymers
     are governed by the directed landscape. Both approaches are novel in the sense that they break out of the circle of ideas involving 
     exact formulas and their analysis. We refer to the original works for details.

    \vskip 2mm
    {\bf The point-to-line geometry.}
    We briefly discuss the log-gamma polymer in the point-to-line geometry and some of its algebraic aspects,
    which link to orthogonal Whittaker functions. 
    The point-to-line partition function is
    \begin{align*}
Z^{\rm flat}_n:=\sum_{m+\ell=n+1} Z_{m,\ell}:=\sum_{m+\ell=n+1} \sum_{\pi\,\in\,\Pi_{m,\ell}} \prod_{(i,j)\in \pi} \bw^{\,i}_j,
\end{align*}
where $\Pi_{m,\ell}$ is the set of down-right (south-east) paths from $(1,1)$ to $(m,\ell)$. Pictorially, we have that
\begin{equation*}
Z^{\rm flat}_n= \sumtwo{\text{red paths}}{\text{ending on the diagonal}}
\begin{tikzpicture}[baseline={([yshift=-.5ex]current bounding box.center)},vertex/.style={anchor=base,
    circle,fill=black!25,minimum size=18pt,inner sep=2pt}, scale=0.35]
\draw[thick,dashed] (10.5,-.5) -- (.5,-10.5);
\foreach \i in {1,...,10}{
	\draw[thick] (1,-\i) grid (11-\i,-\i);
	\draw[thick] (\i,-1) grid (\i,-11+\i);
	
		\foreach \j in {\i,...,10}{
		\node[draw,circle,inner sep=1pt,fill] at (11-\j,-\i) {};
	}
}

\draw[thick,color=red,-] (1,-1) -- (2,-1) -- (2,-2) -- (3,-2) -- (3,-3) -- (3,-4) -- (3,-5) -- (4,-5) -- (5,-5) -- (5,-6);
\foreach \x in {(1,-1),(2,-1),(2,-2),(3,-2),(3,-3),(3,-4),(3,-5),(4,-5),(5,-5),(5,-6)}{
	\node[draw,circle,inner sep=1pt,fill,red] at \x {};
	}
\end{tikzpicture}
\end{equation*}
It turns out that the Laplace transform of $Z^{\rm flat}_{2n}$ 
(notice that we constrain ourselves to even polymer length $2n$) for the point-to-line log-gamma polymer with weight distribution
\begin{equation}
\label{eq:logGammaDistribution}
\frac{1}{\bw^{i}_j} \sim
\begin{cases}
{\rm Gamma}(\alpha_i + \beta_j) &1\leq i,j\leq n \, , \\
{\rm Gamma}(\alpha_i + \alpha_{2n-j+1}) &1\leq i\leq n\, , \,\, n < j\leq 2n-i+1 \, , \\
{\rm Gamma}(\beta_{2n-i+1} + \beta_j) &1\leq j\leq n \, , \,\, n < i \leq 2n-j+1 \, ,
\end{cases}
\end{equation}
given by (inverse) Gamma variables as in \eqref{log-gamma-measure}
for some ${\alpha},{\beta}\in\R_{+}^n$, 
 can also be expressed in terms 
of Whittaker functions but this time corresponding to the orthogonal group $SO_{2n+1}(\R)$, \cite{BZ19}.
 The formula, which remarkably 
parallels the one for the point-to-point case reads, is
  \begin{align}\label{flat_Laplace}
\bbE\Big[e^{-u \,Z^{\rm flat}_{2n}}\Big]&=\frac{u^{\sum_{i=1}^n(\ga_i+\gb_i)}}{\Gamma^{\rm flat}_{{\alpha},{\beta}}}
   \int_{\R_+^n} e^{-ux_1} \Psi^{\mathfrak{so}_{2n+1}}_{{\ga}}(x) \Psi^{\mathfrak{so}_{2n+1}}_{{\gb}}(x) 
   \prod_{i=1}^n\frac{\dd x_i}{x_i} \, ,
\end{align}
where $\Gamma^{\rm flat}_{{\alpha},{\beta}}$ is the normalisation of the measure \eqref{eq:logGammaDistribution}.
The $SO_{2n+1}(\R)-$Whittaker functions also admit a Givental type integral representation \cite{GLO12} over
geometric liftings of what we will call the {\bf ${\rm \boldsymbol{B}\boldsymbol{C}}$-Gelfand-Tsetlin pattern}.
We derive the terminology from the fact that $\boldsymbol{B}$ is typically used to refer to the orthogonal group $SO_{2n+1}(\R)$ and $\boldsymbol{C}$ 
is typically used to refer to the symplectic group $Sp_{2n}(\R)$, which both share similar structure with their associated Gelfand-Tsetlin patterns in the combinatorial setting. 
We consider ${\rm \boldsymbol{B}\boldsymbol{C}}$-Gelfand-Tsetlin patterns to be half-triangular arrays 
$(z^i_j \colon 1\leq j\leq i\leq 2n\,,\, 1\leq j\leq \lceil \tfrac{i}{2}\rceil)$
\begin{equation*}
\begin{tikzpicture}[scale=0.8]

\node (z11) at (1,-1) {$z^1_{1}$};
\node (z21) at (2,-2) {$z^2_{1}$};
\node (z22) at (0,-2) {};
\node (z31) at (3,-3) {$z^3_{1}$};
\node (z32) at (1,-3) {$z^3_{2}$};
\node (z41) at (4,-4) {$z^4_{1}$};
\node (z42) at (2,-4) {$z^4_{2}$};
\node (z43) at (0,-4) {};
\node (z51) at (5,-5) {$z^5_{1}$}; \node (z52) at (3,-5) {$z^5_{2}$}; \node (z53) at (1,-5) {$z^5_{3}$};
\node (z61) at (6,-6) {$z^6_{1}$}; \node (z62) at (4,-6) {$z^6_{2}$}; \node (z63) at (2,-6) {$z^6_{3}$};\node (z64) at (0,-6) {};

\draw[->] (z22) -- (z11);
\draw[->] (z11) -- (z21);
\draw[->] (z32) -- (z21);
\draw[->] (z21) -- (z31);
\draw[->] (z43) -- (z32);
\draw[->] (z32) -- (z42);
\draw[->] (z42) -- (z31);
\draw[->] (z31) -- (z41);
\draw[->] (z41) -- (z51); \draw[->] (z52) -- (z41); \draw[->] (z42) -- (z52); \draw[->] (z53) -- (z42); \draw[->] (z64) -- (z53);
\draw[->] (z51) -- (z61); \draw[->] (z62) -- (z51); \draw[->] (z52) -- (z62); \draw[->] (z63) -- (z52); \draw[->] (z53) -- (z63);

\draw (0,-0.6) -- (0,-6.3);

\end{tikzpicture}
\end{equation*} 
where the arrows indicate ``$\leq\,$'' in the standard, combinatorial setting (e.g. $z^{i+1}_j \to z^i_j \to z^{i+1}_{j+1}$ means
$z^{i+1}_j \leq z^i_j \leq z^{i+1}_{j+1}$ with the {\it wall} assigned the value $0$). In the geometric lifting / Whittaker function setting
arrows indicate the summation in the {\it energy} of the pattern as
\begin{equation*}
\mathcal{E}_{{\rm \boldsymbol{B}\boldsymbol{C}}}(\sfZ) = \sum_{a\to b} \frac{z_a}{z_b},
\end{equation*}
with the wall assigned the value $1$. 
This is similar to the energy of the usual geometric Gelfand-Tsetlin patters as in Theorem \ref{grsk_prop}.
The {\it type} of these patterns is the vector defined similarly as the one for the (full) Gelfand-Tsetlin patterns as
\begin{equation*}
\text{ type}(\sfZ)_i
:= \frac{\prod_{j = 1}^{\lceil i/2 \rceil} z^i_{j}}{\prod_{j = 1}^{\lceil (i-1)/2 \rceil} z^{i-1}_{j}}, 
\qquad \text{for } i=1, \dots, 2n
\, .
\end{equation*}
The $SO_{2n+1}(\R)$-Whittaker functions can be represented as (\cite{GLO12})
\begin{equation}
\label{eq:soWhittakerFn}
\Psi^{\mathfrak{so}_{2n+1}}_{\alpha}(x)
:= \int_{\GT_{\rm BC}(x)} 
\text{type}(\sfZ)^{\alpha^{\pm}}
\exp\Big(-\mathcal{E}_{\boldsymbol{B}\boldsymbol{C}}(\sfZ)\Big)
\,\,\,\,\prod_{\mathclap{\substack{1\leq i<2n, \\ 1\leq j\leq \lceil i/2 \rceil }}} \,\,\frac{\dd z_{i,j}}{z_{i,j}} \, ,
\end{equation}
where $\alpha\in \C^{n}$ and 
 $x \in\R_{+}^n$, and the integration is over all geometric $\boldsymbol{B}\boldsymbol{C}$-Gelfand-Testlin patterns
of depth $2n$ with positive entries and $(2n)$-th row equal to $x\in\R^n_+$, and
\[
\text{type}(\sfZ)^{{\alpha}^{\pm}}
:= \prod_{k=1}^n \text{type}(\sfZ)_{2k-1}^{\alpha_k} \text{type}(\sfZ)_{2k}^{-\alpha_k} 
\, .
\]
Formula \eqref{flat_Laplace} was derived in \cite{BZ19}. The main idea was to apply the geometric $\rsk$ in its 
{\it local moves formulation} to triangular arrays $\sfW=(\bw^i_j \colon i+j\leq 2n+1)$.
 Given that entries $t^i_j$ of array $\sfT=\grsk(\sfW )$ with $i+i=2n+1$ are the
point-to-point polymer partition functions, see \eqref{local_greene},
 the point-to-line partition function $Z_{2n}^{\rm flat}$ can be expressed as
$Z_{2n}^{\rm flat} = \sum_{i+j=2n+1} t^i_j$.
Thus,
\begin{align}\label{flat_laplace_integral}
\bbE\Big[e^{-u \,Z^{\rm flat}_{2n}}\Big] = \int \exp\Big(-u\sum_{i+j=2n+1} t^i_j\Big) \,\,\bbP\circ \grsk^{-1}(\dd \sfT).
\end{align} 
The next step is to perform the change of variables $t^i_j\mapsto (ut^i_j)^{-1}$ and then decompose the integration in \eqref{flat_laplace_integral} 
first over variables $(t^i_j: i<j)$ (for which we will use a notation $z'$ for reasons that will become clear as this will be identified as a geometric lifting
 of a symplectic $\GT$ pattern)
and $(t^i_j\colon i>j)$ (for which we will use a notation $z$)
 and then over the diagonal entries $t^i_i$ (for which we will use a notation $x$). Pictorially, the above integration, when $\bbP\circ \grsk^{-1}(\dd \sfT)$ is
 expanded out, as well as the resulting integration
 after the change of variables as in \eqref{push} and the decomposition described above, is as follows:
\begin{equation*}
\begin{tikzpicture}[baseline={([yshift=-.5ex]current bounding box.center)}, scale=0.8, every node/.style={transform shape}]
\node (t11) at (1,-1) {$t_{1,1}$};
\node (t12) at (2,-1) {$t_{1,2}$};
\node (t13) at (3,-1) {$t_{1,3}$};
\node (t14) at (4,-1) {$t_{1,4}$};
\node(t15) at (5,-1) {$t_{1,5}$};
\node (t16) at (6,-1) {$t_{1,6}$};

\node (t21) at (1,-2) {$t_{2,1}$};
\node (t22) at (2,-2) {$t_{2,2}$};
\node (t23) at (3,-2) {$t_{2,3}$};

\node (t24) at (4,-2) {$t_{2,4}$};
\node (t25) at (5,-2) {$t_{2,5}$};

\node (t31) at (1,-3) {$t_{3,1}$};
\node (t32) at (2,-3) {$t_{3,2}$};
\node (t33) at (3,-3) {$t_{3,3}$};
\node (t34) at (4,-3) {$t_{3,4}$};

\node (t41) at (1,-4) {$t_{4,1}$};
\node(t42) at (2,-4) {$t_{4,2}$};
\node(t43) at (3,-4) {$t_{4,3}$};

\node(t51) at (1,-5) {$t_{5,1}$};
\node (t52) at (2,-5) {$t_{5,2}$};

\node(t61) at (1,-6) {$t_{6,1}$};

\draw[->] (0.4,-0.4) -- (t11);
\node(o) at (0.2, -0.2) {$0$};
\node(u1) at (7,-1) {$u$};\node(u2) at (6,-2) {$u$};\node(u3) at (5,-3) {$u$};\node(u4) at (3,-5) {$u$};\node(u5) at (2,-6) {$u$};
\node(u6) at (1,-7) {$u$};

\draw (t16)--(u1); \draw (t25)--(u2); \draw (t34)--(u3); \draw (t43)--(u4); \draw (t52)--(u5); \draw (t61)--(u6);

\draw[->] (t11) -- (t12);
\draw[->] (t12) -- (t13);
\draw[->] (t13) -- (t14);
\draw[->] (t14) -- (t15);
\draw[->] (t15) -- (t16);

\draw[->] (t21) -- (t22);
\draw[->] (t22) -- (t23);
\draw[->] (t23) -- (t24);
\draw[->] (t24) -- (t25);

\draw[->] (t31) -- (t32);
\draw[->] (t32) -- (t33);
\draw[->] (t33) -- (t34);

\draw[->] (t41) -- (t42);
\draw[->] (t42) -- (t43);

\draw[->] (t51) -- (t52);

\draw[->] (t11) -- (t21);
\draw[->] (t21) -- (t31);
\draw[->] (t31) -- (t41);
\draw[->] (t41) -- (t51);
\draw[->] (t51) -- (t61);

\draw[->] (t12) -- (t22);
\draw[->] (t22) -- (t32);
\draw[->] (t32) -- (t42);
\draw[->] (t42) -- (t52);

\draw[->] (t13) -- (t23);
\draw[->] (t23) -- (t33);
\draw[->] (t33) -- (t43);

\draw[->] (t14) -- (t24);
\draw[->] (t24) -- (t34);

\draw[->] (t15) -- (t25);
\end{tikzpicture}
\mapsto
\begin{tikzpicture}[baseline={([yshift=-.5ex]current bounding box.center)}, scale=0.8, every node/.style={transform shape}]
\node (t11) at (1,-1) {\red{$x_{1}$}};
\node (t12) at (2,-1) {$z'_{5,1}$};
\node (t13) at (3,-1) {$z'_{4,1}$};
\node (t14) at (4,-1) {$z'_{3,1}$};
\node(t15) at (5,-1) {$z'_{2,1}$};
\node(t16) at (6,-1) {$z'_{1,1}$};

\node (t21) at (1,-2) {$z_{5,1}$};
\node (t22) at (2,-2) {\red{$x_{2}$}};
\node (t23) at (3,-2) {$z'_{5,2}$};
\node (t24) at (4,-2) {$z'_{4,2}$};
\node(t25) at (5,-2) {$z'_{3,2}$};

\node (t31) at (1,-3) {$z_{4,1}$};
\node (t32) at (2,-3) {$z_{5,2}$};
\node(t33) at (3,-3) {\red{$x_{3}$}};
\node(t34) at (4,-3) {$z'_{5,3}$};

\node (t41) at (1,-4) {$z_{3,1}$};
\node (t42) at (2,-4) {$z_{4,2}$};
\node (t43) at (3,-4) {$z_{5,3}$};

\node (t51) at (1,-5) {$z_{2,1}$};
\node(t52) at (2,-5) {$z_{3,2}$};

\node (t61) at (1,-6) {$z_{1,1}$};

\draw (0.4,-0.4) -- (t11);
\node(o) at (0.2, -0.2) {$u$};
\node(o1) at (7,-1) {$0$};\node(o2) at (6,-2) {$0$};\node(o3) at (5,-3) {$0$};\node(o4) at (3,-5) {$0$};\node(o5) at (2,-6) {$0$};
\node(o6) at (1,-7) {$0$};

\draw[<-]  (t16)--(o1);  \draw[<-]  (t25)--(o2); \draw[<-]  (t34)--(o3);  \draw[<-]  (t43)--(o4); \draw[<-]  (t52)--(o5); \draw[<-]  (t61)--(o6);
\draw[dashed] (t11)--(t22)--(t33)--(5,-5);

\draw[<-] (t11) -- (t12);
\draw[<-] (t12) -- (t13);
\draw[<-] (t13) -- (t14);
\draw[<-] (t14) -- (t15);
\draw[<-] (t15) -- (t16);

\draw[<-] (t21) -- (t22);
\draw[<-] (t22) -- (t23);
\draw[<-] (t23) -- (t24);
\draw[<-] (t24) -- (t25);

\draw[<-] (t31) -- (t32);
\draw[<-] (t32) -- (t33);
\draw[<-] (t33) -- (t34);

\draw[<-] (t41) -- (t42);
\draw[<-] (t42) -- (t43);

\draw[<-] (t51) -- (t52);

\draw[<-] (t11) -- (t21);
\draw[<-] (t21) -- (t31);
\draw[<-] (t31) -- (t41);
\draw[<-] (t41) -- (t51);
\draw[<-] (t51) -- (t61);

\draw[<-] (t12) -- (t22);
\draw[<-] (t22) -- (t32);
\draw[<-] (t32) -- (t42);
\draw[<-] (t42) -- (t52);

\draw[<-] (t13) -- (t23);
\draw[<-] (t23) -- (t33);
\draw[<-] (t33) -- (t43);

\draw[<-] (t14) -- (t24);
\draw[<-] (t24) -- (t34);

\draw[<-] (t15) -- (t25);
\end{tikzpicture}
\end{equation*}
In the figure, $t_a\to t_b$ denotes the terms $t_a/t_b$ in the exponent of the integrands
of \eqref{flat_laplace_integral} and same for $z_a\to z_b$ and $z_a'\to z_b'$ after the change of variables. When an entry is zero, e.g. $0\to t$, this is to be interpreted as $1/t$
(here $0$ plays the role of the ``wall'' in a symplectic Gelfand-Tsetlin pattern).
Finally, the dashed links: $t^i_{j}-u$ , $x_1-u$, in the above picture
denote multiplication
terms $ut^i_j$ and $ux_1$ in the exponent of the integrand. In the right-hand side of the above picture, 
one recognises two (geometric liftings) of
symplectic Gelfand-Tsetlin patterns and thus the integration over each one gives rise to the orthogonal Whittaker functions 
in \eqref{flat_Laplace}. Let us remark on the seemingly strange fact that $SO_{2n+1}$-Whittaker functions appear as integrals of {\it symplectic} Gelfand-Tsetlin patterns
associated to $Sp_{2n}$. The explanation for this is a duality between $SO_{2n+1}$ and $Sp_{2n}$ and the fact that, via the so-called Casselman-Shalika formula \cite{CS80},
 Whittaker
functions associated to a group are described as characters of a finite dimensional representation of the corresponding dual group. 
\vskip 2mm
Having formula \eqref{flat_Laplace}, one can use iteratively the Plancherel Theorem \ref{thm:Plancerel} 
(accompanied by the necessary estimates) as well as an identity (the Ishii-Stade identity) 
for integrals that involve products of an $SO_{2n+1}$and a $GL_n$ Whittaker function \cite{IS13}
and arrive at the
contour integral formula
\begin{equation}
\label{eq:P2LcontourInt}
\begin{split}
\bbE\Big[e^{- u Z^{\rm flat}_{2n}}\Big] 
= &\frac{u^{\sum_{k=1}^{n} (\alpha_k+\beta_k)}}
{\Gamma^{\rm flat}_{{\alpha},{\beta}}}
\int_{(\epsilon + \iota \R)^n} 
s_n({\rho}) \dd {\rho}
\int_{(\delta + \iota \R)^n}
s_n({\lambda}) \dd {\lambda} \,\,
u^{-\sum_{i=1}^n (\lambda_i+\rho_i)}  \\
&\times \frac{\prod_{1\leq i,j\leq n}
\Gamma(\lambda_i + \rho_j)
\Gamma(\lambda_i + \alpha_j)
\Gamma(\lambda_i - \alpha_j)
\Gamma(\rho_i + \beta_j)
\Gamma(\rho_i - \beta_j)}
{\prod_{1\leq i<j\leq n}
\Gamma(\lambda_i+\lambda_j)
\Gamma(\rho_i + \rho_j) }.
\end{split}
\end{equation}
One notices again the cross term $\prod_{i,j}\Gamma(\lambda_i+\rho_j)$ which bears resemblance to the cross term
in the formula for the joint Laplace transform \eqref{eq:2p_intro}. 

For completeness, we close this section with the statement of the Ishii-Stade identity \cite{IS13}.

\begin{theorem}
\label{thm:IshiiStade}
Let $\alpha,\beta, \in\C^n$, where $\Re(\alpha_i) > |\Re(\beta_j)|$ for all $i,j$. Then
\begin{equation}
\label{eq:IshiiStade}
\int_{\R_{+}^n}
\Psi_{-\alpha}^{\mathfrak{gl}_n}(x)
\Psi_{\beta}^{\mathfrak{so}_{2n+1}}(x)
\prod_{i=1}^n \frac{\dd x_i}{x_i}
= \frac{\prod_{1\leq i,j\leq n}
\Gamma(\alpha_i + \beta_j)
\Gamma(\alpha_i - \beta_j)}
{\prod_{1\leq i<j\leq n} \Gamma(\alpha_i+\alpha_j)} \, .
\end{equation}
\end{theorem}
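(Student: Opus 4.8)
The plan is to prove \eqref{eq:IshiiStade} by induction on $n$, using the recursive (branching) structure of both families of Whittaker functions --- very much in the spirit of Stade's Mellin--Barnes derivation of the Bump--Stade identity \eqref{bumpstade}.

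For the base case $n=1$ one has $\Psi^{\mathfrak{gl}_1}_{-\alpha_1}(x)=x^{\alpha_1}$, while unwinding \eqref{eq:soWhittakerFn} for a depth-two $\boldsymbol B\boldsymbol C$-pattern (a single interior variable, the wall set to $1$) expresses $\Psi^{\mathfrak{so}_3}_{\beta_1}$ as a modified Bessel function; the left-hand side of \eqref{eq:IshiiStade} is then a classical Mellin transform of that Bessel function, and a short computation returns $\Gamma(\alpha_1+\beta_1)\Gamma(\alpha_1-\beta_1)$, which is the right-hand side (the denominator there being an empty product). For the inductive step, observe that the right-hand side of \eqref{eq:IshiiStade} factors multiplicatively as
\begin{equation*}
\frac{\prod_{i,j=1}^{n}\Gamma(\alpha_i+\beta_j)\Gamma(\alpha_i-\beta_j)}{\prod_{1\le i<j\le n}\Gamma(\alpha_i+\alpha_j)}=\frac{\prod_{j=1}^{n}\Gamma(\alpha_n+\beta_j)\Gamma(\alpha_n-\beta_j)\;\prod_{i=1}^{n}\Gamma(\alpha_i+\beta_n)\Gamma(\alpha_i-\beta_n)}{\Gamma(\alpha_n+\beta_n)\Gamma(\alpha_n-\beta_n)\;\prod_{i=1}^{n-1}\Gamma(\alpha_i+\alpha_n)}\times\Big(\text{right-hand side for }n-1\Big),
\end{equation*}
so it suffices to split off exactly this ``layer factor'' times the $(n-1)$-variable Ishii--Stade integral. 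I would do so by feeding the Givental representations \eqref{int-giv} and \eqref{eq:soWhittakerFn} into the left-hand side, peeling the outermost $\boldsymbol B\boldsymbol C$-layer(s) of $\Psi^{\mathfrak{so}_{2n+1}}_\beta$ --- this reduces it, via integration over one or two auxiliary rows of the pattern, to $\Psi^{\mathfrak{so}_{2n-1}}$ on a smaller shape while exposing the remaining $\beta_n$-dependence as an explicit kernel --- together with the $\mathfrak{gl}_n\to\mathfrak{gl}_{n-1}$ recursion for $\Psi^{\mathfrak{gl}_n}_{-\alpha}$; after interchanging integrations and performing the freed-up $x$-integration, the latter collapses into a small number of one- and two-dimensional Gamma integrals (Barnes first- and second-lemma evaluations, i.e.\ degenerate cases of \eqref{bumpstade}), whose product is precisely the layer factor above, leaving the $(n-1)$-variable integral to which the induction hypothesis applies. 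An equivalent route that sidesteps the $\boldsymbol B\boldsymbol C$-combinatorics is to replace both Givental integrals by their iterated Mellin--Barnes contour representations, let the $\R_+^n$-integral collapse one family of contours against the other, and evaluate the remaining finite iterated contour integral of products of Gamma functions by repeated Barnes lemmas --- this is essentially Stade's original method.

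The hard part is the inductive step, and in particular producing \emph{exactly} the layer factor above; two features make the bookkeeping delicate. First, one must keep careful track of the wall contributions of the $\boldsymbol B\boldsymbol C$-pattern (equivalently, of the reflection $\beta\mapsto-\beta$ built into $\Psi^{\mathfrak{so}_{2n+1}}$) so that each spectral parameter appears through the \emph{symmetric} pair $\Gamma(\alpha_i+\beta_j)\Gamma(\alpha_i-\beta_j)$, rather than a single Gamma factor as in the $GL\times GL$ situation, and so that the denominator $\prod_{i<j}\Gamma(\alpha_i+\alpha_j)$ emerges with the correct indices. Second, one must justify all the contour shifts and interchanges of integration, which I would do by first working along purely imaginary $\alpha$ --- where the $\mathfrak{gl}_n$-Plancherel isometry of Theorem \ref{thm:Plancerel} and the Gamma-function asymptotics quoted after \eqref{laplace_integral} give clean $L^2$ bounds --- and then continuing analytically to the stated domain $\Re\alpha_i>|\Re\beta_j|$. (A tempting shortcut --- reading the left-hand side of \eqref{eq:IshiiStade} as the $\mathfrak{gl}_n$-Whittaker transform of $\Psi^{\mathfrak{so}_{2n+1}}_\beta$ evaluated, via \eqref{eq:Whit_inv2}, at $\lambda=-\alpha$ --- is essentially circular, since that transform is exactly what the identity computes.)
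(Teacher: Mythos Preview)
The paper does not give its own proof of Theorem~\ref{thm:IshiiStade}: it is quoted from Ishii--Stade \cite{IS13}, and immediately after the statement the authors explicitly remark that, unlike the Bump--Stade identity \eqref{bumpstade}, no combinatorial (i.e.\ $\grsk$-based) proof is currently known and that obtaining one would be of interest. So there is no ``paper's proof'' to compare against; what you have written is a plan in the spirit of the original Ishii--Stade argument.

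Your outline is the right shape for that approach --- induction on $n$, peeling a $\boldsymbol B\boldsymbol C$-layer from $\Psi^{\mathfrak{so}_{2n+1}}$ together with the $\mathfrak{gl}_n\to\mathfrak{gl}_{n-1}$ recursion, and closing each step with Barnes-type evaluations --- and your factorisation of the right-hand side into a ``layer factor'' times the rank-$(n-1)$ expression is correct. But as written it is a plan, not a proof: the substantive content lies precisely in the step you flag as ``the hard part'', namely showing that the freed-up integrals collapse to \emph{exactly} that layer factor. In the $\mathfrak{so}_{2n+1}$ case this is genuinely more intricate than for Bump--Stade, because peeling one $\boldsymbol B\boldsymbol C$-layer drops from $\mathfrak{so}_{2n+1}$ to $\mathfrak{so}_{2n-1}$ through \emph{two} rows of the half-triangular pattern (one odd, one even), and the wall terms interact with the $\beta\mapsto -\beta$ symmetry in a way that has to be tracked through several Barnes evaluations before the symmetric pairs $\Gamma(\alpha_i+\beta_j)\Gamma(\alpha_i-\beta_j)$ and the denominator $\prod_{i<j}\Gamma(\alpha_i+\alpha_j)$ assemble correctly. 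Until that computation is actually carried out --- or the iterated Mellin--Barnes route you mention is executed with the contour bookkeeping --- the argument is not complete. If your aim is a proof that goes beyond what the paper offers, the interesting (and open, per the paper's remark) alternative would be to find a bijective/$\grsk$ derivation analogous to the one that yields \eqref{bumpstade}.
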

The restriction on the parameters $\alpha, \beta$ in the above theorem is important as otherwise the integral diverges. 
Contrary to the Bump-Stade identity \eqref{bumpstade}, 
currently, there does not exist a combinatorial proof of the Ishii-Stade identity and it would be interesting if such a proof could be obtained.
 
\section{Dynamics on Gelfand-Tsetlin patterns and interacting particle systems}\label{sec:dynamics}
The purpose of this section is to explain how three fundamental algebraic concepts (Cauchy identity, Pieri rule, Branching rule)
form the main ingredients to build stochastic integrable dynamics. 
These concepts are closely related to the underlying structure of the special functions (Schur, Macdonald, Whittaker)
functions that we saw in Section \ref{basics} and their ramifications.
The stochastic integrability, here, amounts to
the fact that the constructed dynamics are Markovian (i.e. the future evolution does not depend on the past but only on the present),
and their transition probability laws as well as the fixed time marginals can be explicitly computed. Most prominent such examples are
one dimensional particle dynamics such as TASEP and several variations of it such as push-TASEP, q-TASEP etc.  Reference \cite{BP16b}
contains a large collection of such dynamics.  

The dynamic version of $\rsk$ and the evolution of its output tableaux provided the first motivation for a construction of integrable dynamics
of the nature that we will discuss. Transcribing $\rsk$ on Gelfand-Tsetlin patterns leads to the very important  coupling between particle
dynamics, like the ones mentioned above, and dynamics related to random matrix theory. Typically, the particle dynamics (TASEP, push-TASEP,
q-TASEP etc.) occupy one of the two diagonals of the Gelfand-Tsetlin pattern while its bottom row is related to random matrix dynamics, when 
the randomness is chosen in an integrable way (like geometric distribution, exponential, log-gamma). The right / left bottom corner
of the pattern will, then, have a dual nature: one as the largest / smallest particle of TASEP, push-TASEP, q-TASEP etc. and, at the same time,
the largest / smallest particle in the random matrix dynamics. This explains to some extent the link between KPZ models and random matrix theory
and justifies the interest for a framework that will produce integrable dynamics on Gelfand-Tsetlin patterns.
\subsection{Some motivating examples.}
We have seen that $\rsk$ (and geometric $\rsk$) can be considered as an evolution on Gelfand-Tsetlin patterns $\big(\sfZ(n)\big)_{n\geq 0}$, starting from
a given initial Gelfand-Tsetlin pattern $\sfZ(0)$ at time zero, via the dynamics of row insertion
\begin{align*}
\sfZ(n-1)\xrightarrow[]{w^n} \sfZ(n),
 \end{align*}
 as this was described in Section \ref{subsec:RSK} and summarised in Proposition \ref{RSK_maxplus} and Theorem \ref{thm:grsk}.
 If the sequence of words $w^n$ are random, then this evolution is a stochastic process, which is readily seen to be Markovian. We will see that this Markovian evolution can be described via an interacting particle system. Let us start with a simple example of a two column 
 input matrix $\sfW$, whose entries are either $0$ or $1$ and we assume that no row contains two $1$'s. For example,
 \begin{align}\label{exampleW}
 \sfW^{\,\sfT}=\left(
 \begin{array}{ccccc}
 0&1&1&0&0\,\cdots\\
 1&0&0&0&1\,\cdots
 \end{array}
 \right).
 \end{align}
We recall the convention \eqref{word} by which columns of the array correspond to letters and rows to words.
 In this case the first column will be associated to letter `$1$'
and the second column to letter `$2$'. So in the case of example \eqref{exampleW} the array $\sfW$ is associated to the sequence of letters $1221$, 
which can also be viewed as a concatenation of the words $(1)(2)(2)(\emptyset)(1)$.
 Let us simulate the dynamics of row insertion the way these were described in Section \ref{sec:RS}:
 Suppose we have inserted the first $n$ rows, $w^1,w^2,...,w^n$ of $\sfW$ and this insertion has resulted in a Gelfand-Tsetlin pattern 
 \begin{align*}
 \sfZ(n)=
 \begin{array}{ccccccc}
&&&z^1_{1}(n)&&&\\
&&&&&&\\
&&z^2_{2}(n)&&z^2_{1}(n)&&
\end{array},
 \end{align*}
 corresponding to a Young tableau
 \begin{align*}
 {\begin{tikzpicture}[scale=.6]
\draw (0.1,0.1) -- (10.5,0.1)--(10.5,0.9)--(0.1,0.9)--(0.1,0.1) ;
 \node at (4.9,1.4) {$z^1_1$}; \node at (10.9,1.4) {$z^2_1$};
\node at (0.4,0.5) {$1$}; \draw[dotted, thick] (0.7,0.5)--(4.2,0.5);    \node at (4.5,0.5) {$1$}; \draw (4.8,0.1)--(4.8,0.9);
\node at (5.2,0.5) {$2$}; \draw[dotted, thick] (5.5,0.5)--(9.7,0.5); \node at (10.1,0.5) {$2$};
\draw(0.1,0.1)--(0.1,-0.7)--(3.7, -0.7)--(3.7,0.1);
\node at (0.4,-0.35) {$2$}; \draw[dotted, thick] (0.7,-0.35)--(3.1,-0.35); \node at (3.4,-0.35) {$2$};
\node at(4.2,-0.7) {$z^2_2$};
\end{tikzpicture}}
 \end{align*}
 where in the figure we recall that $z^1_1$ is the number of $1$'s in the first row, $z^2_1$ is the 
 total number of $1$'s and $2$'s in the first row and $z^2_2$ is the number of $2$'s in the
 second row.
 The evolution of $\sfZ(n)$ to $\sfZ(n+1)$ can be described as follows:
 \begin{itemize}
 \item if $w^{n+1}=(1,0)$, meaning that a `$1$' appears, then $z^1_1(n+1)=z^1_1(n)+1$, i.e. the number of $1$'s in the first row will increase by one, and 
 \begin{itemize}
 \item if $z^1_1(n)=z^2_1(n)$ (i.e. there are no $2$'s in the first row of the corresponding Young tableau), then $z^2_1(n+1)=z^2_1(n)+1$ and $z^2_2(n+1)=z^2_2(n)$, 
 \item if $z^1_1(n)<z^2_1(n)$ (i.e. there are $2$'s in the first row of the corresonding Young tableau), then $z^2_1(n+1)=z^2_1(n)$ and $z^2_2(n+1)=z^2_2(n)+1$. i.e. 
 a $2$ will be bumped from the first row and inserted into the second, thus increasing its length by one.
 \end{itemize}
 \item  if $w^{n+1}=(0,1)$, meaning that a `$2$' appears, 
 then $z^2_1(n+1)=z^2_1(n)+1$, which means that the inserted `$2$' will be appended at the end of the
 first row, thus increasing its length by one, while the lengths
  $z^1_1, z^2_2$ will remain unchanged, that is, $z^1_1(n+1)=z^1_1(n)$ and $z^2_2(n+1)=z^2_2(n)$.  
 \item if $w^{n+1}=(0,0)$, then nothing changes.
 \end{itemize}
 This type of dynamics can be immediately generalised to the general $\rsk$ framework where instead of two letters $\{1,2\}$, we have $N$ letters
 $\{1,2,...,N\}$, i.e. $\sfW$ is an $n\times N$ matrix,
  and the $\GT$ pattern consists of an array $\big(z^i_j\big)_{1\leq j\leq i \leq N}$ (we assume here that $n\geq N$). 
  The dynamics generated by $\rsk$ will be as follows
 (to have a more clear picture, keep in mind the simpler case of $n=2$ worked out earlier and the bumping process of the row insertion):
 \begin{itemize}
 \item particles $\big\{z^i_1\colon i=1,...,N\big\}$ are the only particles that jump on their own volition and this happens if and only
  if $w^{n+1}$ has a $1$ in the $i$-th coordinate, meaning that the letter `$i$'
   appeared in the $(n+1)$-th insertion. In this case $z^i_1$ jumps to the right by one, meaning
  that the total length of letters up to $i$ will increase by one, while $z^1_1,z^2_1,...,z^{\,i-1}_1$ will remain the same. The jump at $z^i_1$ will propagate at the lower entries
  of the Gelfand-Tsetlin pattern in a way which can be described inductively as follows:
  \item if for some $1\leq j\leq i\leq N$ particle $z^i_j$ jumps by one, i.e. $z^i_j(n+1)=z^i_j(n)+1$, then it will generate a jump of exactly one of its neighbours
   $z^{i+1}_j$ or $z^{i+1}_{j+1}$, as follows
  \begin{itemize}
  \item if $z^i_j(n)=z^{i+1}_j(n)$, then $z^{i+1}_j(n+1)=z^{i+1}_j(n)+1$, and 
  \item if  $z^i_j(n) < z^{i+1}_j(n)$, then $z^{i+1}_{j+1}(n+1)=z^{i+1}_{j+1}(n)+1$
  \end{itemize}
  \item this cascading procedure continues until we reach the bottom of the $\GT$ pattern. 
 \end{itemize}
 \vskip 2mm
 These dynamics have a number of interesting properties, as we will see below. Even though it is clear that $\big(\sfZ(n)\big)_{n\geq 0}$ is Markovian,
 it will turn out that in some {\it particular} situations also the bottom row $\big(z^N(n)\big)_{n\geq 0}$ is Markovian. This is entirely not obvious and 
 it is even more surprising 
 that the emerging Markov process turns out to be related to certain processes {\it ``conditioned not to meet''} and to processes
 related to Random Matrices, i.e. {\bf Dyson's Brownian Motion},  which describes the evolution of eigenvalues of random matrix ensembles whose entries are independent 
 Brownian motions up to the symmetries of the matrix ensemble (e.g hermitian or orthogonal).
 The process $\big\{z^i_i(n)\colon i=1,...,N\big\}_{n\geq 0}$ on the right diagonal of the pattern is also related to popular interacting particle systems, in particular, the
 process known as {\bf Push-TASEP} : in this process a particle attempts to jump but if it is blocked by other particles ahead of it, then it pushes all the 
 block of consecutive particles, lying ahead of it, to the right by one.  
We will explain these in the next section but 
 let us first look at a number of variations of the above dynamics:
 \vskip 2mm
 {\bf Poissonian-RSK Dynamics.} We can think of the input, $\{0,1\}$-matrix $\sfW$ in the above example as being continuous in the vertical direction
 with $1$'s appearing on its $i^{th}$ row as a Poisson point processes with rate $x_i$, independent of the process of $1$'s on the other rows. Each time a $1$ appears in the $i$-th row, a letter `$i$'
  is row inserted in the $\GT$ pattern and induces the continuous version of the discrete dynamics which were previously described.
 
 \vskip 2mm
 {\bf $q$-deformation of $\rsk$.} A $q$-deformation comes from perturbing algebraic quantities using a parameter $q\in(0,1)$ in such a way
 that we obtain a meaningful interpolation when varying $q$.
 We will describe here a $q$-deformation of $\rsk$ introduced by \cite{OP13}. Although at first this deformation might seem arbitrary, it is motivated by the structure of 
 special functions called $q$-{\it Whittaker} (i.e. Macdonald polynomials with parameter $t=0$, see Section \ref{basics})
 There also exist other deformations coming from the more general family of {\it Macdonald polynomials} (which generalize Schur and $q$-Whittaker functions)  \cite{BC14, BP16b}
 and we will see these in Section \ref{sec:Cauchy}.
 \vskip 2mm
 The O'Connell-Pei \cite{OP13} deformation is as follows:
 \begin{itemize}
 \item particles $\big\{z^i_1(t)\colon i=1,...,N\big\}_{t\geq 0}$ are the only particles that jump of their own volition. Jumps are of size one to the right and
 happen independently according to Poisson processes of rates $x_i$, respectively. 
 \item if a particle $z^i_j$, $1\leq j\leq i\leq N$, jumps (to the right by one), then, with probability
 \begin{align*}
 \sfR_j(z^i;z^{i+1}):=q^{z^{i+1}_j-z^i_j} \,\,\frac{1-q^{z^i_{j-1}-z^{i+1}_j}}{1- q^{z^i_{j-1}-z^i_j}},
 \end{align*}
 it also pushes particle $z^{i+1}_j$ to the right by one. 
 While with probability $\sfL_j(z^i;z^{i+1}):=1-  \sfR_j(z^i;z^{i+1})$ it pulls particle  $z^{i+1}_{j+1}$ to the right by one.
 \end{itemize}  
 We remark that:
 \begin{itemize}
\item If $z^{i}_j=z^{i+1}_j$ and particle $z^i_j$ jumps then $\sfR_j(z^i;z^{i+1})=1$, which means that necessarily $z^{i+1}_j$ is pushed to the right and
the interlacing is respected.

\item In the limit $q\to 0$ we have that $\sfR_j(z^i;z^{i+1})$ converges to $\ind_{z^i_j=z^{i+1}_j}$ and, thus, we have the reduction to the $\rsk$ dynamics.
\item If we make the choice $q=e^{-\epsilon}$ and change variables $z^i_j:=\epsilon^{-2} \tau-(i+1-2j)\epsilon^{-1}\log \epsilon +\epsilon^{-1}\zeta^i_j$
(this is motivated by the scaling \eqref{Whitt-scale}), for
$1\leq j\leq i\leq N$, we see that
\begin{align*}
\sfR_j(z^i;z^{i+1})\approx \epsilon e^{\zeta^i_j-\zeta^{i+1}_j}.
\end{align*}
If we also speed up time as $t:=\epsilon^{-2} \tau$ and change the rates of the Poisson processes to $x_i:=\epsilon \nu_i$,
 then the dynamics of the continuum Gelfand-Tsetlin pattern $(\zeta^i_j(t)\colon 1\leq j\leq i\leq N)$ are described by the system of coupled diffusions
\begin{equation}\label{brownianRSK}
{ \begin{split}
  \dd \zeta^1_1= \dd B^{(1)},\qquad \dd \gz^i_1(t)=\dd B^{(i)}+ \Big(\nu_i+e^{\gz^{i-1}_1-\gz^{i}_1}\Big)\,\dd \tau &,\qquad i=2,...,N ,\\
 &\\
  \dd \gz^{i}_j = \dd \gz^{i-1}_j + \Big(e^{\gz^{i}_{j+1}-\gz^{i}_j} -  e^{\gz^{i}_{j}-\gz^{i-1}_{j-1}}\Big)
  \,\dd\tau &, \quad 1<j\leq i\leq N,
 \end{split}
 }
 \end{equation}
  where we make the convention that terms which contain variables, which don't belong to the $\GT$ pattern, are excluded.
  This system of interacting diffusions was intoduced by O'Connell \cite{O12}. The analogue of Schensted's  theorem \ref{schensted} in this setting is
   \begin{align*}
 \gz^N_1(t)=\int\cdots\int_{0<t_1<\cdots<t_N=t} \exp\Big(\sum_{i=1}^N \big( B^{(i)}(t_i) -  B^{(i)}(t_{i-1}) \big)\Big)\dd t_1\cdots\dd t_{N-1},
 \end{align*} 
 with the right-hand side called the {\it Brownian directed polymer} or {\it O'Connell-Yor polymer} as it was
  originally introduced by O'Connell and Yor \cite{OY01}. 
 The role of the noise is played here by $N$ independent Brownian motions and the polymer paths are piecewise constant paths with (upwards) 
 jumps at times $t_1,...,t_{N-1}$, that start at level $1$ and time $0$ and end at level $N$ and time $t$. 
\end{itemize}
\subsection{The Pitman-Rogers theorem}\label{sec:PRthm}
Let us now come back to the question of whether the bottom row in the above variations of $\rsk$ dynamics evolves as a 
Markov process with respect to its own filtration (meaning the information generated by the history of the process).
 Since the whole Gelfand-Tsetlin pattern $\sfZ(t)$
 (we think of time $t$ either discrete or continuous) evolves as a Markov process,
its bottom row can be regarded as a function of $\sfZ(t)$, which is nothing other than the projection.  We are, then, led to
the following general and fundamental question:
\vskip 3mm
  {\it When is a function of a Markov process Markov itself, with respect to its own 
filtration ?}
\vskip 3mm
A criterion for this has been provided by Pitman and Rogers \cite{RP81}.

\begin{theorem}[Pitman-Rogers]\label{MarkovfunctionsProp}
Consider a discrete time Markov process $\sfZ(\cdot)$ on a measurable space  $(\mathcal{Z},\mu)$ with transition probability kernel
$\mathbf{\Pi}$ and 
a measurable function $\Phi:\cZ\to\cX$, with $\mathcal{X}$ a measurable  space. 
Assume that there exists a kernel $\mathbf P(\cdot,\cdot)\colon \mathcal{X}\times \mathcal{X} \to \R $ such that for almost every 
$x\in  \mathcal{X}$, $\mathbf P(x,\cdot)$ is a probability measure
 and a kernel $\mathbf K(\cdot,\cdot)\colon \mathcal{X}\times \mathcal{Z}\to\R$ satisfying:
\begin{enumerate}
\item[(i)] for all $x\in \cX$, $\bK(x,\Phi^{-1}(x)) = 1$,
\item[(ii)] the inter-twinning relation $\bK \bPi = \mathbf{P} \bK$ holds.
\end{enumerate}
If, for arbitrary $x\in \cX$, the initial distribution of the Markov process $\sfZ(\cdot)$ is 
 $\bK(x,\cdot) / \int_\cZ \bK(x,z) \mu(\dd z)$, then it holds that
\begin{enumerate}
\item[(i)] The process $\sfX(t)=\Phi(\sfZ(t))$ is Markov with respect to its own filtration $\cX_t:=\sigma\{\sfX_s\colon s<t\}$ with
transition probability kernel $\mathbf{P}$ and initial condition $\sfX(0)=x$,
\item[(ii)] For all $x\in \cX$
 and all bounded Borel functions $f$ on $\cZ$, $$\bbE\big[f(\sfZ(t)) \,\big|\, \sfX(s), \, s<t ,\, 
\sfX(t)=x\big] = (\bK f)(x).$$
\end{enumerate}
\end{theorem}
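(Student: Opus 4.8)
\textbf{Proof proposal for the Pitman--Rogers criterion (Theorem~\ref{MarkovfunctionsProp}).}
The plan is to verify directly, by induction on time, that the $\bK$-weighted initial law propagates correctly and that the conditional law of $\sfZ(t)$ given the past of $\sfX=\Phi(\sfZ)$ is exactly $\bK(\sfX(t),\cdot)$ (suitably normalised). First I would set up notation: write $\nu_x(\dd z):=\bK(x,\dd z)/\int_\cZ \bK(x,z)\,\mu(\dd z)$ for the normalised initial law, and observe that condition (i), $\bK(x,\Phi^{-1}(x))=1$, forces $\nu_x$ to be supported on the fibre $\Phi^{-1}(x)$, so that $\sfX(0)=x$ almost surely; this is the base case. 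The core computation is the one-step propagation: starting from $\sfZ(t)\sim \bK(x,\cdot)$ (up to normalisation), one step of the chain gives $\bK(x,\cdot)\bPi$, and by the intertwining (ii) this equals $(\mathbf P\bK)(x,\cdot)=\int_\cX \mathbf P(x,\dd x')\,\bK(x',\cdot)$. Since each $\bK(x',\cdot)$ lives on $\Phi^{-1}(x')$ by (i), this is precisely the statement that $\sfX(t+1)=\Phi(\sfZ(t+1))$ has law $\mathbf P(x,\cdot)$ and that, conditionally on $\sfX(t+1)=x'$, the law of $\sfZ(t+1)$ is $\bK(x',\cdot)$ normalised.

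The delicate point is to upgrade this one-step identity to a statement about the \emph{full} filtration $\cX_t=\sigma\{\sfX_s:s\le t\}$, not merely the one-step conditional law. Here I would run the induction on the joint law: assume that for all bounded Borel $f$,
\[
\bbE\big[f(\sfZ(t))\,\big|\,\cX_t\big]=(\bK f)(\sfX(t))\Big/\!\int_\cZ\bK(\sfX(t),z)\,\mu(\dd z),
\]
and that $(\sfX(s))_{s\le t}$ is a Markov chain with kernel $\mathbf P$. Then compute $\bbE[g(\sfX(t+1))h(\sfZ(t))\,|\,\cX_t]$ for test functions $g,h$ by first conditioning on $\sfZ(t)$, using the Markov property of $\sfZ$ to get $\bPi(g\circ\Phi)(\sfZ(t))\cdot h(\sfZ(t))$-type terms, and then applying the induction hypothesis to replace the conditional law of $\sfZ(t)$ by $\bK(\sfX(t),\cdot)$. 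The intertwining relation $\bK\bPi=\mathbf P\bK$ is exactly what is needed to collapse $\int\bK(\sfX(t),\dd z)\bPi(z,\dd z')\,g(\Phi(z'))$ into $\int\mathbf P(\sfX(t),\dd x')\,g(x')$, which shows the increment $\sfX(t+1)$ depends on $\cX_t$ only through $\sfX(t)$ (Markov property, transition $\mathbf P$) and simultaneously re-establishes the conditional-law hypothesis at time $t+1$ via condition (i). Iterating gives the full finite-dimensional statement, and conclusion (ii) of the theorem is just the induction hypothesis read at the final time.

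I expect the main obstacle to be purely bookkeeping rather than conceptual: carefully tracking the normalising constants $\int_\cZ\bK(x,z)\,\mu(\dd z)$ through the intertwining (one should check they transform consistently, i.e.\ that $\int\mathbf P(x,\dd x')\int\bK(x',z)\mu(\dd z)=\int\bK(x,z)\mu(\dd z)\cdot$const, or simply absorb them by working with the possibly-unnormalised measures and normalising only at the end), and making sure the measurability/regular-conditional-probability issues on the abstract spaces $(\cZ,\mu)$, $\cX$ do not cause trouble --- for this I would assume the spaces are nice enough (standard Borel) that regular conditional distributions exist, as is implicit in the statement. A clean way to package the whole argument is to prove by induction the single identity, valid for all $n$ and all bounded Borel $F$ on $\cZ$ and $G$ on $\cX^{n}$,
\[
\bbE\big[F(\sfZ(n))\,G(\sfX(0),\dots,\sfX(n))\big]
=\int G(x_0,\dots,x_n)\,(\bK F)(x_n)\,\mathbf P(x_{n-1},\dd x_n)\cdots\mathbf P(x_0,\dd x_1)\,\delta_x(\dd x_0),
\]
which encodes both conclusions at once; the inductive step is one application of the Markov property of $\sfZ$ followed by one application of $\bK\bPi=\mathbf P\bK$.
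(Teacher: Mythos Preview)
The paper does not actually prove this theorem: it is stated as a criterion due to Rogers and Pitman \cite{RP81} and then applied, with no proof given in the text. Your inductive approach --- propagating the hypothesis that the conditional law of $\sfZ(t)$ given $\cX_t$ is $\bK(\sfX(t),\cdot)$, via one application of the Markov property of $\sfZ$ followed by one application of $\bK\bPi=\mathbf P\bK$ --- is exactly the standard proof, and the packaged identity you write at the end is the clean way to do it.

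One small simplification: your worry about the normalising constants is unnecessary here. Condition~(i), $\bK(x,\Phi^{-1}(x))=1$, already says that $\bK(x,\cdot)$ is a probability measure concentrated on the fibre $\Phi^{-1}(x)$, so the denominator $\int_\cZ \bK(x,z)\,\mu(\dd z)$ equals $1$ throughout and there is nothing to track. With that observation the bookkeeping collapses and the induction is entirely straightforward.
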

Let us remark that even though we stated the theorem for discrete time Markov processes, it is also valid for continuous times. In this
case we would need to check the intertwining for the continuous-time transition kernels $\mathbf P_t$ and $\mathbf \bPi_t$. Or it is enough
to check the intertwining for the infinitesimal generators of the processes, given by definition as
\begin{align*}
\mathcal L =\frac{\dd \bPi_t}{\dd t}\Big|_{t=0} \qquad \text{and}\qquad  L =\frac{\dd \mathbf P_t}{\dd t}\Big|_{t=0},
\end{align*}
in which case the intertwining may be written as
\begin{align}\label{continter}
\mathbf K \mathcal L= L  \mathbf K.
\end{align}
In the next sections we will describe how we can construct processes on Gelfand-Tsetlin patterns compatible with the Pitman-Rogers theorem.
    \subsection{The role of Pieri identity, Branching rule and Intertwining. }\label{sec:pieri}

One difficulty in applying the Pitman-Rogers theorem is that one is often given dynamics recorded via $\mathbf{\Pi}$ (for example the dynamics described above coming from $\rsk$,
 Poissonian-RSK, Brownian-RSK, $q$-RSK and corresponding degenerations) 
and is requested to solve the intertwining equation $\bK\mathbf \Pi = \mathbf P \bK$ to find both kernels $\bK$ and $\mathbf P$. 
Often, the solution to this problem is based on trial and error guess work, starting from low rank cases, i.e. 
particle systems on $\GT$ patterns of small depths: two, three etc., until a pattern is recognised. 
Nevertheless, in many cases this procedure can be guided by certain structures that underlie special functions of representation theoretic origins
(see Section \ref{basics}). 
Two such useful structures go by the names of the {\bf Pieri rule} and the {\bf Branching rule}. We will here explain their role through Schur functions and 
Poissonian-RSK dynamics.  
\vskip 2mm
{\bf Branching rule for Schur polynomials.} Schur polynomials $s_\gl(x)$ with
 parameters $x=(x_1,...,x_n)$ and shape variables
$\gl=(\gl_1,...,\gl_n)$  can be written as
\begin{align}\label{SchurBranch}
s_\gl(x)=\sumtwo{Z\colon  \text{$\GT$ pattern}}{ \text{with shape $\gl$  } }  \,\prod_{i=1}^n Q^{\,i}_{i-1}(z^i,z^{i-1}\,;\,x_i),
\quad \text{with}\quad Q^{\,i}_{i-1}(z^i,z^{i-1}\,;\,x_i):= x_i^{|z^i|-|z^{i-1}|}.
\end{align}
This is a rewriting of \eqref{Schur-gen-tableau}
and leads to the recursive structure 
\begin{align}\label{SchurBranch2}
s_\gl(x_1,...,x_n)=\sum_{\mu\colon \mu\prec \gl}  Q^{\,n}_{n-1}(\gl,\mu\,;\,x_n) \,s_\mu(x_1,...,x_{n-1})
\end{align}
where $\mu=(\mu_1,...,\mu_{n-1})$ is a partition and $\mu\prec\gl$ means that $\mu$ interlaces with $\gl$, that is
$\gl_1\geq\mu_1\geq \gl_2\geq \mu_2\geq\cdots \geq \mu_{n-1}\geq \gl_n$. We make the convention that, when $n=0$, then 
$s_\gl(x_1,...,x_n)=s_{\emptyset}(\emptyset):=1$.
Such a recursive structure is often found among
special functions arising in representation theory and algebraic combinatorics and it is known by the name {\it branching rule}.
 We refer to \cite{Bum04} for more on the representation theoretic background, but very briefly let us mention that
 if $G$ is a group and $H$ a subgroup, then branching rules
  describe how the irreducible representations of $G$ decompose to irreducibles when restricted to 
 the subgroup $H$. In the case of Schur functions,
 the branching rule shows how to decompose the irreducible representations of the group $GL(n)$ (the 
 {\it general linear group} of $n\times n$ invertible matrices) into irreducibles of $GL(n-1)$.
 \vskip 2mm
 An algebraically more common notation for \eqref{SchurBranch2} (which we will also use in later sections) is
 \begin{align}\label{SchurBranch3}
s_\gl(x_1,...,x_n)=\sum_{\mu\colon \mu\prec \gl}  s_{\gl/\mu}(x_n) \,s_\mu(x_1,...,x_{n-1})
\end{align}
where $\gl/\mu$ denotes a skew partition, defined from partitions $\gl=(\gl_1,\gl_2,...)$
 and $\mu=(\mu_1,\mu_2,...)$ as $\gl/\mu=(\gl_1-\mu_1,\gl_2-\mu_2,...)$. In the case that $\gl=(\gl_1,...,\gl_n)$ and $\mu=(\mu_1,...,\mu_{n-1})$ we use the
 convention that $\gl_n-\mu_n=\gl_n$. An example is now depicted below
\begin{equation*}
\lambda= \begin{tikzpicture}[baseline={([yshift=-.5ex]current bounding box.center)},vertex/.style={anchor=base,
    circle,fill=black!25,minimum size=18pt,inner sep=2pt}, scale=0.3]
\draw[thick] (0,6)--(0,14)--(14,14)--(14,12)--(10,12)--(10,10)--(6,10)--(6,8)--(4,8)--(4,6)--(0,6);
 \draw[thick, red] (0,8)--(0,14)--(12,14)--(12,12)--(7,12)--(7,10)--(2,10)--(2,8)--(0,8);
 \node at (4,12) {$\mu$};
 \path [fill=blue] (12.05,12) rectangle (14,14);
  \path [fill=blue] (7.05,10) rectangle (10,12);
   \path [fill=blue] (2.05,8) rectangle (6,10);
    \path [fill=blue] (0.05,6) rectangle (4,8);
\end{tikzpicture}
\end{equation*}
In this figure the blue shaded area is the skew partition $\gl/\mu$. A skew partition $\gl/\mu$ is called a {\bf horizontal strip} if
 it contains no two boxes on top of each other, as for example in the following figures
 \begin{equation}\label{horizontal}
\lambda= \begin{tikzpicture}[baseline={([yshift=-.5ex]current bounding box.center)},vertex/.style={anchor=base,
    circle,fill=black!25,minimum size=18pt,inner sep=2pt}, scale=0.3]
\draw[thick] (0,6)--(0,14)--(14,14)--(14,12)--(10,12)--(10,10)--(6,10)--(6,8)--(4,8)--(4,6)--(0,6);
 \draw[thick, red] (0,8)--(0,14)--(12,14)--(12,12)--(7,12)--(7,10)--(5,10)--(5,8)--(4,8)--(0,8);
 \node at (4,12) {$\mu$};
 \path [fill=blue] (12.05,12) rectangle (14,14);
  \path [fill=blue] (7.05,10) rectangle (10,12);
   \path [fill=blue] (0.05,6) rectangle (4,8);
    \path [fill=blue] (5.05,8) rectangle (6,10);
\end{tikzpicture}
\qquad\text{or}\qquad
\lambda= \begin{tikzpicture}[baseline={([yshift=-.5ex]current bounding box.center)},vertex/.style={anchor=base,
    circle,fill=black!25,minimum size=18pt,inner sep=2pt}, scale=0.3]
\draw[thick] (0,6)--(0,14)--(14,14)--(14,12)--(10,12)--(10,10)--(6,10)--(6,8)--(4,8)--(4,6)--(0,6);
 \draw[thick, red] (0,6)--(0,14)--(12,14)--(12,12)--(7,12)--(7,10)--(5,10)--(5,8)--(4,8)--(2,8)--(2,6)--(0,6);
 \node at (4,12) {$\mu$};
 \path [fill=blue] (12.05,12) rectangle (14,14);
  \path [fill=blue] (7.05,10) rectangle (10,12);
   \path [fill=blue] (2.05,6) rectangle (4,8);
    \path [fill=blue] (5.05,8) rectangle (6,10);
\end{tikzpicture}
\end{equation}
The summation in \eqref{SchurBranch3} is over skew partitions $\gl/\mu$ which are horizontal strips that correspond to the
first of the two pictures above. This is because $\gl/\mu$ will correspond to the part of the Young tableau which is filled with
letter $n$ and, by definition of a Young tableau, no two $n$'s can be on top of each other (recall that the entries of Young tableaux
are by definition strictly increasing along columns).
\vskip 2mm
Let us denote the kernel 
\begin{align}\label{SchurK0}
K(\gl,\sfZ):=  \prod_{i=1}^n Q^{\,i}_{i-1}(z^i,z^{i-1}\,;\,x_i)\, \ind_{z^n=\gl},
\end{align}
from which we readily have that 
\begin{align}\label{SchurK}
s_\gl(x)=\sum_{\sfZ\,\,\text{is $\GT$ pattern}} K(\lambda, \sfZ).
\end{align}
 As we will see, the kernel $K(\gl,\sfZ)$ will
play an important role in intertwining.
\vskip 2mm

{\bf The Pieri Rule for Schur polynomials.} Let us recall the complete homogeneous symmetric polynomial of degree $k$ in variables $x_1,...,x_n$ 
to be 
\begin{align*}
h_k(x_1,...,x_n):=\sum_{1\leq i_1\leq \cdots \leq i_k\leq n} x_{i_1} \cdots x_{i_k}.
\end{align*}
In the theory of Schur functions the Pieri rule is known as the identity
\begin{align*}
h_k s_\gl = \sum_{\nu \succ_k \lambda } s_\nu,
\end{align*}
where the notation $\nu\succ_k \gl$ means that $\nu/\gl$ is a {\it horizontal strip}, containing exactly $k$ boxes. 
We will restrict our attention on $k=1$, in which case $h_1(x_1,...,x_n)=\sum_{i=1}^n x_i$ and the Pieri rule may be written as
\begin{align}\label{Pieri1}
h_1 s_\gl = \sum_{\nu \succ_1 \lambda } s_\nu = \sum_{i=1}^n  s_{\lambda+{\be_i}} .
\end{align}
In fact, it is straightforward to check this special case of the Pieri rule via the representation of the Schur function as a sum over Gelfand-Tsetlin patterns \eqref{SchurBranch}. 
\vskip 2mm
{\bf Intertwining: the Poissonian-$\rsk$ case.}
Let us now see how the branching rule \eqref{SchurBranch2}, \eqref{SchurBranch3}, \eqref{SchurK} and the Pieri rule \eqref{Pieri1}
can be used to ``guess'' an intertwining in the case of Poissonian-$\rsk$ dynamics on a $\GT$ patterns.
We briefly recall the dynamics: on a $\GT$ pattern $\sfZ=(z^i_j\colon 1\leq j \leq i \leq n)$ only particles $z^i_1, i=1,2,...,n$ jump of their own volition and they do so at exponential
times at rate $x_i$, respectively. The jumps consist of one step to the right and trickle down the pattern as follows: if particle $z^i_j$ jumps and before the jump took place we had
$z^i_j=z^{i+1}_j$, then
  particle $z^{i+1}_j$ is pushed one step to the right along with $z^i_j$. 
  If  $z^i_j < z^{i+1}_j$, then particle $z^{i+1}_{j+1}$ is pulled one step to the right along with $z^i_j$.
   The jumps trickle down until they reach the bottom of the pattern.
 The Markov generator of this process can be easily written. Concretely, in the case  $n=2$, it may be written as
\begin{align*}
 \Pi(\sfZ,\tilde\sfZ) &= x_1\, \ind_{\{ (\tilde z^1,\tilde z^2)=(z^1+e_1,z^2+e_1)\}} \ind_{\{z^1_1=z^2_1\}} 
+ x_1 \, \ind_{\{ (\tilde z^1,\tilde z^2)=(z^1+e_1,z^2+e_2)\}} \ind_{\{z^1_1<z^2_1\}} \\
&\qquad+x_2 \ind_{\{ (\tilde z^1,\tilde z^2)=(z^1,z^2+e_1) \}} 
\end{align*}
where we have denoted the base vectors on $\R^2$ by $e_1:=(1,0)$ and $e_2:=(0,1)$.
Observe that
\begin{align}\label{sumpi}
\sum_{\tilde\sfZ }  \Pi(\sfZ,\tilde\sfZ) = x_1+x_2= h_1(x_1,x_2),
\end{align}
the complete, symmetric function of degree one, in variables $x_1,x_2$.
\vskip 2mm
Let us now write the Pieri rule  \eqref{Pieri1} by replacing the Schur function with the branching rule representation  \eqref{SchurK}  as
\begin{align*}
\sum_{i=1}^n s_{\gl+e_i}(x) \stackrel{ \eqref{SchurK}}{=}\sum_{i=1}^n \,\,\sum_{\tilde \sfZ } K(\lambda+e_i, \tilde \sfZ) 
\stackrel{\eqref{Pieri1}}{=} h_1 \sum_{\sfZ } K(\lambda, \sfZ)
\stackrel{ \eqref{sumpi}}{=}\sum_{\sfZ, \tilde \sfZ}  K(\lambda, \sfZ)  \Pi( \sfZ, \tilde \sfZ).
\end{align*}
An intertwining relation can be now guessed if we drop (seemingly rather arbitrarily)
 the summation over $\tilde \sfZ$ from the second and fourth term in the above equality and write
\begin{align*}
\sum_{i=1}^n  K(\lambda+e_i, \tilde \sfZ)  = \sum_{ \sfZ} K(\lambda, \sfZ)  \Pi( \sfZ, \tilde\sfZ),
\end{align*}
or, if we denote by $P$ the operator with $P(\lambda, \nu):= \sum_{i=1}^n \ind_{\nu=\lambda+e_i}$, then we can write this more concretely as
\begin{align}\label{guessinter}
P K = K  \Pi.
\end{align}
We have now arrived to a guess for an intertwining relation between the transition kernel $\Pi$ for the Poisson-RSK
 dynamics on the  Gelfand-Tsetlin pattern $\sfZ$ and a transition kernel for dynamics on its bottom row $\lambda$ .
 The intertwining kernel $K$ will be the kernel that appears in \eqref{SchurK0}, which 
 gives the combinatorial representation of Schur polynomials. Of course, it remains
 to check the validity of \eqref{guessinter}. This is typically straightforward and can be done by hand even though the check can be long. 
Let us just check this intertwining for $n=2$.
We compute 
\begin{align}\label{inter}
K \Pi(\gl,\tilde\sfZ)&=\sumtwo{Z\colon  \text{$\GT$ pattern}}{ \text{with shape $\gl$  } }  K(\gl,\sfZ) \Pi(\sfZ,\tilde\sfZ) \\
&= \sum_{z^1_1} x_2^{|z^2|-|z^1|} x_1^{|z^1|}  \, \ind_{\{z^2=\gl\}} \,
\Big( x_1\, \ind_{\{ (\tilde z^1,\tilde z^2)=(z^1+e_1,z^2+e_1)\}} \ind_{\{z^1_1=z^2_1\}} \,\notag\\
&\qquad \qquad+ x_1 \, \ind_{\{ (\tilde z^1,\tilde z^2)=(z^1+e_1,z^2+e_2)\}} \ind_{\{z^1_1<z^2_1\}} +
x_2 \ind_{\{ (\tilde z^1,\tilde z^2)=(z^1,z^2+e_1) \}} \Big)  \notag\\
&=x_2^{\gl_1+\gl_2-\tilde z^1_1 +1} x_1^{\tilde z^1_1} \,\, \Big( 
\ind_{\{ \tilde z^1_1=\gl_1+1 \}} \ind_{\{ \tilde z^2_1=\gl_1+1 \}}  \ind_{\{ \tilde z^2_2=\gl_2 \}} \notag\\
&\qquad \qquad +  \ind_{\{ \tilde z^2_1=\gl_1 \}} \ind_{\{\tilde z^2_2=\gl_2+1\}} \ind_{\{\gl_2+1\leq \tilde z^1_1\leq \gl_1 \}} 
+  \ind_{\{ \tilde z^2_1=\gl_1+1 \}} \ind_{\{ \tilde z^2_2=\gl_2 \}} \ind_{\{ \gl_2\leq \tilde z^1_1<\gl_1+1 \}}
 \Big)  \notag\\
 &=x_2^{\gl_1+\gl_2-\tilde z^1_1 +1} x_1^{\tilde z^1_1} \,\, \Big( 
\ind_{\{ \gl_2\leq \tilde z^1_1\leq \gl_1+1 \}} \ind_{\{ \tilde z^2_1=\gl_1+1 \}}  \ind_{\{ \tilde z^2_2=\gl_2 \}} 
 +  \ind_{\{ \tilde z^2_1= \gl_1 \}} \ind_{\{\tilde z^2_2=\gl_2+1\}} \ind_{\{\gl_2+1\leq \tilde z^1_1\leq \gl_1 \}}  \Big); \notag
\end{align}
where the last equality comes from combining the first and third terms in the parenthesis.
On the other hand we have 
\begin{align*}
P K (\gl,\tilde \sfZ)
&= \sum_{\mu\succ_1 \gl} P(\gl;\mu) \, K(\mu, \tilde \sfZ)
=K(\gl_1+1,\gl_2;\tilde \sfZ) + K(\gl_1,\gl_2+1;\tilde \sfZ)\\
&= x_2^{\gl_1+\gl_2-\tilde z^1_1 +1} x_1^{\tilde z^1_1} \Big( 
\ind_{\{ \gl_2\leq \tilde z^1_1\leq \gl_1+1 \}} \ind_{\{ \tilde z^2_1=\gl_1+1 \}}  \ind_{\{ \tilde z^2_2=\gl_2 \}} +
 \ind_{\{ \tilde z^2_1= \gl_1 \}} \ind_{\{\tilde z^2_2=\gl_2+1\}} \ind_{\{\gl_2+1\leq \tilde z^1_1\leq \gl_1 \}}   \Big)
\end{align*} 
which is the right-hand side of \eqref{inter}.
\vskip 2mm
The intertwining in the general $n$ case can be checked via induction. We omit the details. An example of such an inductive procedure (in the case
of the log-gamma polymer and Whittaker functions) can be found in \cite{COSZ14}, in the proof of Proposition 3.4.
  \vskip 4mm
  {\bf Normalization of kernels, Doob's transform and non-intersecting walks.}
   Notice that \eqref{guessinter} is not quite in the form of the Pitman-Rogers theorem since the kernels $K$ and $P$ 
  are not probability kernels. This 
  can be easily rectified by normalising $K$ and $P$ and defining 
    \begin{align}\label{Doobnorm}
  \bK(\gl,\sfZ) : = \frac{K(\gl,\sfZ)}{\sum_{Z} K(\gl,\sfZ)} = \frac{K(\gl,\sfZ)}{s_\gl(x)}
  \qquad \text{and} \qquad \mathbf P(\gl,\nu):=\frac{s_{\nu}(x)}{s_\gl(x)} P(\gl,\nu),
  \end{align}
for $\lambda\neq \nu$, with $s_\lambda(x), s_\nu(x)$ Schur functions corresponding
  to shapes $\lambda, \nu$ and parameters $x\in\R^n$.
  It is immediate to check that with this definition $\bPi, \mathbf P$ and $\bK$ satisfy the intertwining relationship
   $\mathbf P \, \bK=\bK\, \bPi$. 
  
  \vskip 2mm
  The form of $\mathbf P(\gl,\nu)$ points to some sort of {\bf Doob's $h$-transform}. We refer to \cite{RY13}, Chapter VIII for details on Doob's transform and applications but
  let us only say a couple of words about it: In general, given a positive kernel $P:\mathcal{A}\times\mathcal{A}\to\R_+$ on a state space $\mathcal{A}$,
   we can define a {\it probability} kernel $\bP$ on the same state space $\mathcal{A}$, which can serve as a Markov transition kernel. The probability
     kernel $\bP$ is defined as
     \begin{align*}
     \bP(a,b):=\frac{h(b)}{h(a)} P(a,b),\qquad \text{for} \qquad a,b\in \mathcal{A},
     \end{align*}
     where $h(\cdot)$ is a {\it ground state} eigenfunction for $P$ by which we mean that $\sum_{b} P(a,b)\,h(b)=h(a)$, for all $a\in\mathcal{A}$.
     The original motivation for this transform was to define Markovian processes conditioned on certain events (for example Brownian motion or random walks
     conditioned never to visit zero).
    \vskip 2mm
  Let us now see how the idea of Doob's transform can be applied in our setting. First, we notice that a direct consequence
  of Pieri's rule is that the function 
$\sfh(\lambda):=x_1^{-\gl_1}\cdots x_n^{-\gl_n} \, s_\gl(x)$ satisfies
 the equation 
\begin{align}\label{harmonic}
\sum_{i=1}^n x_i  \,\sfh(\gl+\be_i) = (\sum_{i=1}^n x_i) \,\sfh(\gl).
\end{align}
 This is easily seen by multiplying both sides of \eqref{Pieri1} by $x_1^{-\gl_1}\cdots x_n^{-\gl_n}$. The fact that
 $s_\gl(x)=0$, if $\gl$ does not belong to the {\it Weyl chamber} $\bW_n:=\{\gl\in \R^n\colon \gl_1\geq \gl_2\geq \cdots \geq \gl_n\}$,  implies that $h(\gl)=0$ if $\gl\notin\bW_n$. This, together with \eqref{harmonic} implies that $h(\cdot)$ is harmonic for the  
 generator of an
  $n$-dimensional random walk whose coordinates jump at rates $(x_i)_{i=1,...,n}$ and which is killed upon exiting the Weyl chamber $\bW_n$
  (i.e. Dirichlet boundary conditions on the complement of $\bW_n$). 
Let us  denote by $Q(\lambda,\nu)$ the transition kernel of this random walk, 
which takes values $Q(\lambda,\nu)=x_i$ if $\nu=\gl+e_i$ for $i=1,...,n$ and zero otherwise. Since for $\nu\neq\gl$ it holds that 
\begin{align*}
Q(\gl,\nu)=\frac{x_1^{\nu_1}\cdots x_n^{\nu_n}}{x_1^{\gl_1}\cdots x_n^{\gl_n}} P(\gl,\nu)
\end{align*}
for $P(\lambda, \nu):= \sum_{i=1}^n \ind_{\nu=\lambda+e_i}$ as defined in \eqref{guessinter},
we see  that
\begin{align}\label{810two}
\mathbf  P(\lambda,\nu):=\frac{\sfh(\nu)}{\sfh(\gl)} Q(\gl,\nu)=\frac{s_\nu(x)}{s_{\gl}(x)} P(\gl,\nu),
\end{align}
(as defined in \eqref{Doobnorm}) specifies a Markovian kernel of a random walk conditioned not to exit the Weyl chamber.
\vskip 2mm
If we consider the coordinates of the process $\gl=(\gl_1,...,\gl_n)$ defined via \eqref{810two} individually and actually ``separate'' them by considering
$\ell_i:=\gl_i+n-i$, so that $\ell_1> \ell_2>\cdots >\ell_n$, then $\ell_1,...,\ell_n$ evolve via \eqref{810two} as $n$ simple random walks conditioned never
to meet. This process is the discrete analogue of Dyson's Brownian motion \cite{Dy62, AGZ10}, which consists of $n$ Brownian motions conditioned never to meet. Dyson's
Brownian motion describes the evolution of the eigenvalues of Hermitian random matrices whose entries are independent Brownian motions, up to the
Hermitian symmetry.  This provides one more explanation of the relation between polymer models, RSK dynamics and random matrices.  

\subsection{The roles of Cauchy, skew-Cauchy identities and Macdonald polynomials}\label{sec:Cauchy}
The Cauchy identity states that
\begin{align*}
\sum_\gl s_\lambda(x) s_\gl(y) = \prod_{1\leq i,j\leq n} \frac{1}{1-x_iy_j},
\end{align*}
where $x=(x_1,...,x_n)\in\R^n$ and $y:=(y_1,...,y_n)\in\R^n$ and the sum is taken over all partitions 
$\lambda=(\gl_1,...,\gl_n)$ with $\gl_1\geq \gl_2\geq \cdots $.
This identity can extend to the case of Schur functions that correspond to vector parameters of different length, e.g.
$x\in\R^n$ and $y\in\R^m$ or even of infinite length, e.g. $x=(x_1,x_2,...)$ and $y=(y_1,y_2,...).$
As in the case of the branching and Pieri rules, the Cauchy identity has a representation theoretic background 
(see \cite{Bum04} for more details). 
It describes the decomposition of the {\it symmetric algebra }
over the tensor product representation of $GL(n)\times GL(n)$
(or $GL(n)\times GL(m)$ in general). Combinatorially the Cauchy identity is a consequence of the $\rsk$ correspondence
as apparent from \eqref{schur1} by letting $u\to\infty$, therein. Seen from this angle, the Cauchy identity
allows us to introduce a probability measure on partitions, called the {\it Schur measure} and introduced by
Okounkov \cite{O01}, which may be written as
\begin{align*}
\bbP(\lambda) = \prod_{i,j}(1-x_iy_j) \, s_\lambda(x) \,s_\gl(y).
\end{align*} 
Its marginal over $\gl_1$ is the law of last passage percolation with geometric weights, as manifested by
\eqref{schur1}. The analogue of the Cauchy identity in the case of Whittaker functions is the Bump-Stade identity
\begin{align*}
\int_{\bbR_+^n} e^{-1/x_n} \Psi^{\mathfrak{gl}_n}_{\alpha}(x) \Psi^{\mathfrak{gl}_n}_{\beta}(x) \prod_{i=1}^n\frac{\dd x_i}{x_i}
= \prod_{i,j}\Gamma(\alpha_i+\beta_j),
\end{align*}
which, as we already saw in Section \ref{sec:loggamma} (see relation \eqref{shape}), can analogously 
be derived via  the geometric $\rsk$ correspondence. Similarly to the Schur measure, the Bump-Stade identity 
can be used to define a probability measure on $\R^n$
\begin{align*}
\frac{1}{\prod_{i,j}\Gamma(\alpha_i+\beta_j)}
 \, e^{-1/x_n} \Psi^{\mathfrak{gl}_n}_{\alpha}(x) \Psi^{\mathfrak{gl}_n}_{\beta}(x) \prod_{i=1}^n\frac{\dd x_i}{x_i},
\end{align*}
the marginal of which on the first coordinate is the law of the partition function of the log-gamma polymer
(recall that the role of shape variables $\gl$ in Schur functions is played by the $x$ variables in the Whittaker functions).

Recently, an interesting approach of producing Cauchy identities based on integrable methods around the six-vertex model and Yang-Baxter relations
has been developed, e.g. \cite{BBF11, BW16, BWZ15, BP17, WZ-J16} that also leads to various integrable stochastic models.
\vskip 2mm
Here, we will focus on seeing how Cauchy identities, in conjunction with branching and Pieri rules, can be used to define
natural dynamics on Gelfand-Tsetlin patterns. We will expose this in the general framework of Macdonald functions
 as this was developed by Borodin-Corwin within {\it Madonald processes} \cite{BC14} . This setting has been
 subsequently generalised in several situations and broader frameworks, for example  \cite{BP14, BP16b, BM18, MP17} etc.
\vskip 2mm
{\bf Macdonald polynomials and useful identities.}
Macdonald polynomials were introduced by Macdonald \cite{M88}, see also \cite{M98}, as a new basis for  symmetric functions, 
which  generalise Schur polynomials in the sense that
they contain two parameters $q,t$ and when $q=t$ they reduce to Schur functions. Furthermore,
making different choices on the parameters $q,t$ Macdonald polynomials specialise to other special functions
e.g. Jack with parameter $\ga$ when $q=t^\ga$ and $t\to1$, zonal symmetric functions when $q=t^2$ and $t\to1$, Hall-Littlewood when $q=0$,  etc  (see \cite{M88, M98}). 
Let us present a combinatorial formula for Macdonald polynomials, which was not their original definition given by Macdonald, but 
has the structure of a branching rule in analogy to what is available for Schur, as well as Whittaker functions.

Let $x=(x_1,...,x_n)\in\R^n$ and $\lambda, \mu\in \R^n$ be partitions. The Macdonald polynomials 
$P_{\gl/\mu}(x)$ and $Q_{\gl/\mu}(x)$ corresponding to skew partition $\gl/\mu$ and parameter $x$ admit the
combinatorial representation 
\begin{align}\label{PQMac}
P_{\gl/\mu}(x) = \sum_{T\colon sh(T)=\gl/\mu} \psi_T \, x^T\qquad \text{and} \qquad 
Q_{\gl/\mu}(x) = \sum_{T\colon sh(T)=\gl/\mu} \phi_T  \,x^T,
\end{align}
where the summations are over all skew, semi-standard Young tableaux $T$ 
of shape $\gl/\mu$ and we have used the shorthand
notation 
\begin{align*}
x^T= x_1^{|\gl^{(1)}|}\, x_2^{|\gl^{(2)}| - |\gl^{(1)}| } \cdots  x_n^{|\gl^{(n)}| - |\gl^{(n-1)}| },
\end{align*}
with $|\gl^{(k)}|=\sum_{j\geq 1} \gl^{(k)}_j$ and $\mu=\gl^{(1)}\prec \gl^{(2)}\prec \cdots \prec \gl^{(n)}=\gl$.
 When $\mu=\emptyset$ the above expressions can be written in Gelfand-Tsetlin notation, 
 see Section \ref{sec:GT}, by identifying $\gl^{(k)}$
with the $k$-th row $z^k=(z^k_1,...,z^k_k)\in \R^k$ of a Gelfand-Tsetlin pattern with shape (i.e. bottom row) $\gl$.
 The weights $\psi_T$ and $\phi_T$ for tableaux $T$ with $sh(T)=\gl/\mu$ are given by
 \begin{align*}
 \psi_{\gl/\mu} = \prod_{i=1}^n \psi_{\gl^{(i)}/\gl^{(i-1)}} \quad \text{and} \quad \phi_{\gl/\mu} = \prod_{i=1}^n \phi_{\gl^{(i)}/\gl^{(i-1)}}.
 \end{align*}
 For $\gl/\mu$ a horizontal strip  (see \eqref{horizontal} and recall that $\gl^{(i)}/\gl^{(i-1)}$ is a horizontal strip since $\gl^{(i-1)} \prec \gl^{(i)}$) 
 we have that
\begin{equation}\label{phipsi}
 \begin{split}
 \psi_{\gl/\mu} &= \prod_{1\leq i\leq j\leq \ell(\mu)} \frac{f(q^{\mu_i-\mu_j} \,t^{j-i} )   f(q^{\gl_i-\gl_{j+1}} \,t^{j-i} )}
 { f(q^{\gl_i-\mu_j} \,t^{j-i}) f(q^{\mu_i-\gl_{j+1}} \,t^{j-i})},  \\
 \phi_{\gl/\mu} &= \prod_{1\leq i\leq j\leq \ell(\gl)} \frac{f(q^{\gl_i-\gl_j} \,t^{j-i} )   f(q^{\mu_i-\mu_{j+1}} \,t^{j-i} )}
 { f(q^{\gl_i-\mu_j} \,t^{j-i}) f(q^{\mu_i-\gl_{j+1}} \,t^{j-i})},
 \end{split}
 \end{equation}
 with
 \begin{align*}
 f(u)=\frac{(tu;q)_\infty}{(qu;q)_\infty},
  \end{align*}
 with the $q$-Polchammer symbol defined as $(a;q)_\infty:=\prod_{i= 1}^\infty (1-aq^{i-1})$. When $q=t$ we have that 
 $f(u)=1$ and then, evidently from \eqref{PQMac}, 
 \begin{align*}
 P_{\gl/\mu}(x) = Q_{\gl/\mu}(x) = \sum_{T\colon sh(T)=\gl/\mu}  \, x^T = s_{\gl/\mu}(x),
 \end{align*}
 which shows that in the case $q=t$ Macdonald polynomials are identical to Schur polynomials.
 
 We will also use the following ``coefficients'' when we later discuss Pieri identities:
 \begin{align*}
 \psi'_{\gl\,/\, \mu} = \prod_{i < j \colon \gl_i=\mu_i\,,\, \gl_{j}=\mu_{j}+1} 
 \frac{(1-q^{\,\mu_i-\mu_{j}} t^{\,j-i-1})(1-q^{\,\gl_i-\gl_j} t^{\,j-i+1}) }{(1-q^{\,\mu_i-\mu_j} t^{\,j-i})(1-q^{\,\gl_i-\gl_j}t^{\,j-i})},
 \end{align*}
 for $\gl/\mu$ a vertical $r$-strip. The latter means that the Young diagrams $\lambda'$ and $\mu'$ - obtained from
 $\lambda$ and $\mu$ by making the rows of $\gl,\mu$ be the columns of $\gl',\mu'$ -
  are such that $\gl'/\mu'$ is a horizontal strip (see \eqref{horizontal}) containing $r$ boxes.
  
 It will also be useful to distinguish 
 the case of Macdonald polynomials of one variable  $x_1\in\R$: in this case, if $\gl/\mu$ is a horizontal strip, then
 \begin{align}\label{skewQP}
 Q_{\gl/\mu}(x_1)= \phi_{\gl/\mu} \, x_1^{|\gl|-|\mu|} \qquad\text{and}\qquad 
 P_{\gl/\mu}(x_1)= \psi_{\gl/\mu} \, x_1^{|\gl|-|\mu|}\,.
 \end{align}
 \vskip 2mm
 We can then write the branching rule for Macdonald polynomials with parameter $x=(x_1,...,x_k)$ as
 \begin{align}\label{Macbranch}
 P_\gl(x)= \sum_{\gl^{(1)} \prec \cdots \prec \gl^{(k)}=\gl} 
 P_{\gl^{(1)}}(x_1) P_{\gl^{(2)} / \gl^{(1)}}(x_2) \cdots  P_{\gl^{(k)} / \gl^{(k-1)}}(x_k),
 \end{align}
 and similarly for $Q_\gl(x)$, in analogy to the branching formula for Schur polynomials \eqref{SchurBranch}.
 \vskip 2mm
{\bf Pieri identity for Macdonald polynomials.}

 Let $e_r:=e_r(x_1,...,x_n):=\sum_{1\leq i_1<\cdots<i_r\leq n} x_{i_1}\cdots x_{i_r}$ 
 be the elementary symmetric polynomial
 of degree $r$ in variables $x_1,...,x_n$
 and $g_r:=g_r(x_1,...,x_n):=Q_{(r)}(x_1,...,x_n)$ be
  the $Q$-Macdonald polynomial corresponding to a partition $(r)$,
  represented by a Young diagram of a single row containing $r$ boxes.  There are a number of Pieri identities
   (see \cite{M98}, Chapter  VI). We will just use the following two
 \begin{align}\label{macpieri}
 P_\mu \, g_r = \sum_{\gl\colon \gl \succ_r \,\mu } \phi_{\gl/\mu} P_\gl = \sum_{\gl\colon \gl {\succ_r} \,\mu} Q_{\gl/\mu}(1) P_\gl
 \qquad \text{and} \qquad P_\mu e_r = \sum_{\gl\colon \gl'\succ_r \,\mu'} \psi'_{\gl/\mu} P_\gl  ,
 \end{align}
  where in the second identity we recall the notation $\gl'$ which means the Young diagram whose rows are the columns of $\gl$.
 In the case that $|\gl/\mu|=1$, that is $r=1$ and $\gl\succ_1 \mu$, it holds that 
 $ \psi'_{\gl/\mu} = \tfrac{1-t}{1-q} \phi_{\gl/\mu} =  \tfrac{1-t}{1-q} Q_{\gl/\mu}(1)$.
 Given that $e_1=\sum_{i=1}^n x_i$, we will see that, similarly to the Schur case,
  either of the Pieri identities (which in the case $r=1$ 
 are identical up to a constant) can be used to define a Markovian evolution on Gelfand-Tsetlin patterns,
  whose total rate of change will be $\sum_{i=1}^n x_i$.
 \vskip 2mm
{\bf Cauchy and skew-Cauchy identity for Macdonald polynomials.}
The Cauchy identity for Macdonald polynomials may be written as
\begin{align}\label{CauchyM}
 \sum_{\kappa} P_{\kappa} (x) Q_{\kappa}(y) = H(x,y),
 \end{align}
where for $x=(x_1,...,x_n)$ and $y=(y_1,...,y_n)$ the summation is over all partitions $\kappa$ of length $\ell(\kappa)\leq n$. The right-hand side of this
 identity is 
 \begin{align}\label{H}
 H(x,y):=\prod_{i,j} \frac{(tx_iy_j;q)_\infty}{(x_iy_j;q)_\infty}.
 \end{align}
 As in other situations this Cauchy identity can be extended to vectors $x=(x_1,...,x_n), y=(y_1,...,y_m)$ of different dimensions or even
 of infinite dimensions $x=(x_1,x_2,...)$ and $y=(y_1, y_2,...)$.
  
 The Cauchy identity can be generalised to a skew-Cauchy identity in the form
 \begin{align}\label{skewCauchyM}
 \sum_{\mu} P_{\mu/\gl} (x) Q_{\mu/\nu}(y) = H(x,y) \sum_{\mu} Q_{\gl/\mu}(y) P_{\nu/\mu}(x).
 \end{align}
 In the case that $\gl=\emptyset$, then \eqref{skewCauchyM} becomes
 \begin{align}\label{skewCauchyM2}
 \sum_{\mu} P_{\mu} (x) Q_{\mu/\nu}(y) = H(x,y)  P_{\nu}(x),
 \end{align}
 which in a sense generalises Pieri's rule \eqref{macpieri}.
 We refer to \cite{M98}, Chapter VI.7, for details and proofs of the above Cauchy identities
 (in particular of \eqref{skewCauchyM} and \eqref{skewCauchyM2} as we have already seen a proof of \eqref{CauchyM} in Section \ref{basics})
  \vskip 2mm
 \subsection{Macdonald tailored dynamics on Gelfand-Tsetlin patterns}\label{sec:Macdynam}
 We now have the ingredients to present Borodin-Corwin's \cite{BC14} general construction of dynamics on Gelfand-Tsetlin patterns under which 
 the evolution of the bottom row is Markovian and arises from Pieri's rule via a Doob's transform in a similar way as this was done in the
 Schur case, see \eqref{Doobnorm}. The idea of this construction has been subsequently generalised, see for example \cite{BP16b, MP17}.
 This type of constructions can be traced back to works of Borodin-Ferrari \cite{BF08} and Diaconis-Fill \cite{DF90}.
\vskip 2mm
We start by defining a Markovian
 evolution kernel on a single row coming from the Pieri rule as
 \begin{align}\label{macPk}
 P_k(\mu,\nu) = \frac{1}{H(x_1,...,x_k;\rho)}\frac{P_\nu(x_1,...,x_k)}{P_\mu(x_1,...,x_k)} Q_{\nu/\mu}(\rho),
 \end{align}
 for partitions $\mu=(\mu_1,...,\mu_k) $ and $ \nu=(\nu_1,...,\nu_k)$ such that $\nu\succ \mu$. The parameter $\rho$ will play the role of
 time. We refrain from using the symbol $t$ for time as this is used here 
  for one of the two parameters of Macdonald polynomials.
 This is the analogue of \eqref{Doobnorm} in the Macdonald setting: it is a Doob's transform defining stochastic dynamics out
 of the eigenvalue problem induced by Pieri's rules \eqref{macpieri}, \eqref{skewCauchyM2}.
\vskip 2mm
We also define the so-called {\bf stochastic links}
 \begin{align}\label{stochlink}
 \Lambda^k_{k-1}(\mu,\nu)=\frac{P_\nu(x_1,...,x_{k-1})}{P_{\mu}(x_1,...,x_k)} \, P_{\mu/\nu}(x_k).
 \end{align}
 These will play the role of sampling the $(k-1)$-th row $\nu$ of a Gelfand-Tsetlin pattern knowing its $k$-th row $\mu$.
 The fact that $\Lambda^k_{k-1}$
 is a probability kernel follows immediately from the branching rule for Macdonald polynomials \eqref{Macbranch}.
 We remark that, by definition $P_{\mu/\nu}(x_k)=0$ (and thus also  $\Lambda^k_{k-1}(\mu,\nu)=0$), if $ \mu\nsucc \nu$.
 \vskip 2mm
 The significance of these definitions is that they lead to an intertwining between consecutive rows of Gelfand-Tsetlin patterns, 
 which can then be lifted to an intertwining on the whole pattern.  
\begin{proposition}\label{prop:inter2}
Let $k\geq 1$ and the operators $\Lambda^k_{k-1}$ and $P_k$ as in \eqref{stochlink} and \eqref{macPk}. Then
 \begin{align}\label{MacInter}
 \Delta^k_{k-1} := \Lambda^k_{k-1}P_{k-1} = P_{k} \Lambda^k_{k-1}.
 \end{align}
 \end{proposition}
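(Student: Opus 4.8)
The plan is to verify the intertwining relation~\eqref{MacInter} by direct computation, unfolding the definitions~\eqref{stochlink} and~\eqref{macPk} and then invoking the skew-Cauchy identity~\eqref{skewCauchyM} as the algebraic engine that produces the equality. First I would write out the left-hand side: for partitions $\mu=(\mu_1,\dots,\mu_k)$ and $\nu=(\nu_1,\dots,\nu_{k-1})$,
\begin{align*}
\big(\Lambda^k_{k-1}P_{k-1}\big)(\mu,\nu)
&=\sum_{\kappa} \Lambda^k_{k-1}(\mu,\kappa)\, P_{k-1}(\kappa,\nu)\\
&=\sum_{\kappa} \frac{P_\kappa(x_1,\dots,x_{k-1})}{P_{\mu}(x_1,\dots,x_k)}\,P_{\mu/\kappa}(x_k)\,
\frac{1}{H(x_1,\dots,x_{k-1};\rho)}\frac{P_\nu(x_1,\dots,x_{k-1})}{P_\kappa(x_1,\dots,x_{k-1})}\,Q_{\nu/\kappa}(\rho),
\end{align*}
in which the factor $P_\kappa(x_1,\dots,x_{k-1})$ cancels, leaving
\begin{align*}
\big(\Lambda^k_{k-1}P_{k-1}\big)(\mu,\nu)
=\frac{P_\nu(x_1,\dots,x_{k-1})}{P_\mu(x_1,\dots,x_k)\,H(x_1,\dots,x_{k-1};\rho)}\,
\sum_{\kappa} P_{\mu/\kappa}(x_k)\,Q_{\nu/\kappa}(\rho).
\end{align*}
Symmetrically, for the right-hand side
\begin{align*}
\big(P_k\Lambda^k_{k-1}\big)(\mu,\nu)
&=\sum_{\kappa}P_k(\mu,\kappa)\,\Lambda^k_{k-1}(\kappa,\nu)\\
&=\sum_{\kappa}\frac{1}{H(x_1,\dots,x_k;\rho)}\frac{P_\kappa(x_1,\dots,x_k)}{P_\mu(x_1,\dots,x_k)}\,Q_{\kappa/\mu}(\rho)\,
\frac{P_\nu(x_1,\dots,x_{k-1})}{P_\kappa(x_1,\dots,x_k)}\,P_{\kappa/\nu}(x_k),
\end{align*}
where now $P_\kappa(x_1,\dots,x_k)$ cancels, giving
\begin{align*}
\big(P_k\Lambda^k_{k-1}\big)(\mu,\nu)
=\frac{P_\nu(x_1,\dots,x_{k-1})}{P_\mu(x_1,\dots,x_k)\,H(x_1,\dots,x_k;\rho)}\,
\sum_{\kappa}Q_{\kappa/\mu}(\rho)\,P_{\kappa/\nu}(x_k).
\end{align*}

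The two sides now differ only in (i) the normalisation constant in the denominator and (ii) the sums over $\kappa$. For (i), the multiplicativity of the Cauchy kernel~\eqref{H} in its arguments gives $H(x_1,\dots,x_k;\rho)=H(x_1,\dots,x_{k-1};\rho)\,H(x_k;\rho)$, so it remains to check
\begin{align*}
\sum_{\kappa} P_{\mu/\kappa}(x_k)\,Q_{\nu/\kappa}(\rho)
\;=\;H(x_k;\rho)\sum_{\kappa}Q_{\kappa/\mu}(\rho)\,P_{\kappa/\nu}(x_k).
\end{align*}
This is precisely the skew-Cauchy identity~\eqref{skewCauchyM} with the single-variable specialisations $x\mapsto x_k$ and $y\mapsto\rho$, and with the roles of $(\lambda,\nu)$ in~\eqref{skewCauchyM} taken by $(\nu,\mu)$ here (note the left-hand sum there runs over $\mu$, and after relabelling the dummy index this matches the displayed identity). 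Thus the equality of the two kernels follows directly.

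The main obstacle is bookkeeping rather than anything deep: one must be careful that all the skew Macdonald polynomials appearing are genuine single-variable specialisations (so that~\eqref{skewQP} applies and the sums over $\kappa$ are in fact finite, ranging only over partitions $\kappa$ that interlace appropriately with $\mu$ and $\nu$), and that the index conventions in~\eqref{skewCauchyM} line up after relabelling. It is also worth remarking that $P_{k-1}(\kappa,\nu)$ vanishes unless $\nu\succ\kappa$ and $\Lambda^k_{k-1}(\mu,\kappa)$ vanishes unless $\mu\succ\kappa$, so the cancellation of $P_\kappa(x_1,\dots,x_{k-1})$ above is legitimate (the term is simply absent when that polynomial would be zero). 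Once these conventions are pinned down, the computation closes. A brief remark could be added that the continuous-time version needed for Theorem~\ref{MarkovfunctionsProp}, namely $\mathbf{K}\mathcal L=L\mathbf K$ as in~\eqref{continter}, is obtained by differentiating~\eqref{MacInter} in the time parameter $\rho$ at the identity, or alternatively by the standard semigroup argument, and that iterating~\eqref{MacInter} over $k=2,\dots,n$ through the composed link $\mathbf K=\Lambda^n_{n-1}\Lambda^{n-1}_{n-2}\cdots\Lambda^2_1$ lifts the two-row intertwining to one between the full Gelfand-Tsetlin dynamics and the bottom-row Markov chain with kernel $P_n$.
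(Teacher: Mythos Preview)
Your approach is exactly the paper's: expand both kernel products, cancel the intermediate Macdonald polynomial, and close the computation with the skew-Cauchy identity~\eqref{skewCauchyM} together with the factorisation $H(x_1,\dots,x_k;\rho)=H(x_1,\dots,x_{k-1};\rho)\,H(x_k;\rho)$.

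There is one slip in your bookkeeping. From your two displays, equality $\Lambda^k_{k-1}P_{k-1}=P_k\Lambda^k_{k-1}$ is equivalent to
\[
H(x_k;\rho)\sum_{\kappa} P_{\mu/\kappa}(x_k)\,Q_{\nu/\kappa}(\rho)
\;=\;\sum_{\kappa}Q_{\kappa/\mu}(\rho)\,P_{\kappa/\nu}(x_k),
\]
i.e.\ the factor $H(x_k;\rho)$ belongs on the \emph{left}, not on the right as you wrote. This is the form that matches~\eqref{skewCauchyM} with $x\mapsto x_k$, $y\mapsto\rho$ and $(\lambda,\nu)\mapsto(\nu,\mu)$. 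With that correction the argument is complete and identical to the paper's proof.
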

 
 \begin{proof} Let $\lambda=(\gl_1,...,\gl_k)$ and $\nu=(\nu_1,...,\nu_{k-1})$ be partitions. Then, on the one hand we have
 \begin{align}\label{Minter1}
 \Lambda^k_{k-1} P_{k-1}(\lambda, \nu) 
&= \frac{1}{H(x_1,...,x_{k-1};\rho)}\sum_{\mu}
 \frac{P_\mu(x_1,...,x_{k-1})}{P_{\gl}(x_1,...,x_k)} \, P_{\gl/\mu}(x_k)\,\cdot\,
 \frac{P_\nu(x_1,...,x_{k-1})}{P_\mu(x_1,...,x_{k-1})} Q_{\nu/\mu}(\rho) \, \notag\\
 &=\frac{1}{H(x_1,...,x_{k-1};\rho)} \frac{P_\nu(x_1,...,x_{k-1})}{P_{\gl}(x_1,...,x_k)} \, 
 \sum_{\mu}
  P_{\gl/\mu}(x_k) \,Q_{\nu/\mu}(\rho) ,
 \end{align}
 and on the other that
 \begin{align}\label{Minter2}
 P_k\Lambda^k_{k-1}(\lambda, \nu) 
 &=\frac{1}{H(x_1,...,x_{k};\rho)} \sum_{\mu} 
 \frac{P_\mu(x_1,...,x_k)}{P_\lambda(x_1,...,x_k)} Q_{\mu/\lambda}(\rho) \,\cdot \,
 \frac{P_\nu(x_1,...,x_{k-1})}{P_{\mu}(x_1,...,x_k)} \, P_{\mu/\nu}(x_k) \notag\\
&=\frac{1}{H(x_1,...,x_{k};\rho)}  \frac{P_\nu(x_1,...,x_{k-1})}{P_\lambda(x_1,...,x_k)} 
 \sum_{\mu}  Q_{\mu/\lambda}(\rho) \, P_{\mu/\nu}(x_k) \notag\\
 &= \frac{H(x_k;\rho)}{H(x_1,...,x_{k};\rho)}\frac{P_\nu(x_1,...,x_{k-1})}{P_{\gl}(x_1,...,x_k)} \, 
 \sum_{\mu} P_{\gl/\mu}(x_k) \,Q_{\nu/\mu}(\rho) ,
 \end{align}
 where in the last equality we used the skew Cauchy identity \eqref{skewCauchyM}. We now see that
 \eqref{Minter2} 
 agrees with \eqref{Minter1} upon noticing that $H(x_1,...,x_{k};\rho) =H(x_1,...,x_{k-1};\rho) H(x_k;\rho)$.
 \end{proof}
 \vskip 2mm
Proposition \ref{prop:inter2} allows us now to build dynamics on Gelfand-Tsetlin patterns.
 In particular, define the transition matrix $\bPi_\rho(\sfZ,\tilde\sfZ)$ (we include the dependence on the time parameter $\rho$)
  between Gelfand-Tsetlin patterns $\sfZ$ and $\tilde \sfZ$ as
 \begin{align}\label{macdonaldPi}
 \bPi_\rho(\sfZ,\tilde\sfZ) = P_1(z^1,\tilde z^{\,1}) \,\prod_{k=2}^n \frac{P_k(z^k,\tilde z^{\,k}) \, \Lambda^k_{k-1}(\tilde z^{\,k}, 
 \tilde z^{\,k-1})}{\Delta^k_{k-1}(z^k,\tilde z^{\,k-1})},
 \end{align}
 if $\prod_{k=2}^n \Delta^k_{k-1}(z^k,\tilde z^{\,k-1})>0$ and zero otherwise. 
 Notice also that
 \begin{align}\label{no_interlace}
 \frac{P_k(z^k,\tilde z^{\,k}) \, \Lambda^k_{k-1}(\tilde z^k, \tilde z^{\,k-1}) }{ \Delta^k_{k-1}(z^k,\tilde z^{\,k-1})} = \delta_{\{\tilde z^{\,k}_j=z^k_j+1\}}
  \qquad \text{if} \qquad \tilde z^{\,k-1}_j=z^{\,k}_j+1 \qquad \text{for some $j\leq k$},
\end{align}
where $\delta$ is a delta function taking the value one if $\tilde z^{\,k}_j=z^k_j+1$ and zero otherwise. 
This ensures that the interlacing of the Gelfand-Tsetlin pattern is preserved.
We will now see that $\bPi_\rho$ intertwines with $P_n$ via the kernel
 \begin{align*}
 \bK(\gl,\sfZ)= \prod_{i=1}^n \Lambda^i_{i-1}(z^i , z^{i-1})\,\ind_{z^n=\gl}.
 \end{align*}
 Let us check this in the case $n=2$:
 \begin{align*}
 \bK\bPi_\rho (\gl,\sfZ) &= \sum_{\tilde\sfZ} \bK(\gl,\tilde \sfZ) \bPi_\rho(\tilde\sfZ,\sfZ)
 = \sum_{\tilde z^1} \Lambda^2_1(\gl,\tilde z^1)  
 P_1(\tilde z^1, z^1)  \frac{P_2(\gl, z^2) \, \Lambda^2_{1}( z^2, z^{1})}{\Delta^2_{1}(\gl, z^{1})} \\
&=P_2(\gl, z^2)\,\Lambda^2_{1}( z^2, z^{1}) = P_2\bK\,(\gl, \sfZ),
 \end{align*}
 by using $\sum_{\tilde z^1}\Lambda^2_1(\gl,\tilde z^1)   P_1(z^1,\tilde z^1) =\Delta^2_{1}(\gl, z^{1})$ from \eqref{MacInter},
 thus checking the intertwining property. The general $n$ case follows the same root by summing successively over $\tilde z^{\,n},...,\tilde z^{\,1}$
 and using \eqref{MacInter} to cancel each of the corresponding denominators $\Delta^k_{k-1}(\tilde z^k, z^{k-1})$.
 \vskip 2mm
 {\bf An example: the $q$-Whittaker $2d$ growth model}.
 In order to make the previous general construction more concrete and derive an example, we simplify (by just 
 performing the obvious cancellations)
 the terms in product \eqref{macdonaldPi} and write 
 \begin{align*}
  \frac{P_k(\mu,\nu) \, \Lambda^k_{k-1}(\nu,\gl) }{\Delta^k_{k-1}(\mu,\gl)}
  =\frac{P_{\nu/\gl}(x_k) Q_{\nu/\mu}(\rho)}{\sum_\nu P_{\nu/\gl}(x_k) Q_{\nu/\mu}(\rho) },
 \end{align*}
  and making an informal change of variables $\nu\mapsto \nu/\gl$ in the denominator and then using \eqref{skewCauchyM2} and \eqref{skewQP} we can write this as
 \begin{align*}
 \frac{P_{\nu/\gl}(x_k) \,Q_{\nu/\mu}(\rho)}{\sum_{\nu/\gl} P_{\nu/ \gl }(x_k) \, Q_{\small{(\nu / \gl) \big/ (\mu/\gl)}}(\rho) } 
 &= \frac{P_{\nu/\gl}(x_k) \,Q_{\nu/\mu}(\rho)}{ H(x_k;\rho) \, P_{\mu/\gl}(x_k) }
 =\frac{1}{H(x_k;\rho)} \frac{\psi_{\nu/\gl}}{\psi_{\mu/\gl}}  \, \phi_{\nu/\mu} \,(\rho  x_k)^{|\nu|-|\mu|}.
 \end{align*}
 Then \eqref{macdonaldPi} can be written (in Gelfand-Tsetlin pattern notation) as
 \begin{align}\label{macPisimple}
 \bPi_\rho(\sfZ ,\tilde \sfZ) = \frac{1}{H(x_1;\rho)} 
 \frac{\psi_{\,\tilde z^{\,1}}}{\psi_{\,z^1}} 
\, \, \phi_{\,\tilde z^{\,1} / \, z^1 } \,\,(\rho x_1)^{|\tilde z^{\,1} |- |z^1|} \,\,
 \prod_{k=2}^n \frac{1}{H(x_k;\rho)} \frac{\psi_{\, \tilde z^{\,k}/ \, \tilde z^{\,k-1}}}{\psi_{\,z^k/ \,\tilde z^{\,k-1}}}  \,\,
  \phi_{\,\tilde z^{\,k}/ \,z^k} \,\,(\rho x_k)^{|\tilde z^{\,k}|-|z^k|}.\notag\\
 \end{align}
 which we write in  the shorthand notation
  \begin{align}\label{macPisimple2}
 \bPi_\rho(\sfZ ,\tilde \sfZ) =  \prod_{k=1}^n U^{\,(k)}_{\rho} \big(z^k, \tilde z^{\,k}\,|\, z^{k-1},\tilde z^{\,k-1}),
 \end{align}
 with the obvious notation for $U^{\,(k)}_{\rho} (z^k, \tilde z^{\,k}\,|\, z^{k-1},\tilde z^{\,k-1})$ as derived from \eqref{macPisimple}.
 \vskip 2mm
 When the skew partition $\gl / \mu$ consists of a single box and the parameter $t$ of the Macdonald polynomials is set to
 zero, then another easy computation shows that the expressions for $\psi_{\gl/\mu}$ and $\phi_{\gl / \mu}$ simplify. More precisely, if we first set $t=0$ in \eqref{phipsi}, we obtain that $H(x;\rho)=1 / (\rho x;q)_\infty$ and that
 \begin{align*}
 \phi_{\gl / \mu} &= (q;q)^{-\ell(\gl)}_\infty \, \,\prod_{i=1}^{\ell(\gl)} 
 \frac{ (q^{\gl_i-\mu_i+1};q)_\infty \, (q^{\mu_i-\gl_{i+1}+1};q)_\infty }{ (q^{\mu_i-\mu_{i+1}+1};q)_\infty}, \\
  \psi_{\gl / \mu} &= (q;q)^{-\ell(\mu)}_\infty \,\,  \prod_{i=1}^{\ell(\mu)} 
 \frac{ (q^{\gl_i-\mu_i+1};q)_\infty \, (q^{\mu_i-\gl_{i+1}+1};q)_\infty }{ (q^{\gl_i-\gl_{i+1}+1};q)_\infty} ,
 \end{align*}
 and then if $\gl /\mu$ consists of
  a single box, that is, there exists $j$ such that $\gl_i=\mu_i$ for $i\neq j$ and $\gl_j=\mu_j+1$, 
 then the above may be written as
  \begin{align*}
 \phi_{\gl / \mu} = \frac{1-q^{\,\mu_{j-1}-\mu_{j} }}{1-q} \qquad\text{and}\qquad 
  \psi_{\gl / \mu} = \frac{1-q^{\,\mu_j-\mu_{j+1}+1}}{1-q}.
 \end{align*}
 Moreover, if two partitions $\gl,\nu$ are such that for some $j$ it holds that $\gl_i=\nu_i$ for $i\neq j$
 and $\gl_{j}=\nu_j+1$, then an easy cancellation gives that (still we consider the parameter $t=0$)
 \begin{align*}
 \frac{\psi_{\gl/\mu}}{\psi_{\nu/\mu}} = \frac{(1-q^{\mu_{j-1}-\nu_j})\,(1-q^{\nu_j-\nu_{j+1}+1})}{(1-q^{\nu_j-\mu_j+1})\,(1-q^{\nu_{j-1}-\nu_j})} .
 \end{align*}
 Thus, we obtain that, if in a Gelfand-Tsetlin pattern only the $j$-th
 coordinate $z^k_j$ of $z^k$ jumps by one (forming $\tilde z^{\,k}_j$) and all other entries remain the same,
 then 
  \begin{align}\label{onejump}
U^{\,(k)}_{\rho} (z^k, \tilde z^{\,k}\,|\, z^{k-1},\tilde z^{\,k-1})&=
 \frac{1}{H(x_k;\rho)} \frac{\psi_{\, \tilde z^{\,k}/ \, \tilde z^{\,k-1}}}{\psi_{\,z^k/ \,\tilde z^{\,k-1}}}  \,\,
  \phi_{\,\tilde z^{\,k}/ \,z^k} \,\,(\rho x_k)^{|\tilde z^{\,k}|-|z^k|} \notag\\
 &= \,\frac{\rho x_k}{(\rho x_k;q)_\infty}  \cdot
 \frac{ 1-q^{\,\tilde z^{\,k-1}_{j-1} -z^k_j }}{1-q^{\,z^{\,k}_{j} -\tilde z^{\,k-1}_j+1 }} 
 \,\cdot\frac{1-q^{\,z^{k}_{j} - z^{k}_{j+1}+1}}{1-q}.
 \end{align}

 In order to reduce ourselves to the situation of a ``single jump'' we look at continuous time dynamics, that is $\rho$ is continuous,
 and we compute the infinitesimal generator of $\bPi_\rho$ as
 \begin{align*}
 \mathcal L = \frac{\dd \bPi_\rho}{\dd \rho } \Big|_{\rho=0} =
 \sum_{i\leq n} \prod_{k\neq i} U^{\,(k)}_{\rho} (z^k, \tilde z^{\,k}\,|\, z^{k-1},\tilde z^{\,k-1})\, \,
 \frac{\dd U^{\,(i)}_{\rho} (z^i, \tilde z^{\,i}\,|\, z^{i-1},\tilde z^{\,i-1})}{\dd \rho } \, \Big|_{\rho=0} 
 \end{align*}
 To evaluate the derivative at $\rho=0$, we use \eqref{onejump} as well as the Taylor expansion $1/H(x;\rho)=1-(1-q)^{-1} (x\rho) + o(\rho)$ for 
 $\rho\to0$ and get
 \begin{align}\label{Uderi}
 &\frac{\dd U^{\,(i)}_{\rho} (z^i, \tilde z^{\,i}\,|\, z^{i-1},\tilde z^{\,i-1})}{\dd \rho } \, \Big|_{\rho=0}
 =\,x_i\Big\{ \ind_{|\tilde z^{\,i}|=|z^i|+1} \frac{\psi_{\, \tilde z^{\,i}/ \, \tilde z^{\,i-1}}}{\psi_{\,z^i/ \,\tilde z^{\,i-1}}}  \,\,
  \phi_{\,\tilde z^{\,i}/ \,z^i}   -\frac{\ind_{|\tilde z^{\,i}| =| z^i |} }{1-q}  \Big\}  \notag \\
  &\qquad=\frac{x_i}{1-q}\Big\{  \sum_{j=1}^i  \, \frac{ (1-q^{\,\tilde z^{\,i-1}_{j-1} -z^i_j }) \,(1-q^{\,z^{i}_{j} - z^{i}_{j+1}+1})}{1-q^{\,z^{\,i}_{j} -\tilde z^{\,i-1}_j+1} } 
   \, \ind_{\,\{\text{only $z^i_j$ jumps by one}\}} - \ind_{|\tilde z^{\,i}|=|z^i|} \Big\},
 \end{align}
 Since there is no jump in the $(i-1)$-th row, we have that $\tilde z^{\, i-1}= z^{i-1}$ and thus  \eqref{Uderi} may be written as
 \begin{align*}
 \frac{x_i}{1-q}\Big\{  \sum_{j=1}^i  \, \frac{ (1-q^{\, z^{\,i-1}_{j-1} -z^i_j }) \,(1-q^{\,z^{i}_{j} - z^{i}_{j+1}+1})}{1-q^{\,z^{\,i}_{j} -z^{\,i-1}_j+1} } 
   \, \ind_{\,\{\text{only $z^i_j$ jumps by one}\}} - \ind_{|\tilde z^{\,i}|=|z^i|} \Big\},
 \end{align*}
 Notice also that 
 \begin{align}\label{Uprod}
 \prod_{k\neq i} U^{\,(k)}_{\rho=0} (z^k, \tilde z^{\,k}\,|\, z^{k-1},\tilde z^{\,k-1}) = 0,
 \end{align}  
 unless $|\tilde z^{\, k}| = |z^{k}|$ for all $k\neq i$. Moreover, this  product will also be zero if $\psi_{\tilde z^{\, i+1}/\tilde z^{\,i}}=0$, which happens
 if the jump at row $z^i$ violates the interlacing i.e. leads to a violation of $\tilde z^{\, i+1} \succ \tilde z^{\,i}$.
  This means that if $z^i_j$ (for some $j$ with $j\leq i$) attempts to jump but $z^i_j=z^{i+1}_j$ then this will
 force particle $z^{i+1}_j$ to jump in order that $\psi_{\tilde z^{\, i+1}/\tilde z^{\,i}}\neq 0$ and thus the rate of this jump is also non-zero. 
 This jump will propagate in similar fashion on the whole
 string of particles $z^{i+1}_j, z^{i+2}_j,...$ which are equal to $z^i_j$ before $z^i_j$ jumped. 
 Finally, given also \eqref{no_interlace} which determines the rates of the above trickle down sequence of pushes,
  we see that if product \eqref{Uprod} is non-zero, then it will equal one.
 \vskip 2mm 
 Thus, the above construction defines the particle system (notice that the factor $(1-q)^{-1}$ that appears in \eqref{onejump} and \eqref{Uderi}
 can be ignored as it only corresponds to time change in the dynamics)
 \begin{definition}[$q$-Whittaker $2d$ growth model]
  Let $x_1,...,x_n$ be positive numbers. Each of the particles $z^k_j$ in a Gelfand-Tsetlin pattern $(z^k_j\colon 1\leq j\leq k\leq n)$
   jumps, independently of others, to the right by one step at rate
 \begin{align}\label{q-Whit-rate}
 x_k \, \frac{(1-q^{z^{k-1}_{j-1} - z^{k}_j} ) (1-q^{z^{k}_j  - z^{k}_{j+1}+1   })}{ 1-q^{z^{k}_j  - z^{k-1}_{j}+1   } },
 \end{align}
 and when it jumps it pushes along the string of particles $z^{k+1}_j, z^{k+2}_j,...$ with the property that
 $z^k_j= z^{k+1}_j = z^{k+2}_j = ...$ Notice that if $z^k_j=z^{k-1}_{j-1}$ then the jump of $z^k_j$ is suppressed (the rate in this case is equal to zero), which is consistent with presrving the interlacing property. We implicitly use the convention that terms which contain particles that are not included in the Gelfand-Tsetlin pattern are omitted from expression \eqref{q-Whit-rate}. 
 \end{definition}
 \vskip 2mm
 Let us remark that the $q$-Whittaker dynamics are different than the dynamics induced by $\rsk$. This is because in the latter the independent jumps
 only take place on the diagonal $z^k_1$ with $k=1,...,n$ and the jumps propagate to the rest of the Gelfand-Tsetlin pattern, while
 in the $q$-Whittaker dynamics each particle has its own independent exponential clock that initiates jumps.
 \vskip 2mm
 Notice that the rates of particles $(z^k_k \colon k=1,...,n)$ are just given by $x_k(1-q^{z^{k-1}_{k-1}-z^k_k})$, which means that the evolution
 $(z^k_k \colon k=1,...,n)$  is also Markovian: it is a $q$ deformation of TASEP, called $q$-TASEP. Thus, we see again that particle $z^n_n$
 has a double nature: on the one hand that of the smallest particle in a string of $q$-TASEP and on the other
 that of the smallest particle in a Dyson-like process as this is given by
 \eqref{macPk} for $k=n$. The fact that a Fredholm determinant formula can be derived (via 
 properties of Macdonald polynomials) for certain functionals of the latter ($q$-Laplace transform),
  immediately gives a Fredholm formula for the last particle in $q$-TASEP thus leading to Tracy-Widom asymptotics \cite{BC14}.
\vskip 2mm
\section{ Acknowledgements}\label{ackno} 
A large part of these notes was prepared and exposed during a series of lectures at the National Centre for Theoretical Sciences and Academia Sinica in Taipei.
I would like to warmly thank the institutes for the invitation to present this material and also for the kind hospitality and support through a Visiting Scholar
scheme. I  would also like to thank an anonymous referee, whose very detailed and conscientious comments have improved the presentation of the text as well as
Alexei Borodin, Ofer Busani, Leonid Petrov and Bruce Westbury
 for several useful comments. 
The work was also supported by EPRSC through grant EP/R024456/1.

\end{document}